\newtheorem{theorem}{Theorem}[section]
\newtheorem{lem}{Lemma}[section]
\newtheorem{prop}{Proposition}[section]
\newtheorem{cor}{Corollary}[section]
\theoremstyle{definition}
\newtheorem{Def}{Definition}[section]
\newtheorem*{rmk*}{Remark}
\newtheorem{rmk}{Remark}[section]
    \renewcommand{\theequation}{
    \thesection.\arabic{equation}}
    \renewcommand*{\section}{\@startsection{section}{1}{\z@}%
    {21pt}{12pt}{\reset@font\normalsize\bfseries}}
    \renewcommand*{\subsection}{\@startsection{subsection}{2}{\z@}%
    {15pt}{6pt}{\reset@font\normalsize\mdseries\itshape}}
\def\@seccntformat#1{\csname the#1\endcsname.\quad}
\def\@listi{\leftmargin\leftmargini
  \topsep=.5\baselineskip 
  \partopsep=0pt \parsep=0pt \itemsep=0pt}
\let\@listI\@listi
\def\@listii{\leftmargin\leftmarginii
  \labelwidth\leftmarginii \advance\labelwidth-\labelsep
  \topsep=0pt \partopsep=0pt \parsep=0pt \itemsep=0pt}
\def\@listiii{\leftmargin\leftmarginiii
  \labelwidth\leftmarginiii \advance\labelwidth-\labelsep
  \topsep=0pt \partopsep=0pt \parsep=0pt \itemsep=0pt}
\def\@listiv{\leftmargin\leftmarginiv
  \labelwidth\leftmarginiv \advance\labelwidth-\labelsep
  \topsep=0pt \partopsep=0pt \parsep=0pt \itemsep=0pt}
\newcommand{\indepe}{\mathop{\perp\!\!\!\perp}}
\title{Estimation of integrated covariances in the simultaneous presence of nonsynchronicity, microstructure noise and jumps}
\author{Yuta Koike
\thanks{University of Tokyo, Graduate School of Mathematical Sciences, 3-8-1 Komaba, Meguro-ku, Tokyo 153-8914, Japan, Email: kyuta@ms.u-tokyo.ac.jp}}
\begin{document}

\maketitle 


\begin{abstract}

We propose a new estimator for the integrated covariance of two It\^o semimartingales observed at a high-frequency. This new estimator, which we call the pre-averaged truncated Hayashi-Yoshida estimator, enables us to separate the sum of the co-jumps from the total quadratic covariation even in the case that the sampling schemes of two processes are nonsynchronous and the observation data is polluted by some noise. It is the first estimator which can simultaneously handle these three issues, which are fundamental to empirical studies of high-frequency financial data. We also show the asymptotic mixed normality of this estimator under some mild conditions allowing infinite activity jump processes with finite variations, some dependency between the sampling times and the observed processes as well as a kind of endogenous observation errors. We examine the finite sample performance of this estimator using a Monte Carlo study. \vspace{3mm}


\noindent \textit{Keywords}: Hayashi-Yoshida estimator; Integrated covariance; Jumps; Market microstructure noise; Nonsynchronous observations; Pre-averaging; Threshold estimator.
\end{abstract}

\if0
\begin{keyword}
Endogenous noise; Hayashi-Yoshida estimator; Integrated covariance; Market microstructure noise; Nonsynchronous observations; Pre-averaging; Stable convergence; Strong predictability
\end{keyword}
\fi


\section{Introduction}

In the past years there has been a considerable development in statistical inferences for the quadratic covariations of semimartingales observed at a high frequency. This was mainly motivated by financial application because price processes need to follow a semimartingale under the no-arbitrage assumption (see \cite{DS1994} for instance) 
and technological developments made high frequency data commonly available. 
In general the quadratic covariation of two semimartingales consists of two sources; the continuous martingale parts and the co-jumps of the semimartingales. Recently many authors have indicated that separating these two sources benefits various areas of finance such as volatility forecasting (\citet{ABD2007}), credit risk management (\citet{ContKan2011}),  the construction of a hedging portfolio (\citet{TB2010}) and so on. Motivated by these reasons, in this paper we focus on disentangling these two components of the quadratic covariations of two semimartingales by using high-frequency observation data. 


Let $Z^1$ and $Z^2$ be two It\^o semimartingales and let $(S^i)_{i\in\mathbb{Z}_+}$ be a sequence of stopping times that is increasing a.s., $S^i\uparrow\infty$, and $S^0=0$. Then it is well-known in the classic stochastic calculus that
\begin{equation}\label{rc}
\sum_{i:S^i\leq t}(Z^1_{S^i}-Z^1_{S^{i-1}})(Z^2_{S^i}-Z^2_{S^{i-1}})\to^p[Z^1,Z^2]_t
\end{equation}
for any $t>0$, provided $\sup_{i\in\mathbb{N}}(S^i\wedge t-S^{i-1}\wedge t)\to^p0$. Therefore, if we observe $Z^1$ and $Z^2$ at the time $S^i$ for every $i$, we can use the statistic in the left hand of the equation $(\ref{rc})$ (which is called the \textit{realized covariance}) as a consistent estimator of the quadratic covariation $[Z^1,Z^2]_t$ of $Z^1$ and $Z^2$. Since
\begin{equation}\label{qc}
[Z^1,Z^2]_t=\langle Z^{1,c},Z^{2,c}\rangle_t+\sum_{0\leq s\leq t}\Delta Z^1_s\Delta Z^2_s,
\end{equation}
our aim will be achieved by constructing an estimator for the quantity $\langle Z^{1,c},Z^{2,c}\rangle_t$ which we call the \textit{integrated covariance} of $Z^1$ and $Z^2$. In the present situation we have the observation data $(Z^1_{S^i}+Z^2_{S^i})_{i\in\mathbb{Z}_+}$ and $(Z^1_{S^i}-Z^2_{S^i})_{i\in\mathbb{Z}_+}$, so that the problem results in the univariate case due to the polarization identity $\langle Z^{1,c},Z^{2,c}\rangle=(\langle Z^{1,c}+Z^{2,c}\rangle-\langle Z^{1,c}-Z^{2,c}\rangle)/4$. As a consequence, we can benefit from a vast numbers of studies on detecting jumps in a single financial high-frequency data. For example, \citet{BNS2004b} used such a method based on the \textit{bipower technique} introduced in \cite{BNS2004} and proposed an estimator called the \textit{realized bipower covariation}. On the other hand, there are several approaches which directly treat the multivariate data; see \citet{MG2012} and \citet{Boudt2011} for example. 

In real financial markets, however, some difficulties caused by the so-called \textit{market microstructure} confront us. In the present context there are two major topics related to them: One is the nonsynchronicity of observation times and the other is a kind of observation errors called \textit{microstructure noise}. In recent years the simultaneous treatment of these two problems, which is based on the combination of methods for dealing with each individual one, has been established by many authors in the case that jumps are absent. See \citet{BNHLS2011}, \citet{B2011a} and \citet{CKP2010} for example. Furthermore, in the presence of one of the above issues the methodologies of detecting jumps have been also studied in the literature. In the case that observation times are nonsynchronous, \citet{MG2012} combined the \textit{Hayashi-Yoshida method} proposed in \citet{HY2005} to deal with the nonsynchronoicity with the \textit{thresholding technique} proposed independently in \citet{M2001} and \citet{S2002} to detect jumps and constructed a consistent estimator for the integrated covariance. That estimator, which we call the \textit{truncated Hayashi-Yoshida estimator}, was also studied in \cite{Koike2012}. On the other hand, in \citet{PV2009} they proposed a new method for dealing with microstructure noise and introduced a class of bipower-type statistics which goes well in the presence of microstructure noise. Their method is now called the \textit{pre-averaging method} and has been further investigated in \cite{JLMPV2009} and \cite{PV2009SPA} for example. \citet{FW2007} proposed another approach for detecting jumps in the presence of microstructure noise, where wavelet methods were applied to detect jumps. Their approach has further been developed by \cite{BV2012a}. In contrast, we remark that relatively few papers are so far available in a parametric setting with respect to these topics; we refer to \citet{OY2012} for interested readers.  

In the present article we investigate the methodology accommodated to the situation that all of the above problems are present simultaneously. That is, we consider two It\^o semimartingales which are observed at stopping times in a nonsynchronous manner and contaminated by noise. Then we develop a method for estimating their integrated covariance separately from the sum of their co-jumps. For this purpose, we combine the Hayashi-Yoshida method (to deal with the nonsynchronicity of the observation times) and the pre-averaging method (to remove the noise) with the threshold technique (to separate the jumps) and consider a class of statistics called the \textit{pre-averaged truncated Hayashi-Yoshida estimator}. We prove the consistency and the asymptotic mixed normality of the pre-averaged truncated Hayashi-Yoshida estimator under a very general situation allowing the presence of infinite activity jumps, some dependency between the observation times and the observed processes as well as a kind of endogenous noise.  
        

This paper is organized as follows: In Section \ref{review} we briefly review on the results about the asymptotic properties of the pre-averaged Hayashi-Yoshida estimator in the continuous It\^o semimartingale setting. In Section \ref{main} we present the construction of our estimator and the main results in this paper. We discuss some topics for the statistical application to finance of our estimator in Section \ref{topics}, while Section \ref{simulation} provides some numerical experiments to illustrate the finite sample properties of our estimator. Most of the proofs are postponed to the Appendix.


\section{A brief review of the continuous case}\label{review}

We start by introducing an appropriate stochastic basis on which our observation data is defined. Let $\mathcal{B}^{(0)}=(\Omega^{(0)},\mathcal{F}^{(0)},\mathbf{F}^{(0)}=(\mathcal{F}^{(0)}_t)_{t\in\mathbb{R}_+} ,P^{(0)})$ be a stochastic basis. For any $t\in\mathbb{R}_+$ we have a transition probability $Q_t(\omega^{(0)},\mathrm{d}z)$ from $(\Omega^{(0)},\mathcal{F}^{(0)}_t)$ into $\mathbb{R}^2$, which satisfies
\begin{equation*}
\int z Q_t(\omega^{(0)},\mathrm{d}z)=0.
\end{equation*}
We endow the space $\Omega^{(1)}=(\mathbb{R}^2)^{[0,\infty)}$ with the product Borel $\sigma$-field $\mathcal{F}^{(1)}$ and with the probability $Q(\omega^{(0)},\mathrm{d}\omega^{(1)})$ which is the product $\otimes_{t\in\mathbb{R}_+}Q_t(\omega^{(0)},\cdot)$. We also call $(\zeta_t)_{t\in\mathbb{R}_+}$ the ``canonical process'' on $(\Omega^{(1)},\mathcal{F}^{(1)})$ and the filtaration $\mathcal{F}^{(1)}_t=\sigma(\zeta_s;s\leq t)$. Then we consider the stochastic basis $\mathcal{B}=(\Omega,\mathcal{F},\mathbf{F}=(\mathcal{F}_t)_{t\in\mathbb{R}_+} ,P)$ defined as follows:
\begin{gather*}
\Omega=\Omega^{(0)}\times\Omega^{(1)},\qquad
\mathcal{F}=\mathcal{F}^{(0)}\otimes\mathcal{F}^{(1)},\qquad
\mathcal{F}_t=\cap_{s>t}\mathcal{F}^{(0)}_s\otimes\mathcal{F}^{(1)}_s,\\
P(\mathrm{d}\omega^{(0)},\mathrm{d}\omega^{(1)})=P^{(0)}(\mathrm{d}\omega^{(0)})Q(\omega^{(0)},\mathrm{d}\omega^{(1)}).
\end{gather*}
Any variable or process which is defined on either $\Omega^{(0)}$ or $\Omega^{(1)}$ can be considered in the usual way as a variable or a process on $\Omega$.

Next we introduce our observation data. There are two continuous semimartingales $X^{1}=(X^{1}_t)_{t\in\mathbb{R}_+}$ and $X^{2}=(X^{2}_t)_{t\in\mathbb{R}_+}$ on $\mathcal{B}^{(0)}$ with canonical decompositions
\begin{equation}\label{CSM}
X^l=A^l+M^l,\qquad l=1,2,
\end{equation}
where $A^1$ and $A^2$ are continuous $\mathbf{F}^{(0)}$-adapted processes with locally finite variations, while $M^1$ and $M^2$ are continuous $\mathbf{F}^{(0)}$-local martingales. We have two sequences of $\mathbf{F}^{(0)}$-stopping times $(S^i)_{i\in\mathbb{Z}_+}$ and $(T^j)_{j\in\mathbb{Z}_+}$ that are increasing a.s.,
\begin{equation}\label{increace}
S^i\uparrow\infty\qquad \textrm{and}\qquad T^j\uparrow\infty.
\end{equation}
As a matter of convenience we set $S^{-1}=T^{-1}=0$. These stopping times implicitly depend on a parameter $n\in\mathbb{N}$, which represents the frequency of the observations. Denote by $(b_n)$ a sequence of positive numbers tending to 0 as $n\to\infty$ (typically $b_n=n^{-1}$). Let $\xi'$ be a constant satisfying $0<\xi'<1$. In this paper, we will always assume that
\begin{equation}\label{A4}
r_n(t):=\sup_{i\in\mathbb{Z}_+}(S^i\wedge t-S^{i-1}\wedge t)\vee\sup_{j\in\mathbb{Z}_+}(T^j\wedge t-T^{j-1}\wedge t)=o_p(b_n^{\xi'})
\end{equation}
as $n\to\infty$ for any $t\in\mathbb{R}_+$.

The processes $X^1$ and $X^2$ are observed at the sampling times $(S^i)$ and $(T^j)$ with observation errors $(U^1_{S^i})_{i\in\mathbb{Z}_+}$ and $(U^2_{T^j})_{j\in\mathbb{Z}_+}$ respectively. In this paper, we assume that the observation errors have the following representations:
\begin{equation}\label{MSnoise}
U^1_{S^i}=b_n^{-1/2}(\underline{X}^1_{S^i}-\underline{X}^1_{S^{i-1}})+\zeta^1_{S^i},\qquad
U^2_{T^j}=b_n^{-1/2}(\underline{X}^2_{T^j}-\underline{X}^2_{T^{j-1}})+\zeta^2_{T^j}.
\end{equation}
Here, $\zeta_t=(\zeta^1_t,\zeta^2_t)$ for each $t$, while $\underline{X}^1$ and $\underline{X}^2$ are two continuous semimartingales on $\mathcal{B}^{(0)}$. After all, we have the observation data $\mathsf{X}^1=(\mathsf{X}^1_{S^i})_{i\in\mathbb{Z}_+}$ and $\mathsf{X}^2=(\mathsf{X}^2_{T^j})_{j\in\mathbb{Z}_+}$ of the form
\begin{equation*}
\mathsf{X}^1_{S^i}=X^1_{S^i}+U^1_{S^i},\qquad\mathsf{X}^2_{T^j}=X^2_{T^j}+U^2_{T^j}.
\end{equation*}


Our aim is to estimate the integrated covariance $[X^1,X^2]_t$ of $X^1$ and $X^2$ at any time $t\in\mathbb{R}_+$ from the observation data $(\mathsf{X}^1_{S^i})_{i:S^i\leq t}$ and $(\mathsf{X}^2_{T^j})_{j:T^j\leq t}$. It is necessary to deal with both of the observation noise and the nonsynchronicity of the observation times simultaneously. As is mentioned in the introduction, we use the pre-averaging technique to remove the noise, while use the Hayashi-Yoshida method to deal with the nonsynchronicity. For the pre-averaging technique we introduce some notation. We choose a sequence $k_n$ of integers and a number $\theta\in(0,\infty)$ satisfying
\begin{equation}\label{window}
k_n=\theta b_n^{-1/2}+o(b_n^{-1/4})
\end{equation}
(for example $k_n=\lceil\theta b_n^{-1/2}\rceil$). We also choose a continuous function $g:[0,1]\rightarrow\mathbb{R}$ which is piecewise $C^1$ with a piecewise Lipschitz derivative $g'$ and satisfies
\begin{equation}\label{weight}
g(0)=g(1)=0,\qquad \psi_{HY}:=\int_0^1 g(x)\mathrm{d}x\neq 0
\end{equation}
(for example $g(x)=x\wedge(1-x)$). We associate the random intervals $I^i=[S^{i-1},S^i)$ and $J^j=[T^{j-1},T^j)$ with the sampling scheme $(S^i)$ and $(T^j)$ and refer to $\mathcal{I}=(I^i)_{i\in\mathbb{N}}$ and $\mathcal{J}=(J^j)_{j\in\mathbb{N}}$ as the sampling designs for $X^1$ and $X^2$. We introduce the \textit{pre-averaging observation data} of $X^1$ and $X^2$ based on the sampling designs $\mathcal{I}$ and $\mathcal{J}$ respectively as follows:
\begin{align*}
\overline{\mathsf{X}}^1(\mathcal{I})^i=\sum_{p=1}^{k_n-1}g\left (\frac{p}{k_n}\right)\left(\mathsf{X}^1_{S^{i+p}}-\mathsf{X}^1_{S^{i+p-1}}\right),\qquad
\overline{\mathsf{X}}^2(\mathcal{J})^j=\sum_{q=1}^{k_n-1}g\left (\frac{q}{k_n}\right)\left(\mathsf{X}^2_{T^{j+q}}-\mathsf{X}^2_{T^{j+q-1}}\right),\\
i,j=0,1,\dots.
\end{align*}

The following quantity was introduced in \citet{CKP2010} :
\begin{Def}[Pre-averaged Hayashi-Yoshida estimator]\label{Defphy}
The \textit{pre-averaged Hayashi-Yoshida estimator}, or \textit{pre-averaged HY estimator} of $\mathsf{X}^1$ and $\mathsf{X}^2$ associated with sampling designs $\mathcal{I}$ and $\mathcal{J}$ is the process
\begin{equation*}
PHY(\mathsf{X}^1,\mathsf{X}^2;\mathcal{I},\mathcal{J})^n_t
=\frac{1}{(\psi_{HY}k_n)^2}\sum_{\begin{subarray}{c}
i,j=0\\
S^{i+k_n}\vee T^{j+k_n}\leq t
\end{subarray}}^{\infty}\overline{\mathsf{X}}^1(\mathcal{I})^i\overline{\mathsf{X}}^2(\mathcal{J})^j 1_{\{[S^i,S^{i+k_n})\cap[T^j,T^{j+k_n})\neq\emptyset\}},\qquad t\in\mathbb{R}_+.
\end{equation*}
\end{Def}

For a technical reason explained in \cite{Koike2012phy}, we modify the above estimator as follows. The following notion was introduced to this area in \citet{BNHLS2011}:
\begin{Def}[Refresh time]
The first refresh time of sampling designs $\mathcal{I}$ and $\mathcal{J}$ is defined as $R^0=S^0\vee T^0$, and then subsequent refresh times as
\begin{align*}
R^k:=\min\{S^i|S^i>R^{k-1}\}\vee\min\{T^j|T^j>R^{k-1}\},\qquad k=1,2,\dots.
\end{align*}
\end{Def}

We introduce new sampling schemes by a kind of the next-tick interpolations to the refresh times. That is, we define $\widehat{S}^0:=S^0$, $\widehat{T}^0:=T^0$, and
\begin{align*}
\widehat{S}^k:=\min\{S^i|S^i>R^{k-1}\},\quad\widehat{T}^k:=\min\{T^j|T^j>R^{k-1}\},\qquad k=1,2,\dots.
\end{align*}
Then, we create new sampling designs as follows:
\begin{align*}
\widehat{I}^k:=[\widehat{S}^{k-1},\widehat{S}^k),\qquad\widehat{J}^k:=[\widehat{T}^{k-1},\widehat{T}^k),\qquad
\widehat{\mathcal{I}}:=(\widehat{I}^i)_{i\in\mathbb{N}},\qquad\widehat{\mathcal{J}}:=(\widehat{J}^j)_{j\in\mathbb{N}}.
\end{align*}
For the sampling designs $\widehat{\mathcal{I}}$ and $\widehat{\mathcal{J}}$ obtained in such a manner, we will consider the pre-averaged HY estimator $\widehat{PHY}(\mathsf{X}^1,\mathsf{X}^2)^n:=PHY(\mathsf{X}^1,\mathsf{X}^2;\widehat{\mathcal{I}},\widehat{\mathcal{J}})^n$.

Now we review the results related to the consistency and the asymptotic mixed normality of the estimator $\widehat{PHY}(\mathsf{X}^1,\mathsf{X}^2)^n$. We write the canonical decompositions of $\underline{X}^1$ and $\underline{X}^2$ as follows:
\begin{equation}\label{noiseCSM}
\underline{X}^l=\underline{A}^l+\underline{M}^l,\qquad l=1,2.
\end{equation}
Here, $\underline{A}^1$ and $\underline{A}^2$ are continuous $\mathbf{F}^{(0)}$-adapted processes with locally finite variations, while $\underline{M}^1$ and $\underline{M}^2$ are continuous $\mathbf{F}^{(0)}$-local martingales.
Next, let $N^n_t=\sum_{k=1}^{\infty}1_{\{R^k\leq t\}}$ for each $t\in\mathbb{R}_+$, and we introduce the following regularity conditions:
\begin{enumerate}[{[{C}1]}]
\item $b_nN^n_t=O_p(1)$ as $n\to\infty$ for every $t$.
\item  $A^1$, $A^2$, $\underline{A}^1$, $\underline{A}^2$, and $[V,W]$ for $V,W=X^1,X^2,\underline{X}^1,\underline{X}^2$ are absolutely continuous with locally bounded derivatives.
\end{enumerate}
Furthermore, for every $r\in[2,\infty)$ we introduce the following regularity condition for noise:
\begin{enumerate}
\item[{[N$^\flat_r$]}] $(\int |z|^rQ_t(\mathrm{d}z))_{t\in\mathbb{R}_+}$ is a locally bounded process.
\end{enumerate}
A sequence $(X^n)$ of stochastic processes is said to converge to a process $X$ \textit{uniformly on compacts in probability} (abbreviated \textit{ucp}) if, for each $t>0$, $\sup_{0\leq s\leq t}|X^n_s-X_s|\rightarrow^p0$ as $n\rightarrow\infty$. We then write $X^n\xrightarrow{ucp}X$. We have the following result about the consistency of the pre-averaged HY estimator:
\begin{theorem}[\protect\cite{Koike2012phy}, Theorem 5.1]\label{consistency}
Suppose $(\ref{A4})$, $[\mathrm{C}1]$-$[\mathrm{C}2]$ and $[\mathrm{N}^\flat_2]$ are satisfied. Then $$\widehat{PHY}(\mathsf{X}^1,\mathsf{X}^2)^n\xrightarrow{ucp}[X^1,X^2]$$ as $n\to\infty$, provided that $\xi'>1/2$.
\end{theorem}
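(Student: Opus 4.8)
The plan is to combine a standard localization with the bilinearity of $PHY$ in its first two arguments. By localization one may assume that the derivatives in $[\mathrm{C}2]$, the bound in $[\mathrm{C}1]$ and the noise second-moment process in $[\mathrm{N}^\flat_2]$ are uniformly bounded, and that $M^1,M^2,\underline{M}^1,\underline{M}^2$ are square-integrable martingales with uniformly bounded (co)variation densities. Writing $\mathsf{X}^l=X^l+\underline{U}^l+\zeta^l$, where $\underline{U}^l$ is the first (rescaled-increment) term on the right-hand side of $(\ref{MSnoise})$, bilinearity splits $\widehat{PHY}(\mathsf{X}^1,\mathsf{X}^2)^n$ into nine pieces $PHY(V^1,V^2;\widehat{\mathcal{I}},\widehat{\mathcal{J}})^n$ with $V^l\in\{X^l,\underline{U}^l,\zeta^l\}$. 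It then suffices to prove $PHY(X^1,X^2;\widehat{\mathcal{I}},\widehat{\mathcal{J}})^n\xrightarrow{ucp}[X^1,X^2]$ and that the other eight pieces converge to $0$ ucp.

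For the ``signal'' piece I would decompose $X^l=A^l+M^l$ and exploit the refresh-time construction: since $\widehat{I}^k$ and $\widehat{J}^k$ both contain the refresh time $R^{k-1}$, the pre-averaging windows $[\widehat{S}^i,\widehat{S}^{i+k_n})$ and $[\widehat{T}^j,\widehat{T}^{j+k_n})$ are essentially aligned, the overlap indicator equals $1_{\{|i-j|\le k_n\}}$ up to $O(1)$ ambiguous indices, and the situation is close to the synchronous equidistant one. Expanding the pre-averages into increments and reorganizing the double sum according to the increments $\Delta_aM^1=M^1_{\widehat{S}^a}-M^1_{\widehat{S}^{a-1}}$ and $\Delta_bM^2=M^2_{\widehat{T}^b}-M^2_{\widehat{T}^{b-1}}$, the coefficient attached to a pair $(a,b)$ lying on overlapping refresh intervals equals $\big(\sum_p g(p/k_n)\big)^2=(\psi_{HY}k_n)^2(1+o(1))$ in the bulk; hence after the $(\psi_{HY}k_n)^{-2}$ normalization $PHY(M^1,M^2;\widehat{\mathcal{I}},\widehat{\mathcal{J}})^n$ agrees, up to an asymptotically negligible remainder, with the Hayashi--Yoshida statistic $\sum_{\widehat{I}^a\cap\widehat{J}^b\neq\emptyset}\Delta_aM^1\,\Delta_bM^2$, which converges ucp to $[M^1,M^2]=[X^1,X^2]$ by a Riemann-sum argument along the refresh grid (this is essentially the consistency of the pre-averaged Hayashi--Yoshida estimator in the continuous noise-free case). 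The pieces containing $A^1$ or $A^2$ are negligible: since $A^l$ has bounded variation one gets $\sum_i|\overline{A}^l(\widehat{\mathcal{I}})^i|=O(k_n)$, which together with the $L^2$-size $O(\sqrt{k_nr_n})$ of a pre-averaged martingale increment and Cauchy--Schwarz yields bounds of order $k_nr_n$ or $\sqrt{k_nr_n}$, hence $o_p(1)$, precisely because $\xi'>1/2$ forces $k_nr_n=o_p(b_n^{\xi'-1/2})=o_p(1)$.

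For the six pieces carrying at least one noise factor the key is a summation by parts: since $g(0)=g(1)=0$ and $b_n^{-1/2}/k_n\to1/\theta$ by $(\ref{window})$,
\begin{equation*}
\overline{\underline{U}}^1(\widehat{\mathcal{I}})^i\approx-\frac1\theta\sum_p g'\!\Big(\frac p{k_n}\Big)\big(\underline{X}^1_{\widehat{S}^{i+p}}-\underline{X}^1_{\widehat{S}^{i+p-1}}\big),\qquad
\overline{\zeta}^1(\widehat{\mathcal{I}})^i\approx-\frac1{k_n}\sum_p g'\!\Big(\frac p{k_n}\Big)\zeta^1_{\widehat{S}^{i+p}}
\end{equation*}
(and analogously for the second process), so each pre-averaged noise factor carries the \emph{derivative} weight $g'$ instead of $g$. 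Because $\int_0^1 g'(x)\,\mathrm{d}x=g(1)-g(0)=0$, the ``Riemann-sum'' factor $\sum_p g'(p/k_n)$ is $O(1)$ rather than $O(k_n)$, so relative to the $(\psi_{HY}k_n)^2$ normalization every piece with one noise factor has coefficients of order $k_n^{-1}$ and every piece with two noise factors coefficients of order $k_n^{-2}$. Expanding these pieces as quadratic forms in the martingale increments (respectively in the $\zeta$'s) and bounding them by conditional mean plus conditional variance — the conditional means vanishing whenever a $\zeta$ is present (it is independent of $\mathcal{F}^{(0)}$ with $\int z\,Q_t(\mathrm{d}z)=0$, and $\zeta^1,\zeta^2$ are independent at distinct observation times, so $PHY(\zeta^1,\zeta^2;\widehat{\mathcal{I}},\widehat{\mathcal{J}})^n$ is conditionally centered), and being otherwise $O_p(k_n^{-1})$ or $O_p(k_n^{-2})$ times a bounded-variation quantity, while the conditional variances are controlled using $[\mathrm{N}^\flat_2]$, the bounded (co)variation densities, the count $O_p(b_n^{-1}k_n)$ of overlapping index pairs (via $[\mathrm{C}1]$) and again $k_nr_n=o_p(1)$ — shows that all six are $o_p(1)$ ucp.

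The step I expect to be the main obstacle is the bookkeeping in the ``signal'' martingale piece. One cannot use the triangle inequality there: the pre-averages $\overline{M}^l(\widehat{\mathcal{I}})^i$ for neighbouring $i$ are strongly correlated, so summing $|\overline{M}^1_i\overline{M}^2_j|$ over the $O_p(b_n^{-1}k_n)$ overlapping pairs diverges. One must instead set up nested filtrations so that the ``off-diagonal'' products of increments are conditionally centered and orthogonal, reducing the remainder estimate to a second-moment computation whose smallness again rests on $k_nr_n=o_p(1)$, i.e.\ on $\xi'>1/2$; simultaneously one must check that the normalization $(\psi_{HY}k_n)^{-2}$ exactly compensates the pre-averaging, so that the surviving ``diagonal'' part is the Hayashi--Yoshida statistic and carries the correct limit $[X^1,X^2]$.
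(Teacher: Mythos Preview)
The present paper does not prove this theorem: it is quoted as ``Theorem 5.1 of \cite{Koike2012phy}'' in the review Section~\ref{review}, with no argument given here. So there is no proof in this paper to compare against.

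That said, your overall strategy---localization, bilinear decomposition of $\widehat{PHY}(\mathsf{X}^1,\mathsf{X}^2)^n$ into nine pieces according to $\mathsf{X}^l=X^l+\underline{U}^l+\zeta^l$, Abel summation on the noise pieces exploiting $g(0)=g(1)=0$ and $\int_0^1 g'=0$, and a martingale-orthogonality treatment of the signal piece---is sound and is essentially the standard route. Judging from the techniques this paper uses for closely related computations (Lemma~\ref{XiM}, the proof of Proposition~\ref{propphy}, and the decomposition around $(\ref{IBP})$), the companion paper \cite{Koike2012phy} almost certainly proceeds in a more stochastic-calculus fashion: rather than expanding products of pre-averages into discrete increments and bounding the resulting quadratic forms combinatorially, one writes each $\bar{V}^1(\widehat{\mathcal{I}})^i\,\bar{V}^2(\widehat{\mathcal{J}})^j\,\bar{K}^{ij}$ via It\^o integration by parts as a sum of stochastic integrals, reindexes the double sum over $(i,j)$ into a single stochastic integral with an aggregated integrand (as in the manipulation of $\Delta_{1,s}$ and $\Upsilon_{1,s}$ in Lemma~\ref{XiM}), and then controls it by Doob's inequality and the bound~$(\ref{sumbarK})$. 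This reformulation is precisely what neutralizes the obstacle you flag: the ``off-diagonal'' martingale cross-terms become a single square-integrable martingale whose predictable quadratic variation is easy to bound, so no delicate conditional-orthogonality bookkeeping on the discrete index set is needed. Your combinatorial route would also go through, but with more work at exactly the point you anticipate; the integration-by-parts route is cleaner and is what the paper's own machinery (Lemma~4.3 of \cite{Koike2012phy}, Lemma~\ref{barKssp} here) is built for.
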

\noindent The consistency of the pre-averaged HY estimator was first shown in \citet{CKP2010} in a simpler situation.

Next we review the results related to the asymptotic mixed normality of the pre-averaged HY estimator, which was first proven in \cite{CPV2011} when the sampling times are deterministic transformation of equidistant ones. In this paper we treat general sampling times, so that we review the result given in \cite{Koike2012phy}.

Let $N^{n,1}_t=\sum_{k=1}^{\infty}1_{\{\widehat{S}^k\leq t\}}$ and $N^{n,2}_t=\sum_{k=1}^{\infty}1_{\{\widehat{T}^k\leq t\}}$ for each $t\in\mathbb{R}_+$ and 
\begin{align*}
\Gamma^k=[R^{k-1},R^k),\qquad \check{I}^k:=[\check{S}^k,\widehat{S}^k),\qquad\check{J}^k:=[\check{T}^k,\widehat{T}^k)
\end{align*}
for each $k\in\mathbb{N}$. Here, for each $t\in\mathbb{R}_+$ we write $\check{S}^k=\sup_{S^i<\widehat{S}^k}S^i$ and $\check{T}^k=\sup_{T^j<\widehat{T}^k}T^j$. Note that $\check{S}^k$ and $\check{T}^k$ may not be stopping times.  

Let $\xi$ be a positive constant satisfying $\frac{1}{2}<\xi<1$. Furthermore, let $\mathbf{H}^n=(\mathcal{H}^n_t)_{t\in\mathbb{R}_+}$ be a sequence of filtrations of $\mathcal{F}$ to which $N^n$, $N^{n,1}$ and $N^{n,2}$ are adapted, and for each $n$ and each $\rho\geq0$ we define the processes $\chi^n$, $G(\rho)^n$, $F(\rho)^{n,1}$, $F(\rho)^{n,2}$ and $F(1)^{n,1* 2}$ by
\begin{gather*}
\chi^n_{s}=P(\widehat{S}^k=\widehat{T}^k\big|\mathcal{H}_{R^{k-1}}^n),\qquad
G(\rho)^n_s=E\left[\left(b_n^{-1}|\Gamma^{k}|\right)^\rho\big|\mathcal{H}_{R^{k-1}}^n\right],\\
F(\rho)^{n,1}_{s}=E\left[\left(b_n^{-1}|\check{I}^{k}|\right)^\rho\big|\mathcal{H}_{\widehat{S}^{k-1}}^n\right],\qquad
F(\rho)^{n,2}_{s}=E\left[\left(b_n^{-1}|\check{J}^{k}|\right)^\rho\big|\mathcal{H}_{\widehat{T}^{k-1}}^n\right],\\
F(1)^{n,1*2}_{s}=b_n^{-1}E\left[|\check{I}^{k}\cap\check{J}^k|+|\check{I}^{k+1}\cap\check{J}^k|+|\check{I}^{k}\cap\check{J}^{k+1}|\big|\mathcal{H}_{R^{k-1}}^n\right]
\end{gather*}
when $s\in\Gamma^k$. Here, $|\cdot|$ denotes the Lebesgue measure.

The following condition is necessary to compute the asymptotic variance of the estimation error of our estimator explicitly. For a sequence $(X^n)$ of c\`adl\`ag processes and a c\`adl\`ag process $X$, we write $X^n\xrightarrow{\text{Sk.p.}}X$ if $(X^n)$ converges to $X$ in probability for the Skorokhod topology.
\begin{enumerate}
\item[{[A1$'$]}]

(i) For each $n$, we have a c\`adl\`ag $\mathbf{H}^{n}$-adapted process $G^n$ and a random subset $\mathcal{N}^0_n$ of $\mathbb{N}$ such that $(\#\mathcal{N}^0_n)_{n\in\mathbb{N}}$ is tight, $G(1)^n_{R^{k-1}}=G^n_{R^{k-1}}$ for any $k\in\mathbb{N}-\mathcal{N}^0_n$, and there exists a c\`adl\`ag $\mathbf{F}^{(0)}$-adapted process $G$ satisfying that $G$ and $G_{-}$ do not vanish and that $G^n\xrightarrow{\text{Sk.p.}}G$ as $n\to\infty$.

(ii) There exists a constant $\rho\geq1/\xi'$ such that $\left(\sup_{0\leq s\leq t}G(\rho)^n_{s}\right)_{n\in\mathbb{N}}$ is tight for all $t>0$.

(iii) For each $n$, we have a c\`adl\`ag $\mathbf{H}^{n}$-adapted process $\chi^{\prime n}$ and a random subset $\mathcal{N}'_n$ of $\mathbb{N}$ such that $(\#\mathcal{N}'_n)_{n\in\mathbb{N}}$ is tight, $\chi^n_{R^{k-1}}=\chi^{\prime n}_{R^{k-1}}$ for any $k\in\mathbb{N}-\mathcal{N}'_n$, and there exists a c\`adl\`ag $\mathbf{F}^{(0)}$-adapted process $\chi$ such that $\chi^{\prime n}\xrightarrow{\text{Sk.p.}}\chi$ as $n\to\infty$.

(iv) For each $n$ and $l=1,2,1*2$, we have a c\`adl\`ag $\mathbf{H}^{n}$-adapted process $F^{n,l}$ and a random subset $\mathcal{N}^l_n$ of $\mathbb{N}$ such that $(\#\mathcal{N}^l_n)_{n\in\mathbb{N}}$ is tight, $F(1)^{n,l}_{R^{k-1}}=F^{n,l}_{R^{k-1}}$ for any $k\in\mathbb{N}-\mathcal{N}^l_n$, and there exists a c\`adl\`ag $\mathbf{F}^{(0)}$-adapted processes $F^l$ satisfying $F^{n,l}\xrightarrow{\text{Sk.p.}}F^l$ as $n\to\infty$.

(v) There exists a constant $\rho'\geq1/\xi'$ such that $\left(\sup_{0\leq s\leq t}F(\rho')^{n,l}_{s}\right)_{n\in\mathbb{N}}$ is tight for all $t>0$ and $l=1,2$.

\end{enumerate}

The following condition is a sufficient one for the condition [A1$'$]:
\begin{enumerate}
\item[{[A1$^{\prime\sharp}$]}]

(i) For every $\rho\in[0,1/\xi']$ there exists a c\`adl\`ag $\mathbf{F}^{(0)}$-adapted process $G(\rho)$ such that $G(\rho)^n\xrightarrow{\text{Sk.p.}}G(\rho)$ as $n\to\infty$. Furthermore, $G$ and $G_{-}$ do not vanish, where $G=G(1)$.

(ii) There exists a c\`adl\`ag $\mathbf{F}^{(0)}$-adapted process $\chi$ such that $\chi^{n}\xrightarrow{\text{Sk.p.}}\chi$ as $n\to\infty$.

(iii) For every $l=1,2$ and every $\rho'\in[0,1/\xi']$, there exists a c\`adl\`ag $\mathbf{F}^{(0)}$-adapted process $F(\rho)^l$ such that $F(\rho)^{n,l}\xrightarrow{\text{Sk.p.}}F(\rho)^l$ as $n\to\infty$.

(iv) There exists a c\`adl\`ag $\mathbf{F}^{(0)}$-adapted process $F(1)^{1*2}$ such that $F(1)^{n,1*2}\xrightarrow{\text{Sk.p.}}F(1)^{1*2}$ as $n\to\infty$.

\end{enumerate}

\begin{rmk}
An [A1$^{\prime\sharp}$] type condition appears in \cite{BNHLS2011} and \cite{HJY2011}, for example. The reason why we introduce a kind of exceptional sets $\mathcal{N}^l_n$ $(l=0,1,2,1*2,')$ is that the condition [A1$'$] without them is too local. To explain this, we focus on the univariate case. Note that in this case we have $R^k=S^k$ $(k=0,1,2,\dots)$. Let $\tau$ be a positive number and suppose that $(S^i)$ be a sequence of Poisson arrival times whose intensity is $\underline{\lambda}$ before the time $\tau$ and $\overline{\lambda}$ after $\tau$. Then the structure of the process $G(1)^n$ becomes very complex around the time $\tau$ (of course if $\underline{\lambda}\neq\overline{\lambda}$), so that it will be difficult to verify the convergence $G(1)^n\xrightarrow{\text{Sk.p.}}G$ because it requires a kind of uniformity. See Section 5.2 of \cite{Koike2012phy} for more precise discussion.
\end{rmk}

Next, we introduce the following strong predictability condition for the sampling designs, which is an analog to the condition [A2] in \cite{HY2011}.
\begin{enumerate}
\item[{[A2]}] For every $n,i\in\mathbb{N}$, $S^i$ and $T^i$ are $\mathbf{G}^{(n)}$-stopping times, where $\mathbf{G}^{(n)}=(\mathcal{G}^{(n)}_t)_{t\in\mathbb{R}_+}$ is the filtration given by $\mathcal{G}^{(n)}_t=\mathcal{F}_{(t-b_n^{\xi-1/2})_+}^{(0)}$ for $t\in\mathbb{R}_+$.
\end{enumerate}

The following conditions are analogs to the conditions [A3] and [A4] in \cite{HY2011}: 
\begin{enumerate}
\item[[{A3]}] For each $V,W=X^1,X^2,\underline{X}^1,\underline{X}^2$, $[V,W]$ is absolutely continuous with a c\`adl\`ag derivative, and for the density process $f=[V,W]'$ there is a sequence $(\sigma_k)$ of $\mathbf{F}^{(0)}$-stopping times such that $\sigma_k\uparrow\infty$ as $k\to\infty$ and for every $k$ and any $\lambda>0$ we have a positive constant $C_{k,\lambda}$ satisfying
\begin{equation}\label{eqA3}
E\left[|f^{\sigma_k}_{\tau_1}-f^{\sigma_k}_{\tau_2}|^2\big|\mathcal{F}_{\tau_1\wedge\tau_2}\right]\leq C_{k,\lambda}E\left[|\tau_1-\tau_2|^{1-\lambda}\big|\mathcal{F}_{\tau_1\wedge\tau_2}\right]
\end{equation}
for any bounded $\mathbf{F}^{(0)}$-stopping times $\tau_1$ and $\tau_2$, and $f$ is adapted to $\mathbf{H}^n$.
\item[{[A4]}] $\xi\vee\frac{9}{10}<\xi'$ and $(\ref{A4})$ holds for every $t\in\mathbb{R}_+$.
\end{enumerate}

The following conditions, which are analogs to the conditions [A5] and [A6] in \cite{HY2011}, are necessary to deal with the drift parts. For a (random) interval $I$ and a time $t\in\mathbb{R}_+$, we write $I(t)=I\cap[0,t)$.
\begin{enumerate}
\item[[{A5]}] $A^{1}$, $A^{2}$, $\underline{A}^1$ and $\underline{A}^2$ are absolutely continuous with c\`adl\`ag derivatives, and there is a sequence $(\sigma_k)$ of $\mathbf{F}^{(0)}$-stopping times such that $\sigma_k\uparrow\infty$ as $k\to\infty$ and for every $k$ we have a positive constant $C_{k}$ and $\lambda_k\in(0,3/4)$ satisfying
\begin{equation}\label{eqA5}
E\left[|f^{\sigma_k}_t-f^{\sigma_k}_{\tau}|^2\big|\mathcal{F}_{\tau\wedge t}\right]\leq C_{k}E\left[|t-\tau|^{1-\lambda_k}\big|\mathcal{F}_{\tau\wedge t}\right]
\end{equation}
for every $t$ and any bounded $\mathbf{F}^{(0)}$-stopping time $\tau$, for the density processes $f=(A^1)'$, $(A^{2})'$, $(\underline{A}^1)'$ and $(\underline{A}^2)'$.
\item[{[A6]}] For each $t\in\mathbb{R}_+$, $b_n^{-1}H_n(t)=O_p(1)$ as $n\to\infty$, where $H_n(t)=\sum_{k=1}^{\infty}|\Gamma^k(t)|^2$.
\end{enumerate}

Let $r\in[2,\infty)$. The following condition is a regularity condition for the noise process:
\begin{enumerate}
\item[{[N$_r$]}] $(\int |z|^rQ_t(\mathrm{d}z))_{t\in\mathbb{R}_+}$ is a locally bounded process, and the covariance matrix process
\begin{equation}\label{defPsi}
\Psi_t(\omega^{(0)})=\int zz^*Q_t(\omega^{(0)},\mathrm{d}z).
\end{equation}
is c\`adl\`ag and quasi-left continuous. Furthermore, there is a sequence $(\sigma^k)$ of $\mathbf{F}^{(0)}$-stopping times such that $\sigma^k\uparrow\infty$ as $k\to\infty$ and for every $k$ and any $\lambda>0$ we have a positive constant $C_{k,\lambda}$ satisfying
\begin{equation}\label{eqN}
E\left[|\Psi^{ij}_{\sigma^k\wedge t}-\Psi^{ij}_{\sigma^k\wedge (t-h)_+}|^2\big|\mathcal{F}_{(t-h)_+}\right]\leq C_{k,\lambda} h^{1-\lambda}
\end{equation}
for every $i,j\in\{1,2\}$ and every $t,h>0$.
\end{enumerate}

\begin{rmk}
The inequalities $(\ref{eqA3})$, $(\ref{eqA5})$ and $(\ref{eqN})$ are satisfied when $w(f;h,t)=O_p(h^{\frac{1}{2}-\lambda})$ as $h\to\infty$ for every $t,\lambda\in(0,\infty)$, for example. Here, for a real-valued function $x$ on $\mathbb{R}_+$, the \textit{modulus of continuity} on $[0,T]$ is denoted by $w(x;\delta,T)=\sup\{|x(t)-x(s)|;s,t\in[0,T],|s-t|\leq\delta\}$ for $T,\delta>0$. This is the original condition in \cite{HY2011}. Another such example is the case that there exist an $\mathbf{F}^{(0)}$-adapted process $B$ with a locally integrable variation and a locally square-integrable martingale $L$ such that $f=B+L$ and both of the predictable compensator of the variation process of $B$ and the predictable quadratic variation of $L$ are absolutely continuous with locally bounded derivatives. This type of condition is familiar in the context of the estimation of volatility-type quantities; see \cite{HJY2011} and \cite{JPV2010} for example. Furthermore, in both of the cases $f$ is c\`adl\`ag and quasi-left continuous.
\end{rmk}

We extend the functions $g$ and $g'$ to the whole real line by setting $g(x)=g'(x)=0$ for $x\notin[0,1]$. Then we put
\begin{gather*}
\kappa:=\int_{-2}^{2}\psi_{g,g}(x)^2\mathrm{d}x,\qquad
\widetilde{\kappa}:=\int_{-2}^{2}\psi_{g',g'}(x)^2\mathrm{d}x,\qquad
\overline{\kappa}:=\int_{-2}^{2}\psi_{g,g'}(x)^2\mathrm{d}.
\end{gather*}
Here, for each $f_1,f_2\in\{g,g'\}$ we define the function $\psi_{f_1,f_2}$ on $\mathbb{R}$ by $\psi_{f_1,f_2}(x)=\int_0^1\int_{x+u-1}^{x+u+1}f_1(u)f_2(v)\mathrm{d}v\mathrm{d}u$.

We denote by $\mathbb{D}(\mathbb{R}_+)$ the space of c\`adl\`ag functions on $\mathbb{R}_+$ equipped with the Skorokhod topology. A sequence of random elements $X^n$ defined on a probability space $(\Omega,\mathcal{F},P)$ is said to \textit{converge stably in law} to a random element $X$ defined on an appropriate extension $(\tilde{\Omega},\tilde{\mathcal{F}} ,\tilde{P})$ of $(\Omega,\mathcal{F},P)$ if $E[Yg(X^n)]\rightarrow E[Yg(X)]$ for any $\mathcal{F}$-measurable and bounded random variable $Y$ and any bounded and continuous function $g$. We then write $X^n\rightarrow^{d_s}X$. 

Now we are ready to state the result related to the asymptotic mixed normality of the pre-averaged HY estimator.
\begin{theorem}[\protect\cite{Koike2012phy}, Theorem 3.1]\label{AMN}

$(\mathrm{a})$ Suppose $[\mathrm{A}1'](\mathrm{i})$-$(\mathrm{iii})$, $[\mathrm{A}2]$-$[\mathrm{A}6]$ and $[\mathrm{N}_8]$ are satisfied. Suppose also $\underline{X}^1=\underline{X}^2=0$. Then
\begin{align*}
b_n^{-1/4}\{\widehat{PHY}(\mathsf{X}^1,\mathsf{X}^2)^n-[X^1,X^2]\}\to^{d_s}\int_0^\cdot w_s\mathrm{d}\widetilde{W}_s\qquad\mathrm{in}\ \mathbb{D}(\mathbb{R}_+)
\end{align*}
as $n\to\infty$, where $\tilde{W}$ is a one-dimensional standard Wiener process (defined on an extension of $\mathcal{B}$) independent of $\mathcal{F}$ and $w$ is given by
\begin{align}
w_s^2=\psi_{HY}^{-4}[&\theta\kappa\{[X^1]'_s[X^2]'_s+([X^1,X^2]'_s)^2\}G_s
+\theta^{-3}\widetilde{\kappa}\{\Psi^{11}_s\Psi^{22}_s+\left(\Psi^{12}_s\chi_s\right)^2\}G_s^{-1}\nonumber\\
&+\theta^{-1}\overline{\kappa}\{[X^1]'_s\Psi^{22}_s+[X^2]'_s\Psi^{11}_s+2[X^1,X^2]'_s\Psi^{12}_s\chi_s\}].\label{avar}
\end{align}

\noindent $(\mathrm{b})$ Suppose $[\mathrm{A}1']$, $[\mathrm{A}2]$-$[\mathrm{A}6]$ and $[\mathrm{N}_8]$ are satisfied. Then
\begin{align*}
b_n^{-1/4}\{\widehat{PHY}(\mathsf{X}^1,\mathsf{X}^2)^n-[X^1,X^2]\}\to^{d_s}\int_0^\cdot w_s\mathrm{d}\widetilde{W}_s\qquad\mathrm{in}\ \mathbb{D}(\mathbb{R}_+)
\end{align*}
as $n\to\infty$, where $\tilde{W}$ is as in the above and $w$ is given by
\begin{align}
w_s^2=\psi_{HY}^{-4}\bigg[&\theta\kappa\left\{[X^1]'_s[X^2]'_s+([X^1,X^2]'_s)^2\right\}G_s
+\theta^{-3}\widetilde{\kappa}\left\{\overline{\Psi}^{11}_s\overline{\Psi}^{22}_s+\left(\overline{\Psi}^{12}_s\right)^2\right\}G_s^{-1}\nonumber\\
&+\theta^{-1}\overline{\kappa}\left\{[X^1]'_s\overline{\Psi}^{22}_s+[X^2]'_s\overline{\Psi}^{11}_s+2[X^1,X^2]'_s\overline{\Psi}^{12}_s-\left([\underline{X}^1,X^2]'_s F^1_s-[X^1,\underline{X}^2]'_s F^2_s\right)^2 G_s^{-1}\right\}\Bigg],\label{avarend}
\end{align}
where
$\overline{\Psi}^{ll}_s=\Psi^{ll}_s+[\underline{X}^l]'_s F^l_s$ $(l=1,2)$ and
$\overline{\Psi}^{12}_s=\Psi^{12}_s\chi_s+[\underline{X}^1,\underline{X}^2]'_s F^{1* 2}_s.$
\end{theorem}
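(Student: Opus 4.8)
The plan is to localize, decompose the estimation error according to the structure $\mathsf X^l=X^l+U^l$, reorganize the defining double sum into a martingale difference array indexed by the refresh times, apply a stable central limit theorem for such arrays, and finally identify the asymptotic variance by a careful count of the overlaps of the pre-averaging windows. By the usual localization procedure --- stopping at a sequence of $\mathbf F^{(0)}$-stopping times increasing to infinity and using that the processes appearing in \eqref{avarend} are continuous in $t$ and the convergence is stable --- it suffices to treat the case in which $X^l,\underline X^l$, all the relevant characteristics, the localizing bounds in [A3], [A5], [N$_8$], and $G^n,\chi^n,F^{n,l}$ together with their limits are uniformly bounded, $G$ and $G_-$ are bounded away from $0$, and $b_nN^n_t$, $b_n^{-1}H_n(t)$, $\sup_{s\le t}G(\rho)^n_s$, $\sup_{s\le t}F(\rho')^{n,l}_s$ are $O(1)$.

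First I would expand each pre-averaged increment $\overline{\mathsf X}^l(\widehat{\mathcal I})^i$, using \eqref{MSnoise}, into a continuous-martingale part (from $M^l$), a drift part (from $A^l$), an endogenous-noise part (from $\underline M^l$ and $\underline A^l$, carrying the $b_n^{-1/2}$ factor) and an exogenous-noise part (from $\zeta^l$). Substituting into $\widehat{PHY}$, multiplying by $b_n^{-1/4}$ and subtracting $[X^1,X^2]$ --- the centering being correct by Theorem \ref{consistency} --- only three products survive asymptotically: martingale$\times$martingale (the leading term after centering), noise$\times$noise and martingale$\times$noise. Every product containing a drift factor is $o_p(1)$ uniformly on compacts by \eqref{eqA5}, [A4] and the Cauchy--Schwarz inequality, exactly as the drift terms are handled in \cite{HY2011}.

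Next I would reorganize. The overlap indicator $1_{\{[S^i,S^{i+k_n})\cap[T^j,T^{j+k_n})\neq\emptyset\}}$ together with the next-tick interpolation defining $\widehat{\mathcal I},\widehat{\mathcal J}$ lets one rewrite each surviving term as $\sum_{k:R^k\le t}\zeta^n_k$ with $\zeta^n_k$ measurable with respect to $\mathcal F_{R^k}$; strong predictability [A2] permits conditioning on $\mathcal F_{R^{k-1}}$ enlarged by the sampling times, and then $\zeta^n_k$ is, up to a controllable error, a centered quadratic form in the Gaussian increments of $M^1,M^2$ over $\Gamma^k$ and in the pre-averaged noise. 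Since $|\Gamma^k|=O_p(b_n)$ and the relevant windows have length $O_p(b_n)$, the densities $[X^l]'$, $[X^1,X^2]'$, $[\underline X^l]'$, $\Psi^{ij}$ may be frozen at their values at $R^{k-1}$, the replacement error being absorbed via \eqref{eqA3}, \eqref{eqN}, [A6] and the tightness in [A1$'$](ii),(v). Then $\sum_k\zeta^n_k$ is, modulo $o_p(1)$, a square-integrable martingale difference array for $(\mathcal F_{R^k})_k$, and I would invoke Jacod's stable limit theorem for triangular arrays (as employed in \cite{HY2011}): one checks (i) $\sum_k E[(\zeta^n_k)^2\mid\mathcal F_{R^{k-1}}]\to^p\int_0^t w_s^2\,\mathrm ds$; (ii) $\sum_k E[(\zeta^n_k)^4\mid\mathcal F_{R^{k-1}}]\to^p0$, for which the eighth-moment bound [N$_8$] is precisely what controls the fourth moments of the pre-averaged noise; and (iii) $\sum_k E[\zeta^n_k(N_{R^k}-N_{R^{k-1}})\mid\mathcal F_{R^{k-1}}]\to^p0$ for $N$ any component of $(M^1,M^2)$ or any bounded $\mathbf F^{(0)}$-martingale orthogonal to them --- the former yielding asymptotic orthogonality (a mean-zero quadratic form decorrelates from a single increment to leading order), the latter yielding the $\mathcal F$-conditional Gaussianity and the independence of $\widetilde W$ from $\mathcal F$. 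Condition (i) is where the sampling-design hypotheses enter: after discarding the tight exceptional sets $\mathcal N^l_n$, the block sums of $(b_n^{-1}|\Gamma^k|)^\rho$, of $1_{\{\widehat S^k=\widehat T^k\}}$ and of the intersection lengths $|\check I^k\cap\check J^k|$, $|\check I^{k+1}\cap\check J^k|$, $|\check I^k\cap\check J^{k+1}|$ converge, by Riemann-sum / Skorokhod arguments, to $\int_0^\cdot G(\rho)_s\,\mathrm ds$, $\int_0^\cdot\chi_sG_s\,\mathrm ds$ and $\int_0^\cdot F^l_sG_s\,\mathrm ds$, which is exactly what [A1$'$] packages.

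The main obstacle is the explicit evaluation of the limit in (i) and its matching with \eqref{avar}--\eqref{avarend}. This means computing, block by block, the conditional variance of the quadratic form, which splits into a signal, a noise and a cross contribution; extracting the constants $\theta\kappa$, $\theta^{-3}\widetilde\kappa$, $\theta^{-1}\overline\kappa$ as the continuous limits of the discrete convolutions $\frac1{k_n}\sum_{p,p'}g(\tfrac p{k_n})g(\tfrac{p'}{k_n})$ over overlapping windows, of the corresponding sums with $g'$ (arising from the telescoping of the pre-averaged noise and the $b_n^{-1/2}$ scaling of the endogenous part), and of the mixed sums --- whence the kernels $\psi_{g,g},\psi_{g',g'},\psi_{g,g'}$; and accounting for which blocks and which neighbouring blocks overlap after synchronization. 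The nonsynchronicity produces the factor $\chi_s$ multiplying $\Psi^{12}$ (the conditional probability that the two refresh ticks coincide), and in part (b) the endogenous noise produces the terms $[\underline X^l]'F^l$ and $[\underline X^1,\underline X^2]'F^{1*2}$ absorbed into $\overline\Psi$, together with the genuinely new term $-([\underline X^1,X^2]'_sF^1_s-[X^1,\underline X^2]'_sF^2_s)^2G_s^{-1}$, which is (minus) the square of the covariance between the pre-averaged endogenous noise and the pre-averaged signal and is hence necessarily nonpositive. Part (a) is the specialization $\underline X^1=\underline X^2=0$, in which the $F$-processes drop out and only [A1$'$](i)--(iii) are used, so it is obtained along the way. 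A secondary difficulty, consuming [N$_8$], [A6] and the tightness assertions of [A1$'$](ii),(v), is the uniform-in-$n$ moment control needed to pass from the exact block contributions to the frozen-coefficient approximation of Step three.
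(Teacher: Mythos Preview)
The paper does not contain a proof of this theorem. Theorem~\ref{AMN} is stated in Section~\ref{review} as a review result, explicitly attributed to \cite{Koike2012phy}, Theorem~3.1, and no argument for it appears anywhere in the present paper (the appendices prove Theorems~\ref{thmFAnoise} and~\ref{thmIAnoise}, Proposition~\ref{propphy} and Proposition~\ref{depcon}, but not Theorem~\ref{AMN}). Consequently there is no ``paper's own proof'' against which to compare your proposal.

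That said, your outline is broadly in line with the standard machinery for stable central limit theorems for pre-averaged Hayashi--Yoshida type statistics: localization, decomposition of the pre-averaged increments into martingale/drift/endogenous-noise/exogenous-noise parts, reorganization along refresh times into a martingale difference array, and verification of the conditions of a Jacod-type stable CLT (conditional variance convergence, Lindeberg/Lyapunov via the eighth-moment bound $[\mathrm N_8]$, and orthogonality to bounded $\mathbf F^{(0)}$-martingales). Your identification of where the structural assumptions enter --- $[\mathrm A2]$ for predictability of the sampling times, $[\mathrm A1']$ for the Riemann-sum limits producing $G$, $\chi$, $F^l$, $F^{1*2}$, and $[\mathrm A3]$, $[\mathrm A5]$, $[\mathrm N_8]$, $[\mathrm A6]$ for the freezing and moment bounds --- is consistent with how the cited source and the related papers \cite{HY2011}, \cite{CPV2011} proceed. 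The explanation of why the constants $\kappa,\widetilde\kappa,\overline\kappa$ arise as limits of discrete convolutions of $g$ and $g'$, and of why $\chi_s$ multiplies $\Psi^{12}$, is also correct in spirit.

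What your sketch does not supply, and what constitutes the real work in \cite{Koike2012phy}, is the actual bookkeeping: the precise reorganization of the double sum into blocks indexed by refresh times (this is nontrivial because the pre-averaging windows overlap across $k_n$ consecutive refresh intervals, so the ``$\zeta^n_k$'' you describe are not literally $\mathcal F_{R^k}$-measurable without further grouping), the control of the boundary terms created by this blocking, and the exact combinatorics that turn the overlap counts into $\psi_{g,g}$, $\psi_{g',g'}$, $\psi_{g,g'}$. Your proposal acknowledges this as ``the main obstacle'' but does not carry it out; as a proof plan it is reasonable, as a proof it is incomplete. If you intend to reproduce the argument rather than cite it, you will need to spell out that blocking and the associated error estimates.
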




\section{Main results}\label{main}

In this section we investigate the case that the latent processes possibly have jumps. Let $Z^{1}=(Z^{1}_t)_{t\in\mathbb{R}_+}$ and $Z^{2}=(Z^{2}_t)_{t\in\mathbb{R}_+}$ be two stochastic processes on $(\Omega^{(0)},\mathcal{F}^{(0)},P^{(0)})$. We have the observation data $\mathsf{Z}^1=(\mathsf{Z}^1_{S^i})_{i\in\mathbb{Z}_+}$ and $\mathsf{Z}^2=(\mathsf{Z}^2_{T^j})_{j\in\mathbb{Z}_+}$ of $Z^1$ and $Z^2$ contaminated by noise:
\begin{equation*}
\mathsf{Z}^1_{S^i}=Z^1_{S^i}+U^1_{S^i},\qquad\mathsf{Z}^2_{T^j}=Z^2_{T^j}+U^2_{T^j}.
\end{equation*}
Here, the observation noise $(U^1_{S^i})_{i\in\mathbb{Z}_+}$ and $(U^2_{T^j})_{j\in\mathbb{Z}_+}$ are given by $(\ref{MSnoise})$.

The idea for the construction of our estimator as follows. The pre-averaging procedure smooths the noise and thus we can expect the pre-averaged data $\bar{\mathsf{Z}}^{1}(\widehat{\mathcal{I}})^i$ and $\bar{\mathsf{Z}}^{2}(\widehat{\mathcal{J}})^j$ are small enough if they contain no jumps. This idea has already appeared in \citet{AJL2011} and \citet{PZ2010} in the univariate case and \citet{JLL2011} in the synchronous case. Following this idea, we introduce the following quantity:
\begin{Def}[Pre-averaged truncated Hayashi-Yoshida estimator]
The \textit{pre-averaged truncated Hayashi-Yoshida estimator}, or \textit{PTHY estimator} of two observation data $\mathsf{Z}^{1}$ and $\mathsf{Z}^{2}$ is the process
\begin{align*}
&\widehat{PTHY}(\mathsf{Z}^{1},\mathsf{Z}^{2})^n_t\\
=&\frac{1}{(\psi_{HY}k_n)^2}\sum_{i,j: \widehat{S}^{i+k_n}\vee \widehat{T}^{j+k_n}\leq t}\bar{\mathsf{Z}}^{1}(\widehat{\mathcal{I}})^i\bar{\mathsf{Z}}^{2}(\widehat{\mathcal{J}})^j \bar{K}^{i j} 1_{\{\bar{\mathsf{Z}}^{1}(\widehat{\mathcal{I}})^i|^2\leq\varrho_n^{1}(\widehat{S}^i),\bar{\mathsf{Z}}^{2}(\widehat{\mathcal{J}})^j|^2\leq\varrho_n^{2}(\widehat{T}^j)\}},\qquad t\in\mathbb{R}_+,
\end{align*}
where
\begin{align*}
\overline{\mathsf{Z}}^1(\widehat{\mathcal{I}})^i=\sum_{p=1}^{k_n-1}g\left (\frac{p}{k_n}\right)\left(\mathsf{Z}^1_{\widehat{S}^{i+p}}-\mathsf{Z}^1_{\widehat{S}^{i+p-1}}\right),\qquad
\overline{\mathsf{Z}}^2(\widehat{\mathcal{J}})^j=\sum_{q=1}^{k_n-1}g\left (\frac{q}{k_n}\right)\left(\mathsf{Z}^2_{\widehat{T}^{j+q}}-\mathsf{Z}^2_{\widehat{T}^{j+q-1}}\right),\\
i,j=0,1,\dots,
\end{align*}
$\bar{K}^{ij}=1_{\{[\widehat{S}^i,\widehat{S}^{i+k_n})\cap[\widehat{T}^j,\widehat{T}^{j+k_n})\neq\emptyset\}}$ and $(\varrho_n^{l}(t))_{n\in\mathbb{N}}$, $l=1,2$, are two sequences of positive-valued stochastic processes.
\end{Def}

We will write $\varrho_n^{1}[i]:=\varrho_n^{1}(\widehat{S}^i)$ and $\varrho_n^{2}[j]:=\varrho_n^{2}(\widehat{T}^j)$ for short. The above statistic was originally considered in \cite{Koike2012} without the refresh sampling modification. Recently \citet{WLL2013} also introduced such a statistic. In the present article we include this modification in order to obtain the central limit theorem for the estimator with a broad class of sampling schemes by using Theorem \ref{AMN}.   


\subsection{Finite activity jump case}

First we consider the case that the observed processes have at most finite jumps. We assume the following structural assumption:
\begin{enumerate}
\item[{[F]}] For each $l=1,2$ we have
$Z^{l}_t=X^{l}_t+\sum_{k=1}^{N^{l}_t}\gamma^{l}_k,$
where $X^l$ is a continuous semimartingale on $\mathcal{B}^{(0)}$ given by $(\ref{CSM})$, $N^{l}$ is a (simple) point process adopted to $\mathbf{F}^{(0)}$, and $(\gamma_k^{l})_{k\in\mathbb{N}}$ is a sequence of non-zero random variables.
\end{enumerate}
Moreover, we impose the following condition on the threshold processes:
\begin{enumerate}
\item[{[$\mathrm{T}$]}] $\xi'>1/2$, and for each $l=1,2$ we have $\varrho^{l}_n(t)=\alpha_n^{l}(t)\rho_n$, where
\begin{enumerate}[(i)]
\item $(\rho_n)_{n\in\mathbb{N}}$ is a sequence of (deterministic) positive numbers satisfying $\rho_n\rightarrow 0$ and
\begin{equation}\label{threshold2}
\frac{b_n^{\xi'-1/2}|\log b_n|}{\rho_n}\rightarrow 0 
\end{equation}
as $n\rightarrow\infty$.
\item $(\alpha_n^{l}(t))_{n\in\mathbb{N}}$ is a sequence of (not necessarily adapted) positive-valued stochastic processes. Moreover, there exists a sequence $(R_k^{l})$ of stopping times (with respect to $\mathbf{F}$) such that $R^{l}_k\uparrow\infty$ and both of the sequences  
$\left(\sup_{0\leq t< R_k^{l}}\alpha_n^{l}(t)\right)_{n\in\mathbb{N}}$
and
$\left(\sup_{0\leq t< R_k^{l}}[1/\alpha_n^{l}(t)]\right)_{n\in\mathbb{N}}$
are tight for all $k$. 
\end{enumerate}
\end{enumerate}
Then we obtain the following theorem.


\begin{theorem}\label{thmFAnoise}
Suppose $[\mathrm{F}]$, $[\mathrm{T}]$, $[\mathrm{C}1]$-$[\mathrm{C}2]$ and $[\mathrm{N}^\flat_r]$ hold for some $r\in(2,\infty)$. Then we have
\begin{equation*}
\sup_{0\leq s \leq t}|\widehat{PTHY}(\mathsf{Z}^{1},\mathsf{Z}^{2})^n_s-\widehat{PHY}(\mathsf{X}^{1},\mathsf{X}^{2})^n_s|
=o_p(b_n^{1/4})+O_p\left(\left(b_n^{1/2}\rho_n^{-1}\right)^{\frac{r-2}{2}}\right)
\end{equation*}
as $n\rightarrow\infty$ for any $t>0$.
\end{theorem}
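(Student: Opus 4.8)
The plan is to show that, on a localization where the finitely many jump times and jump sizes of $Z^1,Z^2$ are controlled, the difference between $\widehat{PTHY}(\mathsf{Z}^1,\mathsf{Z}^2)^n$ and $\widehat{PHY}(\mathsf{X}^1,\mathsf{X}^2)^n$ decomposes into two pieces: the pre-averaged increments that should be truncated away but survive, and those that should survive but get truncated. By a standard localization argument (using the sequences of stopping times in $[\mathrm{C}2]$, $[\mathrm{N}^\flat_r]$ and $[\mathrm{T}]$(ii)), I may assume $A^l,\underline A^l$ have bounded derivatives, the noise has bounded $r$-th moment, $\alpha^l_n$ and $1/\alpha^l_n$ are uniformly bounded, and there are at most finitely many jumps in any compact interval, with jump sizes bounded below and above.

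First I would split each pre-averaged datum $\bar{\mathsf Z}^l(\widehat{\mathcal I})^i = \bar{\mathsf X}^l(\widehat{\mathcal I})^i + \bar\Delta^l(\widehat{\mathcal I})^i$, where $\bar\Delta$ collects the contribution of the jump part $\sum_{k\le N^l_\cdot}\gamma^l_k$. Because of $(\ref{A4})$ with $\xi'>1/2$ and $k_n\sim\theta b_n^{-1/2}$, each fixed jump time falls in the support of $O(k_n)$ consecutive pre-averaging windows, so the index set of "contaminated" $i$ (resp. $j$) has cardinality $O_p(k_n)$. On the non-contaminated indices, $\bar{\mathsf Z}=\bar{\mathsf X}$, so the only discrepancy is the truncation indicator: I need $|\bar{\mathsf X}^1(\widehat{\mathcal I})^i|^2\le\varrho^1_n[i]$ and $|\bar{\mathsf X}^2(\widehat{\mathcal J})^j|^2\le\varrho^2_n[j]$ to hold for all relevant indices with probability tending to one. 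This is the standard "no false alarm" estimate: $\bar{\mathsf X}^l(\widehat{\mathcal I})^i$ is a sum of $k_n$ martingale-type increments of size $O_p(b_n^{1/4})$ (continuous-martingale part) plus a noise part also $O_p(b_n^{1/4})$ plus a drift part $O_p(b_n^{1/2})$; a Bernstein/exponential inequality together with $(\ref{threshold2})$, which forces $\rho_n$ to dominate $b_n^{\xi'-1/2}|\log b_n|\gg b_n^{1/2}|\log b_n|$, gives $P(|\bar{\mathsf X}^l(\widehat{\mathcal I})^i|^2>\varrho^l_n[i]\text{ for some }i\le N^{n,l}_t) \to 0$, using $[\mathrm{C}1]$ to bound the number of windows by $O_p(b_n^{-1})$ and a union bound. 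Hence with probability tending to one the two estimators agree on all non-contaminated index pairs, and the whole difference reduces to the $O_p(k_n)$ contaminated pairs.

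On a contaminated pair $(i,j)$ there are two cases. If the relevant jump is truncated away in $\widehat{PHY}$'s counterpart — i.e. the indicator in $\widehat{PTHY}$ is zero — the term contributes $(\psi_{HY}k_n)^{-2}\bar{\mathsf X}^1(\widehat{\mathcal I})^i\bar{\mathsf X}^2(\widehat{\mathcal J})^j$ to the difference, which is of size $(\psi_{HY}k_n)^{-2}\cdot k_n\cdot O_p(b_n^{1/4})^2=O_p(b_n^{1/4}k_n^{-1})$ summed over $O_p(k_n)$ pairs, giving $O_p(b_n^{1/4})$; one must sharpen this to $o_p(b_n^{1/4})$, which follows because for fixed finitely many jumps the contaminated $i$ with $\bar{\mathsf X}^1(\widehat{\mathcal I})^i$ \emph{not} negligible are themselves only $o(k_n)$ many in the relevant sense, or more simply because the continuous part over a shrinking time window contributes $o_p(b_n^{1/4})$ by the modulus-of-continuity estimate. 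If instead the jump is \emph{not} truncated — the indicator is one — then the term appears in $\widehat{PTHY}$ but not (in that form) in $\widehat{PHY}$, and it carries the full jump contribution; I must show the indicator is in fact $0$ with probability tending to one, i.e. a genuine jump forces $|\bar{\mathsf Z}^l|^2>\varrho^l_n$. This is the "detection" estimate: when a window straddles a jump time, $|\bar{\mathsf Z}^l(\widehat{\mathcal I})^i|\ge c|g(p/k_n)||\gamma^l_k| - (\text{continuous + noise remainder})$, which is $\gtrsim 1$ on an $O(k_n)$-block of indices except for the $O(1)$ indices where $g(p/k_n)$ is too close to the endpoints; since $\varrho^l_n=\alpha^l_n\rho_n\to0$, the jump dominates. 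The residual indices near the window endpoint where $g$ is small are $O(1)$ in number and carry a term that is again $O_p(b_n^{1/4}k_n^{-1})\cdot O(1)=o_p(b_n^{1/4})$.

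The main obstacle, and where the $O_p((b_n^{1/2}\rho_n^{-1})^{(r-2)/2})$ term enters, is controlling the false-alarm probability when the noise has only $r$-th moments rather than exponential moments: the exponential inequality above is not available in full strength, so instead I would use an $L^r$ (Markov) bound, $P(|\bar{\mathsf X}^l(\widehat{\mathcal I})^i|^2>\varrho^l_n[i]) \lesssim E[|\bar{\mathsf X}^l(\widehat{\mathcal I})^i|^r]/\varrho^l_n[i]^{r/2}\lesssim (b_n^{1/4})^r/\rho_n^{r/2}=(b_n^{1/2}/\rho_n)^{r/2}$, and sum over the $O_p(b_n^{-1})$ windows to get $O_p(b_n^{-1}(b_n^{1/2}/\rho_n)^{r/2})$; this is not automatically small, so the falsely-kept continuous increments contribute in expectation a term of order $b_n^{-1}(b_n^{1/2}/\rho_n)^{r/2}\cdot(\psi_{HY}k_n)^{-2}\cdot k_n\cdot b_n^{1/2} = O_p((b_n^{1/2}\rho_n^{-1})^{(r-2)/2})$ after bookkeeping, which is exactly the stated error term (it is $o(b_n^{1/4})$ precisely when $\rho_n$ is not too close to the boundary in $(\ref{threshold2})$, but the theorem is stated so as not to require that). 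So the delicate accounting is: (a) the localization, (b) the $L^r$ false-alarm bound and its propagation through the doubly-indexed sum with the indicator, (c) the detection lower bound near and away from window endpoints, and (d) collecting the genuinely jump-carrying terms — which must cancel between the two estimators up to $o_p(b_n^{1/4})$ — using the Hayashi-Yoshida overlap structure $\bar K^{ij}$ and $(\ref{A4})$. I would organize the write-up around these four estimates, with (b) done first since it is both the source of the explicit error term and the most technical.
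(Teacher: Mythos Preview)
Your overall plan---localize, split into jump-contaminated and clean windows, and show (i) undetected jumps contribute $o_p(b_n^{1/4})$ while (ii) false truncations of clean windows cost $O_p((b_n^{1/2}\rho_n^{-1})^{(r-2)/2})$---is the paper's strategy. But your execution of (ii) has a genuine gap, and the displayed bookkeeping does not produce the stated exponent.

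First, the arithmetic: your product $b_n^{-1}(b_n^{1/2}/\rho_n)^{r/2}\cdot(\psi_{HY}k_n)^{-2}\cdot k_n\cdot b_n^{1/2}$ equals $(b_n^{1/2}\rho_n^{-1})^{r/2}$, not $(b_n^{1/2}\rho_n^{-1})^{(r-2)/2}$. More seriously, the heuristic ``number of false alarms times size per alarm'' is circular, because on the false-alarm event $\{|\bar{\mathsf X}^l|^2>\varrho^l_n[i]\}$ one has $|\bar{\mathsf X}^l|>\sqrt{\rho_n}\gg b_n^{1/4}$, so you cannot use $b_n^{1/2}$ as the size of $\bar{\mathsf X}^1\bar{\mathsf X}^2$. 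And the underlying moment claim $E[|\bar{\mathsf X}^l(\widehat{\mathcal I})^i|^r]\lesssim b_n^{r/4}$ is not justified: the continuous-semimartingale piece $\bar X^l(\widehat{\mathcal I})^i$ has quadratic variation of order $|\bar I^i|\leq k_n\bar r_n=b_n^{\xi'-1/2}$, so BDG only gives $(k_n\bar r_n)^{r/2}$, which is \emph{larger} than $b_n^{r/4}$ since $\xi'<1$.

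The paper closes this with a device you do not invoke: L\'evy's modulus of continuity (Lemma~\ref{modulus}), which after a further localization yields the \emph{almost-sure uniform} bound $|\bar X^l(\widehat{\mathcal I})^i|+|\widetilde{\underline X^l}(\widehat{\mathcal I})^i|\leq K\sqrt{k_n\bar r_n|\log b_n|}$ for all $i$ simultaneously (equation~$(\ref{absmod2})$). Since $(\ref{threshold2})$ forces $k_n\bar r_n|\log b_n|/\rho_n\to0$, the continuous and endogenous-noise parts are deterministically $o(\sqrt{\rho_n})$; hence on $\{|\bar{\mathsf X}^l|^2>\varrho^l_n[i]\}$ the excess must come from the exogenous noise, i.e.\ $|\overline\zeta^l|\geq c\sqrt{\rho_n}$. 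Now Markov applied to $\overline\zeta^l$ \emph{alone}, whose $r$-th moment is cleanly $\lesssim k_n^{-r/2}$ (Lemma~\ref{moment}), gives $|\bar{\mathsf X}^l|^2 1_{\{|\bar{\mathsf X}^l|^2>\varrho^l_n\}}\lesssim \rho_n^{-(r-2)/2}|\overline\zeta^l|^r$ up to a term annihilated by $(\ref{threshold2})$; summing over $i$ via $[\mathrm C1]$ and applying Schwarz over $(i,j)$ produces exactly $(b_n^{1/2}\rho_n^{-1})^{(r-2)/2}$. The same modulus bound drives the detection step: if a bounded-below jump is present but $|\bar{\mathsf Z}^l|^2\leq\varrho^l_n$, then $|\overline\zeta^l|$ must exceed a \emph{fixed} positive constant, and Lemma~\ref{lemFAnoise} kills those $O(k_n)$ indices.

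So your (a), (c), (d) are right in spirit; to make (b) work, insert the L\'evy-modulus step to strip off the Brownian and endogenous pieces \emph{before} applying the $L^r$ Markov bound to the residual noise. Also drop the earlier assertion that ``with probability tending to one the two estimators agree on all non-contaminated index pairs''---you correctly retract it later, but it is precisely this failure that generates the $O_p$ term.
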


Proof of this theorem is given in Appendix \ref{proofthmFAnoise}. Combining this result with Theorem \ref{consistency} or Theorem \ref{AMN}, we obtain the following results:

\begin{theorem}[Consistency of the PTHY estimator in finite activity case]
Suppose $[\mathrm{F}]$, $[\mathrm{T}]$, $[\mathrm{C}1]$-$[\mathrm{C}2]$ and $[\mathrm{N}^\flat_r]$ hold for some $r\in(2,\infty)$. Then we have
\begin{equation}\label{pthycon}
\widehat{PTHY}(\mathsf{Z}^{1}, \mathsf{Z}^{2})^n\xrightarrow{ucp} [X^{1}, X^{2}]
\end{equation}
as $n\rightarrow\infty$.
\end{theorem}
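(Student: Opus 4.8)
The plan is to derive this as an immediate consequence of Theorems~\ref{consistency} and~\ref{thmFAnoise}, combined by the triangle inequality.

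First, I would check that Theorem~\ref{consistency} applies to $\widehat{PHY}(\mathsf{X}^{1},\mathsf{X}^{2})^n$. Its hypotheses are $(\ref{A4})$, $[\mathrm{C}1]$--$[\mathrm{C}2]$, $[\mathrm{N}^\flat_2]$ and $\xi'>1/2$. The condition $(\ref{A4})$ is a standing assumption, $\xi'>1/2$ is part of $[\mathrm{T}]$, and $[\mathrm{C}1]$--$[\mathrm{C}2]$ are assumed; finally $[\mathrm{N}^\flat_2]$ follows from $[\mathrm{N}^\flat_r]$ with $r>2$, since each $Q_t(\omega^{(0)},\cdot)$ is a probability measure, so Jensen's inequality gives $\int|z|^2Q_t(\mathrm{d}z)\le\big(\int|z|^rQ_t(\mathrm{d}z)\big)^{2/r}$, whence local boundedness of the $r$-th moment process entails that of the second moment process. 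Therefore $\widehat{PHY}(\mathsf{X}^{1},\mathsf{X}^{2})^n\xrightarrow{ucp}[X^1,X^2]$.

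Second, Theorem~\ref{thmFAnoise} applies verbatim under the present hypotheses and yields, for each $t>0$,
\[
\sup_{0\le s\le t}\big|\widehat{PTHY}(\mathsf{Z}^{1},\mathsf{Z}^{2})^n_s-\widehat{PHY}(\mathsf{X}^{1},\mathsf{X}^{2})^n_s\big|=o_p(b_n^{1/4})+O_p\Big(\big(b_n^{1/2}\rho_n^{-1}\big)^{\frac{r-2}{2}}\Big).
\]
I would then argue that the right-hand side is $o_p(1)$. The term $o_p(b_n^{1/4})$ vanishes since $b_n\to0$. For the second term it suffices that $b_n^{1/2}\rho_n^{-1}\to0$; writing $b_n^{1/2}\rho_n^{-1}=b_n^{1-\xi'}\cdot b_n^{\xi'-1/2}\rho_n^{-1}$ and using $1/2<\xi'<1$ (so that $b_n^{1-\xi'}\le1$ for $n$ large) together with $|\log b_n|\ge1$ for $n$ large gives $b_n^{1/2}\rho_n^{-1}\le b_n^{\xi'-1/2}|\log b_n|\rho_n^{-1}$, which tends to $0$ by $(\ref{threshold2})$; since $r>2$ the exponent $(r-2)/2$ is positive, so the $O_p$ term is also $o_p(1)$.

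Finally, the triangle inequality bounds $\sup_{0\le s\le t}|\widehat{PTHY}(\mathsf{Z}^{1},\mathsf{Z}^{2})^n_s-[X^1,X^2]_s|$ by the sum of the two quantities controlled above, both of which converge to $0$ in probability; this proves $(\ref{pthycon})$. There is essentially no obstacle here; the only point needing care is the elementary estimate deducing $\big(b_n^{1/2}\rho_n^{-1}\big)^{(r-2)/2}\to0$ from $(\ref{threshold2})$, which is precisely where the assumption $r>2$ (as opposed to $r=2$) is used.
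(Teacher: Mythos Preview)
Your proof is correct and follows exactly the approach the paper intends: the theorem is stated immediately after Theorem~\ref{thmFAnoise} with the remark ``Combining this result with Theorem~\ref{consistency} \ldots we obtain the following results,'' and you have simply spelled out that combination carefully, including the verification that $[\mathrm{N}^\flat_r]$ implies $[\mathrm{N}^\flat_2]$ and that $(\ref{threshold2})$ forces $b_n^{1/2}\rho_n^{-1}\to0$.
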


\begin{theorem}[Asymptotic mixed normality of the PTHY estimator in finite activity case]\mbox{}
\begin{enumerate}[\normalfont (a)]
\item Suppose $[\mathrm{A}1'](\mathrm{i})$-$(\mathrm{iii})$, $[\mathrm{A}2]$-$[\mathrm{A}6]$ and $[\mathrm{F}]$ are satisfied. Suppose also $\underline{X}^1=\underline{X}^2=0$, $[\mathrm{N}_r]$ holds for some $r\in[8,\infty)$ and $[\mathrm{T}]$ holds with $b_n^{-(r-3)/2(r-2)}\rho_n\rightarrow\infty$ as $n\rightarrow\infty$. Then
\begin{equation}\label{pthyAMN}
b_n^{-1/4}\{\widehat{PTHY}(\mathsf{Z}^1,\mathsf{Z}^2)^n-[X^1,X^2]\}\to^{d_s}\int_0^\cdot w_s\mathrm{d}\widetilde{W}_s\qquad\mathrm{in}\ \mathbb{D}(\mathbb{R}_+)
\end{equation}
as $n\to\infty$, where $\widetilde{W}$ is the same one in Theorem \ref{AMN} and $w$ is given by $(\ref{avar})$.

\item Suppose $[\mathrm{A}1']$, $[\mathrm{A}2]$-$[\mathrm{A}6]$ and $[\mathrm{F}]$ are satisfied. Suppose also $[\mathrm{N}_r]$ holds for some $r\in[8,\infty)$ and $[\mathrm{T}]$ holds with  $b_n^{-(r-3)/2(r-2)}\rho_n\rightarrow\infty$ as $n\rightarrow\infty$. Then $(\ref{pthyAMN})$ holds with that $\widetilde{W}$ is as in the above and $w$ is given by $(\ref{avarend})$.
\end{enumerate}
\end{theorem}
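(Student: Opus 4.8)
The plan is to decompose the difference $\widehat{PTHY}(\mathsf{Z}^1,\mathsf{Z}^2)^n-\widehat{PHY}(\mathsf{X}^1,\mathsf{X}^2)^n$ by isolating, for each summation index $(i,j)$, the effect of (a) replacing the continuous data $\mathsf{X}^l$ by the jump-contaminated data $\mathsf{Z}^l$ and (b) inserting the truncation indicators. First I would observe that under $[\mathrm{F}]$ the pre-averaged jump process $\bar{\mathsf{Z}}^l(\widehat{\mathcal I})^i-\bar{\mathsf X}^l(\widehat{\mathcal I})^i$ is nonzero only for those blocks $[\widehat S^i,\widehat S^{i+k_n})$ (resp.\ $[\widehat T^j,\widehat T^{j+k_n})$) straddling one of the finitely many jump times on $[0,t]$; by $[\mathrm{F}]$, $[\mathrm C1]$ and $(\ref{A4})$, the number of such indices is $O_p(k_n)=O_p(b_n^{-1/2})$ on any compact interval, since each jump time is covered by at most $O_p(k_n)$ consecutive blocks. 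Write $D_n$ for the (random) set of ``jump-affected'' index pairs and split the sum over $D_n$ and over its complement.

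Next I would handle the complement $D_n^c$. On blocks away from all jumps, $\bar{\mathsf Z}^l(\widehat{\mathcal I})^i=\bar{\mathsf X}^l(\widehat{\mathcal I})^i$, so the only discrepancy is the presence of the truncation indicators. Using the standard estimates for pre-averaged increments of a continuous It\^o semimartingale with noise of the form $(\ref{MSnoise})$ — namely that $E[|\bar{\mathsf X}^l(\widehat{\mathcal I})^i|^p\mid\mathcal F_{\widehat S^i}]\lesssim (b_n^{1/2})^{p/2}$ after suitable localization via $[\mathrm C2]$, $[\mathrm N^\flat_r]$ and the tightness in $[\mathrm T](\mathrm{ii})$ — together with the threshold condition $(\ref{threshold2})$ which forces $\varrho_n^l[i]\gg b_n^{1/2}|\log b_n|$, one shows by a Markov/Bernstein-type bound that the probability that a given continuous block is spuriously truncated is $o(b_n^{m})$ for every $m$. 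Since there are $O_p(b_n^{-1})$ blocks, a union bound shows that with probability tending to one \emph{no} continuous block is truncated, so the $D_n^c$-contribution to the difference vanishes with probability $\to1$; this yields the $o_p(b_n^{1/4})$ term. (Here one also needs the easy fact that the constraint $\widehat S^{i+k_n}\vee\widehat T^{j+k_n}\le t$ vs.\ the unmodified summation ranges contributes only $O_p(k_n)$ extra terms near $t$, each of pre-averaged size $O_p(b_n^{1/2})$, hence $o_p(b_n^{1/4})$ in the supremum.)

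Finally I would treat the jump-affected set $D_n$. For $(i,j)\in D_n$ we bound $|\bar{\mathsf Z}^1(\widehat{\mathcal I})^i\bar{\mathsf Z}^2(\widehat{\mathcal J})^j\bar K^{ij}|$ times the truncation indicators directly. The indicator $1_{\{|\bar{\mathsf Z}^1(\widehat{\mathcal I})^i|^2\le\varrho_n^1[i]\}}$ forces $|\bar{\mathsf Z}^1(\widehat{\mathcal I})^i|\le\sqrt{\varrho_n^1[i]}=O_p(\sqrt{\rho_n})$ on that block (using $[\mathrm T](\mathrm{ii})$), and similarly for the second coordinate; a block containing a jump but passing the threshold test must have its jump ``masked,'' which by the reverse estimate (the jump size $|\gamma^l_k|$ is a fixed positive random variable, while the continuous-plus-noise part is $O_p(b_n^{1/2-\varepsilon})$) can only happen with vanishing probability \emph{unless} the block is within $O(k_n)$ steps of the jump but the jump contribution $g(p/k_n)\gamma^l_k$ is itself small, i.e.\ near the block edges. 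Counting these carefully: the product $\bar{\mathsf Z}^1\bar{\mathsf Z}^2$ with both truncation indicators is bounded by $\varrho_n^1[i]^{1/2}\varrho_n^2[j]^{1/2}=O_p(\rho_n)$ on at most $O_p(k_n)$ pairs, but one needs the sharper $r$-dependent bound: using the moment assumption $[\mathrm N^\flat_r]$ and a Chebyshev argument on the event $\{\text{jump masked}\}$, the total contribution is $O_p(k_n^{-2}\cdot k_n\cdot\rho_n\cdot(\text{mask probability}))$, and optimizing the mask-probability bound against $[\mathrm N^\flat_r]$ gives the $O_p((b_n^{1/2}\rho_n^{-1})^{(r-2)/2})$ term. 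The main obstacle is precisely this last step — quantifying the probability that a pre-averaged block containing a genuine jump nonetheless falls below the threshold, and summing these masking probabilities over the $O_p(k_n)$ jump-adjacent blocks with the right power of $b_n^{1/2}\rho_n^{-1}$; this requires combining the moment bound for the continuous part and the noise (where the $r$th-moment hypothesis enters) with the deterministic lower bound $|g(p/k_n)|\gamma^l_k$ on the jump contribution away from the edges, and I would model this computation on the corresponding threshold-masking estimates in \cite{Koike2012} and \cite{M2001}, adapted to the pre-averaged and refresh-sampled setting.
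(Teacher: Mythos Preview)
Your reduction is exactly the paper's: the stated theorem follows immediately from Theorem~\ref{thmFAnoise} together with Theorem~\ref{AMN}, the extra hypothesis $b_n^{-(r-3)/2(r-2)}\rho_n\to\infty$ being precisely what is needed to make the $O_p\bigl((b_n^{1/2}\rho_n^{-1})^{(r-2)/2}\bigr)$ term in Theorem~\ref{thmFAnoise} become $o_p(b_n^{1/4})$. So the substance of your proposal is a proof of Theorem~\ref{thmFAnoise}, and there the argument has a genuine gap --- in fact the two error terms are attributed to the \emph{wrong} sources.

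On $D_n^c$ you claim that a Markov/Bernstein-type bound gives $P\bigl(|\bar{\mathsf X}^l(\widehat{\mathcal I})^i|^2>\varrho_n^l[i]\bigr)=o(b_n^m)$ for every $m$, whence a union bound over $O(b_n^{-1})$ blocks yields that no continuous block is spuriously truncated. A Bernstein bound would require sub-exponential tails, but $[\mathrm N^\flat_r]$ gives only an $r$th moment for $\zeta^l$, and hence (Lemma~\ref{moment}) only $E_0[|\bar\zeta^l(\widehat{\mathcal I})^i|^r]\lesssim k_n^{-r/2}$. The Brownian part $\bar X^l(\widehat{\mathcal I})^i$ does lie below threshold almost surely by the modulus-of-continuity bound (Lemma~\ref{modulus}), but for the noise part Markov's inequality yields only the polynomial rate $P(|\bar\zeta^l|^2>\varrho_n^l/4)\lesssim(k_n\rho_n)^{-r/2}$, which summed against $|\bar{\mathsf X}^l|^2$ over all blocks produces exactly the $O_p\bigl((b_n^{1/2}\rho_n^{-1})^{(r-2)/2}\bigr)$ term; this is $A_{1,t}$ in the paper's proof, and it comes from $D_n^c$, not $D_n$.

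Conversely, the jump-affected contribution is $o_p(b_n^{1/4})$. After localizing so that $|\gamma_k^l|\ge B^{-1}$ (condition $[\mathrm{SF}]$), a block containing a jump with weight $g(p_0/k_n)$ bounded away from zero has $|\bar D^l|$ bounded below by a fixed constant; it can then survive the threshold test only if $|\bar\zeta^l|$ exceeds a \emph{fixed} constant (not $\sqrt{\rho_n}$), and since there are only $O_p(k_n)$ such blocks per jump, Lemma~\ref{lemFAnoise} shows the total probability of this event tends to zero. Your edge-block observation (where $g(p_0/k_n)$ is small) is correct and relevant, but those blocks behave like continuous blocks and are absorbed into the $A_{1,t}$-type analysis; they do not produce the $(r-2)/2$ power by themselves. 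In short, the ``masking probability'' you attempt to optimize on $D_n$ does not depend on $\rho_n$ at all once the jump size is localized, and the $r$-dependent rate enters only through the heavy-tailed noise on $D_n^c$.
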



\subsection{Infinite activity jump case}

Next we consider the case that the observed processes are two general semimartingales contaminated by noise. We need the following structural assumption. Let $\beta\in[0,2]$.
\begin{enumerate}
\item[{$[\mathrm{K}_\beta]$}] 
For each $l=1,2$, we have
\begin{align*}
Z^{l}=X^l+\kappa(\delta^{l})\star(\mu^{l}-\nu^{l})+\kappa'(\delta^{l})\star\mu^{l},
\end{align*}
where
\begin{enumerate}[(i)]
\item $X^l$ is a continuous semimartingale given by $(\ref{CSM})$.
\item $\mu^{l}$ is a Poisson random measure on $\mathbb{R}_+\times E^{l}$ with intensity measure $\nu^{l}(\mathrm{d}t,\mathrm{d}x)=\mathrm{d}tF^{l}(\mathrm{d}x)$, where $(E^{l},\mathcal{E}^{l})$ is a Polish space and $F^{l}$ is a $\sigma$-finite measure on $(E^{l},\mathcal{E}^{l})$.
\item $\kappa(x)=x1_{\{|x|\leq 1\}}$ and $\kappa'(x)=x-\kappa(x)$ for each $x\in\mathbb{R}$.
\item $\delta^{l}$ is a predictable map from $\Omega^{(0)}\times\mathbb{R}_+\times E^{l}$ into $\mathbb{R}$. Moreover, there are a sequence $(R_k^{l})$ of stopping times increasing to $\infty$ and a sequence $(\psi_k^{l})$ of non-negative measurable functions on $E^{l}$ such that
\begin{equation*}
\sup_{\omega^{(0)}\in\Omega^{(0)} ,t<R_k^{l}(\omega^{(0)})}|\delta^{l}(\omega, t,x)|\leq\psi_k^{l}(x)~\textrm{and}~\int_{E^{l}} 1\wedge\psi_k^{l}(x)^{\beta}\underline{F}^{l}(\mathrm{d}x)<\infty .
\end{equation*}
\item If $\beta<1$, for the process $f_t=\int_{E^{l}}\kappa(\delta^{l}(t,x))\underline{F}^{l}(\mathrm{d}x)$, there is a sequence $(\sigma_k)$ of $\mathbf{F}^{(0)}$-stopping times such that for every $k$ we have a positive constant $C_{k}$ and $\lambda_k\in(0,3/4)$ satisfying $(\ref{eqA5})$ for every $t>0$ and any bounded $\mathbf{F}^{(0)}$-stopping time $\tau$.
\end{enumerate}
\end{enumerate}

Here and below $\star$ denotes the integral (either stochastic or ordinary) with respect to a some (integer-valued) random measure; see Chapter II of \cite{JS} for details. The above type of assumption appears in a lot of literature, for example \cite{J2008}. [$\mathrm{K}_\beta$] implies that for each $l=1,2$ the generalized Blumenthal-Getoor index of $Z^{l}$ is less than $\beta$.


\begin{theorem}\label{thmIAnoise}
Suppose $[\mathrm{K}_{\beta}]$ and $[\mathrm{N}^\flat_r]$ hold for some $\beta\in[0,2]$ and $r\in(2,\infty)$. Suppose also $[\mathrm{C}1]$-$[\mathrm{C}2]$, $[\mathrm{A}4]$, $[\mathrm{A}6]$ and $[\mathrm{T}]$ are satisfied. Then we have
\begin{equation*}
\sup_{0\leq s \leq t}|\widehat{PTHY}(\mathsf{Z}^{1},\mathsf{Z}^{2})^n_s-\widehat{PHY}(\mathsf{X}^{1},\mathsf{X}^{2})^n_s|
=o_p(b_n^{1/4})+O_p\left(\left(b_n^{1/2}\rho_n^{-1}\right)^{\frac{r-2}{2}}\right)+o_p(\rho_n^{1-\beta/2})
\end{equation*}
as $n\rightarrow\infty$ for any $t>0$.
\end{theorem}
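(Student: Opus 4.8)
The plan is to reduce the infinite-activity case to the finite-activity case already handled in Theorem \ref{thmFAnoise} by truncating the small jumps at a level governed by $\rho_n$, and then to control the contribution of the truncated small jumps separately. Concretely, for each $l=1,2$ I would fix a truncation level $\varepsilon_n\to0$ (to be calibrated against $\rho_n$, presumably $\varepsilon_n$ comparable to a power of $\rho_n$) and split the compensated-jump part of $Z^l$ into a "large jump" piece, which has finitely many jumps on every compact interval and therefore falls under hypothesis [F], and a "small jump" piece $Z'^{l}$ consisting of the jumps of size at most $\varepsilon_n$ together with their compensator. Write $Z^l = Y^l + Z'^{l}$ where $Y^l$ is a continuous semimartingale plus finitely many jumps. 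Applying Theorem \ref{thmFAnoise} to $Y^1,Y^2$ (whose hypotheses [C1]--[C2], [N$^\flat_r$], [T] are inherited; [A4], [A6] are only needed for the small-jump bookkeeping) gives $\sup_{0\le s\le t}|\widehat{PTHY}(\mathsf Y^1,\mathsf Y^2)^n_s-\widehat{PHY}(\mathsf X^1,\mathsf X^2)^n_s| = o_p(b_n^{1/4}) + O_p((b_n^{1/2}\rho_n^{-1})^{(r-2)/2})$, which already accounts for the first two terms on the right-hand side of the claimed estimate. It therefore remains to show that replacing $Y^l$ by $Z^l$ in the PTHY functional costs at most $o_p(\rho_n^{1-\beta/2})$ uniformly on $[0,t]$.

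The bulk of the work is the last estimate. I would expand $\widehat{PTHY}(\mathsf Z^1,\mathsf Z^2)^n - \widehat{PTHY}(\mathsf Y^1,\mathsf Y^2)^n$ into a sum over refresh-interpolated index pairs $(i,j)$ of differences of products $\bar{\mathsf Z}^1\bar{\mathsf Z}^2\bar K^{ij}\mathbf 1_{\{\cdots\}}$ versus $\bar{\mathsf Y}^1\bar{\mathsf Y}^2\bar K^{ij}\mathbf 1_{\{\cdots\}}$, noting that $\bar{\mathsf Z}^l(\widehat{\mathcal I})^i = \bar{\mathsf Y}^l(\widehat{\mathcal I})^i + \bar Z'^{l}(\widehat{\mathcal I})^i$ where the latter is the pre-averaged small-jump increment. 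The key moment bounds are: (a) for the pre-averaged small-jump part, $E[|\bar Z'^{l}(\widehat{\mathcal I})^i|^2 \mid \mathcal F_{\widehat S^{i-1}}] = O(k_n^2 b_n \cdot g(\varepsilon_n))$ where $g(\varepsilon)=\int 1\wedge(\psi^l_k(x)/\varepsilon)^2\wedge 1 \,F^l(\mathrm dx)$-type quantity controlled via $[\mathrm K_\beta](\mathrm{iv})$, which under the index-$\beta$ assumption is $O(k_n^2 b_n \varepsilon_n^{2-\beta})$; (b) for the pre-averaged continuous+large-jump part, $E[|\bar{\mathsf Y}^l(\widehat{\mathcal I})^i|^2\mid\cdot] = O(k_n^2 b_n)$ plus the noise contribution of order $O(1)$ — these are the standard pre-averaging size estimates used to prove Theorem \ref{consistency}. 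The truncation indicators force $|\bar{\mathsf Z}^l|^2 \le \varrho_n^l[\cdot] \asymp \rho_n$, so on the retained terms each factor is $O_p(\rho_n^{1/2})$; combining with the count $N^n_t = O_p(b_n^{-1})$ of index pairs and the prefactor $(\psi_{HY}k_n)^{-2} \asymp b_n$, a Cauchy–Schwarz split of the cross terms $\bar{\mathsf Y}^1\bar Z'^{2} + \bar Z'^{1}\bar{\mathsf Y}^2 + \bar Z'^{1}\bar Z'^{2}$ yields a bound of order $\rho_n^{1/2}\cdot(\varepsilon_n^{2-\beta})^{1/2} \asymp \rho_n^{1/2}\varepsilon_n^{1-\beta/2}$ for the dominant cross term; choosing $\varepsilon_n\asymp\rho_n^{1/2}$ (so $\varepsilon_n^2\asymp\rho_n$, matching the threshold scale) gives $\rho_n^{1-\beta/2}$, and the $o_p$ is obtained by letting the implicit constants vanish along a localizing sequence and using $[\mathrm K_\beta](\mathrm{iv})$ to make the tail integral arbitrarily small. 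One must also check that the difference between the indicator events for $\bar{\mathsf Z}^l$ and for $\bar{\mathsf Y}^l$ contributes no more, which follows because on the symmetric difference of those events one of $|\bar{\mathsf Z}^l|^2,|\bar{\mathsf Y}^l|^2$ is within $O_p$ of the threshold $\rho_n$ while the small-jump perturbation is $o_p$ of it; the drift-compensator term is handled exactly as in the proof of Theorem \ref{consistency} using $[\mathrm K_\beta](\mathrm v)$ when $\beta<1$, and $[\mathrm A4]$, $[\mathrm A6]$ to control the interpolation gaps $|\check I^k|,|\Gamma^k|$.

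The main obstacle I anticipate is the uniformity in $s\in[0,t]$ together with the endogeneity of the interpolated sampling times $\widehat S^i,\widehat T^j,\check S^i$: the moment bounds above are conditional on $\mathcal F$-stopping-time $\sigma$-fields, but $\check S^i$ is not a stopping time, so one must either pass to the strong-predictability structure as in $[\mathrm A2]$-type arguments or, more simply, dominate by the refresh times and use $[\mathrm C1]$ plus $r_n(t)=o_p(b_n^{\xi'})$ from $[\mathrm A4]$ to bound the number and the Lebesgue measure of the relevant intervals. A secondary subtlety is that $\varepsilon_n$ must be chosen \emph{before} knowing the localizing sequence $(R^l_k)$ from $[\mathrm K_\beta](\mathrm{iv})$; the standard remedy is a double-limit argument — first fix $k$, localize, get a bound with a constant depending on $k$ times $\int 1\wedge\psi^l_k(x)^\beta F^l(\mathrm dx)$, then observe that the $o_p$ statement is stable under the localization since $R^l_k\uparrow\infty$. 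Apart from these structural points, everything reduces to the pre-averaging size estimates and Doob/Burkholder inequalities that already underlie Theorems \ref{consistency} and \ref{thmFAnoise}.
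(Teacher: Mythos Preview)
Your high-level strategy --- split off a finite-activity piece, invoke Theorem~\ref{thmFAnoise}, and control the small-jump remainder separately --- is the paper's. The genuine gap is \emph{where} you truncate. You cut at a level $\varepsilon_n\to0$ and then apply Theorem~\ref{thmFAnoise} to $Y^l=X^l+(\text{jumps of size}>\varepsilon_n)$, but that theorem is proved for an $n$-\emph{independent} finite-activity process: its localized hypothesis [SF] requires the jump sizes to be bounded below by a fixed $B^{-1}>0$, and the argument for the term $A_{2,t}$ relies on each pre-averaging block eventually containing at most one jump. With your $n$-dependent cut the number of ``large'' jumps on $[0,t]$ is of order $\varepsilon_n^{-\beta}\to\infty$ and the minimal jump size tends to $0$, so neither ingredient survives uniformly in $n$ and the $o_p(b_n^{1/4})$ conclusion is no longer justified. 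The paper instead uses the \emph{fixed} truncation at level $1$ already built into $[\mathrm K_\beta]$: with $D^l=\kappa'(\delta^l)\star\mu^l$ and $L^l=\kappa(\delta^l)\star(\mu^l-\nu^l)$, the process $Y^l:=X^l+D^l$ is $n$-independent and genuinely finite-activity, so Theorem~\ref{thmFAnoise} applies verbatim to give the first two rates. A secondary, $n$-dependent level $\varepsilon_n$ (calibrated so that $\sqrt{\rho_n}/\varepsilon_n\to0$ and $\varphi_\beta(\varepsilon_n)\to0$ --- hence $\varepsilon_n\gg\rho_n^{1/2}$, not $\asymp\rho_n^{1/2}$) is introduced only \emph{inside} the subsequent analysis of $L^l$.

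There is a second gap in the cross-term step. On the retained event $\{|\overline{\mathsf Z}^l|^2\le\varrho_n^l\}$ one has $\overline{\mathsf Y}^l=\overline{\mathsf Z}^l-\bar L^l$, so the factor $\overline{\mathsf Y}^l$ is \emph{not} $O_p(\rho_n^{1/2})$ unless $|\bar L^l|$ is; your ``each factor is $O_p(\rho_n^{1/2})$'' fails exactly on the blocks where $\bar L^l$ is large. The paper therefore conditions further on $\{|\bar L^l|^2\lessgtr4\varrho_n^l\}$: the small-$\bar L$ case produces the $\rho_n^{1-\beta/2}$ rate via the second-moment bound $\sum_i|\bar L^1(\widehat{\mathcal I})^i|^2\,1_{\{|\bar L^1|^2\le4\varrho_n^1\}}=o_p(k_n\rho_n^{1-\beta/2})$, while the large-$\bar L$ case needs both a block-count estimate $\sum_i 1_{\{|\bar L^1|^2>c\varrho_n^1\}}=o_p(k_n\rho_n^{-\beta/2})$ and, crucially, \emph{martingale} estimates for the unrestricted sums $\sum_{i,j}\bar\Xi^1(\widehat{\mathcal I})^i\,\bar M^2(\widehat{\mathcal J})^j\,\bar K^{ij}$, $\sum_{i,j}\bar\Xi^1(\widehat{\mathcal I})^i\,\overline{\zeta}^2(\widehat{\mathcal J})^j\,\bar K^{ij}$ and their $\Theta$-analogues (here $\Xi^l,\Theta^l$ are the martingale and compensator parts of the $\le\varepsilon_n$ jumps). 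These are obtained by rewriting the double sum as a stochastic integral and applying Doob, which is where $[\mathrm A4]$ and $[\mathrm A6]$ actually enter; a blanket Cauchy--Schwarz on these sums loses the orthogonality and yields only an $O_p(1)$-type bound, not the required $o_p(b_n^{1/4})$. The indicator-difference term is handled by the same case split, not by the symmetric-difference heuristic you sketch.
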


Proof of this theorem is given in Appendix \ref{proofthmIAnoise}. Combining this result with Theorem \ref{consistency} or Theorem \ref{AMN}, we obtain the following results:

\begin{theorem}[Consistency of the PTHY estimator in infinite activity case]\label{conpthy}
Suppose $[\mathrm{K}_2]$ and $[\mathrm{N}^\flat_r]$ hold for some $r\in(2,\infty)$. Suppose also $[\mathrm{C}1]$-$[\mathrm{C}2]$, $[\mathrm{A}4]$, $[\mathrm{A}6]$ and $[\mathrm{T}]$ are satisfied. Then we have $(\ref{pthycon})$ as $n\rightarrow\infty$.
\end{theorem}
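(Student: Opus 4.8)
The plan is to deduce the result by combining Theorem \ref{thmIAnoise} with Theorem \ref{consistency}, verifying that all hypotheses of both are in force and that the error terms in Theorem \ref{thmIAnoise} vanish. First I would observe that $[\mathrm{K}_2]$ is precisely the assumption $[\mathrm{K}_\beta]$ with $\beta=2$, and that under $[\mathrm{K}_2]$ the continuous-semimartingale part $X^l$ of $Z^l$ satisfies $(\ref{CSM})$; in particular $[X^1,X^2]=\langle X^{1,c},X^{2,c}\rangle$ is the target quantity. Hence by Theorem \ref{thmIAnoise}, applied with the given $\beta=2$ and the given $r\in(2,\infty)$,
\begin{equation*}
\sup_{0\leq s\leq t}|\widehat{PTHY}(\mathsf{Z}^1,\mathsf{Z}^2)^n_s-\widehat{PHY}(\mathsf{X}^1,\mathsf{X}^2)^n_s|
=o_p(b_n^{1/4})+O_p\!\left(\left(b_n^{1/2}\rho_n^{-1}\right)^{\frac{r-2}{2}}\right)+o_p(\rho_n^{1-\beta/2})
\end{equation*}
for every $t>0$. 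With $\beta=2$ the last term is $o_p(\rho_n^{0})=o_p(1)$, which is \emph{not} automatically negligible, so the first key step is to check it really does tend to $0$: since $[\mathrm{T}]$(i) requires $\rho_n\to0$, the bound $o_p(\rho_n^{0})$ should in fact be read as $o_p(1)$ coming from a quantity that decays, and more carefully one sees from the structure of the proof of Theorem \ref{thmIAnoise} that the relevant contribution is $o_p(\rho_n^{1-\beta/2})$ evaluated as a genuine $o_p(1)$; I would make explicit that under $[\mathrm{K}_2]$ this term is $o_p(1)$ because the jump part of each $Z^l$ has finite quadratic variation, so the truncation removes a vanishing fraction. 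The first term $o_p(b_n^{1/4})=o_p(1)$ trivially.

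The second key step is to show the middle term $O_p((b_n^{1/2}\rho_n^{-1})^{(r-2)/2})$ is $o_p(1)$. Here I would use condition $(\ref{threshold2})$ of $[\mathrm{T}]$(i): $b_n^{\xi'-1/2}|\log b_n|/\rho_n\to0$ with $\xi'>1/2$, hence in particular $b_n^{1/2}\rho_n^{-1}\to 0$ (indeed $b_n^{1/2}/\rho_n = b_n^{1-\xi'}\cdot b_n^{\xi'-1/2}/\rho_n \le b_n^{1-\xi'}\cdot b_n^{\xi'-1/2}|\log b_n|/\rho_n\to0$ since $1-\xi'>0$ and the second factor $\to0$). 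Because $(r-2)/2>0$, raising a sequence tending to $0$ to this positive power still tends to $0$, so the middle term is $o_p(1)$. Combining, the right-hand side is $o_p(1)$, i.e.
\begin{equation*}
\sup_{0\leq s\leq t}|\widehat{PTHY}(\mathsf{Z}^1,\mathsf{Z}^2)^n_s-\widehat{PHY}(\mathsf{X}^1,\mathsf{X}^2)^n_s|\xrightarrow{p}0.
\end{equation*}

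The third step is to invoke Theorem \ref{consistency} for $\widehat{PHY}(\mathsf{X}^1,\mathsf{X}^2)^n$, which gives $\widehat{PHY}(\mathsf{X}^1,\mathsf{X}^2)^n\xrightarrow{ucp}[X^1,X^2]$ provided $(\ref{A4})$, $[\mathrm{C}1]$, $[\mathrm{C}2]$, $[\mathrm{N}^\flat_2]$ hold and $\xi'>1/2$. The hypotheses of the present theorem supply $[\mathrm{C}1]$, $[\mathrm{C}2]$, $[\mathrm{A}4]$ (which includes $(\ref{A4})$ and, together with $[\mathrm{T}]$, the requirement $\xi'>1/2$), and $[\mathrm{N}^\flat_r]$ for some $r\in(2,\infty)$, which implies $[\mathrm{N}^\flat_2]$ by Lyapunov/Jensen since $\int|z|^2Q_t(\mathrm{d}z)\le(\int|z|^rQ_t(\mathrm{d}z))^{2/r}$ up to a local bound. (I should note that Theorem \ref{consistency} is stated for the continuous model in which the noise decomposition $(\ref{MSnoise})$ holds; under $[\mathrm{K}_2]$ the noise is exactly of this form, so the theorem applies to $\mathsf{X}^1,\mathsf{X}^2$.) Finally, a ucp limit plus a $\sup$-in-probability-negligible perturbation is again a ucp limit: writing $\widehat{PTHY}(\mathsf{Z}^1,\mathsf{Z}^2)^n = \widehat{PHY}(\mathsf{X}^1,\mathsf{X}^2)^n + \varepsilon_n$ with $\sup_{s\le t}|\varepsilon_n(s)|\xrightarrow{p}0$, the triangle inequality gives $\widehat{PTHY}(\mathsf{Z}^1,\mathsf{Z}^2)^n\xrightarrow{ucp}[X^1,X^2]$, which is $(\ref{pthycon})$.

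The only genuine obstacle I anticipate is the second step: making sure the power $(r-2)/2$ on $b_n^{1/2}\rho_n^{-1}$ really produces a negligible term for \emph{every} admissible $r>2$ — this is fine because $(\ref{threshold2})$ forces $b_n^{1/2}\rho_n^{-1}\to0$ regardless of how close $r$ is to $2$, so the positive exponent, however small, still kills it; no rate is needed, only consistency. Everything else is bookkeeping: checking that $[\mathrm{K}_2]$ feeds the correct continuous part into Theorem \ref{consistency} and that $[\mathrm{N}^\flat_r]\Rightarrow[\mathrm{N}^\flat_2]$.
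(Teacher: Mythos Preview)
Your approach is correct and matches the paper's: Theorem \ref{conpthy} is obtained simply by combining Theorem \ref{thmIAnoise} (with $\beta=2$) and Theorem \ref{consistency}, exactly as you outline. One clarification: your worry that ``$o_p(\rho_n^{0})=o_p(1)$ is not automatically negligible'' is misplaced --- $o_p(1)$ is by definition a sequence converging to $0$ in probability, so that term needs no further justification; the remaining verifications ($b_n^{1/2}\rho_n^{-1}\to0$ from $(\ref{threshold2})$ and $[\mathrm{N}^\flat_r]\Rightarrow[\mathrm{N}^\flat_2]$ via Jensen) are exactly the bookkeeping required.
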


\begin{theorem}[Asymptotic mixed normality of the PTHY estimator in infinite activity case]\label{AMNIAnoise}\mbox{}
\begin{enumerate}[\normalfont (a)]
\item Suppose $[\mathrm{A}1'](\mathrm{i})$-$(\mathrm{iii})$ and $[\mathrm{A}2]$-$[\mathrm{A}6]$ are satisfied. Suppose also $[\mathrm{N}_r]$ holds for some $r\in[8,\infty)$, $[\mathrm{K}_\beta]$ holds for some $\beta\in[0,2-\frac{1}{2\xi'-1})$ and $[\mathrm{T}]$ holds with $b_n^{-(r-3)/2(r-2)}\rho_n\rightarrow\infty$ and $\rho_n=O(b_n^{1/2(2-\beta)})$ as $n\rightarrow\infty$. Moreover, suppose $\underline{X}^1=\underline{X}^2=0$. Then $(\ref{pthyAMN})$ holds true as $n\to\infty$ with that $\widetilde{W}$ is the same one in Theorem \ref{AMN} and $w$ is given by $(\ref{avar})$.

\item Suppose $[\mathrm{A}1']$ and $[\mathrm{A}2]$-$[\mathrm{A}6]$ are satisfied. Suppose also $[\mathrm{N}_r]$ holds for some $r\in[8,\infty)$, $[\mathrm{K}_\beta]$ holds for some $\beta\in[0,2-\frac{1}{2\xi'-1})$ and $[\mathrm{T}]$ holds with $b_n^{-(r-3)/2(r-2)}\rho_n\rightarrow\infty$ and $\rho_n=O(b_n^{1/2(2-\beta)})$ as $n\rightarrow\infty$. Then $(\ref{pthyAMN})$ holds true as $n\to\infty$ with that $\widetilde{W}$ is as in the above and $w$ is given by $(\ref{avarend})$.
\end{enumerate}
\end{theorem}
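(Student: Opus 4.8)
The plan is to obtain this theorem directly from Theorem \ref{thmIAnoise} and Theorem \ref{AMN}, so that the only real work is to check that the stated hypotheses imply those of both theorems and to keep track of the rates. First I would record the implications between the conditions. Since $r\geq 8$, $[\mathrm{N}_r]$ entails both $[\mathrm{N}^\flat_r]$ and $[\mathrm{N}_8]$ (for the moment part, use Jensen's inequality for the probability measure $Q_t$). Condition $[\mathrm{A}3]$ gives absolute continuity of $[V,W]$ with a c\`adl\`ag, hence locally bounded, derivative, and $[\mathrm{A}5]$ does the same for $A^1,A^2,\underline{A}^1,\underline{A}^2$, so $[\mathrm{C}2]$ holds; and $[\mathrm{C}1]$ follows from $[\mathrm{A}1'](\mathrm{i})$-$(\mathrm{ii})$, which keep $G(1)^n$ bounded away from $0$ off a tight exceptional set, whence $b_nN^n_t=O_p(1)$ by a standard estimate using $\sum_k|\Gamma^k(t)|\leq t$. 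Conditions $[\mathrm{A}4]$ and $[\mathrm{A}6]$ are assumed outright, $[\mathrm{A}4]$ forces $\xi'>9/10>1/2$, the version of $[\mathrm{T}]$ in force is $[\mathrm{T}]$ together with the two extra rate conditions on $\rho_n$, and $\beta<2-\frac{1}{2\xi'-1}<2$ puts $\beta$ in $[0,2]$. Hence Theorem \ref{thmIAnoise} is applicable, and so is Theorem \ref{AMN}---part (a) when $\underline{X}^1=\underline{X}^2=0$ and part (b) otherwise.

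Next I would split, for each fixed $t\in\mathbb{R}_+$,
\begin{align*}
&b_n^{-1/4}\{\widehat{PTHY}(\mathsf{Z}^1,\mathsf{Z}^2)^n-[X^1,X^2]\}\\
&\qquad=b_n^{-1/4}\{\widehat{PHY}(\mathsf{X}^1,\mathsf{X}^2)^n-[X^1,X^2]\}+b_n^{-1/4}\{\widehat{PTHY}(\mathsf{Z}^1,\mathsf{Z}^2)^n-\widehat{PHY}(\mathsf{X}^1,\mathsf{X}^2)^n\}.
\end{align*}
By Theorem \ref{AMN} the first term on the right-hand side converges stably in law in $\mathbb{D}(\mathbb{R}_+)$ to $\int_0^\cdot w_s\,\mathrm{d}\widetilde{W}_s$, with $w$ given by $(\ref{avar})$ under the hypotheses of (a) and by $(\ref{avarend})$ under those of (b). It thus remains to show that the second term tends to the zero process uniformly on compacts in probability, i.e. $b_n^{-1/4}\sup_{0\leq s\leq t}|\widehat{PTHY}(\mathsf{Z}^1,\mathsf{Z}^2)^n_s-\widehat{PHY}(\mathsf{X}^1,\mathsf{X}^2)^n_s|\to^p0$ for every $t>0$.

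For this I would feed the bound of Theorem \ref{thmIAnoise}, which controls the supremum by $o_p(b_n^{1/4})+O_p((b_n^{1/2}\rho_n^{-1})^{(r-2)/2})+o_p(\rho_n^{1-\beta/2})$. After multiplication by $b_n^{-1/4}$, the first summand is $o_p(1)$ trivially; the second equals $O_p(b_n^{(r-3)/4}\rho_n^{-(r-2)/2})$, which tends to $0$ because raising it to the power $2/(r-2)$ gives $b_n^{(r-3)/2(r-2)}\rho_n^{-1}\to0$ by the assumption $b_n^{-(r-3)/2(r-2)}\rho_n\to\infty$; the third is $o_p(b_n^{-1/4}\rho_n^{1-\beta/2})$ and $b_n^{-1/4}\rho_n^{1-\beta/2}=O(1)$ since $\rho_n=O(b_n^{1/2(2-\beta)})$ forces $\rho_n^{(2-\beta)/2}=O(b_n^{1/4})$. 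Hence the second term in the split converges to $0$ uniformly on compacts in probability, a fortiori to the zero process in probability for the Skorokhod topology, and adding it to a stably convergent sequence does not alter the stable limit (a Slutsky-type property of stable convergence in $\mathbb{D}(\mathbb{R}_+)$). This yields $(\ref{pthyAMN})$ with the asserted $w$, proving both (a) and (b).

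All the analytic difficulty is already contained in Theorems \ref{thmIAnoise} and \ref{AMN}, so I do not expect a genuine obstacle in this argument; the only delicate point is the exponent arithmetic of the last paragraph, which is exactly what dictates the two growth requirements on $\rho_n$ and, together with the lower bound $\rho_n\gg b_n^{\xi'-1/2}|\log b_n|$ inherent in $(\ref{threshold2})$, is the reason the hypothesis restricts $\beta$ to $[0,2-\frac{1}{2\xi'-1})$---namely so that a sequence $\rho_n$ meeting all the constraints can exist.
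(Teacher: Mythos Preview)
Your argument is correct and is exactly the paper's approach: the paper does not spell out a separate proof of this theorem but simply says that it follows by combining Theorem~\ref{thmIAnoise} with Theorem~\ref{AMN}, which is precisely the decomposition and rate check you carry out. Your exponent arithmetic for the three error terms is right, and your verification that the hypotheses of Theorem~\ref{AMNIAnoise} imply those of Theorem~\ref{thmIAnoise} (in particular $[\mathrm{C}1]$, $[\mathrm{C}2]$, $[\mathrm{N}^\flat_r]$) is the only extra bookkeeping needed.
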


Note that the assumptions of Theorem \ref{AMNIAnoise} require at least $\beta<1$.


\section{Some related topics for statistical application to finance}\label{topics}

\subsection{Estimation of the quadratic covariation of jump parts}

As stated in the introduction, we are interested in the estimation of the quadratic covariation of jump parts of two semimartingales $Z^1$ and $Z^2$. This is achieved by estimating the quadratic variation $[Z^1,Z^2]$ due to the formula $(\ref{qc})$ because we can estimate the integrated covariance $\langle Z^{1,c},Z^{2,c}\rangle_t$ by the PTHY estimator as investigated in the previous section. In the literature such estimators are usually given by consistent estimators for the integrate covariance in the absence of jumps. See \cite{MG2012} and \cite{PV2009SPA} for example. Following this approach, we consider the pre-averaged HY estimator and we obtain the following result.
\begin{prop}\label{propphy}
Suppose $[\mathrm{C}1]$-$[\mathrm{C}2]$, $[\mathrm{A}2]$, $[\mathrm{A}4]$, $[\mathrm{A}6]$, $[\mathrm{K}_2]$ and $[\mathrm{N}^\flat_2]$ are satisfied. Then
\begin{align*}
\widehat{PHY}(\mathsf{Z}^1,\mathsf{Z}^2)^n_t=[Z^1,Z^2]_t+O_p(b_n^{1/4})
\end{align*}
as $n\to\infty$ for any $t\in\mathbb{R}_+$.
\end{prop}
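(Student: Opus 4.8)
The plan is to exploit the bilinearity of the pre-averaged HY functional in its two data slots and to reduce the jump contributions to the continuous-case estimate behind Theorem~\ref{AMN}. Write $J^l:=Z^l-X^l$ for the jump part of $Z^l$; under $[\mathrm{K}_2]$ we have $J^l=M^l+B^l$, where $M^l:=\kappa(\delta^l)\star(\mu^l-\nu^l)$ is a locally square-integrable purely discontinuous martingale with $[M^l]_t=\sum_{s\leq t}(\Delta J^l_s)^2 1_{\{|\Delta J^l_s|\leq1\}}$, and $B^l:=\kappa'(\delta^l)\star\mu^l$ has only finitely many jumps on each compact interval (because $\int_{E^l}1\wedge\psi_k^l(x)^2\underline{F}^l(\mathrm{d}x)<\infty$ forces $\underline{F}^l(\psi_k^l>1)<\infty$). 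Since $\mathsf{Z}^l_{S^i}=\mathsf{X}^l_{S^i}+J^l_{S^i}$ (the noise cancels in $\mathsf{Z}^l-\mathsf{X}^l$) and the quantity in Definition~\ref{Defphy} is, for fixed sampling designs, bilinear in the two data sequences (the pre-averaging operators and the product are linear in each slot, and the indicator factors depend only on the sampling times), we obtain
\[
\widehat{PHY}(\mathsf{Z}^1,\mathsf{Z}^2)^n=\widehat{PHY}(\mathsf{X}^1,\mathsf{X}^2)^n+\widehat{PHY}(\mathsf{X}^1,J^2)^n+\widehat{PHY}(J^1,\mathsf{X}^2)^n+\widehat{PHY}(J^1,J^2)^n,
\]
where, here and below, $\widehat{PHY}(V,W)^n$ denotes the Definition~\ref{Defphy} quantity formed with $\widehat{\mathcal{I}},\widehat{\mathcal{J}}$ from arbitrary processes $V,W$ observed at the appropriate sampling times. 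By the arguments of \cite{Koike2012phy}, which already deliver the $O_p(b_n^{1/4})$ stochastic order of the pre-averaged HY estimation error under $[\mathrm{C}1]$--$[\mathrm{C}2]$, $[\mathrm{A}2]$, $[\mathrm{A}4]$, $[\mathrm{A}6]$ and $[\mathrm{N}^\flat_2]$ (the remaining hypotheses of Theorem~\ref{AMN}, namely $[\mathrm{A}1']$, $[\mathrm{A}3]$, $[\mathrm{A}5]$ and $[\mathrm{N}_8]$, being needed only to pin down the limiting variance), we have $\widehat{PHY}(\mathsf{X}^1,\mathsf{X}^2)^n_t=[X^1,X^2]_t+O_p(b_n^{1/4})$. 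It then remains to show that $\widehat{PHY}(V,W)^n_t=[V,W]_t+O_p(b_n^{1/4})$ for every pair $V\in\{\mathsf{X}^1,M^1,B^1\}$, $W\in\{\mathsf{X}^2,M^2,B^2\}$ having at least one jump factor; since $[X^1,J^2]=[X^2,J^1]=0$ and $[M^1+B^1,M^2+B^2]=[J^1,J^2]=\sum_{s\leq t}\Delta Z^1_s\Delta Z^2_s$, summing over the four terms and using $(\ref{qc})$ yields $\widehat{PHY}(\mathsf{Z}^1,\mathsf{Z}^2)^n_t=[Z^1,Z^2]_t+O_p(b_n^{1/4})$.

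The pair $\widehat{PHY}(B^1,B^2)^n$ is elementary: since the refresh mesh is $o_p(b_n^{\xi'})$ ($[\mathrm{A}4]$) and $b_nN^n_t=O_p(1)$ ($[\mathrm{C}1]$), for large $n$ every block $[\widehat{S}^i,\widehat{S}^{i+k_n})$ has length $O(b_n^{1/2})$ and carries at most one jump of $B^l$; a jump at $\tau$ enters the corresponding pre-averaged datum with weight $g(p/k_n)$ where $\widehat{S}^{i+p-1}<\tau\leq\widehat{S}^{i+p}$, the factor $1_{\{[\widehat{S}^i,\widehat{S}^{i+k_n})\cap[\widehat{T}^j,\widehat{T}^{j+k_n})\neq\emptyset\}}$ equals $1$ for all $(i,j)$ whose blocks both straddle $\tau$, and $\sum_{p=1}^{k_n-1}g(p/k_n)=\psi_{HY}k_n+O(1)$; summing over those $(i,j)$ and dividing by $(\psi_{HY}k_n)^2$ recovers $\Delta B^1_\tau\Delta B^2_\tau$ for co-jumps and $0$ otherwise, with error $O_p(b_n^{1/2})$ per jump (from the $O(1)$ Riemann-sum gap and from the continuous parts sitting inside the straddling blocks), and jumps in $(t-\theta b_n^{1/2},t]$ contribute nothing once $n$ is large. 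For the pairs with at least one factor among $M^1,M^2$ (including $\widehat{PHY}(\mathsf{X}^1,M^2)^n$ and its mirror) I would run the $L^2$/conditional-variance computation of \cite{Koike2012phy}: conditioning on $\mathcal{F}^{(0)}$ removes the noise from the $\mathsf{X}$-factor, the $\overline{M}^l(\widehat{\mathcal{I}})^i$ are martingale increments of conditional variance $O(b_n^{1/2})$, the relevant products split into a diagonal piece telescoping to $[V,W]_t$ up to $O_p(b_n^{1/2})$ and an off-diagonal martingale piece whose variance is $O((\psi_{HY}k_n)^2 b_n^{1/2})$ thanks to the bound $H_n(t)=\sum_k|\Gamma^k(t)|^2=O_p(b_n)$ of $[\mathrm{A}6]$, so division by $(\psi_{HY}k_n)^2$ gives the required $O_p(b_n^{1/4})$. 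The local boundedness of the compensator densities used here (e.g. $E[([M^l]_t)^2]<\infty$ on localization and $E[[M^l]_t-[M^l]_{t-h}|\mathcal{F}_{(t-h)_+}]\leq C_k h$) follows from $\int_{E^l}1\wedge\psi_k^l(x)^2\underline{F}^l(\mathrm{d}x)<\infty$ by Cauchy--Schwarz.

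The only genuinely new estimate is $\widehat{PHY}(\mathsf{X}^1,B^2)^n=O_p(b_n^{1/4})$ (and its mirror). Localising around each of the finitely many jumps of $B^2$, with $m_\tau$ the index such that $\widehat{T}^{m_\tau-1}<\tau\leq\widehat{T}^{m_\tau}$, the contribution of the jump at $\tau$ equals
$(\psi_{HY}k_n)^{-2}\Delta B^2_\tau\sum_j g\big((m_\tau-j)/k_n\big)\sum_i\overline{\mathsf{X}}^1(\widehat{\mathcal{I}})^i 1_{\{[\widehat{S}^i,\widehat{S}^{i+k_n})\cap[\widehat{T}^j,\widehat{T}^{j+k_n})\neq\emptyset\}}$,
and the inner sum telescopes to $\psi_{HY}k_n\big(\mathsf{X}^1_{\widehat{S}^{M_+(j)}}-\mathsf{X}^1_{\widehat{S}^{M_-(j)}}\big)$ plus an $O_p(b_n^{-1/4})$ "ramp" correction, where $M_\pm(j)$ are the extreme refresh indices entering the inner sum for that $j$. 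Each endpoint $\mathsf{X}^1_{\widehat{S}^{M_\pm(j)}}=X^1_{\widehat{S}^{M_\pm(j)}}+U^1_{\widehat{S}^{M_\pm(j)}}$ is only $O_p(1)$ because of the noise, but as $j$ runs over the $\sim k_n$ blocks straddling $\tau$ the indices $M_\pm(j)$ shift by $\sim1$, so the endpoint noises are essentially distinct, centred and conditionally independent; summing them against the bounded weights $g$ therefore produces an $O_p(k_n^{1/2})=O_p(b_n^{-1/4})$ cancellation, while the $X^1$-parts cancel between $M_+$ and $M_-$ up to $O_p(b_n^{1/4})$ by continuity of $X^1$ on the $O(b_n^{1/2})$ window around $\tau$. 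Altogether the contribution of each big jump is $O_p(b_n^{1/4})$, as needed; combining the four terms via $(\ref{qc})$ gives $\widehat{PHY}(\mathsf{Z}^1,\mathsf{Z}^2)^n_t=[X^1,X^2]_t+\sum_{s\leq t}\Delta Z^1_s\Delta Z^2_s+O_p(b_n^{1/4})=[Z^1,Z^2]_t+O_p(b_n^{1/4})$.

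The main obstacle is precisely this bookkeeping, together with the analogous variance estimates in the $M$-terms: one must show that the cross-sums involving the noise $U^l$ or the purely discontinuous martingales $M^l$ have variance $O(b_n^{1/2})$ — not $O(1)$ — after the $(\psi_{HY}k_n)^2$ normalization, which forces one to track how a single jump is spread over the $\sim k_n$ straddling pre-averaging blocks and to exploit the conditional centredness and (near-)independence of the noise and of the jumps of $M^l$ at distinct times. This is exactly where $[\mathrm{A}6]$ and the window scale $k_n\sim\theta b_n^{-1/2}$ enter, and it is the same mechanism producing the $b_n^{1/4}$ rate in the continuous case, now transplanted to the jump contributions.
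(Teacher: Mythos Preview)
Your decomposition is different from the paper's, and the difference is instructive. The paper does \emph{not} keep the split $J^l=M^l+B^l$. After localizing so that $|\delta^l|\leq\psi$ with $\psi$ bounded and $\int\psi^2\,F^l(\mathrm{d}x)<\infty$, it absorbs the big jumps into a single locally square-integrable martingale $L^{\prime l}:=\delta^l\star(\mu^l-\nu^l)$ and pushes the big-jump compensator $\int\kappa'(\delta^l)\,\mathrm{d}\nu^l$ into the continuous drift, writing $Z^l=X^{\prime l}+L^{\prime l}$. The four bilinear pieces are then $\mathbb{I}$ (continuous$\times$continuous), $\mathbb{II},\mathbb{III}$ (cross terms), and $\mathbb{IV}$ (jump$\times$jump). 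For $\mathbb{II}$ and $\mathbb{III}$ the paper simply reuses the $L^2$ martingale estimate of Lemma~\ref{XiM}; for $\mathbb{IV}$ it applies integration by parts $\bar{L}^{\prime1}\bar{L}^{\prime2}=\bar{L}^{\prime1}_-\!\bullet\bar{L}^{\prime2}+\bar{L}^{\prime2}_-\!\bullet\bar{L}^{\prime1}+[\bar{L}^{\prime1},\bar{L}^{\prime2}]$, bounds the two stochastic integrals again via Lemma~\ref{XiM}, and checks by direct index manipulation that the bracket piece equals $[L^{\prime1},L^{\prime2}]_t+O_p(k_n^{-1})$ plus a boundary term controlled through Lemma~\ref{barKssp}. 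No finite-activity bookkeeping ever enters.

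Your route is not wrong in principle, but it multiplies the cases (nine cross terms instead of four) and forces you into the local jump analysis for $\widehat{PHY}(\mathsf{X}^1,B^2)$. That argument, as you wrote it, is the soft spot: the claim that the inner sum over $i$ ``telescopes to $\psi_{HY}k_n(\mathsf{X}^1_{\widehat{S}^{M_+(j)}}-\mathsf{X}^1_{\widehat{S}^{M_-(j)}})$'' is only approximately true (the weight at index $m$ is a partial Riemann sum of $g$, not the full $c_n$, on two ramps of width $\sim k_n$), and the $O_p(k_n^{1/2})$ cancellation of the endpoint noises across $j$ needs you to verify both that the $M_\pm(j)$ are genuinely distinct as $j$ varies and that the endogenous-noise increments $b_n^{-1/2}(\underline{X}^1_{\widehat{S}^{m}}-\underline{X}^1_{\widehat{S}^{m-1}})$ contribute variance $O(b_n^{-1/2})$---which only works once you invoke $[\mathrm{C}1]$/$[\mathrm{A}6]$ to bound $\sum_m|\widehat{I}^m|$ over the $\sim k_n$ relevant indices by $O_p(b_n^{1/2})$ rather than the crude $k_n\bar r_n$. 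All of this can be made rigorous, but the paper's trick of compensating $B^l$ into the martingale part sidesteps the whole computation: every jump contribution becomes a martingale cross term amenable to one uniform $L^2$ estimate, and the quadratic-covariation piece is recovered cleanly from the bracket in the It\^o product.
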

See Appendix \ref{proofpropphy} for a proof. Consequently, we obtain the following result on the issue of the estimation of the quadratic covariation of jump parts:
\begin{cor}
Suppose $[\mathrm{C}1]$-$[\mathrm{C}2]$, $[\mathrm{A}2]$, $[\mathrm{A}4]$, $[\mathrm{A}6]$, $[\mathrm{K}_2]$ and $[\mathrm{N}^\flat_r]$ for some $r\in(2,\infty)$ are satisfied. Suppose also that $[\mathrm{T}]$ holds. Then
\begin{align*}
\widehat{PHY}(\mathsf{Z}^1,\mathsf{Z}^2)^n_t-\widehat{PTHY}(\mathsf{Z}^1,\mathsf{Z}^2)^n_t\to^p\sum_{0\leq s\leq t}\Delta Z^1_s\Delta Z^2_s
\end{align*}
as $n\to\infty$ for any $t\in\mathbb{R}_+$. Furthermore, if $b_n^{-(r-3)/2(r-2)}\rho_n\rightarrow\infty$ and $\rho_n=O(b_n^{1/2(2-\beta)})$ as $n\rightarrow\infty$, then
\begin{align*}
\widehat{PHY}(\mathsf{Z}^1,\mathsf{Z}^2)^n_t-\widehat{PTHY}(\mathsf{Z}^1,\mathsf{Z}^2)^n_t=\sum_{0\leq s\leq t}\Delta Z^1_s\Delta Z^2_s+O_p(b_n^{1/4})
\end{align*}
as $n\to\infty$ for any $t\in\mathbb{R}_+$.
\end{cor}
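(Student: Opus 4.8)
The plan is to obtain both statements purely by combining results already available for $\widehat{PHY}$ and $\widehat{PTHY}$: Proposition~\ref{propphy}, Theorem~\ref{conpthy}, Theorem~\ref{thmIAnoise}, and the decomposition $(\ref{qc})$. Two preliminary remarks reduce everything to bookkeeping. First, $[\mathrm{N}^\flat_r]$ for some $r\in(2,\infty)$ entails $[\mathrm{N}^\flat_2]$, because $|z|^2\leq1+|z|^r$ and each $Q_t$ is a probability measure; hence the hypotheses of Proposition~\ref{propphy} and of Theorem~\ref{conpthy} hold under the assumptions of the corollary. Second, since the drift parts $A^l$ of $(\ref{CSM})$ are continuous and of finite variation, the continuous martingale part of $Z^l$ is $M^l=X^{l,c}$, so $\langle Z^{1,c},Z^{2,c}\rangle=\langle M^1,M^2\rangle=[M^1,M^2]=[X^1,X^2]$, and $(\ref{qc})$ yields the identity $[Z^1,Z^2]_t-[X^1,X^2]_t=\sum_{0\leq s\leq t}\Delta Z^1_s\Delta Z^2_s$, which is exactly what we want to recover.

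For the first assertion I would simply observe that Proposition~\ref{propphy} gives $\widehat{PHY}(\mathsf{Z}^1,\mathsf{Z}^2)^n_t\to^p[Z^1,Z^2]_t$ and Theorem~\ref{conpthy} gives $\widehat{PTHY}(\mathsf{Z}^1,\mathsf{Z}^2)^n_t\to^p[X^1,X^2]_t$; subtracting and invoking the identity above yields the convergence in probability to $\sum_{0\leq s\leq t}\Delta Z^1_s\Delta Z^2_s$. Only consistency is used here, which is why $[\mathrm{N}^\flat_2]$ suffices and no rate condition on $\rho_n$ is needed at this stage.

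For the second assertion I would run the same decomposition quantitatively. Apply Proposition~\ref{propphy} to $(\mathsf{Z}^1,\mathsf{Z}^2)$, giving $\widehat{PHY}(\mathsf{Z}^1,\mathsf{Z}^2)^n_t=[Z^1,Z^2]_t+O_p(b_n^{1/4})$; apply it again to the purely continuous data $(\mathsf{X}^1,\mathsf{X}^2)$, which is legitimate because a continuous semimartingale trivially fulfils $[\mathrm{K}_2]$ (take $\delta^l\equiv0$) while the remaining hypotheses of Proposition~\ref{propphy} do not involve the jump parts, giving $\widehat{PHY}(\mathsf{X}^1,\mathsf{X}^2)^n_t=[X^1,X^2]_t+O_p(b_n^{1/4})$. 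Next, Theorem~\ref{thmIAnoise} (with the exponent $\beta$ for which $[\mathrm{K}_\beta]$ holds) yields
\[
\widehat{PTHY}(\mathsf{Z}^1,\mathsf{Z}^2)^n_t-\widehat{PHY}(\mathsf{X}^1,\mathsf{X}^2)^n_t=o_p(b_n^{1/4})+O_p\big((b_n^{1/2}\rho_n^{-1})^{(r-2)/2}\big)+o_p(\rho_n^{1-\beta/2}).
\]
Here $b_n^{-(r-3)/2(r-2)}\rho_n\to\infty$ forces $b_n^{1/2}\rho_n^{-1}=o(b_n^{1/2(r-2)})$, so $(b_n^{1/2}\rho_n^{-1})^{(r-2)/2}=o(b_n^{1/4})$, and $\rho_n=O(b_n^{1/2(2-\beta)})$ forces $\rho_n^{1-\beta/2}=O(b_n^{1/4})$; thus both remainders are $o_p(b_n^{1/4})$, whence $\widehat{PTHY}(\mathsf{Z}^1,\mathsf{Z}^2)^n_t=[X^1,X^2]_t+O_p(b_n^{1/4})$. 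Subtracting this from the expansion of $\widehat{PHY}(\mathsf{Z}^1,\mathsf{Z}^2)^n_t$ and using the identity $[Z^1,Z^2]_t-[X^1,X^2]_t=\sum_{0\leq s\leq t}\Delta Z^1_s\Delta Z^2_s$ gives the claimed expansion.

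No step here is genuinely deep, since the corollary is just a recombination of the quoted results; the two points that need a little attention are (i) justifying the application of Proposition~\ref{propphy} to the continuous data $(\mathsf{X}^1,\mathsf{X}^2)$ — equivalently, isolating the expansion $\widehat{PHY}(\mathsf{X}^1,\mathsf{X}^2)^n_t=[X^1,X^2]_t+O_p(b_n^{1/4})$ directly, noting that under the corollary's weaker hypotheses we may not appeal to the stable central limit theorem of Theorem~\ref{AMN}; and (ii) the elementary arithmetic with the exponents, using $r>2$ (so $(r-2)/2>0$) and $\beta<2$ (so $1-\beta/2>0$), that collapses every remainder term of Theorem~\ref{thmIAnoise} into $o_p(b_n^{1/4})$.
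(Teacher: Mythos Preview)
Your proposal is correct and follows essentially the same approach the paper intends: the corollary is stated immediately after Proposition~\ref{propphy} as a direct consequence of combining it with Theorem~\ref{conpthy} and Theorem~\ref{thmIAnoise}, and the paper gives no separate proof. Your decomposition through the intermediate quantity $\widehat{PHY}(\mathsf{X}^1,\mathsf{X}^2)^n_t$, together with the two careful points you flag (applying Proposition~\ref{propphy} to the continuous data by taking $\delta^l\equiv0$, and the exponent arithmetic reducing the remainders of Theorem~\ref{thmIAnoise} to $o_p(b_n^{1/4})$), is exactly what is needed to make the implicit argument explicit.
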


\subsection{Autocorrelated noise}

We have so far assumed that the observation noise is not autocorrelated, conditionally on $\mathcal{F}^{(0)}$. In empirical studies of financial high-frequency data, however, there is a lot of evidence that microstructure noise is autocorrelated (see \cite{HL2006} and \cite{UO2009} for instance). In this subsection we briefly discuss the case that the observation noise is autocorrelated conditionally on $\mathcal{F}^{(0)}$ as \cite{CPV2011} did in the continuous case.

We focus on the synchronous case. That is, we assume that $S^i=T^i$ for all $i$. Note that in this case it holds that $\widehat{S}^k=\widehat{T}^k=R^k=S^k$ for all $k$. Let $(\lambda^l_u)_{u\in\mathbb{Z}_+}$ and $(\mu^l_u)_{u\in\mathbb{Z}_+}$ $(l=1,2)$ be four sequences of real numbers such that
\begin{equation}\label{weakdep}
\sum_{u=1}^{\infty}u|\lambda^l_u|<\infty
\quad\mathrm{and}\quad
\sum_{u=1}^{\infty}u|\mu^l_u|<\infty.
\end{equation}
We assume that the observation data $(\mathsf{Z}^1_{S^i})$ and $(\mathsf{Z}^2_{T^j})$ are of the form
\begin{equation}\label{depmodel}
\left.\begin{array}{l}
\displaystyle\mathsf{Z}^1_{S^i}=Z^1_{S^i}+\sum_{u=0}^i\lambda^1_u\zeta^1_{S^{i-u}}+b_n^{-1/2}\sum_{u=0}^i\mu^1_u(\underline{X}^1_{S^{i-u}}-\underline{X}^1_{S^{i-u-1}}),\\
\displaystyle\mathsf{Z}^2_{T^j}=Z^2_{T^j}+\sum_{u=0}^i\lambda^2_u\zeta^2_{T^{j-u}}+b_n^{-1/2}\sum_{u=0}^i\mu^2_u(\underline{X}^2_{T^{j-u}}-\underline{X}^2_{T^{j-u-1}}).
\end{array}\right\}
\end{equation}
In other words, the observation noise follows a kind of linear processes. Under such a situation the consistency of our estimators is still valid: 
\begin{prop}\label{depcon}
Suppose $(\ref{weakdep})$ and $(\ref{depmodel})$ are satisfied. Suppose also $[\mathrm{C}1]$-$[\mathrm{C}2]$, $[\mathrm{A}2]$, $[\mathrm{A}4]$, $[\mathrm{A}6]$, $[\mathrm{K}_2]$ and $[\mathrm{N}^\flat_2]$ are satisfied. Then
$\widehat{PHY}(\mathsf{Z}^1,\mathsf{Z}^2)^n_t\to^p[Z^1,Z^2]_t$
as $n\to\infty$ for any $t\in\mathbb{R}_+$. Furthermore, if $[\mathrm{T}]$ and $[\mathrm{N}^\flat_r]$ for some $r\in(2,\infty)$ hold, then $\widehat{PTHY}(\mathsf{Z}^1,\mathsf{Z}^2)^n_t\to^p[X^1,X^2]_t$
as $n\to\infty$ for any $t\in\mathbb{R}_+$.
\end{prop}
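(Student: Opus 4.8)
The plan is to reduce the linearly filtered noise $(\ref{depmodel})$ to a noise of the form $(\ref{MSnoise})$ by a Beveridge--Nelson (summation-by-parts) device, and then to appeal to the results already proved for such noise. Set $\bar\lambda^l=\sum_{u\ge0}\lambda^l_u$ and $\bar\mu^l=\sum_{u\ge0}\mu^l_u$ ($l=1,2$), finite by $(\ref{weakdep})$, and introduce the \emph{effective} observation data $\mathsf{Z}^{\prime 1}_{S^i}=Z^1_{S^i}+b_n^{-1/2}\bar\mu^1(\underline{X}^1_{S^i}-\underline{X}^1_{S^{i-1}})+\bar\lambda^1\zeta^1_{S^i}$ and $\mathsf{Z}^{\prime 2}_{T^j}=Z^2_{T^j}+b_n^{-1/2}\bar\mu^2(\underline{X}^2_{T^j}-\underline{X}^2_{T^{j-1}})+\bar\lambda^2\zeta^2_{T^j}$. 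These are of the form $(\ref{MSnoise})$ with $\underline{X}^l$ replaced by the continuous semimartingale $\bar\mu^l\underline{X}^l$ and $Q_t$ replaced by its image under $(z_1,z_2)\mapsto(\bar\lambda^1z_1,\bar\lambda^2z_2)$, while $Z^1,Z^2$ are unchanged; and $[\mathrm{C}1]$--$[\mathrm{C}2]$, $[\mathrm{A}2]$, $[\mathrm{A}4]$, $[\mathrm{A}6]$, $[\mathrm{K}_2]$, $[\mathrm{N}^\flat_r]$, $[\mathrm{T}]$ are all preserved by these constant rescalings. Hence Proposition \ref{propphy} gives $\widehat{PHY}(\mathsf{Z}^{\prime 1},\mathsf{Z}^{\prime 2})^n_t\to^p[Z^1,Z^2]_t$, and Theorem \ref{thmIAnoise} with $\beta=2$ --- whose error terms $o_p(b_n^{1/4})$, $O_p((b_n^{1/2}\rho_n^{-1})^{(r-2)/2})$, $o_p(\rho_n^{1-\beta/2})=o_p(1)$ all vanish, since $(\ref{threshold2})$ and $\xi'<1$ force $b_n^{1/2}=o(\rho_n)$ --- combined with the consistency of the pre-averaged HY estimator (Theorem \ref{consistency}) gives $\widehat{PTHY}(\mathsf{Z}^{\prime 1},\mathsf{Z}^{\prime 2})^n_t\to^p[X^1,X^2]_t$. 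So it is enough to show $\widehat{PHY}(\mathsf{Z}^1,\mathsf{Z}^2)^n_t-\widehat{PHY}(\mathsf{Z}^{\prime 1},\mathsf{Z}^{\prime 2})^n_t\to^p0$ and $\widehat{PTHY}(\mathsf{Z}^1,\mathsf{Z}^2)^n_t-\widehat{PTHY}(\mathsf{Z}^{\prime 1},\mathsf{Z}^{\prime 2})^n_t\to^p0$. (In the synchronous case $\widehat{S}^k=\widehat{T}^k=R^k=S^k$, so $\widehat{\mathcal{I}}=\mathcal{I}$, $\widehat{\mathcal{J}}=\mathcal{J}$ and $\bar K^{ij}=1_{\{|i-j|<k_n\}}$.)

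\emph{The noise-reduction estimate.} The signal parts of $\mathsf{Z}^l$ and $\mathsf{Z}^{\prime l}$ coincide, so $\varepsilon^l(\widehat{\mathcal{I}})^i:=\overline{\mathsf{Z}}^l(\widehat{\mathcal{I}})^i-\overline{\mathsf{Z}}^{\prime l}(\widehat{\mathcal{I}})^i$ depends only on the noise. By the Beveridge--Nelson identity $\sum_{u\ge0}\lambda^l_uz^u=\bar\lambda^l-(1-z)\sum_{u\ge0}\big(\sum_{v>u}\lambda^l_v\big)z^u$ (the second factor has absolutely summable coefficients because $\sum_uu|\lambda^l_u|<\infty$), and its analogue for $\mu^l$, each filtered noise increment equals $\bar\lambda^l\zeta^l_{S^m}$ (resp. $\bar\mu^l$ times the corresponding $\underline{X}^l$-increment) plus a first difference of an absolutely summable moving average. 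Substituting into the pre-averaging sum and summing by parts twice, the main parts reconstruct $\overline{\mathsf{Z}}^{\prime l}(\widehat{\mathcal{I}})^i$ exactly, while $\varepsilon^l(\widehat{\mathcal{I}})^i$ becomes a linear functional of $(\zeta^l_{S^v})_v$ and of the $\underline{X}^l$-increments whose weights are \emph{second} differences of $g$ at scale $1/k_n$, plus boundedly many boundary and kink weights; since $g'$ is piecewise Lipschitz these weights are $O(k_n^{-2})$ in the bulk and $O(k_n^{-1})$ at the $O(1)$ exceptional places. Using the conditional independence of $(\zeta^l_{S^v})_v$, the local boundedness of $\Psi$ from $[\mathrm{N}^\flat_r]$, and $[\mathrm{C}2]$ and $[\mathrm{A}4]$ for the $\underline{X}^l$-part, this gives (after the customary localization) $E[|\varepsilon^l(\widehat{\mathcal{I}})^i|^2]=O(b_n)$ uniformly over $i$ with $\widehat{S}^{i+k_n}\le t$, and likewise for $\widehat{\mathcal{J}}$ --- a factor $O(b_n^{1/2})$ below the $L^2$-size of the pre-averaged noise itself, whence also $\max_i|\varepsilon^l(\widehat{\mathcal{I}})^i|=o_p(\rho_n^{1/2})$.

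\emph{The PHY comparison.} As $\widehat{PHY}$ is bilinear in its two pre-averaged inputs, $ab-a'b'=(a-a')b+a'(b-b')$ reduces $\widehat{PHY}(\mathsf{Z}^1,\mathsf{Z}^2)^n_t-\widehat{PHY}(\mathsf{Z}^{\prime 1},\mathsf{Z}^{\prime 2})^n_t$ to sums $(\psi_{HY}k_n)^{-2}\sum_{i,j}\varepsilon^1(\widehat{\mathcal{I}})^i\,\overline{\mathsf{Z}}^2(\widehat{\mathcal{J}})^j\,\bar K^{ij}$ and the mirror term with $\varepsilon^2$. Since each $i$ meets at most $2k_n$ indices $j$ with $\bar K^{ij}=1$, two applications of Cauchy--Schwarz bound the expectation of such a sum (over indices up to $t$) by $2\psi_{HY}^{-2}k_n^{-1}\big(E\sum_i|\varepsilon^1(\widehat{\mathcal{I}})^i|^2\big)^{1/2}\big(E\sum_j|\overline{\mathsf{Z}}^2(\widehat{\mathcal{J}})^j|^2\big)^{1/2}$; the first factor is $O(1)$ by $[\mathrm{C}1]$ and the estimate above, and the standard energy bound for the noisy, jump-bearing pre-averaged data --- the jump part controlled by $[\mathrm{K}_2]$, with each instant lying in at most $k_n$ windows --- gives $E\sum_j|\overline{\mathsf{Z}}^2(\widehat{\mathcal{J}})^j|^2=O(b_n^{-1/2})$. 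Since $k_n^{-1}=O(b_n^{1/2})$, the sum is $O_p(b_n^{1/4})\to0$, which is the first convergence.

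\emph{The PTHY comparison, and the main obstacle.} For $\widehat{PTHY}$ the same telescoping produces, besides terms handled as above, terms carrying the difference of truncation indicators $1_{\{|\overline{\mathsf{Z}}^l(\widehat{\mathcal{I}})^i|^2\le\varrho_n^l[i]\}}-1_{\{|\overline{\mathsf{Z}}^{\prime l}(\widehat{\mathcal{I}})^i|^2\le\varrho_n^l[i]\}}$; this is nonzero only on windows where $|\overline{\mathsf{Z}}^l(\widehat{\mathcal{I}})^i|$ lies within $|\varepsilon^l(\widehat{\mathcal{I}})^i|$ of $(\varrho_n^l[i])^{1/2}$, hence --- by $\max_i|\varepsilon^l|=o_p(\rho_n^{1/2})$ and the tightness in $[\mathrm{T}](\mathrm{ii})$ --- within an $o_p(1)$-fraction of the threshold, on which windows $|\overline{\mathsf{Z}}^{\prime l}(\widehat{\mathcal{I}})^i|\le(1+o_p(1))\rho_n^{1/2}$ and the matching factor from the other stream is $O(\rho_n^{1/2})$. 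Bounding the resulting double sum as in the PHY step, the matter reduces to controlling the number of these ``near-threshold'' windows, weighted by $\rho_n/k_n$. This is exactly the estimate at the heart of the truncation analysis in the proof of Theorem \ref{thmIAnoise} (and of the truncated Hayashi--Yoshida estimator in \cite{Koike2012}): on windows without a detectable jump one has $|\overline{\mathsf{Z}}^l(\widehat{\mathcal{I}})^i|^2=O_p(b_n^{1/2}|\log b_n|)=o_p(\rho_n)$, while a jump large enough to affect a window drives the pre-averaged statistic above the threshold, so the surviving mass is negligible after the $\rho_n/k_n$ weighting; I would invoke that estimate, noting that the $o_p(\rho_n^{1/2})$ perturbation $\varepsilon^l$ only dilates the transitional region by a factor $1+o_p(1)$. (Equivalently, one may re-run the proof of Theorem \ref{thmIAnoise} directly for $\mathsf{Z}^l$, since it enters the noise only through $L^2$- and maximal bounds on the pre-averaged noise, which the estimate above preserves.) I expect this transfer of the threshold robustness through the $\varepsilon^l$-perturbation --- in the infinite-activity regime $[\mathrm{K}_2]$, where no window is literally jump-free --- to be the main technical point, the Beveridge--Nelson reduction and the PHY comparison being comparatively routine.
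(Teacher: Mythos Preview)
Your approach is essentially the same as the paper's: both reduce the linearly filtered noise to one of the standard form $(\ref{MSnoise})$ via the Beveridge--Nelson decomposition (your $\bar\lambda^l$, $\bar\mu^l$ are the paper's $\tilde\lambda^l_0=\sum_{v\ge0}\lambda^l_v$, $\tilde\mu^l_0$, and your $\mathsf{Z}^{\prime l}$ are the paper's $\tilde{\mathsf{Z}}^l$), then invoke Proposition~\ref{propphy} and the consistency of $\widehat{PTHY}$ (the paper cites Corollary~\ref{conpthy}, which packages your Theorem~\ref{thmIAnoise} $+$ Theorem~\ref{consistency} argument). The paper's proof is in fact quite terse on the comparison step---it derives the Abel summation identity exhibiting the second differences $\Delta(g)^n_{p+1}-\Delta(g)^n_p$ acting on the tail-summed process $\tilde\zeta^l$, then simply asserts that ``by using the above formulas we can show'' the $\widehat{PHY}$ and $\widehat{PTHY}$ differences vanish---whereas you supply the $L^2$ and maximal bounds on $\varepsilon^l$ and the truncation-indicator perturbation analysis that actually carry the conclusion; in this sense your proposal is a fleshed-out version of the paper's sketch rather than a different route.
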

We give a proof of Proposition \ref{depcon} in Appendix \ref{proofdepcon}. The proof is based on a Beveridge-Nelson type  decomposition for the noise. It might be possible to prove the asymptotic mixed normality of the PTHY estimator under the above model by refining on the proof of the above proposition given in the present article. On the other hand, in the nonsynchronous case we will need to model the autocorrelation structure of the noise on the time dependence in calender time (as \cite{UO2009} did) rather than tick time (as in the above). This is because we have two axes of tick time, $(S^i)$ and  $(T^j)$, in the nonsynchronous case and this fact complicates the analysis of our estimator. With an appropriate modeling of the autocorrelation structure of the noise, we can probably obtain a Beveridge-Nelson type  decomposition for the noise even in the nonsynchronous case. As a result, we might be able to prove the consistency and the asymptotic normality of our estimator. However, these topics are beyond the scope of this paper, so that we postpone them to further research.


\subsection{Estimation of asymptotic variance}

In this subsection we shall briefly discuss the estimation of the asymptotic variance of the PTHY estimator. This is necessary to construct feasible confidence intervals of this estimator, for example. We focus on the simple case that the endogenous terms of the microstructure noise are absent, i.e. $\underline{X}^1=\underline{X}^2=0$. In this case our aim can be achieved by a \textit{kernel-based approach} as in \cite{HY2011} and \cite{Koike2012}.

More precisely, let $(h_n)$ be a sequence of positive numbers tending to 0 as $n\rightarrow\infty$. For any $s\in\mathbb{R}_+$, put
\begin{align*}
\widehat{[X^{l},X^{l'}]'}_s=h_n^{-1}\left(\widehat{PTHY}(\mathsf{Z}^{l},\mathsf{Z}^{l'})^n_s-\widehat{PTHY}(\mathsf{Z}^{l}, \mathsf{Z}^{l'})^n_{(s-h_n)_+}\right),\qquad \widehat{[X^{l}]'}_s=\widehat{[X^{l},X^{l}]'}_s,\qquad l,l'=1,2
\end{align*}
and
\begin{align*}
&\widehat{\Psi^{11}}_s=-\frac{1}{h_n k_n^2}\sum_{i:s-h_n<S^{i+1}\leq s}(\mathsf{Z}^1_{S^i}-\mathsf{Z}^1_{S^{i-1}})(\mathsf{Z}^1_{S^{i+1}}-\mathsf{Z}^1_{S^{i}}),\\
&\widehat{\Psi^{22}}_s=-\frac{1}{h_n k_n^2}\sum_{j:s-h_n<T^{j+1}\leq s}(\mathsf{Z}^2_{T^j}-\mathsf{Z}^2_{T^{j-1}})(\mathsf{Z}^2_{T^{j+1}}-\mathsf{Z}^2_{T^{j}}),\\
&\widehat{\Psi^{12}\chi}_s=-\frac{1}{2 h_n k_n^2}\sum_{k:s-h_n<R^{k+1}\leq s}\left\{(\mathsf{Z}^1_{\widehat{S}^k}-\mathsf{Z}^1_{\widehat{S}^{k-1}})(\mathsf{Z}^2_{\widehat{T}^{k+1}}-\mathsf{Z}^2_{\widehat{T}^{k}})+(\mathsf{Z}^1_{\widehat{S}^{k+1}}-\mathsf{Z}^1_{\widehat{S}^{k}})(\mathsf{Z}^2_{\widehat{T}^{k}}-\mathsf{Z}^2_{\widehat{T}^{k-1}})\right\}1_{\{\widehat{S}^k=\widehat{T}^k\}}.
\end{align*}
Then we set
\begin{align}
\widehat{w^2}_{R^k}
&=k_n\psi_{HY}^{-4}\left[\kappa\left\{\widehat{[X^1]'}_{R^{k}}\widehat{[X^2]'}_{R^{k}}+\left(\widehat{[X^1,X^2]'}_{R^{k}}\right)^2\right\}
+\widetilde{\kappa}\left\{\widehat{\Psi^{11}}_{R^{k}}\widehat{\Psi^{22}}_{R^{k}}+\left(\widehat{\Psi^{12}\chi}_{R^{k}}\right)^2\right\}\right.\nonumber\\
&\hphantom{=k_n\psi_{HY}^{-4}[}\left.+\overline{\kappa}\left\{\widehat{[X^1]'}_{R^{k}}\widehat{\Psi^{22}}_{R^{k}}+\widehat{[X^2]'}_{R^{k}}\widehat{\Psi^{11}}_{R^{k}}+2\widehat{[X^1,X^2]'}_{R^{k}}\widehat{\Psi^{12}\chi}_{R^{k}}\right\}\right]|\Gamma^{k}||\Gamma^{k+1}|\label{what}
\end{align}
for every $k\in\mathbb{N}$ and $\widehat{\int_0^t w_s^2\mathrm{d}s}=b_n^{-1/2}\sum_{k:R^k\leq t}\widehat{w^2}_{R^k}$ for every $t\in\mathbb{R}_+$. 
\begin{prop}
Suppose $[\mathrm{A}1'](\mathrm{i})$-$(\mathrm{iii})$ and $[\mathrm{A}2]$-$[\mathrm{A}6]$ are satisfied. Suppose also $[\mathrm{N}_r]$ holds for some $r\in[8,\infty)$, $[\mathrm{K}_\beta]$ holds for some $\beta\in[0,2-\frac{1}{2\xi'-1})$ and $[\mathrm{T}]$ holds with $b_n^{-(r-3)/2(r-2)}\rho_n\rightarrow\infty$ and $\rho_n=O(b_n^{1/2(2-\beta)})$ as $n\rightarrow\infty$. Moreover, suppose $\underline{X}^1=\underline{X}^2=0$. Then
\begin{equation*}
\widehat{\int_0^{\cdot} w_s^2\mathrm{d}s}\xrightarrow{ucp}\int_0^{\cdot} w_s^2\mathrm{d}s
\end{equation*}
as $n\rightarrow\infty$, provided that $h_n^{-1}b_n^{1/4}\rightarrow 0$ and $\sup_{0\leq t\leq T}h_n^{-1}b_n(N^n_t-N^n_{(t-h_n)_+})$ is tight as $n\to\infty$ for any $T>0$.
\end{prop}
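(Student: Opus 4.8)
The plan is to recognise $\widehat{\int_0^t w_s^2\mathrm{d}s}=b_n^{-1/2}\sum_{k:R^k\le t}\widehat{w^2}_{R^k}$ as a Riemann-type sum. Since $[\mathrm{A}1'](\mathrm{i})$ gives $|\Gamma^k|\approx b_nG_{R^k}$ (hence $b_n^{-1/2}k_n|\Gamma^k||\Gamma^{k+1}|\approx\theta b_nG_{R^k}^2$) while $\sum_{k:R^k\le t}\phi_{R^k}=\int_0^t\phi_s\,\mathrm{d}N^n_s\approx b_n^{-1}\int_0^t\phi_s G_s^{-1}\,\mathrm{d}s$, substituting the (yet to be proved) spot limits $\widehat{[X^l,X^{l'}]'}_s\to^p[X^l,X^{l'}]'_s$, $\widehat{\Psi^{ll}}_s\to^p\theta^{-2}G_s^{-1}\Psi^{ll}_s$ and $\widehat{\Psi^{12}\chi}_s\to^p\theta^{-2}G_s^{-1}\Psi^{12}_s\chi_s$ into the $\kappa$-, $\widetilde{\kappa}$- and $\overline{\kappa}$-blocks of $\widehat{w^2}$ reproduces, after the bookkeeping of the powers of $\theta$ and $G$, exactly the three blocks of $(\ref{avar})$ (we are in case (a), where $\underline{X}^1=\underline{X}^2=0$). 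Thus the proof splits into (i) the four spot convergences, in a form uniform enough over compacts to survive integration, and (ii) a routine Riemann-sum passage.

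\textbf{Spot continuous covariances.} Theorem \ref{AMNIAnoise}(a), which gives convergence in $\mathbb{D}(\mathbb{R}_+)$, implies $\sup_{0\le s\le t}|\widehat{PTHY}(\mathsf{Z}^l,\mathsf{Z}^{l'})^n_s-[X^l,X^{l'}]_s|=O_p(b_n^{1/4})$ for each $l,l'\in\{1,2\}$ (with $\mathsf{Z}^l$ in both slots for the diagonal entries), hence
\[
\widehat{[X^l,X^{l'}]'}_s=h_n^{-1}\bigl([X^l,X^{l'}]_s-[X^l,X^{l'}]_{(s-h_n)_+}\bigr)+O_p(h_n^{-1}b_n^{1/4})
\]
uniformly in $s\le t$. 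The remainder is $o_p(1)$ since $h_n^{-1}b_n^{1/4}\to0$, and the leading term tends to $[X^l,X^{l'}]'_s$ in the relevant integrated sense by the $L^2$-H\"older control of $[X^l,X^{l'}]'$ in $[\mathrm{A}3]$ (and of the drift densities in $[\mathrm{A}5]$).

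\textbf{Noise spot estimators.} Since $\underline{X}^l=0$, split each raw increment as $\mathsf{Z}^l_{S^i}-\mathsf{Z}^l_{S^{i-1}}=(X^l_{S^i}-X^l_{S^{i-1}})+(\zeta^l_{S^i}-\zeta^l_{S^{i-1}})+(J^l_{S^i}-J^l_{S^{i-1}})$, with $J^l$ the jump part, which has locally finite variation because $[\mathrm{K}_\beta]$ forces $\beta<1$ here. In the lag-one products defining $\widehat{\Psi^{11}}$, $\widehat{\Psi^{22}}$ and $\widehat{\Psi^{12}\chi}$ I would argue that: (a) the noise$\times$noise block has $\mathcal{F}^{(0)}$-conditional mean $-\Psi^{11}_{S^i}$ (respectively $-\Psi^{12}_{\widehat{S}^k}1_{\{\widehat{S}^k=\widehat{T}^k\}}$, before the $\tfrac12$-symmetrisation), so after the $h_nk_n^2\asymp h_nb_n^{-1}$ normalisation it produces the limits $\theta^{-2}G_s^{-1}\Psi^{11}_s$ etc.\ once the conditional means are summed --- a summation relying on $[\mathrm{A}1'](\mathrm{i}),(\mathrm{iii})$ (convergence of $G^n$, $\chi^n$), the moment bound $[\mathrm{A}1'](\mathrm{ii})$, the quasi-left-continuity of $\Psi$ in $[\mathrm{N}_r]$, and $[\mathrm{C}1]$ together with the tightness of $\sup_{0\le t\le T}h_n^{-1}b_n(N^n_t-N^n_{(t-h_n)_+})$ to control (uniformly) the number of summands and the local observation frequency; (b) the mixed noise$\times$continuous and the continuous$\times$continuous blocks are mean-zero or negligible after normalisation, using $h_n\gg b_n$ (a consequence of $h_n^{-1}b_n^{1/4}\to0$); (c) the fluctuation about the $\mathcal{F}^{(0)}$-conditional mean is $O_p(\sqrt{b_n/h_n})$ by a Burkholder-type estimate exploiting the lag-one dependence and the moment bound $[\mathrm{N}_r]$; and (d) the jump contribution is dominated by the total variation of $J^l$ over $(s-h_n,s]$ times the supremum of noise increments over that window, hence $o_p(1)$ after normalisation. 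Throughout, $[\mathrm{A}2]$ makes the relevant sampling times $\mathbf{G}^{(n)}$-stopping times so that the conditioning is legitimate, and $[\mathrm{A}6]$ absorbs the drift terms.

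\textbf{Assembly, and the main obstacle.} Substituting the spot limits into $\widehat{w^2}_{R^k}$ and carrying out the Riemann-sum computation of the first paragraph --- using the tightness of $\bigl(\sup_{0\le s\le t}G(\rho)^n_s\bigr)_n$ and of the threshold processes supplied by $[\mathrm{T}]$ to dominate the products and push the (uniform in $s\le t$) errors through, and noting that the prefactor-weighted mass $b_n^{-1/2}k_n\sum_{k:R^k\le t}|\Gamma^k||\Gamma^{k+1}|$ is $O_p(t)$ --- yields $\widehat{\int_0^t w_s^2\mathrm{d}s}\to^p\int_0^t w_s^2\mathrm{d}s$; uniformity in $t$ over compacts, hence the claimed ucp convergence, follows because all the error bounds above are uniform. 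I expect the main obstacle to be the noise step: one must simultaneously neutralise the jump part of $\mathsf{Z}^l$ in the \emph{untruncated} lag-one products, obtain the $O_p(\sqrt{b_n/h_n})$ fluctuation bound in the genuinely nonsynchronous, endogenous-sampling regime where $\widehat{S}^k$ and $\widehat{T}^k$ are only $\mathbf{G}^{(n)}$-stopping times, and pass from $\mathcal{F}^{(0)}$-conditional means to the integrals $\int(\cdot)G_s^{-1}\mathrm{d}s$ uniformly in $t$ using only the Skorokhod-convergence hypotheses $[\mathrm{A}1']$; the spot-covariance step and the final Riemann-sum passage otherwise follow the pattern of the corresponding results in \cite{HY2011} and \cite{Koike2012}.
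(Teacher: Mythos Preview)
Your strategy is sound and would eventually work, but the paper takes a much shorter route that you should be aware of. Two simplifications:

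First, the paper observes that since the limit $\int_0^\cdot w_s^2\,\mathrm{d}s$ is continuous and non-decreasing, pointwise convergence in probability automatically upgrades to ucp convergence. This bypasses all of your effort to obtain error bounds uniform in $t$.

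Second, rather than re-establishing the spot convergences of $\widehat{[X^l,X^{l'}]'}$, $\widehat{\Psi^{ll}}$ and $\widehat{\Psi^{12}\chi}$ from scratch, the paper simply invokes Lemma~10.2(a) of \cite{Koike2012phy}, which (in the companion paper, for the continuous case) allows replacing $|\Gamma^{k+1}|$ by $G^n_{R^k}$ in the definition of $\widehat{w^2}_{R^k}$; after that substitution the sum becomes a genuine Riemann sum with integrand converging pointwise, and the dominated convergence theorem finishes the job. The passage from jumps to no jumps is implicit in the hypotheses: under $[\mathrm{T}]$ with $b_n^{-(r-3)/2(r-2)}\rho_n\to\infty$ and $\rho_n=O(b_n^{1/2(2-\beta)})$, Theorem~\ref{thmIAnoise} already gives $\widehat{PTHY}(\mathsf{Z}^l,\mathsf{Z}^{l'})^n=\widehat{PHY}(\mathsf{X}^l,\mathsf{X}^{l'})^n+o_p(b_n^{1/4})$, so the spot estimators $\widehat{[X^l,X^{l'}]'}$ behave exactly as in the continuous case treated in \cite{Koike2012phy}.

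Your more explicit programme has the merit of being self-contained, and your identification of the ``main obstacle'' --- controlling jump contamination in the \emph{untruncated} lag-one noise estimators --- is a genuine issue that the paper's proof hides inside the cited lemma. If you pursue your route, be careful with step~(d): under $[\mathrm{K}_\beta]$ with $\beta<1$ the small-jump compensated martingale $\kappa(\delta^l)\star(\mu^l-\nu^l)$ need not have locally finite variation, so ``total variation of $J^l$ over $(s-h_n,s]$'' is not directly available; you would instead bound the jump$\times$noise cross terms in $L^2$ via the predictable quadratic variation $\langle L^l\rangle$, which \emph{is} controlled by $\int\psi^2\,F^l(\mathrm{d}x)$.
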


\begin{proof}
Since $\int_0^\cdot w_s^2\mathrm{d}s$ is a continuous non-decreasing process, it is sufficient to prove the pointwise convergence. By Lemma 10.2(a) of \cite{Koike2012phy}, we have $b_n^{-1/2}\widehat{\int_0^{t} w_s^2\mathrm{d}s}-b_n^{1/2}\sum_{k:R^k\leq t}\widetilde{w^2}_{R^k}$ as $n\to\infty$ for every $t$, where $\widetilde{w^2}_{R^k}$ is defined by $(\ref{what})$ with replacing $|\Gamma^{k+1}|$ by $G^n_{R^k}$. Then, we obtain the desired result by the assumptions and the dominated convergence theorem.
\end{proof}

The above approach has the disadvantage that it depends strongly on the particular form of the asymptotic variance due to the noise. In fact, it is not adapted to the case that the endogenous terms of the noise is present because we have so far known no estimator for the statistic $\int_0^t\left([\underline{X}^1,X^2]'_s F^1_s-[X^1,\underline{X}^2]'_s F^2_s\right)^2G_s^{-1}\mathrm{d}s$ which is the asymptotic variance due to the presence of the endogenous noise. We will also need to modify it if the noise is autocorrelated since we will need to replace the covariance matrix $\Psi$ of the noise in the asymptotic variance with the long-run covariance matrix of the noise (see the proof of Proposition \ref{depcon} in Appendix \ref{proofdepcon}). To avoid this problem, we might rely on the approach used in Section 4 of \citet{CPV2011} or the \textit{subsampling approach} developed by \citet{Kal2011} recently though it remains for further research to verify the theoretical validity of them.


\section{Simulation study}\label{simulation}
In this section, we examine the finite sample performance of our estimators by using Monte Carlo experiments.

\subsection{Choice of the threshold processes}

As is well known, the thresholding method is often sensitive to the selection of thresholds in finite samples; see \cite{S2010} or the Web Appendix of \cite{MG2012} for instance. Therefore, it is important to determine a reasonable rule of selecting thresholds. Here we present an easy but effective way to determine thresholds. Formal study of methods for optimal threshold selection in a given model is an important issue for the future.

We will determine the thresholds for individual processes so that we focus on the univariate case. First we compute an auxiliary estimator $\widehat{\Sigma}^n_t$ for the spot variance process $\Sigma_t=\theta\psi_2[X^1]'_t+\frac{1}{\theta}\psi_1\Psi^{11}_t$ for each sampling time $t$, where $\psi_1=\int_0^1g'(s)^2\mathrm{d}s$ and $\psi_2=\int_0^1g(s)^2\mathrm{d}s$. In this paper we will use a numerical derivative of the pre-averaged bipower variation, i.e.
\begin{align*}
\widehat{\Sigma}_{\widehat{S}^i}=\frac{\mu_1^{-2}}{K-2k_n+1}\sum_{p=i-K}^{i-2 k_n}|\overline{\mathsf{Z}}^{1}(\widehat{\mathcal{I}})^p||\overline{\mathsf{Z}}^{1}(\widehat{\mathcal{I}})^{p+k_n}|,\qquad i=K,K+1,\dots,N
\end{align*}
and $\widehat{\Sigma}_{\widehat{S}^i}=\widehat{\Sigma}_{\widehat{S}^K}$ if $i<K$. Here, $\mu_1$ is the absolute moment of the standard normal distribution, $N$ is the number of the available pre-averaging data $(\overline{\mathsf{Z}}^{1}(\widehat{\mathcal{I}})^i)$ and $K$ is a bandwidth parameter such that $K=O(b_n^{-\alpha})$ as $n\rightarrow\infty$ for some $\alpha\in(0.5,1)$. We will set $K=\lceil N^{3/4}\rceil$ below. Such a kind of spot variance estimator was studied in \citet{BJK2012}. Then we choose
\begin{equation}\label{PLUT}
\varrho^1_n(\widehat{S}^i)=2\log(N)^{1+\varepsilon}\widehat{\Sigma}_{\widehat{S}^i},\qquad i=0,1,\dots,N
\end{equation}
for some $\varepsilon>0$. We will set $\varepsilon=0.2$ below.

The heuristic idea behind the above choice of thresholds is as follows. First we recall the following classic result:
\begin{theorem}[\protect\citet{Pickands1967}, Theorem 3.4]\label{univ}
Let $(X_i)_{i\in\mathbb{N}}$ be a stationary Gaussian process such that $E[X_i]=0$, $E[X_i^2]=1$ and $E[X_iX_{i+k}]=\gamma(k)$. If $\lim_{k\to\infty}\gamma(k)=0$, then $\max_{1\leq i\leq n}X_i/\sqrt{2\log n}\to1,$ almost surely, as $n\to\infty$.
\end{theorem}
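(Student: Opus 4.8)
Write $M_n=\max_{1\le i\le n}X_i$; the assertion is $M_n/\sqrt{2\log n}\to1$ a.s. Since $n\mapsto M_n$ is nondecreasing while $n\mapsto\sqrt{\log n}$ varies slowly, it suffices to control $M_{n_k}$ along the geometric subsequence $n_k=2^k$ and then interpolate, using $\log n_k/\log n_{k+1}=k/(k+1)\to1$. I would prove $\limsup_n M_n/\sqrt{2\log n}\le1$ and $\liminf_n M_n/\sqrt{2\log n}\ge1$ separately, the first being soft and the second being where the hypothesis $\gamma(k)\to0$ is actually used.

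\textbf{Upper bound.} Fix $\varepsilon>0$ and put $u_n=(1+\varepsilon)\sqrt{2\log n}$. The Gaussian tail bound $P(X_1>u)\le\frac{1}{u\sqrt{2\pi}}e^{-u^2/2}$ and a union bound give $P(M_n>u_n)\le n\,\bar\Phi(u_n)\le\frac{c}{\sqrt{\log n}}\,n^{-2\varepsilon-\varepsilon^2}$, which is summable along $n_k=2^k$; by the Borel--Cantelli lemma, almost surely $M_{2^k}\le(1+\varepsilon)\sqrt{2\log 2^k}$ for all large $k$, and monotonicity then yields $\limsup_n M_n/\sqrt{2\log n}\le1+\varepsilon$ a.s. Letting $\varepsilon\downarrow0$ along a countable sequence gives $\limsup_n M_n/\sqrt{2\log n}\le1$ a.s. Note that this half ignores the dependence structure entirely.

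\textbf{Lower bound.} Fix $\varepsilon>0$; it suffices to show that almost surely $M_n>(1-\varepsilon)\sqrt{2\log n}$ for all large $n$. By monotonicity it is enough to choose blocks $B_k$ with $\max B_k=n_k$, with $n_k\uparrow\infty$ and $\log n_{k-1}/\log n_k\to1$, and to prove that the events $A_k=\{\max_{i\in B_k}X_i>u_k\}$ with $u_k=(1-\varepsilon)\sqrt{2\log n_k}$ occur for all large $k$. Taking $|B_k|$ large enough that $u_k$ lies below the ``natural scale'' $\sqrt{2\log|B_k|}$ of a block of that length, the i.i.d.\ comparison probability satisfies $P(\text{no exceedance in }B_k)\le(1-\bar\Phi(u_k))^{|B_k|}\le e^{-|B_k|\bar\Phi(u_k)}$ with $|B_k|\bar\Phi(u_k)\to\infty$, hence is summable in $k$. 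The real work is to pass from the i.i.d.\ comparison to the true probability $P(A_k^c)=P(\max_{i\in B_k}X_i\le u_k)$: here I would invoke Berman's normal comparison lemma, bounding the difference of the two orthant probabilities by $C\sum_{i<j\in B_k}|\gamma(j-i)|\exp\!\big(-u_k^2/(1+|\gamma(j-i)|)\big)$, and control this sum via $\gamma(k)\to0$ — splitting the lags at a level $p$ (using $|\gamma(m)|<1$ for $m\le p$ and $\sup_{m>p}|\gamma(m)|\to0$ as $p\to\infty$) together with the Ces\`aro fact $\frac1n\sum_{m\le n}|\gamma(m)|\to0$, and coordinating $p=p_k$ and $n_k$ so that the resulting error is summable. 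Once $\sum_k P(A_k^c)<\infty$, Borel--Cantelli gives $M_{n_k}\ge\max_{i\in B_k}X_i>u_k$ eventually, and the interpolation yields $\liminf_n M_n/\sqrt{2\log n}\ge1-\varepsilon$; letting $\varepsilon\downarrow0$ finishes the proof together with the upper bound. (A variant: show $P(A_k)$ is bounded below by a positive constant via a Paley--Zygmund second-moment estimate, then use that $\gamma(k)\to0$ makes the $A_k$ asymptotically independent to conclude they occur infinitely often.)

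\textbf{Main obstacle.} The delicate point is precisely the dependence control in the lower bound: the hypothesis gives only $\gamma(k)\to0$ with no decay rate, so the comparison error contributed by the ``bulk'' of pairs inside a block is the bottleneck, and making it negligible relative to the (very small) target probability forces a careful, simultaneous choice of the block lengths, the level sequence $u_k$, the lag cutoff $p_k$, and the subsequence $n_k$; the Ces\`aro averaging of $|\gamma|$ is what plays the role that absolute summability of the correlations would play in the easy case. The upper bound, by contrast, is routine and blind to the correlation structure.
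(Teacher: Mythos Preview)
The paper does not prove this theorem at all: it is quoted verbatim as a classical result from \cite{Pickands1967} and used only heuristically to motivate the pre-averaged local universal threshold in Section~\ref{simulation}. There is therefore no ``paper's own proof'' to compare against.

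Your sketch follows the standard route in the extreme-value literature (upper bound by a union bound plus Borel--Cantelli, lower bound via Berman's normal comparison lemma along well-chosen blocks), and you have correctly located the only genuine difficulty: with merely $\gamma(k)\to0$ and no rate, making the comparison error summable requires a delicate coordination of block sizes, levels, and lag cutoffs, exploiting Ces\`aro averaging of $|\gamma|$. This is indeed where Pickands' original argument does its work, and your identification of this obstacle is accurate. One caution: many textbook presentations (e.g.\ Leadbetter--Lindgren--Rootz\'en) obtain the analogous results under the stronger hypothesis $\gamma(n)\log n\to0$, which makes the comparison-lemma bookkeeping much cleaner; getting by with only $\gamma(k)\to0$ as Pickands does is genuinely more subtle, and your outline would need to be executed with some care at that step to avoid tacitly assuming a rate.
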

The most important point of the above theorem is that the random variables $X_i$, $i=1,2,\dots,$ in the theorem can have a kind of dependence structure. This fact is crucial for the present situation because the pre-averaging data $(\overline{\mathsf{Z}}^{1}(\widehat{\mathcal{I}})^i)$ is $k_n$-dependent. As a result, Theorem \ref{univ} has the following implication: Suppose that the observation data is given by a scaled Brownian motion with i.i.d.~Gaussian noise. That is, suppose that $\mathsf{Z}_{S^i}=\sigma W_{S^i}+u_i$, $i=0,1,\dots,$ where $\sigma>0$, $W_t$ is a standard Wiener process and $(u_i)$ is an i.i.d.~random variables independent of $W$ with $u_i\sim N(0,\omega^2)$. Suppose also that $(S^i)$ is an equidistant sampling scheme. Then the pre-averaging data $(\overline{\mathsf{Z}}^{1}(\widehat{\mathcal{I}})^i)$ is a centered stationary Gaussian process with the autocovariance function vanishing at infinity, so that Theorem \ref{univ} yields
\begin{equation*}
\max_{0\leq i\leq N-1}\overline{\mathsf{Z}}^{1}(\widehat{\mathcal{I}})^i\Big/\sqrt{\Sigma\cdot 2\log N}\to1\qquad\mathrm{a.s.}
\end{equation*}
as $n\to\infty$, where $\Sigma=\theta\psi_2\sigma^2+\frac{1}{\theta}\omega^2$ is the variance of $\overline{\mathsf{Z}}^{1}(\widehat{\mathcal{I}})^i$. This result suggests that we may use $\Sigma\cdot 2\log N$ as thresholds. This idea has already been introduced as the \textit{universal threshold} by \cite{DJ1994} in the context of wavelet shrinkage. \cite{DJ1994} estimated the unknown parameter $\Sigma$ by the square of the median absolute deviation (MAD) of $(\overline{\mathsf{Z}}^{1}(\widehat{\mathcal{I}})^i)$ divided by 0.6745, the 0.75-quantile of the standard normal distribution. In the present situation $(\overline{\mathsf{Z}}^{1}(\widehat{\mathcal{I}})^i)$ is heteroscedastic in general, hence we need to replace $\Sigma$ with the spot variance process $\Sigma_t$ and estimate $\Sigma_t$ by $\widehat{\Sigma}^n_t$. Consequently, we have arrived at the threshold process given by $(\ref{PLUT})$, where we multiply the usual universal threshold by $(\log N)^\varepsilon$ to ensure the condition $(\ref{threshold2})$. 

Since the threshold process $(\ref{PLUT})$ can be regarded as a pre-averaging version of the local universal threshold proposed in \cite{Koike2012}, so we call $(\ref{PLUT})$ the \textit{pre-averaged local universal threshold} (abbreviated PLUT). The author conjectures that the PLUT satisfies the condition [T] under some mild regularity conditions though we do not investigate this topic in this paper and postpone it in the future. Instead, we will numerically show that the PLUT gives a reasonable choice of the threshold processes below.  

\subsection{Simulation design}

We simulate over the interval $t\in[0,1]$. We normalize one second to be $1/23400$, so that the interval $[0,1]$ contains $6.5$ hours. In generating the observation data, we discretize $[0,1]$ into a number $n=23400$ of intervals. 

In order to extract irregular, nonsynchronous observation times from $n$ equi-spaced division points, we generate random observation times $(S^i)$ and $(T^j)$ using two independent Poisson processes with intensity $n/\lambda_1$ and $n/\lambda_2$. Here $\lambda_l$ denotes the average waiting time for new data from process $\mathsf{Z}^l$, so that a typical simulation will have $n/\lambda_l$ observations of $\mathsf{Z}^l$, $l=1,2$. Following \cite{BNHLS2011}, we vary $\lambda:=(\lambda_1,\lambda_2)$ through the following configurations (3,6), (10,20) and (30,60). Note that because we are simulating in discrete time, it is possible to see common points to the observation times $(S^i)$ and $(T^j)$.

We consider three types of bivariate L\'evy processes $J_t$ with no Brownian components to introduce jumps to models. The specifics of the jump processes are as follows:

\begin{description}
\item[NO] $J\equiv0$, i.e. there are no jumps.

\item[SCP1] Let $L$ be a stratified normal inverse Gaussian compound Poisson
process with a single jump per unit time (i.e., the jump time is uniformly
distributed over $[0,1]$ and the jump size follows a normal inverse Gaussian
distribution). The jump size is drawn from $\varepsilon\sqrt{S}$, where $\varepsilon\indepe S$, $\varepsilon\sim N(0,1)$ and $S\sim IG(c,c^2/\gamma)$, so that $\mathrm{Var}[\varepsilon\sqrt{S}]=E[S]=c$ and $\mathrm{Var}[S]=c^3/(c^2/\gamma)=c\gamma$. Then, we set $J^1=J^2=L$.
\if0
and
\begin{equation*}
\Lambda=
\left(\begin{array}{cc}
1&\sqrt{1-(\rho^1)^2}\sqrt{1-(\rho^2)^2}\\
\sqrt{1-(\rho^1)^2}\sqrt{1-(\rho^2)^2}&1
\end{array}\right).
\end{equation*}
\fi


\item[VG] Let $L^1$ and $L^2$ be mutually independent variance Gamma processes such that $L^l_1\sim\varepsilon\sqrt{S}$, where $\varepsilon\indepe S$, $\varepsilon\sim N(0,1)$, and $S\sim \Gamma(c/\gamma,1/\gamma)$, so that
$\mathrm{Var}[\varepsilon^l\sqrt{S}]=E[S]=c$ for each $l=1,2$ and $\mathrm{Var}[S]=(c/\gamma)/(1/\gamma)^2=c\gamma$. Then, we set $J^1=L^1$ and $J^2=R L^1+\sqrt{1-R^2}L^2$.

\end{description}
In the simulation we set $c=0.1$ and $\gamma=0.25$. The value of $R$ is given for each model below. Note that each component of the above models coincides with the model simulated in \citet{Veraart2010}.

The observation data $(\mathsf{Z}^1_{S^i})$ and $(\mathsf{Z}^2_{T^j})$ are generated from the models below.

{\bf Model 1} (\citet{BNHLS2011}) --- the case of stochastic volatility \& additive noise. The following bivariate factor stochastic volatility model is used to generate the continuous semimartingales $X^1$ and $X^2$:
\begin{align*}
\mathrm{d}X^l_t=\mu^l\mathrm{d}t+\rho^l\sigma^l_t B^k_t+\sqrt{1-(\rho^l)^2}\sigma^l_t W_t,\qquad
\sigma^l_t=\exp(\beta^l_0+\beta^l_1\varrho^l_t),\qquad
\mathrm{d}\varrho^l_t=\alpha^l\varrho^l_t\mathrm{d}t+\mathrm{d}B^l_t,\qquad l=1,2,
\end{align*}
where $(B^1,B^2,W)$ is a 3-dimensional standard Wiener processes. The initial values for the $\varrho^l_t$ processes at each simulation run are drawn randomly from their stationary distribution, which is $\varrho^l_t\sim N(0,(-2\alpha^l)^{-1})$. We carry out our numerical experiments by using the following parametrization, assumed to be identical across the two volatility factors: $(\mu^l,\beta^l_0,\beta^l_1,\alpha^l,\rho^l)=(0.03,-5/16,1/8,-1/40,-0.3)$, so that $\beta_0^l=(\beta^l_1)^2/2\alpha^l$. This
choice of parameters implies that integrated volatility has been normalized, in the sense that $E[\int_0^1(\sigma^l_s)^2\mathrm{d}s]=1$. At each simulation run we add noise $(U^l_{k/n})_{k=0}^n$ simulated as
\begin{align*}
U^l_{k/n}|\{\sigma,X,J\}\overset{i.i.d.}{\sim}N(0,\omega^2),\qquad\omega^2=\eta^2\sqrt{\frac{1}{n}\sum_{i=1}^n\left(\sigma^l_{i/n}\right)^4},\quad\mathrm{and}\quad
\mathrm{Corr}(U^1_t,U^2_s)=
\left\{\begin{array}{cl}
R&\textrm{if }t=s,\\
0&\textrm{if }t\neq s,
\end{array}\right.
\end{align*}
where $R=\sqrt{1-(\rho^1)^2}\sqrt{1-(\rho^2)^2}$ and the noise-to-signal ratio, $\eta^2$ takes the value 0.001. Finally,  $\mathsf{Z}^1_{S^i}$ and $\mathsf{Z}^2_{T^j}$ are given by $\mathsf{Z}^1_{S^i}=Z^1_{S^i}+U^1_{S^i}$ and $\mathsf{Z}^2_{T^j}=Z^2_{T^j}+U^2_{T^j}$, where $Z^l=X^l+J^l$, $l=1,2$.

{\bf Model 2} (\citet{JLMPV2009}) --- the case of constant volatility \& rounding plus error.
\begin{gather*}
Z^l_t=X^l_t+\sigma^l J^l_t,\qquad X^l_t=X^l_0+\sigma^l W^l_t,\\
\mathsf{Z}^l_{k/n}=\log\left(\alpha^l\left\lfloor\frac{\exp(Z^l_{k/n}+u^l_{k/n})}{\alpha^l}\right\rfloor\right),\qquad
u^l_{k/n}=\eta^l_{k/n}\log\frac{\alpha^l\lceil\frac{\exp(Z^l_{k/n})}{\alpha^l}\rceil}{\exp(Z^l_{k/n})},
\end{gather*}
where $W^1$ and $W^2$ are correlated standard Winer processes independent of $J$ with $\mathrm{d}[W^1,W^2]_t=R\mathrm{d}t$ and $(\eta^l_{k/n})$ is a sequence of independent Bernoulli variables (probabilities $p^l_{k/n}$ and $1-p^l_{k/n}$ of taking values 1 and 0), with
$p^l_{k/n}=\log\left(\frac{\exp(Z^l_{k/n})}{\alpha^l\lfloor\frac{\exp(Z^l_{k/n})}{\alpha^l}\rfloor}\right)\left/\log\left(\frac{\alpha^l\lceil\frac{\exp(Z^l_{k/n})}{\alpha^l}\rceil}{\alpha^l\lfloor\frac{\exp(Z^l_{k/n})}{\alpha^l}\rfloor}\right)\right..$
We assume that the sequences $(\eta^1_{k/n})$ and $(\eta^2_{k/n})$ are mutually independent as well as independent of $W$ and $J$. Parameters used: $\sigma^l=0.2/\sqrt{252}$, $X^l_0=\log(8)$, $\alpha^l=0.01$ and $R=0.5$.

{\bf Model 3} --- the case of stochastic volatility \& endogenous noise. The model of the continuous semimartingales $X^1$ and $X^2$ is the same one as in Model 1, but the noise processes $U^1_{S^i}$ and $U^2_{T^j}$ are given by
\begin{align*}
U^1_{S^i}=\delta^1\sqrt{n/\lambda^1}(X^1_{S^i}-X^1_{S^{i-1}}),\qquad
U^2_{T^j}=\delta^2\sqrt{n/\lambda^2}(X^2_{T^j}-X^2_{T^{j-1}}).
\end{align*}
Here we set $\delta^1=\delta^2=-0.01$, so that the microstructure noise is negatively correlated with the returns of the latent continuous semimartingale processes $X^1$ and $X^2$. This choice reflects the empirical findings reported in \citet{HL2006}. Note that the magnitude of the noise processes in this model is smaller than the one in Model 1. Finally, as in Model 1 we set  $\mathsf{Z}^1_{S^i}=Z^1_{S^i}+U^1_{S^i}$ and $\mathsf{Z}^2_{T^j}=Z^2_{T^j}+U^2_{T^j}$, where $Z^l=X^l+J^l$, $l=1,2$.

1000 iterations were run for each model. The simulation of the model paths of $X^1$ and $X^2$ has been made using the Euler-Maruyama scheme with $n$ equi-spaced division points.

The estimators are calculated componentwise. That is, we use all of the observations for the variance estimators and the refresh time sampling of the observations for the covariance estimator. The tuning parameters for pre-averaging are selected as follows. We use $\theta=0.15$ and $g(x)=x\wedge(1-x)$ following \citet{CPV2011}, and set $k_n=\lceil\theta\sqrt{m}\rceil$. Here, $m$ represents the number of the observed returns when calculating the variance estimators while the number of refresh times minus 1 when calculating the covariance estimators. 

\subsection{Simulation results}

In Table \ref{ICmodel1}--\ref{ICmodel3}, we present the bias and the root mean squared error of our PTHY estimator in Model 1--3 respectively. As a comparison, we also computed the subsampled realized bipower (co)variations (BPVs) based on 5-minutes returns. More precisely, we computed a total of 300 realized bipower (co)variations by shifting the time of the first observation in 1-second increments. Then, we took the average of these estimators. Such an operation is commonly used in empirical work in this area; see \cite{BNHLS2009} and \cite{NSS2011} for example. Note that we divide the reported numbers in Table \ref{ICmodel2} by $(0.2/\sqrt{252})^2$ for normalization. We can see that our estimator performs well even in finite samples especially when the frequency of the observations are relatively large. When the frequency of the observations is small, it is downward biased due to the loss of summands induced by pre-averaging and the nonsynchronisty of the observation times. It is not surprising, because reducing the effects induced by microstructure noise forces the estimation to be less efficient. In contrast, the variance estimation by the BPV statistic is significantly upward biased across all scenarios for Model 1--2, while downward biased across almost all scenarios for Model 3. Although for Model 1--2 the covariance estimator of the BPV is surprisingly more precise than the variance estimator, our PTHY estimator is superior to it in most of the scenarios.  

In Table \ref{JVmodel1}--\ref{JVmodel3}, we present the bias and the root mean squared error of estimators for the quadratic (co)variations $JV^{k,l}:=[Z^k,Z^l]_1-[X^k,X^l]_1$ $(k,l=1,2)$ of the jump processes (JV) in Model 1--3 respectively. We also report the results for the estimators based on the differences between the subsampled realized (co)variances (RV) based on 5-minutes returns and the BPVs for a comparison. The reported numbers in Table \ref{ICmodel2} are divided by $(0.2/\sqrt{252})^2$ for normalization as above. As the tables reveal, our estimator is downward biased in the presence of jumps when the frequency of the observations is large and such a downward bias tends to be large in the VG case. It is not surprising because the thresholding technique cannot detect too small jumps whose sizes have the same magnitude as those of Brownian increments (note that in finite samples we cannot identify very small jumps \textit{in principle}; see \citet{Zhang2007} for details). Although the bias is modest compared with that of the BPV based statistic (when the sampling frequency is sufficiently large), we might need to investigate the possibilities of making finite sample adjustments. We also find that both of the estimators are upward biased in the absence of jumps. It is theoretically natural because these estimators should be non-negative asymptotically (note that in the simulation we always have $JV^{1,2}\geq0$). In particular, our estimator is far more precise than the BPV based statistic if the sampling frequency is large. However, our estimator does not perform well when the frequency of the observations is very small. In this case our estimator is inferior to the BPV based statistic in some situations. It is also worth mentioning that in terms of the bias our estimator has slightly worth performance in Model 3 than Model 1, despite the fact that the magnitude of the noise in Model 3 is smaller than that in Model 1. 

Finally, we briefly compare our simulation results with some existing empirical studies in this area. Recently \citet{COP2011} indicated low-frequency based measures of jump variations such as the above BPV based statistic tend to be upward biased. In our simulation, the BPV based estimators for the quadratic variations of the jump processes are upward biased in the absence of jumps. They are also upward biased across all scenarios in Model 2. These findings have the following implication. First, in real markets jumps might not often occur (or they might be too active to be disentangled from diffusive components as indicated in \cite{AJL2011}) and perhaps their magnitude is not so large. Second, the round-off effects could be significantly important for measuring jump variations. This is of course quite natural intuitively. On the other hand, in Table 3 of \cite{COP2011} we can find that the reported values of their low-frequency based bipower variations are smaller than those of their tick frequency based estimators for almost all assets, so that we expect the low-frequency based bipower variations will be downward biased. We can observe such phenomenons (only) across all of the scenarios in Model 3. This is not surprising because the downward bias of the BPV based estimators for integrated variances presumably appears only in the presence of the negative correlation between  the noise and the efficient returns.  


{\linespread{1.2}
\begin{table}
\caption{Simulation results of the estimation of integrated (co)variances in Model 1}
\begin{center}
\begin{tabular}{lccccccc}\hline
 &  & PTHY &  &&  & BPV &  \\[1pt] \cline{2-4}\cline{6-8}
Target & $[X^1]_1$ & $[X^1,X^2]_1$ & $[X^2]_1$ && $[X^1]_1$ & $[X^1,X^2]_1$ & $[X^2]_1$ \\[1pt] \hline
NO &  &  &  &&  &  &  \\
$\lambda=(3,6)$ & $-$.003 (.136) & $-$.002 (.104) & $-$.004 (.241) && .132 (.291) & $-$.020 (.136) & .131 (.379) \\
$\lambda=(10,20)$ & $-$.005 (.196) & $-$.011 (.135) & $-$.022 (.300) && .131 (.291) & $-$.058 (.165) & .122 (.399) \\
$\lambda=(30,60)$ & $-$.028 (.256) & $-$.037 (.200) & $-$.054 (.425) && .120 (.291) & $-$.142 (.246) & .092 (.412) \\[5pt]
SCP1 &  & &  &&  &  & \\
$\lambda=(3,6)$ & .004 (.137) & .003 (.104) & .002 (.237) && .164 (.318) & .012 (.148) & .164 (.399) \\
$\lambda=(10,20)$ & .005 (.198) & $-$.006 (.137) & $-$.014 (.298) && .163 (.315) & $-$.028 (.166) & .155 (.418) \\
$\lambda=(30,60)$ & $-$.018 (.259) & $-$.033 (.203) & $-$.044 (.421) && .151 (.315) & $-$.116 (.236) & .123 (.428) \\[5pt]
VG &  &  & &&  &  &  \\
$\lambda=(3,6)$ & .005 (.136) & .005 (.103) &  .011 (.239) && .171 (.328) & .017 (.150) & .177 (.412) \\
$\lambda=(10,20)$ & .006 (.200) & $-$.003 (.137) &  $-$.002 (.303) && .170 (.326) & $-$.022 (.165) & .168 (.430) \\
$\lambda=(30,60)$ & $-$.014 (.256) & $-$.030 (.200) &  $-$.033 (.427) && .158 (.322) & $-$.111 (.235) & .136 (.438) \\ \hline
\end{tabular}
\label{ICmodel1}\vspace{2mm}

\parbox{17cm}
{\footnotesize
\textit{Note}. We report the bias and rmse of the estimators for 
the integrated (co)variances included in the simulation study. The number reported in parenthesis is rmse.
}

\end{center}
\end{table}
}

{\linespread{1.2}
\begin{table}
\caption{Simulation results of the estimation of integrated (co)variances in Model 2}
\begin{center}
\begin{tabular}{lccccccc}\hline
 &  & PTHY &  &&  & BPV &  \\[1pt] \cline{2-4}\cline{6-8}
Target & $[X^1]_1$ & $[X^1,X^2]_1$ & $[X^2]_1$ && $[X^1]_1$ & $[X^1,X^2]_1$ & $[X^2]_1$ \\[1pt] \hline
NO &  &  &  &&  &  &  \\
$\lambda=(3,6)$ & $-$.000 (.090) & .003 (.084) & .002 (.010) && .111 (.176) & $-$.021 (.109) & .110 (.177) \\
$\lambda=(10,20)$ & .001 (.118) & $-$.007 (.114) & $-$.011 (.142) && .109 (.115) & $-$.043 (.119) & .106 (.181) \\
$\lambda=(30,60)$ & $-$.021 (.159) & $-$.036 (.161) & $-$.057 (.186) && .101 (.180) & $-$.105 (.156) & .073 (.181) \\[5pt]
SCP1 &  & &  &&  &  & \\
$\lambda=(3,6)$ & .008 (.089) & .011 (.085) & .010 (.103) && .150 (.214) & .013 (.122) & .149 (.213) \\
$\lambda=(10,20)$ & .014 (.118) & .005 (.115) & .001 (.144) && .148 (.215) & $-$.011 (.124) & .145 (.218) \\
$\lambda=(30,60)$ & $-$.006 (.160) & $-$.025 (.161) & $-$.045 (.188) && .140 (.218) & $-$.076 (.145) & .107 (.205) \\[5pt]
VG &  &  & &&  &  &  \\
$\lambda=(3,6)$ & .010 (.090) & .002 (.086) &  .016 (.109) && .154 (.222) & .000 (.118) & .158 (.220) \\
$\lambda=(10,20)$ & .004 (.118) & $-$.003 (.118) &  .008 (.118) && .152 (.224) & $-$.019 (.121) & .153 (.222) \\
$\lambda=(30,60)$ & $-$.010 (.159) & $-$.038 (.159) &  $-$.036 (.192) && .143 (.226) & $-$.086 (.152) & .119 (.212) \\ \hline
\end{tabular}
\label{ICmodel2}\vspace{2mm}

\parbox{17cm}{\footnotesize
\textit{Note}. We report the bias and rmse of the estimators for 
the integrated (co)variances included in the simulation study. The number reported in parenthesis is rmse. All of the reported numbers are divided by $(0.2/\sqrt{252})^2$.
}

\end{center}
\end{table}
}

{\linespread{1.2}
\begin{table}
\caption{Simulation results of the estimation of integrated (co)variances in Model 3}
\begin{center}
\begin{tabular}{lccccccc}\hline
 &  & PTHY &  &&  & BPV &  \\[1pt] \cline{2-4}\cline{6-8}
Target & $[X^1]_1$ & $[X^1,X^2]_1$ & $[X^2]_1$ && $[X^1]_1$ & $[X^1,X^2]_1$ & $[X^2]_1$ \\[1pt] \hline
NO &  &  &  &&  &  &  \\
$\lambda=(3,6)$ & $-$.004 (.128) & $-$.002 (.100) & $-$.007 (.218) && $-$.027 (.229) & $-$.034 (.147) & $-$.045 (.370) \\
$\lambda=(10,20)$ & $-$.008 (.186) & $-$.017 (.138) & $-$.033 (.305) && $-$.042 (.236) & $-$.066 (.171) & $-$.073 (.401) \\
$\lambda=(30,60)$ & $-$.039 (.248) & $-$.055 (.198) & $-$.079 (.433) && $-$.073 (.254) & $-$.155 (.259) & $-$.144 (.460) \\[5pt]
SCP1 &  & &  &&  &  & \\
$\lambda=(3,6)$ & .$-$.000 (.127) & .000 (.101) & $-$.004 (.215) && $-$.010 (.229) & $-$.016 (.149) & $-$.026 (.370) \\
$\lambda=(10,20)$ & $-$.003 (.186) & $-$.014 (.138) & $-$.028 (.300) && $-$.012 (.237) & $-$.037 (.170) & $-$.042 (.399) \\
$\lambda=(30,60)$ & $-$.035 (.246) & $-$.053 (.199) & $-$.074 (.428) && $-$.056 (.251) & $-$.141 (.253) & $-$.127 (.455) \\[5pt]
VG &  &  & &&  &  &  \\
$\lambda=(3,6)$ & .002 (.127) & .002 (.100) &  .003 (.215) && .010 (.243) & .003 (.156) & .000 (.375) \\
$\lambda=(10,20)$ & $-$.001 (.186) & $-$.013 (.139) &  $-$.018 (.303) && $-$.005 (.246) & $-$.031 (.171) & $-$.028 (.399) \\
$\lambda=(30,60)$ & $-$.028 (.246) & $-$.051 (.196) &  $-$.062 (.435) && $-$.037 (.255) & $-$.013 (.247) & $-$.102 (.452) \\ \hline
\end{tabular}
\label{ICmodel3}\vspace{2mm}

\parbox{17cm}{\footnotesize
\textit{Note}. We report the bias and rmse of the estimators for 
the integrated (co)variances included in the simulation study. The number reported in parenthesis is rmse.
}
\end{center}
\end{table}
}

{\linespread{1.2}
\begin{table}
\caption{Simulation results of the estimation of jump (co)variations in Model 1}
\begin{center}
\begin{tabular}{lccccccc}\hline
 &  & PTHY &  &&  & BPV &  \\[1pt] \cline{2-4}\cline{6-8}
Target & $JV^{1,1}$ & $JV^{1,2}$ & $JV^{2,2}$ && $JV^{1,1}$ & $JV^{1,2}$ & $JV^{2,2}$ \\[1pt] \hline
NO &  &  &  &&  &  &  \\
$\lambda=(3,6)$ & .000 (.005) & .001 (.003) & .001 (.005) && .011 (.075) & .016 (.049) & .011 (.093) \\
$\lambda=(10,20)$ & .003 (.018) & .006 (.020) & .009 (.047) && .013 (.074) & .014 (.055) & .016 (.107) \\
$\lambda=(30,60)$ & .018 (.071) & .025 (.066) & .030 (.113) && .020 (.097) & .023 (.063) & .044 (.132) \\[5pt]
SCP1 &  & &  &&  &  & \\
$\lambda=(3,6)$ & $-$.006 (.032) & $-$.003 (.037) & $-$.004 (.041) && $-$.021 (.098) & $-$.022 (.077) & $-$.021 (.111) \\
$\lambda=(10,20)$ & $-$.006 (.050) & .002 (.055) & .002 (.071) && $-$.018 (.098) & $-$.020 (.084) & $-$.016 (.122) \\
$\lambda=(30,60)$ & .008 (.097) & .021 (.089) & .021 (.125) && $-$.012 (.113) & $-$.016 (.096) & .013 (.135) \\[5pt]
VG &  &  & &&  &  &  \\
$\lambda=(3,6)$ & $-$.008 (.039) & $-$.007 (.041) &  $-$.017 (.046) && $-$.029 (.116) & $-$.030 (.097) & $-$.038 (.129) \\
$\lambda=(10,20)$ & $-$.007 (.053) & $-$.003 (.053) &  $-$.013 (.070) && $-$.026 (.112) & $-$.028 (.100) & $-$.034 (.138) \\
$\lambda=(30,60)$ & .003 (.098) & .014 (.092) &  .004 (.125) && $-$.020 (.124) & $-$.026 (.121) & $-$.005 (.140) \\ \hline
\end{tabular}
\label{JVmodel1}\vspace{2mm}

\parbox{17cm}{\footnotesize
\textit{Note}. We report the bias and rmse of the estimators for 
the jump (co)variations included in the simulation study. The number reported in parenthesis is rmse.
}

\end{center}
\end{table}
}

{\linespread{1.2}
\begin{table}
\caption{Simulation results of the estimation of jump (co)variations in Model 2}
\begin{center}
\begin{tabular}{lccccccc}\hline
 &  & PTHY &  &&  & BPV &  \\[1pt] \cline{2-4}\cline{6-8}
Target & $JV^{1,1}$ & $JV^{1,2}$ & $JV^{2,2}$ && $JV^{1,1}$ & $JV^{1,2}$ & $JV^{2,2}$ \\[1pt] \hline
NO &  &  &  &&  &  &  \\
$\lambda=(3,6)$ & .000 (.002) & .001 (.003) & .001 (.005) && .134 (.141) & .010 (.038) & .137 (.145) \\
$\lambda=(10,20)$ & .003 (.011) & .006 (.015) & .009 (.023) && .136 (.144) & .013 (.040) & .141 (.152) \\
$\lambda=(30,60)$ & .017 (.039) & .035 (.059) & .052 (.087) && .143 (.155) & .018 (.049) & .169 (.186) \\[5pt]
SCP1 &  & &  &&  &  & \\
$\lambda=(3,6)$ & $-$.007 (.032) & $-$.009 (.037) & $-$.008 (.040) && .095 (.127) & $-$.025 (.082) & .099 (.136) \\
$\lambda=(10,20)$ & $-$.009 (.047) & $-$.005 (.054) & $-$.001 (.060) && .098 (.130) & $-$.024 (.087) & .103 (.141) \\
$\lambda=(30,60)$ & .004 (.065) & .023 (.086) & .038 (.107) && .104 (.139) & $-$.022 (.104) & .133 (.173) \\[5pt]
VG &  &  & &&  &  &  \\
$\lambda=(3,6)$ & $-$.012 (.036) & $-$.009 (.034) &  $-$.019 (.045) && .082 (.135) & $-$.021 (.071) & .083 (.122) \\
$\lambda=(10,20)$ & $-$.012 (.052) & $-$.004 (.046) &  $-$.015 (.061) && .084 (.137) & $-$.021 (.072) & .085 (.127) \\
$\lambda=(30,60)$ & $-$.002 (.078) & .024 (.085) &  .025 (.108) && .089 (.146) & $-$.013 (.084) & .118 (.163) \\ \hline
\end{tabular}
\label{JVmodel2}\vspace{2mm}

\parbox{17cm}{\footnotesize
\textit{Note}. We report the bias and rmse of the estimators for 
the jump (co)variations included in the simulation study. The number reported in parenthesis is rmse. All of the reported numbers are divided by $(0.2/\sqrt{252})^2$.
}

\end{center}
\end{table}
}

{\linespread{1.2}
\begin{table}
\caption{Simulation results of the estimation of jump (co)variations in Model 3}
\begin{center}
\begin{tabular}{lccccccc}\hline
 &  & PTHY &  &&  & BPV &  \\[1pt] \cline{2-4}\cline{6-8}
Target & $JV^{1,1}$ & $JV^{1,2}$ & $JV^{2,2}$ && $JV^{1,1}$ & $JV^{1,2}$ & $JV^{2,2}$ \\[1pt] \hline
NO &  &  &  &&  &  &  \\
$\lambda=(3,6)$ & .001 (.007) & .002 (.006) & .003 (.016) && .013 (.087) & .009 (.051) & .012 (.107) \\
$\lambda=(10,20)$ & .007 (.032) & .011 (.032) & .017 (.060) && .014 (.091) & .009 (.052) & .017 (.108) \\
$\lambda=(30,60)$ & .030 (.089) & .039 (.081) & .055 (.153) && .022 (.094) & .015 (.054) & .053 (.138) \\[5pt]
SCP1 &  & &  &&  &  & \\
$\lambda=(3,6)$ & $-$.003 (.022) & $-$.001 (.027) & .000 (.035) && $-$.004 (.093) & $-$.010 (.065) & $-$.006 (.112) \\
$\lambda=(10,20)$ & .001 (.052) & .009 (.059) & .013 (.078) && $-$.016 (.110) & $-$.024 (.083) & $-$.014 (.120) \\
$\lambda=(30,60)$ & .025 (.098) & .038 (.093) & .051 (.157) && .006 (.099) & $-$.005 (.069) & .037 (.135) \\[5pt]
VG &  &  & &&  &  &  \\
$\lambda=(3,6)$ & $-$.005 (.038) & $-$.004 (.041) &  $-$.010 (.045) && $-$.026 (.123) & $-$.032 (.098) & $-$.037 (.138) \\
$\lambda=(10,20)$ & $-$.000 (.055) & .005 (.058) &  .001 (.073) && $-$.025 (.124) & $-$.033 (.100) & $-$.031 (.136) \\
$\lambda=(30,60)$ & .018 (.109) & .003 (.103) &  .034 (.151) && $-$.015 (.122) & $-$.033 (.118) & .006 (.137) \\ \hline
\end{tabular}
\label{JVmodel3}\vspace{2mm}

\parbox{17cm}{\footnotesize
\textit{Note}. We report the bias and rmse of the estimators for 
the jump (co)variations included in the simulation study. The number reported in parenthesis is rmse.
}
\end{center}
\end{table}
}

\if0
{\linespread{1.5}
\begin{table}
\caption{Simulation results of the estimation of integrated (co)variances in Model 1}
\begin{center}
\begin{tabular}{cccccccc}\hline
 &  & PTHY &  &&  & BPV &  \\[1pt] \cline{2-4}\cline{6-8}
Target & $[X^1]_1$ & $[X^1,X^2]_1$ & $[X^2]_1$ && $[X^1]_1$ & $[X^1,X^2]_1$ & $[X^2]_1$ \\[1pt] \hline
NO & -0.005 & -0.011 & -0.022 && 0.131 & -0.058 & 0.122 \\
 & (0.196)& (0.135) & (0.300) && (0.291) & (0.165) & (0.399) \\[5pt]
SCP1 & 0.005 & -0.006 & -0.014 && 0.163 & -0.028 & 0.155 \\
 & (0.198) & (0.137) & (0.298) && (0.315) & (0.166) & (0.418) \\[5pt]
VG & 0.006 & -0.003 & -0.002 && 0.170 & -0.022 & 0.168 \\
 & (0.200) & (0.137) & (0.303) && (0.326) & (0.165) & (0.430) \\ \hline
\end{tabular}
\label{ICmodel1}\vspace{2mm}

\parbox{40zw}{\footnotesize
\textit{Note}. We report the bias and rmse of the estimators for 
the integrated (co)variances included in the simulation study. The number reported in parenthesis is rmse.
}
\end{center}
\end{table}}

{\linespread{1.5}
\begin{table}
\caption{Simulation results of the estimation of integrated (co)variances in Model 2}
\begin{center}
\begin{tabular}{cccccccc}\hline
 &  & PTHY &  &&  & BPV &  \\[1pt] \cline{2-4}\cline{6-8}
Target & $[X^1]_1$ & $[X^1,X^2]_1$ & $[X^2]_1$ && $[X^1]_1$ & $[X^1,X^2]_1$ & $[X^2]_1$ \\[1pt] \hline
NO & 0.001 & -0.007 & -0.011 && 0.109 & -0.043 & 0.106 \\
 & (0.118)& (0.114) & (0.142) && (0.181) & (0.119) & (0.181) \\[5pt]
SCP1 & 0.014 & 0.005 & 0.001 && 0.148 & -0.011 & 0.145 \\
 & (0.118) & (0.115) & (0.144) && (0.215) & (0.124) & (0.218) \\[5pt]
VG & 0.004 & -0.003 & 0.008 && 0.152 & -0.019 & 0.153 \\
 & (0.118) & (0.118) & (0.144) && (0.224) & (0.121) & (0.222) \\ \hline
\end{tabular}
\label{ICmodel2}\vspace{2mm}

\parbox{40zw}{\footnotesize
\textit{Note}. We report the bias and rmse of the estimators for 
the integrated (co)variances included in the simulation study. The number reported in parenthesis is rmse. All of the reported numbers are divided by $(0.2/\sqrt{252})^2$.
}
\end{center}
\end{table}}

{\linespread{1.5}
\begin{table}
\caption{Simulation results of the estimation of jump (co)variations}
\begin{center}
\begin{tabular}{cccccccc}\hline
 &  & PHY$-$PTHY &  &&  & RC$-$BPV &  \\[1pt] \cline{2-4}\cline{6-8}
Target & $JV^{1,1}$ & $JV^{1,2}$ & $JV^{2,2}$ && $JV^{1,1}$ & $JV^{1,2}$ & $JV^{2,2}$ \\[1pt] \hline
Model 1 & 0.030 & 0.061 & 0.095 && 0.134 & 0.144 & 0.156 \\
 & (0.176)& (0.120) & (0.471) && (0.745) & (0.551) & (1.074) \\[5pt]
Model 2 & -0.061 & -0.090 & -0.051 && -0.264 & -0.241 & -0.238 \\
 & (0.558) & (0.534) & (0.765) && (1.520) & (1.294) & (1.518) \\[5pt]
Model 3 & -0.094 & -0.029 & -0.068 && -0.292 & -0.286 & -0.290 \\
 & (0.573) & (0.686) & (0.831) && (1.232) & (1.221) & (1.616) \\ \hline
\end{tabular}
\label{RJmodel1}\vspace{2mm}

\parbox{40zw}{\footnotesize
\textit{Note}. We report the bias and rmse of the estimators for the quadratic (co)variation of the jump process included in the simulation study. The number reported in parenthesis is rmse. All of the reported numbers are multiplied by 10.
}
\end{center}
\end{table}}

{\linespread{1.5}
\begin{table}
\caption{Simulation results of the estimation of jump (co)variations}
\begin{center}
\begin{tabular}{cccccccc}\hline
 &  & PHY$-$PTHY &  &&  & RC$-$BPV &  \\[1pt] \cline{2-4}\cline{6-8}
Target & $[J^1]_1$ & $[J^1,J^2]_1$ & $[J^2]_1$ && $[J^1]_1$ & $[J^1,J^2]_1$ & $[J^2]_1$ \\[1pt] \hline
Model 1 & 0.030 & 0.061 & 0.095 && 0.134 & 0.144 & 0.156 \\
 & (0.176)& (0.120) & (0.471) && (0.745) & (0.551) & (1.074) \\[5pt]
Model 2 & -0.061 & -0.090 & -0.051 && -0.264 & -0.241 & -0.238 \\
 & (0.558) & (0.534) & (0.765) && (1.520) & (1.294) & (1.518) \\[5pt]
Model 3 & -0.094 & -0.029 & -0.068 && -0.292 & -0.286 & -0.290 \\
 & (0.573) & (0.686) & (0.831) && (1.232) & (1.221) & (1.616) \\ \hline
\end{tabular}
\label{RJmodel2}\vspace{2mm}

\parbox{40zw}{\footnotesize
\textit{Note}. We report the bias and rmse of the estimators for the quadratic (co)variation of the jump process included in the simulation study. The number reported in parenthesis is rmse. All of the reported numbers are multiplied by 10.
}
\end{center}
\end{table}}
\fi

\newpage
\appendix

\renewcommand*{\theequation}{\textup{\Alph{section}}.\arabic{equation}}

\section*{Appendix}


\section{Proof of Theorem \ref{thmFAnoise}}\label{proofthmFAnoise}


First note that for the proof we can use a localization procedure, and which allows us to systematically replace the conditions [C1], [C2], [N$^\flat_r$] and [F] by the following strengthened version:
\begin{enumerate}
\item[{[SC1]}] There is a positive constant $K$ such that $b_nN^n_t\leq K$ for all $n$ and $t$. 
\item[{[SC2]}] [C2] holds, and $(A^1)'$, $(A^2)'$, $(\underline{A}^1)'$, $(\underline{A}^2)'$ and $[V,W]'$ for each $V,W=X^1,X^2,\underline{X}^1,\underline{X}^2$ are bounded.
\item[{[SN$^\flat_r$]}] $(\int |z|^rQ_t(\mathrm{d}z))_{t\in\mathbb{R}_+}$ is a bounded process.
\item[{[SF]}]  We have [F] and there is a positive constant $B$ such that
\begin{equation}\label{jumpsize}
B^{-1}<\inf_{k\in\mathbb{N}}|\gamma_k^{l}|<\sup_{k\in\mathbb{N}}|\gamma_k^{l}|<B
\end{equation}
for each $l=1,2$.
\end{enumerate} 

Next we introduce the following strengthened version of the condition [T]:
\begin{enumerate}
\item[{[ST]}] For each $l=1,2$ we have $\varrho^{l}_n(t)=\alpha_n^{l}(t)\rho_n$, where $(\rho_n)_{n\in\mathbb{N}}$ is the same one in [T] and $(\alpha_n^{l}(t))_{n\in\mathbb{N}}$ is a sequence of (not necessarily adapted) positive-valued stochastic processes such that there exists a positive constant $K_0$ satisfying
\begin{equation*}
\frac{1}{K_0}<\inf_{t\in\mathbb{R}_+}\alpha_n^{(q)}(t)<\sup_{t\in\mathbb{R}_+}\alpha_n^{(q)}(t)<K_0,\qquad n=1,2,\dots.
\end{equation*}
\end{enumerate}

\begin{lem}\label{lemthreshold}
Let $(c_n)$ be a sequence of positive numbers. If we have
\begin{equation}\label{auxucp}
c_n^{-1}\{ \widehat{PTHY}(\mathsf{Z}^{1}, \mathsf{Z}^{2})^n-\widehat{PHY}(\mathsf{X}^{1}, \mathsf{X}^{2})^n\}\xrightarrow{ucp}0\quad\mathrm{as}\quad n\rightarrow\infty
\end{equation}
under the condition $[\mathrm{ST}]$, then we have also $(\ref{auxucp})$ under the condition $[\mathrm{T}]$. 
\end{lem}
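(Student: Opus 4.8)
The plan is a routine localization-type reduction. Given coefficients $\alpha^{l}_n$ satisfying only the local two-sided control in [T], I will truncate them to obtain coefficients obeying the uniform bounds of [ST], show that this truncation changes the PTHY estimator only on an event whose probability can be made uniformly small in $n$, and then transfer the convergence $(\ref{auxucp})$.

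Fix $t>0$ and $\eta>0$. Since $R^{l}_k\uparrow\infty$ for $l=1,2$, choose $k$ with $P(R^{1}_k\wedge R^{2}_k\le t)<\eta$. Using the tightness asserted in [T](ii) of $(\sup_{0\le s<R^{l}_k}\alpha^{l}_n(s))_n$ and of $(\sup_{0\le s<R^{l}_k}[1/\alpha^{l}_n(s)])_n$, pick a finite constant $M=M_{k,\eta}$, \emph{not depending on $n$}, such that $\sup_n P\big(\sup_{0\le s<R^{l}_k}\alpha^{l}_n(s)>M\ \text{or}\ \sup_{0\le s<R^{l}_k}[1/\alpha^{l}_n(s)]>M\big)<\eta$ for $l=1,2$. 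Put $\widetilde\alpha^{l}_n(s):=(\alpha^{l}_n(s)\vee M^{-1})\wedge M$ and $\widetilde\varrho^{l}_n(s):=\widetilde\alpha^{l}_n(s)\rho_n$; these satisfy [ST] with $K_0=M+1$, and they leave [SF], [SC1]--[SC2] and [SN$^\flat_r$] intact since those conditions do not involve $\alpha^{l}_n$. Write $\widetilde{PTHY}(\mathsf{Z}^{1},\mathsf{Z}^{2})^n$ for the PTHY estimator built with $\widetilde\varrho^{l}_n$ in place of $\varrho^{l}_n$.

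On the event $\Omega_n:=\{R^{1}_k\wedge R^{2}_k>t\}\cap\bigcap_{l=1,2}\{\sup_{0\le s<R^{l}_k}\alpha^{l}_n(s)\le M,\ \sup_{0\le s<R^{l}_k}[1/\alpha^{l}_n(s)]\le M\}$ one has $\widetilde\alpha^{l}_n(s)=\alpha^{l}_n(s)$ for every $s<R^{l}_k$, hence for every $s\le t$, so $\varrho^{l}_n$ and $\widetilde\varrho^{l}_n$ coincide on $[0,t]$. For each $s\le t$ the thresholds enter $\widehat{PTHY}(\mathsf{Z}^{1},\mathsf{Z}^{2})^n_s$ only through $\varrho^{1}_n(\widehat{S}^i)$ and $\varrho^{2}_n(\widehat{T}^j)$ with $\widehat{S}^i\le\widehat{S}^{i+k_n}\le s\le t$ and $\widehat{T}^j\le\widehat{T}^{j+k_n}\le s\le t$, so on $\Omega_n$ the two estimators have identical summands and $\widehat{PTHY}(\mathsf{Z}^{1},\mathsf{Z}^{2})^n_s=\widetilde{PTHY}(\mathsf{Z}^{1},\mathsf{Z}^{2})^n_s$ for all $s\le t$. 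Consequently, for any $\delta>0$,
\[
P\Big(\sup_{0\le s\le t}c_n^{-1}\big|\widehat{PTHY}(\mathsf{Z}^{1},\mathsf{Z}^{2})^n_s-\widehat{PHY}(\mathsf{X}^{1},\mathsf{X}^{2})^n_s\big|>\delta\Big)\le P(\Omega_n^c)+P\Big(\sup_{0\le s\le t}c_n^{-1}\big|\widetilde{PTHY}(\mathsf{Z}^{1},\mathsf{Z}^{2})^n_s-\widehat{PHY}(\mathsf{X}^{1},\mathsf{X}^{2})^n_s\big|>\delta\Big).
\]
By construction $P(\Omega_n^c)<3\eta$ for every $n$, while the second term tends to $0$ as $n\to\infty$ because $\widetilde\varrho^{l}_n$ satisfies [ST] and so $(\ref{auxucp})$ holds for $\widetilde{PTHY}$ by hypothesis. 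Letting $n\to\infty$ and then $\eta\downarrow0$ yields $(\ref{auxucp})$ under [T]. The only points requiring care are that $M$ can be chosen uniformly in $n$ (which is precisely what [T](ii) provides) and that truncating $\alpha^{l}_n$ does not disturb the remaining structural assumptions; both are immediate, so I anticipate no real obstacle here — all the analytic content sits in the [ST] case, handled separately.
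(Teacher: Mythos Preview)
Your proof is correct and takes essentially the same localization approach as the paper: both use the tightness in [T](ii) to find a uniform bound $M$ (the paper calls it $K$), then argue that on the high-probability event where $\alpha^{l}_n$ stays in $[M^{-1},M]$ up to the stopping time, the estimator behaves as under [ST]. Your version is slightly more explicit in globally truncating $\alpha^{l}_n$ to build an auxiliary threshold satisfying [ST] and then invoking the hypothesis directly, whereas the paper leaves this step implicit by restricting to the good event; the content is identical.
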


\begin{proof}
Let $t>0$ and $k\in\mathbb{N}$. Suppose that [T] holds. Then, for an arbitrary $\varepsilon>0$, there exists a positive number $K$ such that
\begin{equation*}
\sup_{n\in\mathbb{N}}P\left(\sup_{0\leq s< R_k^{l}}\alpha_n^{l}(s)\geq K\right)<\varepsilon\quad \mathrm{and}\quad
\sup_{n\in\mathbb{N}}P\left(\sup_{0\leq s< R_k^{l}}[1/\alpha_n^{l}(s)]\geq K\right)<\varepsilon,\qquad l=1,2.
\end{equation*}
Hence for any $\eta>0$ we have
\begin{align*}
P\left(\Psi^n(t)>\eta\right)
\leq  P(R_k^{1}\wedge R_k^{2}\leq t)+4\varepsilon
+P\left(\Psi^n(t\wedge R_k^{1}\wedge R_k^{2})>\eta,\max_{l\in\{1,2\}}\left[\sup_{0\leq s<R_k^{l}}\alpha_n^{l}(s)\vee\sup_{0\leq s<R_k^{l}}\frac{1}{\alpha_n^{l}(s)}\right]<K\right),
\end{align*}
where $\Psi^n(t):=\sup_{0\leq s\leq t}c_n^{-1}|\widehat{PTHY}(\mathsf{Z}^{1}, \mathsf{Z}^{2})^n_s-\widehat{PHY}(\mathsf{X}^{1}, \mathsf{X}^{2})^n_s|$. Therefore, by the assumption we obtain
\begin{equation*}
\limsup_{n\rightarrow\infty}P\left(\Psi^n(t)>\eta\right)
\leq P(R_k^{1}\wedge R_k^{2}\leq t)+4\varepsilon.
\end{equation*}
Since $\varepsilon$ is arbitrary, we can replace $\varepsilon$ in the above inequality with 0. Finally, with $k$ tending to 0, we obtain the desired result.
\end{proof}

We need a modification of sampling times as follows. We write $\bar{r}_n=b_n^{\xi'}$. Next, let $\upsilon_n=\inf\{t|r_n(t)>\bar{r}_n\}$, and define a sequence $(\widetilde{S}^i)_{i\in\mathbb{Z}^+}$ sequentially by $\widetilde{S}^i=S^i$ if $S^i<\upsilon_n$, otherwise $\widetilde{S}^i=\widetilde{S}^{i-1}+\bar{r}_n$.
\if0
\begin{equation*}
\widetilde{S}^i=
\left\{\begin{array}{ll}
S^i & \textrm{if $S^i<\upsilon_n$},\\
\widetilde{S}^{i-1}+\bar{r}_n & \textrm{otherwise}.
\end{array}\right.
\end{equation*}
\fi
Then, $(\widetilde{S}^i)$ is obviously a sequence of $\mathbf{F}^{(0)}$-stopping times satisfying $(\ref{increace})$ and $\sup_{i\in\mathbb{N}}(\widetilde{S}^i-\widetilde{S}^{i-1})\leq\bar{r}_n$. Furthermore, for any $t>0$ we have $P(\bigcap_i\{\widetilde{S}^i\wedge t\neq S^i\wedge t\})\leq P(\upsilon_n<t)\to0$ as $n\to\infty$ by $(\ref{A4})$. By replacing $(S^i)$ with $(T^j)$, we can construct a sequence $(\widetilde{T}^j)$ in a similar manner. This argument implies that we may also assume that
\begin{equation}\label{SA4}
\sup_{t\in\mathbb{R}_+}r_n(t)\leq\bar{r}_n
\end{equation}
by an appropriate localization procedure.

Finally, we note that the inequality (3.2) of \cite{Koike2012phy} holds true:
\begin{equation}\label{sumbarK}
\sum_{j=0}^\infty\bar{K}^{kj}\leq 2 k_n+1\quad\mathrm{and}\quad\sum_{i=0}^\infty\bar{K}^{ik}\leq 2 k_n+1,\qquad k=0,1,\dots.
\end{equation}

Now we introduce some notation and prove some lemmas which we will also use later. Set $g^n_p=g(p/k_n)$ and $\Delta(g)^n_p=g^n_{p+1}-g^n_p$ for every $n,p$. For any semimartingale $V$ and any (random) interval $I$, we define the processes $V(I)_t$ and $I_t$ by $V(I)_t=\int_0^t1_I(s-)\mathrm{d}V_s$ and $I_t=1_I(t)$ respectively. Moreover, set
\begin{align*}
\bar{V}(\widehat{\mathcal{I}})^i_t=\sum_{p=1}^{k_n-1}g^n_p V(\widehat{I}^{i+p})_t,\qquad
\bar{V}(\widehat{\mathcal{J}})^j_t=\sum_{q=1}^{k_n-1}g^n_q V(\widehat{J}^{j+q})_t
\end{align*}
and
\begin{align*}
\widetilde{V}(\widehat{\mathcal{I}})^i_t=-b_n^{-1/2}\sum_{p=0}^{k_n-1}\Delta(g)^n_p V(\check{I}^{i+p})_t,\qquad
\widetilde{V}(\widehat{\mathcal{J}})^j_t=-b_n^{-1/2}\sum_{q=0}^{k_n-1}\Delta(g)^n_q V(\check{J}^{j+q})_t
\end{align*}
for each $t\in\mathbb{R}_+$ and $i,j\in\mathbb{Z}_+$. The following lemma is an analog to Lemma 3.1 of \cite{Koike2012} and was used in \cite{Koike2012phy}:


\begin{lem}\label{modulus}
Suppose $A^l$, $[X^l]$, $\underline{A}^l$ and $[\underline{X}^l]$ are absolutely continuous with locally bounded derivatives for $l=1,2$. Suppose also $(\ref{SA4})$ holds. Then a.s. we have
\begin{align}
\limsup_{n\rightarrow\infty}\sup_{i\in\mathbb{N}}
\frac{|\bar{X}^1(\widehat{\mathcal{I}})^i_t|}{\sqrt{2 k_n\bar{r}_n\log\frac{1}{\bar{r}_n}}}\leq \| g\|_\infty\sup_{0\leq s\leq t}|[X^1]'_s|,\qquad
\limsup_{n\rightarrow\infty}\sup_{i\in\mathbb{N}}
\frac{|\widetilde{\underline{X}^1}(\widehat{\mathcal{I}})^i_t|}{\sqrt{2 k_n\bar{r}_n\log\frac{1}{\bar{r}_n}}}\leq L\sup_{0\leq s\leq t}|[\underline{X}^1]'_s|,\label{modulus1}\\
\limsup_{n\rightarrow\infty}\sup_{i\in\mathbb{N}}
\frac{|\bar{X}^2(\widehat{\mathcal{I}})^i_t|}{\sqrt{2 k_n\bar{r}_n\log\frac{1}{\bar{r}_n}}}\leq \| g\|_\infty\sup_{0\leq s\leq t}|[X^2]'_s|,\qquad
\limsup_{n\rightarrow\infty}\sup_{i\in\mathbb{N}}
\frac{|\widetilde{\underline{X}^2}(\widehat{\mathcal{I}})^i_t|}{\sqrt{2 k_n\bar{r}_n\log\frac{1}{\bar{r}_n}}}\leq L\sup_{0\leq s\leq t}|[\underline{X}^2]'_s|\label{modulus2}
\end{align}
for any $t>0$, where $L$ is a positive constant which only depends on $g$.
\end{lem}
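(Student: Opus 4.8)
The plan is to reduce the claim, by a routine localization, to the case where all the densities $(A^l)'$, $[X^l]'$, $(\underline{A}^l)'$ and $[\underline{X}^l]'$ $(l=1,2)$ are bounded by a single deterministic constant: stop every process at $\sigma_m=\inf\{s:\text{one of these densities at }s\text{ exceeds }m\}$ and use that $\bigcup_m\{\sigma_m>t\}$ has full probability, while on $\{\sigma_m>t\}$ the stopped pre-averaged quantities (and $\sup_{s\le t}[X^1]'_s$, etc.) coincide with the original ones. All four assertions in $(\ref{modulus1})$–$(\ref{modulus2})$ being proved in the same manner, I will only discuss $\bar{X}^1(\widehat{\mathcal{I}})^i$ and $\widetilde{\underline{X}^1}(\widehat{\mathcal{I}})^i$.

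First I would discard the finite-variation parts. With $X^1=A^1+M^1$, absolute continuity with bounded density together with the refresh-time gap estimate $\widehat{S}^{i+p}-\widehat{S}^{i+p-1}\le R^{i+p}-R^{i+p-2}\le 2\bar{r}_n$ — which follows from $R^{k-1}<\widehat{S}^k\le R^k$ and $R^k-R^{k-1}\le\bar{r}_n$, the latter a consequence of $(\ref{SA4})$ — gives $\sup_i|\bar{A}^1(\widehat{\mathcal{I}})^i_t|\le C\|g\|_\infty k_n\bar{r}_n$, and similarly $\sup_i|\widetilde{\underline{A}^1}(\widehat{\mathcal{I}})^i_t|=O(b_n^{-1/2}\bar{r}_n)$ using $\sum_p|\Delta(g)^n_p|\le\mathrm{Var}(g)<\infty$. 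Since $k_n\sim\theta b_n^{-1/2}$ and $\bar{r}_n=b_n^{\xi'}$ with $\xi'>1/2$, both quantities are $O(b_n^{\xi'-1/2})$, hence $o\big(\sqrt{k_n\bar{r}_n\log(1/\bar{r}_n)}\,\big)$, so they do not contribute to the $\limsup$; only $\bar{M}^1(\widehat{\mathcal{I}})^i$ and $\widetilde{\underline{M}^1}(\widehat{\mathcal{I}})^i$ remain.

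For the martingale part I would pass through the Dambis–Dubins–Schwarz representation $M^1=W_{[X^1]}$, $W$ a Brownian motion. Then $\bar{M}^1(\widehat{\mathcal{I}})^i_t=\sum_{p=1}^{k_n-1}g^n_p(W_{\tau_p}-W_{\tau_{p-1}})$ with $\tau_p=[X^1]_{\widehat{S}^{i+p}\wedge t}$, and the telescoping estimate above yields $\tau_{k_n-1}-\tau_0\le(\sup_{s\le t}[X^1]'_s)k_n\bar{r}_n=:\delta_n$ uniformly in $i$. A summation by parts, using $g^n_{k_n-1}=g(1-1/k_n)\to 0$ and $\sum_p|\Delta(g)^n_p|\le\mathrm{Var}(g)$, bounds $|\bar{M}^1(\widehat{\mathcal{I}})^i_t|$ by a $g$-dependent multiple of $\sup_{\tau_0\le s\le \tau_0+\delta_n}|W_s-W_{\tau_0}|$, and since $\tau_0\in[0,[X^1]_t]$ for every $i$ this is at most the modulus of continuity $\sup_{0\le u\le[X^1]_t}\sup_{u\le s\le u+\delta_n}|W_s-W_u|$ — no union bound over $i$ is needed, which is why no control on $N^n_t$ enters. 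Lévy's uniform modulus of continuity for $W$, together with the bookkeeping relating $\log(1/\delta_n)$ to $\log(1/\bar{r}_n)$, then gives a bound of the form $(\ref{modulus1})$ for $\bar{M}^1(\widehat{\mathcal{I}})^i$; the term $\widetilde{\underline{M}^1}(\widehat{\mathcal{I}})^i$ is treated identically, the relevant step function now having sup-norm $\le b_n^{-1/2}\max_p|\Delta(g)^n_p|$, which is bounded by a constant depending only on $g$ because $b_n^{-1/2}/k_n\to\theta^{-1}$ and $g$ is Lipschitz, whence the constant $L$. Combining with the first step via the triangle inequality completes the proof. (An alternative for the martingale estimate is the direct route $\bar{M}^1(\widehat{\mathcal{I}})^i_{\cdot}=\int_0^{\cdot}\phi^{n,i}(s-)\,dM^1_s$ with $\|\phi^{n,i}\|_\infty\le\|g\|_\infty$ and $\langle\bar{M}^1(\widehat{\mathcal{I}})^i\rangle_t\le\|g\|_\infty^2(\sup_{s\le t}[X^1]'_s)k_n\bar{r}_n$, followed by the exponential inequality $P(\sup_{s\le t}|N_s|\ge a)\le 2e^{-a^2/2v}$ for continuous martingales and a union bound over the $i$ with $\widehat{S}^i\le t$, but this requires an a priori bound on that number.)

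The delicate point is the uniformity over $i\in\mathbb{N}$ combined with the precise constant: the summation-by-parts estimate as sketched naturally produces the total variation of $g$ rather than $\|g\|_\infty$, so recovering exactly the constant displayed in $(\ref{modulus1})$ (and, in the union-bound route, keeping the slack in the threshold from inflating the constant by a factor depending on $\xi'$) needs a finer version of this argument, exploiting that the big weights $g^n_p$ are localized away from $p=0$ and $p=k_n$; this is where the main work lies.
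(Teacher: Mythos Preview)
Your approach is essentially the paper's: apply the Dambis--Dubins--Schwarz representation together with L\'evy's uniform modulus of continuity to obtain a modulus bound for the underlying process, then use Abel's summation $\bar{X}^1(\widehat{\mathcal{I}})^i_t=-\sum_{p=0}^{k_n-1}\Delta(g)^n_p(X^1_{\widehat{S}^{i+p}\wedge t}-X^1_{\widehat{S}^i\wedge t})$ and bound each increment by the modulus at scale $k_n\bar r_n$; for $\widetilde{\underline{X}^1}$ the paper performs one further Abel step to produce second differences $\Delta(g)^n_{p+1}-\Delta(g)^n_p$ and invokes the piecewise Lipschitz property of $g'$ (the only cosmetic difference is that the paper applies the modulus argument to $X^1$ directly rather than first peeling off $A^1$, whose contribution is $O(\delta)=o(\sqrt{\delta\log(1/\delta)})$ anyway). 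Your worry about recovering the exact constant $\|g\|_\infty$ is in fact not resolved by the paper either --- it simply writes $|\Delta(g)^n_p|\le k_n^{-1}\|g\|_\infty$, which appears to be a slip for $k_n^{-1}\|g'\|_\infty$ --- so no ``finer version'' is actually carried out, and none is needed for the applications, where only a finite $g$-dependent constant matters.
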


\begin{proof}
Combining a representation of a continuous local martingale with Brownian motion and L\'{e}vy's theorem on the uniform modulus of continuity of Brownian motion, we obtain
\begin{align*}
\limsup_{\delta\rightarrow +0}\sup_{
\begin{subarray}{c}
s,u\in[0,t]\\
|s-u|\leq\delta
\end{subarray}}
\frac{|X^1_s-X^1_u|}{\sqrt{2\delta\log\frac{1}{\delta}}}\leq \sup_{0\leq s\leq t}|[X^1]'_s|,\qquad
\limsup_{\delta\rightarrow +0}\sup_{
\begin{subarray}{c}
s,u\in[0,t]\\
|s-u|\leq\delta
\end{subarray}}
\frac{|\underline{X}^1_s-\underline{X}^1_u|}{\sqrt{2\delta\log\frac{1}{\delta}}}\leq \sup_{0\leq s\leq t}|[\underline{X}^1]'_s|,
\end{align*}
where $\underline{X}^1=-(\sum_{p=1}^\infty\check{I}^p_-)\bullet\underline{X}^1$. Since $\bar{X}^1(\widehat{\mathcal{I}})^i_t=-\sum_{p=0}^{k_n-1}\Delta(g)^n_p(X^1_{\widehat{S}^{i+p}\wedge t}-X^1_{\widehat{S}^{i}\wedge t})$ and $|\Delta(g)^n_p|\leq\frac{1}{k_n}\| g\|_\infty$, we obtain the first inequality in $(\ref{modulus1})$. On the other hand, since Abel's partial summation formula yields $\widetilde{\underline{X}^1}(\widehat{\mathcal{I}})^i_t=b_n^{-1/2}\sum_{p=0}^{k_n-1}\{\Delta(g)^n_{p+1}-\Delta(g)^n_p\}(\underline{X}^1_{\widehat{S}^{i+p}\wedge t}-\underline{X}^1_{\widehat{S}^{i-1}\wedge t})$, and $\Delta(g)^n_{p+1}-\Delta(g)^n_p=-\int_{p/k_n}^{(p+1)/k_n}\{g'(x+1/k_n)-g'(x)\}\mathrm{d}x$, the piecewise Lipschitz continuity of $g'$ and $(\ref{window})$ imply the second inequality in $(\ref{modulus1})$.
By symmetry we also obtain $(\ref{modulus2})$.
\end{proof}

We can strengthen Lemma \ref{modulus} by a localization if we assume that $(\ref{SA4})$ and [SC2] hold, so that in the remainder of this section we always assume that we have a positive constant $K$ and a positive integer $n_0$ such that
\begin{align}
\sup_{i\in\mathbb{N}}
\frac{|\bar{X}^1_g(\widehat{\mathcal{I}})^i_t(\omega)|+|\widetilde{\underline{X}^1}_g(\widehat{\mathcal{I}})^i_t(\omega)|}{\sqrt{2 k_n\bar{r}_n|\log b_n|}}
+\sup_{j\in\mathbb{N}}
\frac{|\bar{X}^2_g(\widehat{\mathcal{J}})^j_t(\omega)|+|\widetilde{\underline{X}^2}_g(\widehat{\mathcal{J}})^j_t(\omega)|}{\sqrt{2 k_n \bar{r}_n|\log b_n|}}
\leq K\label{absmod2}
\end{align}
for all $t>0$ and $\omega\in\Omega$ if $n\geq n_0$. Moreover, we only consider sufficiently large $n$ such that $n\geq n_0$.


Next, set
\begin{align*}
\overline{\zeta}^1(\widehat{\mathcal{I}})^i=\sum_{p=0}^{k_n-1}\Delta(g)^n_p\zeta^1_{\widehat{S}^{i+p}},\qquad
\overline{\zeta}^2(\widehat{\mathcal{J}})^j=\sum_{q=0}^{k_n-1}\Delta(g)^n_q\zeta^2_{\widehat{T}^{j+q}}
\end{align*}
for each $i,j\in\mathbb{Z}_+$. Throughout the discussions, for (random) sequences $(x_n)$ and $(y_n)$, $x_n\lesssim y_n$ means that there exists a (non-random) constant $C\in[0,\infty)$ such that $x_n\leq Cy_n$ for large $n$. We denote by $E_0$ a conditional expectation given $\mathcal{F}^{(0)}$, i.e. $E_0[\cdot]:=E[\cdot|\mathcal{F}^{(0)}]$.
\begin{lem}\label{moment}
Suppose $[\mathrm{SN}^\flat_r]$ hold for some $r\in[2,\infty)$. Then there exists a some positive constant $K_r$ independent of $n$ such that
\begin{equation}\label{eqmoment}
E_0[|\overline{\zeta}^{1}(\widehat{\mathcal{I}})^i|^r]\leq K_r k_n^{-r/2},\qquad
E_0[|\overline{\zeta}^{2}(\widehat{\mathcal{J}})^j|^r]\leq K_r k_n^{-r/2}
\end{equation}
for all $i,j\in\mathbb{N}$.
\end{lem}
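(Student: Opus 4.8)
The plan is to condition on $\mathcal{F}^{(0)}$ and view $\overline{\zeta}^1(\widehat{\mathcal{I}})^i=\sum_{p=0}^{k_n-1}\Delta(g)^n_p\zeta^1_{\widehat{S}^{i+p}}$ as a weighted sum of conditionally independent, conditionally centered, uniformly $L^r$-bounded random variables, and then to apply Rosenthal's inequality. First I would record the structural facts. Since $\widehat{S}^{k-1}\le R^{k-1}<\widehat{S}^k$ by construction, the times $\widehat{S}^k$ are $\mathbf{F}^{(0)}$-stopping times which are a.s.\ strictly increasing; given $\mathcal{F}^{(0)}$ they are therefore deterministic and pairwise distinct, and because $\zeta$ has conditional law $Q(\omega^{(0)},\cdot)=\otimes_{t}Q_t(\omega^{(0)},\cdot)$, the variables $(\zeta^1_{\widehat{S}^{i+p}})_{0\le p\le k_n-1}$ are conditionally independent. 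They are conditionally centered since $\int zQ_t(\omega^{(0)},\mathrm{d}z)=0$, and by $[\mathrm{SN}^\flat_r]$ there is a constant $\bar K$, not depending on $n$, $i$ or $p$, with $E_0[|\zeta^1_{\widehat{S}^{i+p}}|^r]\le\bar K$; since $r\ge2$, Jensen's inequality also gives $E_0[(\zeta^1_{\widehat{S}^{i+p}})^2]\le\bar K^{2/r}$.

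Next I would bound the weights. As $g$ is continuous on $[0,1]$, piecewise $C^1$ with a piecewise Lipschitz derivative, and is extended by $0$ off $[0,1]$ with $g(0)=g(1)=0$, it is globally Lipschitz, so $|\Delta(g)^n_p|=|g((p+1)/k_n)-g(p/k_n)|\le L_g/k_n$ for all $p$, with $L_g$ depending only on $g$. Combining this with the conditional independence, Rosenthal's inequality (applied under $P(\cdot\mid\mathcal{F}^{(0)})$) provides a universal constant $C_r$ with
\[
E_0[|\overline{\zeta}^1(\widehat{\mathcal{I}})^i|^r]\le C_r\left\{\Big(\sum_{p=0}^{k_n-1}(\Delta(g)^n_p)^2E_0[(\zeta^1_{\widehat{S}^{i+p}})^2]\Big)^{r/2}+\sum_{p=0}^{k_n-1}|\Delta(g)^n_p|^rE_0[|\zeta^1_{\widehat{S}^{i+p}}|^r]\right\}.
\]
Inserting $|\Delta(g)^n_p|\le L_g/k_n$, $E_0[(\zeta^1_{\widehat{S}^{i+p}})^2]\le\bar K^{2/r}$ and $E_0[|\zeta^1_{\widehat{S}^{i+p}}|^r]\le\bar K$, and using that each sum has $k_n$ terms, the right-hand side is at most $C_rL_g^r\bar K(k_n^{-r/2}+k_n^{1-r})\le2C_rL_g^r\bar K\,k_n^{-r/2}$, because $1-r\le-r/2$ for $r\ge2$. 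This yields the first inequality in $(\ref{eqmoment})$ with $K_r=2C_rL_g^r\bar K$, and the second follows by the identical argument with $(\widehat{T}^j)$, $\widehat{\mathcal{J}}$ and $\zeta^2$ in place of $(\widehat{S}^i)$, $\widehat{\mathcal{I}}$ and $\zeta^1$.

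I do not expect a real obstacle here; the only point that needs a little care is the conditional independence, namely that after conditioning on $\mathcal{F}^{(0)}$ the times $\widehat{S}^i,\widehat{S}^{i+1},\dots$ are genuinely distinct deterministic times, so that the corresponding coordinates of the canonical process $\zeta$ are independent under $Q(\omega^{(0)},\cdot)$. Should two of these times ever coincide, one can still conclude by merging the corresponding weights, which only reduces the number of summands and at most doubles each weight, so that the stated bound is unaffected.
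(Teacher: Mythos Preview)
Your proof is correct and follows essentially the same route as the paper: both exploit that, conditionally on $\mathcal{F}^{(0)}$, the summands are independent centered variables with uniformly bounded $r$-th moments and weights of size $O(k_n^{-1})$. The only cosmetic difference is that the paper invokes the Burkholder--Davis--Gundy inequality followed by Jensen's inequality on the square bracket, whereas you apply Rosenthal's inequality directly; these are equivalent here.
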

\begin{proof}
The Burkholder-Davis-Gundy inequality, Jensen's inequality and the Lipschitz continuity of $g$ yield
\begin{align*}
E_0[|\overline{\zeta}^{1}(\widehat{\mathcal{I}})^i|^r]
\lesssim  E_0\left[\left\{\sum_{p=0}^{k_n-1}|\Delta(g)^n_p\zeta^{1}_{\widehat{S}^{i+p}}|^2\right\}^{r/2}\right]
\leq k_n^{r/2-1}\sum_{p=0}^{k_n-1}E_0[|\Delta(g)^n_p \zeta^{1}_{\widehat{S}^{i+p}}|^r]
\lesssim k_n^{-r/2},
\end{align*}
hence we obtain the first inequality of $(\ref{eqmoment})$. By symmetry we also obtain the second one.
\end{proof}


Set $\bar{I}^i=[\widehat{S}^i,\widehat{S}^{i+k_n})$, $\bar{J}^j=[\widehat{T}^j,\widehat{T}^{j+k_n})$ and $\bar{R}^\vee(i,j)=\widehat{S}^{i+k_n}\vee\widehat{T}^{j+k_n}$ for each $i,j\in\mathbb{Z}_+$.
\begin{lem}\label{lemFAnoise}
Let $c$ be a positive number. Suppose $[\mathrm{SC}2]$ and $[\mathrm{SF}]$ hold. Suppose also $[\mathrm{SN}^\flat_r]$ holds for some $r\in(2,\infty)$. Then for all $t>0$ we have
\begin{equation}\label{eqFAnoise}
\sum_{i=1}^{\infty}P\left(|\overline{\zeta}^{1}(\widehat{\mathcal{I}})^i|\geq c, N^{1}(\bar{I}^i)_t\neq 0,  \widehat{S}^{i+k_n}\leq t\right)
\rightarrow 0,\qquad
\sum_{j=1}^{\infty}P\left(|\overline{\zeta}^{2}(\widehat{\mathcal{J}})^j|\geq c, N^{2}(\bar{J}^j)_t\neq 0,  \widehat{T}^{j+k_n}\leq t\right)
\rightarrow 0
\end{equation}
as $n\rightarrow\infty$. 
\end{lem}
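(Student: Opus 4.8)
The plan is to condition on $\mathcal{F}^{(0)}$, use the uniform conditional moment bound of Lemma~\ref{moment} on the pre-averaged noise, and observe that only those indices $i$ for which the window $\bar{I}^i$ happens to contain one of the finitely many jump times of $Z^1$ before $t$ can contribute to the sum. Only $[\mathrm{SN}^\flat_r]$ (through Lemma~\ref{moment}) and the point-process structure of $[\mathrm{SF}]$ will actually be used, together with the localization.

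First I would note that $\overline{\zeta}^1(\widehat{\mathcal{I}})^i$ is a function of the noise variables $\zeta^1_{\widehat{S}^{i+p}}$, $0\le p<k_n$, alone, whereas the event $B_i^n:=\{N^1(\bar{I}^i)_t\neq0,\ \widehat{S}^{i+k_n}\le t\}$ is $\mathcal{F}^{(0)}$-measurable, since $N^1$ is $\mathbf{F}^{(0)}$-adapted and $\widehat{S}^{i+k_n}$ is an $\mathbf{F}^{(0)}$-stopping time. By Markov's inequality and Lemma~\ref{moment}, $P(|\overline{\zeta}^1(\widehat{\mathcal{I}})^i|\ge c\mid\mathcal{F}^{(0)})\le c^{-r}E_0[|\overline{\zeta}^1(\widehat{\mathcal{I}})^i|^r]\le c^{-r}K_rk_n^{-r/2}$ a.s., uniformly in $i$, whence
\begin{equation*}
\sum_{i=1}^\infty P\!\left(|\overline{\zeta}^1(\widehat{\mathcal{I}})^i|\ge c,\ N^1(\bar{I}^i)_t\neq0,\ \widehat{S}^{i+k_n}\le t\right)=\sum_{i=1}^\infty E\!\left[1_{B_i^n}P(|\overline{\zeta}^1(\widehat{\mathcal{I}})^i|\ge c\mid\mathcal{F}^{(0)})\right]\le c^{-r}K_rk_n^{-r/2}\,E\!\left[\sum_{i=1}^\infty 1_{B_i^n}\right].
\end{equation*}

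Next I would bound $\sum_{i\ge1}1_{B_i^n}$ pathwise. Because the $\widehat{S}^k$ are strictly increasing, a fixed time lies in at most $k_n$ of the windows $\bar{I}^i=[\widehat{S}^i,\widehat{S}^{i+k_n})$: if $\tau\in\bar{I}^i\cap\bar{I}^{i'}$ with $i<i'$, then $\widehat{S}^{i'}\le\tau<\widehat{S}^{i+k_n}$, so $i'<i+k_n$. Applying this to each of the $N^1_{t-}\le N^1_t$ jump times of $Z^1$ in $[0,t)$ gives $\sum_{i\ge1}1_{B_i^n}=\sum_{i:\widehat{S}^{i+k_n}\le t}1_{\{N^1(\bar{I}^i)_t\neq0\}}\le k_nN^1_t$. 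By a further localization---stopping the processes at $\inf\{t:N^1_t\vee N^2_t\ge m\}$, which increases to $\infty$ a.s. since the jumps have finite activity---we may assume in addition that $N^1$ and $N^2$ are bounded by a constant, so that $E[\sum_{i\ge1}1_{B_i^n}]\lesssim k_n$. Plugging this into the previous display gives the bound $c^{-r}K_rk_n^{-r/2}\cdot O(k_n)=O(k_n^{1-r/2})\to0$, since $k_n\to\infty$ and $r>2$. The second convergence in $(\ref{eqFAnoise})$ follows verbatim with $\widehat{S}^i,\bar{I}^i,N^1,\overline{\zeta}^1(\widehat{\mathcal{I}})^i$ replaced by $\widehat{T}^j,\bar{J}^j,N^2,\overline{\zeta}^2(\widehat{\mathcal{J}})^j$.

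The step I expect to be the most delicate is the control of $E[\sum_i1_{B_i^n}]$: one must use \emph{both} that the contributing windows are exactly those meeting a jump \emph{and} that $E[N^1_t]<\infty$ after localization. Indeed, the crude count $\#\{i:\widehat{S}^{i+k_n}\le t\}=O(b_n^{-1})$ available from $[\mathrm{SC}1]$, multiplied by the per-term estimate $O(k_n^{-r/2})=O(b_n^{r/4})$, would only yield $O(b_n^{r/4-1})$, which does not vanish when $r\le4$; the refinement via the jump count is essential for the full range $r>2$. Everything else is routine.
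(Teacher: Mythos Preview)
Your proof is correct and follows essentially the same route as the paper's: condition on $\mathcal{F}^{(0)}$, apply Markov's inequality together with Lemma~\ref{moment} to get the uniform bound $c^{-r}K_rk_n^{-r/2}$ on each term, then use that a single jump time can lie in at most $k_n$ of the overlapping windows $\bar{I}^i$ to bound $\sum_i 1_{B_i^n}\le k_n N^1_t$, concluding with $k_n^{1-r/2}\to0$. Your write-up is in fact slightly more careful than the paper's, which leaves implicit both the window-overlap count and the integrability of $N^1_t$ that you secure via an additional localization; your closing remark that the crude count $O(b_n^{-1})$ would fail for $r\le4$ is also a nice diagnostic that the paper does not spell out.
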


\begin{proof}
Lemma \ref{moment} yields
$E_0[|\overline{\zeta}^{1}(\widehat{\mathcal{I}})^i|^r 1_{\{ N^{1}(\bar{I}^i)_t\neq 0\}}]
\lesssim k_n^{-r/2}1_{\{ N^{1}(\bar{I}^i)_t\neq 0\}}$
uniformly in $i$. Since $N^{1}$ is a point process, we obtain
\begin{align*}
\sum_{i=1}^{\infty}P\left(|\overline{\zeta}^{1}(\widehat{\mathcal{I}})^i|\geq c, N^{1}(\bar{I}^i)_t\neq 0,  \widehat{S}^{i+k_n}\leq t\big|\mathcal{F}^{(0)}\right)
\leq \frac{1}{c^r}\sum_{i=1}^{\infty}E_0[|\overline{\zeta}^{1}(\widehat{\mathcal{I}})^i|^r 1_{\{ N^{1}(\bar{I}^i)_t\neq 0\}}]
\lesssim k_n^{1-r/2}N^{1}_t ,
\end{align*} 
and thus we obtain the first equation of $(\ref{eqFAnoise})$ since $r>2$ and $k_n\rightarrow\infty$ as $n\rightarrow\infty$. By symmetry we also obtain the second equation of $(\ref{eqFAnoise})$, and thus we complete the proof of lemma.
\end{proof}

\if0
\begin{proof}
Take a positive $\gamma$ satisfying $\gamma>1/(\xi'-1/2)$. Lemma \ref{moment} and $(\ref{absmod2})$ yield
\begin{align*}
&P\left(|\overline{\zeta}^{1}(\widehat{\mathcal{I}})^i|\geq c, N^{1}(\bar{I}^i)_t\neq 0,  \widehat{S}^{i+k_n}\leq t\big|\mathcal{F}^{(0)}\right)
\leq \left\{c^{-r}E_0[|\overline{\zeta}^{1}(\widehat{\mathcal{I}})^i|^r+c^{-\gamma}|\widetilde{\underline{X}^1}(\widehat{\mathcal{I}})^i_t| \right\}1_{\{ N^{1}(\bar{I}^i)_t\neq 0\}}\\
\lesssim& \left(c^{-r}k_n^{-r/2}+c^{-\gamma}(2 k_n \bar{r}_n|\log b_n|)^{\gamma/2}\right)1_{\{ N^{1}(\bar{I}^i)_t\neq 0\}}
\end{align*}
uniformly in $i$. Since $N^{1}$ is a point process, we obtain
\begin{align*}
&\sum_{i=1}^{\infty}P^0\left(|\overline{\zeta}^{1}(\widehat{\mathcal{I}})^i|\geq c, N^{1}(\bar{I}^i)_t\neq 0,  \widehat{S}^{i+k_n}\leq t\big|\mathcal{F}^{(0)}\right)\\
\lesssim& \left(c^{-r}k_n^{-r/2}+c^{-\gamma}(2 k_n \bar{r}_n|\log b_n|)^{\gamma/2}\right)\sum_{i=1}^\infty1_{\{ N^{1}(\bar{I}^i)_t\neq 0\}}
\leq \left(c^{-r}k_n^{1-r/2}+c^{-\gamma}(2 k_n \bar{r}_n|\log b_n|)^{\gamma/2}k_n\right)N^{1}_t ,
\end{align*} 
and thus we obtain $(\ref{eqFAnoise1})$ since $1-\frac{r}{2}<0$ and $\frac{\gamma}{2}(\xi'-\frac{1}{2})>\frac{1}{2}$. By symmetry we also obtain $(\ref{eqFAnoise2})$, and thus we complete the proof of lemma.
\end{proof}
\fi


\begin{proof}[\upshape{\bfseries{Proof of Theorem \ref{thmFAnoise}}}]
By a localization procedure, we may replace the conditions [F], [C1]-[C2] and [N$^\flat_r$] with [SF], [C1]-[C2] and [SN$^\flat_r$] respectively. Moreover, we can also replace the condition [T] with [ST] by Lemma \ref{lemthreshold}, while $(\ref{A4})$ can be replaced with $(\ref{SA4})$ due to the above argument.

We decompose the target quantity as
\begin{align*}
&PTHY(\mathsf{Z}^{1},\mathsf{Z}^{2})^n_t-PHY(\mathsf{X}^{1},\mathsf{X}^{2})^n_t\\
=&\frac{1}{(\psi_{HY}k_n)^2}\Biggl(-\sum_{i,j:\bar{R}^\vee(i,j)\leq t}\overline{\mathsf{X}}^{1}(\widehat{\mathcal{I}})^i\overline{\mathsf{X}}^{2}(\widehat{\mathcal{J}})^j \bar{K}^{i j} 1_{\{|\overline{\mathsf{Z}}^{1}(\widehat{\mathcal{I}})^i|^2>\varrho_n^{1}[i]\}\cup\{|\overline{\mathsf{Z}}^{2}(\widehat{\mathcal{J}})^j|^2>\varrho_n^{2}[j]\}}\\
&+\sum_{i,j:\bar{R}^\vee(i,j)\leq t}\left\{\bar{D}^{1}(\widehat{\mathcal{I}})^i_t\overline{\mathsf{X}}^{2}(\widehat{\mathcal{J}})^j+\overline{\mathsf{X}}^{1}(\widehat{\mathcal{I}})^i\bar{D}^{2}(\widehat{\mathcal{J}})^j_t+\bar{D}^{1}(\widehat{\mathcal{I}})^i_t\bar{D}^{2}(\widehat{\mathcal{J}})^j_t\right\} \bar{K}^{i j} 1_{\{|\overline{\mathsf{Z}}^{1}(\widehat{\mathcal{I}})^i|^2\leq\varrho_n^{1}[i],|\overline{\mathsf{Z}}^{2}(\widehat{\mathcal{J}})^j|^2\leq\varrho_n^{2}[j]\}}\Biggr)\\
=:&\mathbb{I}_t+\mathbb{II}_t+\mathbb{III}_t+\mathbb{IV}_t,
\end{align*}
where $D^{l}_t:=\sum_{k=1}^{N_t^{l}}\gamma_k^{l}$ for each $l=1,2$.

First consider $\mathbb{I}$. By the Schwarz inequality and $(\ref{sumbarK})$, we have
{\small \begin{align*}
\sup_{0\leq s\leq t}|\mathbb{I}_s|
\leq &\frac{1}{(\psi_{HY}k_n)^2}\left\{\sum_{i,j:\bar{R}^\vee(i,j)\leq t}|\overline{\mathsf{X}}^{1}(\widehat{\mathcal{I}})^i|^2\bar{K}^{i j} 1_{\{|\overline{\mathsf{Z}}^{1}(\widehat{\mathcal{I}})^i|^2>\varrho_n^{1}[i]\}}\right\}^{1/2}
\left\{\sum_{i,j:\bar{R}^\vee(i,j)\leq t}|\overline{\mathsf{X}}^{2}(\widehat{\mathcal{J}})^j|^2\bar{K}^{i j} 1_{\{|\overline{\mathsf{Z}}^{2}(\widehat{\mathcal{J}})^j|^2>\varrho_n^{2}[j]\}}\right\}^{1/2}\\
\lesssim &\frac{1}{k_n}\left\{\sum_{i:\widehat{S}^{i+k_n}\leq t}|\overline{\mathsf{X}}^{1}(\widehat{\mathcal{I}})^i|^2 1_{\{|\overline{\mathsf{Z}}^{1}(\widehat{\mathcal{I}})^i|^2>\varrho_n^{1}[i]\}}\right\}^{1/2}\left\{\sum_{j:\widehat{T}^{j+k_n}\leq t}|\overline{\mathsf{X}}^{2}(\widehat{\mathcal{J}})^j|^2\ 1_{\{|\overline{\mathsf{Z}}^{2}(\widehat{\mathcal{J}})^j|^2>\varrho_n^{2}[j]\}}\right\}^{1/2}.
\end{align*}}
Consider $\sum_{i:\widehat{S}^{i+k_n}\leq t}|\overline{\mathsf{X}}^{1}(\widehat{\mathcal{I}})^i|^2 1_{\{|\overline{\mathsf{Z}}^{1}(\widehat{\mathcal{I}})^i|^2>\varrho_n^{1}[i]\}}$. We decompose it as
\begin{align*}
\sum_{i:\widehat{S}^{i+k_n}\leq t}|\overline{\mathsf{X}}^{1}(\widehat{\mathcal{I}})^i|^2 1_{\{|\overline{\mathsf{Z}}^{1}(\widehat{\mathcal{I}})^i|^2>\varrho_n^{1}[i]\}}
&= \sum_{i:\widehat{S}^{i+k_n}\leq t}|\overline{\mathsf{X}}^{1}(\widehat{\mathcal{I}})^i|^2\left( 1_{\{|\overline{\mathsf{X}}^{1}(\widehat{\mathcal{I}})^i|^2>\varrho_n^{1}[i], N^{1}(\bar{I}^i)_t=0\}}
+1_{\{|\overline{\mathsf{Z}}^{1}(\widehat{\mathcal{I}})^i|^2>\varrho_n^{1}[i], N^{1}(\bar{I}^i)_t\neq 0\}}\right)\\
&=:A_{1,t}+A_{2,t}.
\end{align*}
On $\{|\overline{\mathsf{X}}^{1}(\widehat{\mathcal{I}})^i|^2>\varrho_n^{1}[i], \widehat{S}^{i+k_n}\leq t\}$ we have
\begin{align*}
|\overline{\zeta}^{1}(\widehat{\mathcal{I}})^i|
\geq |\overline{\mathsf{X}}^{1}(\widehat{\mathcal{I}})^i|-|\bar{X}^{1}(\widehat{\mathcal{I}})^i_t|-|\widetilde{\underline{X}^1}(\widehat{\mathcal{I}})^i_t|
>\sqrt{\rho_n}\left(\frac{1}{\sqrt{K_0}}-2K\sqrt{\frac{2 k_n\bar{r}_n|\log b_n|}{\rho_n}}\right)
\end{align*}
by [$\mathrm{ST}$] and $(\ref{absmod2})$. Hence by $(\ref{threshold2})$ we have
{\small \begin{align*}
A_{1,t}\leq
\sum_{i:\widehat{S}^{i+k_n}\leq t} |\overline{\mathsf{X}}^{1}(\widehat{\mathcal{I}})^i|^21_{\{\overline{\zeta}^{1}(\widehat{\mathcal{I}})^i|^2>\rho_n/4K_0\}}
\leq 2\left(\frac{4 K_0}{\rho_n}\right)^{\frac{r}{2}}\sum_{i:\widehat{S}^{i+k_n}\leq t} |\bar{X}^{1}(\widehat{\mathcal{I}})^i_t|^2|\overline{\zeta}^{1}(\widehat{\mathcal{I}})^i|^r
+2\left(\frac{4 K_0}{\rho_n}\right)^{\frac{r-2}{2}}\sum_{i:\widehat{S}^{i+k_n}\leq t} |\overline{\zeta}^{1}(\widehat{\mathcal{I}})^i|^r,
\end{align*}}
and thus $(\ref{threshold2})$ and $(\ref{absmod2})$ imply that
$A_{1,t}
\lesssim(\rho_n)^{-\frac{r-2}{2}}\sum_{i:\widehat{S}^{i+k_n}\leq t} |\overline{\zeta}^{1}(\widehat{\mathcal{I}})^i|^r.$
Hence Lemma \ref{moment} and [SC1] yield
\begin{equation}\label{nFAA1}
E[A_{1,t}]\lesssim (b_n k_n)^{-1}(k_n\rho_n)^{-\frac{r-2}{2}}.
\end{equation}
On the other hand, since $|\bar{I}^i(t)|\leq k_n\bar{r}_n\rightarrow 0$ and $N^{1}$ is a point process, pathwise for sufficiently large $n$ there exists a some index $k(i)\in\mathbb{N}$ for each $i$ such that $\bar{D}^{1}(\widehat{\mathcal{I}})^i_t=\gamma_k(i)N^{1}(\bar{I}^i)_t$. Hence by $(\ref{jumpsize})$ we have $|\bar{D}^{1}(\widehat{\mathcal{I}})^i_t|\geq B^{-1}$ on $\{|\overline{\mathsf{Z}}^{1}(\widehat{\mathcal{I}})^i|^2\leq\varrho_n^{1}[i], N^{1}(\bar{I}^i)_t\neq 0, \widehat{S}^{i+k_n}\leq t\}$ for each $i$ pathwise for sufficiently large $n$.
Moreover, on $\{|\overline{\mathsf{Z}}^{1}(\widehat{\mathcal{I}})^i|^2\leq\varrho_n^{1}[i],|\bar{D}^{1}(\widehat{\mathcal{I}})^i_t|\geq B^{-1},\widehat{S}^{i+k_n}\leq t\}$ we have
\begin{align*}
|\overline{\zeta}^{1}(\widehat{\mathcal{I}})^i|
\geq|\bar{D}^{1}(\widehat{\mathcal{I}})^i_t|-|\mathsf{Z}^{1}(\widehat{\mathcal{I}})^i|-|\bar{X}^{1}(\widehat{\mathcal{I}})^i_t|-|\widetilde{\underline{X}^1}(\widehat{\mathcal{I}})^i_t|
\geq B^{-1} -\sqrt{\varrho_n^{1}[i]}-|\bar{X}^{1}(\widehat{\mathcal{I}})^i_t|-|\widetilde{\underline{X}^1}(\widehat{\mathcal{I}})^i_t|, 
\end{align*}
hence by $(\ref{absmod2})$ a.s. for sufficiently large $n$ we have
$A_{2,t}
\leq\sum_{i:\widehat{S}^{i+k_n}\leq t}|\overline{\mathsf{X}}^{1}(\widehat{\mathcal{I}})^i|^2 1_{\{|\overline{\zeta}^{1}(\widehat{\mathcal{I}})^i|>1/2 B, N^{1}(\bar{I}^i)_t\neq 0\}}.$
Therefore Lemma \ref{lemFAnoise} yields
\begin{equation}\label{nFAA2}
A_{2,t}= o_p\left((b_n k_n)^{-1}(k_n\rho_n)^{-\frac{r-2}{2}}\right).
\end{equation}
By $(\ref{nFAA1})$ and $(\ref{nFAA2})$ we obtain
$\sum_{i:\widehat{S}^{i+k_n}\leq t}|\overline{\mathsf{X}}^{1}(\widehat{\mathcal{I}})^i|^2 1_{\{|\overline{\mathsf{Z}}^{1}(\widehat{\mathcal{I}})^i|^2>\varrho_n^{1}[i]\}}=O_p\left((b_n k_n)^{-1}(k_n\rho_n)^{-\frac{r-2}{2}}\right),$
and by symmetry we also obtain
$\sum_{i:\widehat{T}^{j+k_n}\leq t}|\overline{\mathsf{X}}^{2}(\widehat{\mathcal{J}})^j|^2 1_{\{|\overline{\mathsf{Z}}^{2}(\widehat{\mathcal{J}})^j|^2>\varrho_n^{2}[j]\}}=O_p\left((b_n k_n)^{-1}(k_n\rho_n)^{-\frac{r-2}{2}}\right).$
Consequently, by $(\ref{window})$ we have
$\sup_{0\leq s\leq t}|\mathbb{I}_s|=
O_p\left(\left(b_n^{1/2}\rho_n^{-1}\right)^{\frac{r-2}{2}}\right).$

Next consider $\mathbb{II}$. Since $\bar{D}^{1}(\widehat{\mathcal{I}})^i_t=0$ on $\{N^{1}(\bar{I}^i)_t=0\}$, we have
\begin{equation*}
\mathbb{II}_t
=\frac{1}{(\psi_{HY}k_n)^2}\sum_{i,j\in\mathbb{Z}_+, \bar{R}^\vee(i,j)\leq t}\bar{D}^{1}(\widehat{\mathcal{I}})^i_t\overline{\mathsf{Z}}^{2}(\widehat{\mathcal{J}})^j \bar{K}^{i j} 1_{\{|\overline{\mathsf{Z}}^{1}(\widehat{\mathcal{I}})^i|^2\leq\varrho_n^{1}[i],|\overline{\mathsf{Z}}^{2}(\widehat{\mathcal{J}})^j|^2\leq\varrho_n^{2}[j], N^{1}(\bar{I}^i)_t\neq 0\}},
\end{equation*}
and thus an argument similar to the proof of $(\ref{nFAA2})$ yield $\sup_{0\leq s\leq t}|\mathbb{II}_s|=o_p(b_n^{1/4})$. Similarly we can show $\sup_{0\leq s\leq t}|\mathbb{III}_s|=o_p(b_n^{1/4})$ and $\sup_{0\leq s\leq t}|\mathbb{IV}_s|=o_p(b_n^{1/4})$.
Consequently, we complete the proof of Theorem \ref{thmFAnoise}.
\end{proof}


\section{Proof of Theorem \ref{thmIAnoise}}\label{proofthmIAnoise}


Exactly as in the previous section, we can use a localization procedure for the proof, and which allows us to systematically replace the conditions [A4], [A6] and [K$_\beta$] by the following strengthened versions: 

\begin{enumerate}
\item[{[SA4]}] $\xi\vee\frac{9}{10}<\xi'$ and $(\ref{SA4})$ holds.
\item[{[SA6]}] There exists a positive constant $C$ such that $b_n^{-1}H_n(t)\leq C$ for every $t$.
\item[{ [$\mathrm{SK}_\beta$]}] We have [$\mathrm{K}_\beta$] with $E^{1}=E^{2}=:E$ and $(A^{l})'$, $[M^{l}]'$, $(\underline{A}^l)'$ and $[\underline{M}^{l}]'$ ($l=1,2$) are bounded. Moreover, there is a non-negative bounded measurable function $\psi$ on $E$ such that
\begin{equation*}
\sup_{\omega\in\Omega ,t\in\mathbb{R}_+}|\delta^{l}(\omega, t,x)|\leq\psi(x)~\textrm{and}~\int_E \psi(x)^{\beta}F^{l}(\mathrm{d}x)<\infty ,~l=1,2.
\end{equation*}   
\end{enumerate}

Next, an argument similar to the one in the first part of Section 12 of \cite{HY2011} allows us to assume that $\frac{9}{10}<\xi<\xi'<1$ under [A2]. Furthermore, in the following we only consider sufficiently large $n$ such that
\begin{equation}\label{spp}
k_n\bar{r}_n<b_n^{\xi-1/2}.
\end{equation}
Note that we can use Lemma 11.2 of \cite{Koike2012phy} under [A2] and [SA4] in this situation. 

Now we prove some auxiliary results. Let
\begin{align*}
N^{l}:=1_{\{|\delta^{l}|>1\}}\star\mu^{l},\qquad
L^{l}:=\kappa(\delta^{l})\star(\mu^{l}-\nu^{l}).
\end{align*}
First we need the pre-averaged versions of some lemmas in Section 6 of \cite{Koike2012}. For processes $V$ and $W$, $V\bullet W$ denotes the integral (either stochastic or ordinary) of $V$ with respect to $W$.

 
\begin{lem}\label{lembasic}
Suppose $[\mathrm{ST}]$, $[\mathrm{A}2]$, $[\mathrm{SA}4]$ and $[\mathrm{SK}_2]$ hold. Then for any $t>0$ we have
\begin{equation}\label{eqbasic}
\sum_{i=1}^{\infty}P\left( N^{1}(\bar{I}^i)_t\neq 0, |\bar{L}^{1}(\widehat{\mathcal{I}})^i_t|^2>4\varrho_n^{1}[i]\right)\rightarrow 0,\qquad
\sum_{j=1}^{\infty}P\left( N^{2}(\bar{J}^j)_t\neq 0, |\bar{L}^{2}(\widehat{\mathcal{J}})^j_t|^2>4\varrho_n^{2}[j]\right)\rightarrow 0
\end{equation}
as $n\rightarrow\infty$.
\end{lem}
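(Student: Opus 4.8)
I would argue by a sequence of reductions followed by a dyadic-type splitting of $L^1$, treating the ``tiny'' jumps by an exponential inequality and the ``medium'' jumps by a direct count.

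First, the reductions. By the localization announced just before the lemma we may assume $[\mathrm{SK}_2]$, so that $\psi$ is bounded and $\int_E\psi^2\,F^l<\infty$; in particular $\{|\delta^l|>1\}\subseteq\{\psi>1\}$, so $N^l$ is dominated by a Poisson process of finite intensity and $N^l_t$ has finite moments of every order. By (the argument of) Lemma \ref{lemthreshold} we may work under $[\mathrm{ST}]$, so that $\varrho_n^l[i]\ge\rho_n/K_0$; hence it suffices to prove the two displayed sums with $\varrho_n^l[\cdot]$ replaced by the deterministic quantity $\rho_n/K_0$, and by the symmetry between the two coordinates it is enough to treat the sum over $i$. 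Put $\ell_n:=k_n\bar r_n$; by $(\ref{window})$, $\ell_n\sim\theta b_n^{\xi'-1/2}\to0$, and $(\ref{threshold2})$ gives the crucial relation $\ell_n|\log b_n|=o(\rho_n)$. Since $\sup_tr_n(t)\le\bar r_n$ by $(\ref{SA4})$, every pre-averaging window $[\widehat S^i,\widehat S^{i+k_n})$ has Lebesgue measure $\le2\ell_n$, and each atom of $N^1$ on $[0,t)$ belongs to at most $k_n$ of the intervals $\bar I^i$; thus the random index set $\mathcal A_n:=\{i\ge1:N^1(\bar I^i)_t\ne0\}$ satisfies $\#\mathcal A_n\le k_nN^1_t$, so $E[(\#\mathcal A_n)^p]\lesssim k_n^p$ for every $p\ge1$.

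Second, the splitting. Fix $a_n:=\sqrt{\ell_n/|\log b_n|}\downarrow0$ and write $L^1=L^{1,\flat}_n+L^{1,\sharp}_n$ with $L^{1,\flat}_n:=\kappa(\delta^1)1_{\{\psi\le a_n\}}\star(\mu^1-\nu^1)$ and $L^{1,\sharp}_n:=\kappa(\delta^1)1_{\{\psi>a_n\}}\star(\mu^1-\nu^1)$, so that $\bar L^1(\widehat{\mathcal I})^i_t=\bar L^{1,\flat}_n(\widehat{\mathcal I})^i_t+\bar L^{1,\sharp}_n(\widehat{\mathcal I})^i_t$ and it is enough to bound the two sums obtained by replacing $\bar L^1$ by $\bar L^{1,\flat}_n$, resp.\ $\bar L^{1,\sharp}_n$, and the threshold by $\rho_n/K_0$. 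For the tiny part, Abel summation exactly as in the proof of Lemma \ref{modulus} gives $\sup_i|\bar L^{1,\flat}_n(\widehat{\mathcal I})^i_t|\le C_g\,w(L^{1,\flat}_n;2\ell_n,t)$ with $C_g$ depending only on $g$, so $\{\exists i\in\mathcal A_n:|\bar L^{1,\flat}_n(\widehat{\mathcal I})^i_t|^2>\rho_n/K_0\}\subseteq\{w(L^{1,\flat}_n;2\ell_n,t)>c_0\sqrt{\rho_n}\}$ for a constant $c_0>0$. Now $L^{1,\flat}_n$ is a purely discontinuous locally square-integrable martingale with $|\Delta L^{1,\flat}_n|\le a_n$ and $\langle L^{1,\flat}_n\rangle'_s\le\phi_n:=\int_{\{\psi\le a_n\}}\psi^2\,F^1$, where $\phi_n\to0$ by dominated convergence; splitting $[0,t]$ into $O(t/\ell_n)$ blocks of length $2\ell_n$ and using the Bernstein inequality for martingales with bounded jumps on each, I get
\begin{equation*}
P\big(w(L^{1,\flat}_n;2\ell_n,t)>c_0\sqrt{\rho_n}\big)\lesssim\frac{t}{\ell_n}\exp\!\Big(-\frac{c_0^2\rho_n}{2(\phi_n\ell_n+a_nc_0\sqrt{\rho_n})}\Big),
\end{equation*}
and since $\phi_n\ell_n|\log b_n|=o(\rho_n)$ and $a_n\sqrt{\rho_n}|\log b_n|=\sqrt{\ell_n\rho_n}\,|\log b_n|^{1/2}=o(\rho_n)$, the exponent tends to $-\infty$ faster than any multiple of $|\log b_n|$, so the right side is $O(b_n^M)$ for every $M>0$. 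Hence, by Cauchy--Schwarz,
\begin{equation*}
\sum_iP\big(i\in\mathcal A_n,\,|\bar L^{1,\flat}_n(\widehat{\mathcal I})^i_t|^2>\rho_n/K_0\big)\le E\big[\#\mathcal A_n\,1_{\{w(L^{1,\flat}_n;2\ell_n,t)>c_0\sqrt{\rho_n}\}}\big]\le E[(\#\mathcal A_n)^2]^{1/2}\,O(b_n^{M/2})=O(k_nb_n^{M/2})\to0 .
\end{equation*}

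Third, the medium part $L^{1,\sharp}_n=J^\sharp_n-B^\sharp_n$, where $J^\sharp_n:=\kappa(\delta^1)1_{\{\psi>a_n\}}\star\mu^1$ is finite-activity with jump times forming a Poisson process of intensity $\le a_n^{-2}\int\psi^2F^1$, and $B^\sharp_n:=\kappa(\delta^1)1_{\{\psi>a_n\}}\star\nu^1$ is continuous with $|(B^\sharp_n)'|\le a_n^{-1}\int\psi^2F^1$. The compensator is harmless: the bound $\sup_i|\bar B^\sharp_n(\widehat{\mathcal I})^i_t|\le C_g\ell_na_n^{-1}\int\psi^2F^1$ together with $\ell_na_n^{-1}=\sqrt{\ell_n|\log b_n|}=o(\sqrt{\rho_n})$ shows that $\{|\bar B^\sharp_n(\widehat{\mathcal I})^i_t|^2>\rho_n/(4K_0)\}=\emptyset$ for large $n$. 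The sum for $J^\sharp_n$ is the heart of the matter: one uses that $\bar J^\sharp_n(\widehat{\mathcal I})^i_t$ is a $g^n_\bullet$-weighted sum of the jumps of $J^\sharp_n$ lying in $[\widehat S^i,\widehat S^{i+k_n-1})$, so that $|\bar J^\sharp_n(\widehat{\mathcal I})^i_t|^2>\rho_n/(4K_0)$ together with $N^1(\bar I^i)_t\ne0$ forces an atom of $N^1$ and an atom of $\mu^1|_{\{\psi>a_n\}}$ of size $\gtrsim\sqrt{\rho_n}$ to lie in a common window of length $\le2\ell_n$, while the weight $g^n_p$ kills all but a fraction of the affected indices $i$; then, invoking $(\ref{spp})$ and the strong-predictability condition $[\mathrm A2]$ (through Lemma 11.2 of \cite{Koike2012phy}) to handle the randomness of the $\widehat S^i$, and the rate estimate $\int_{\{\psi>y\}}F^1\le y^{-2}\int\psi^2F^1$ for jumps of size $\ge y$, the whole estimate should reduce to the relation $\ell_n|\log b_n|=o(\rho_n)$ from $(\ref{threshold2})$. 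This last step — the simultaneous control of $N^1$, of the not-too-small jumps of $L^1$, and of the sampling-time randomness, against a threshold that only needs to be slightly larger than $\ell_n|\log b_n|$ — is the delicate part and the main obstacle; the remaining pieces are routine consequences of the Burkholder--Davis--Gundy and Bernstein inequalities and the moment estimates for integrals against Poisson measures already used in \cite{Koike2012} and \cite{Koike2012phy}.
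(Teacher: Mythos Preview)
Your tiny-jump and compensator estimates are fine, but the ``medium'' piece $J^\sharp_n$ that you yourself call ``the delicate part and the main obstacle'' is not proved, and the heuristic you sketch has a real problem: since $F^1(\{\psi>a_n\})$ can be of order $a_n^{-2}\sim|\log b_n|/\ell_n$, a window of length $2\ell_n$ typically contains on the order of $|\log b_n|\to\infty$ atoms of $\mu^1|_{\{\psi>a_n\}}$, so $|\bar J^\sharp_n(\widehat{\mathcal I})^i_t|>\sqrt{\rho_n}$ need not be caused by a single jump of size $\gtrsim\sqrt{\rho_n}$ lying near an atom of $N^1$. In addition, $N^1$, $J^\sharp_n$ and the random sampling times $\widehat S^i$ are all coupled through $(\omega,\mu^1,\delta^1)$, so the ``simultaneity'' reduction you gesture at via $[\mathrm A2]$ would require nontrivial work that is not supplied. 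As written, this step is a genuine gap.

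The paper bypasses all of this with a one-line idea: since $N^1=1_{\{|\delta^1|>1\}}\star\mu^1$ and $L^1=\kappa(\delta^1)\star(\mu^1-\nu^1)$ have no common jumps, It\^o's formula on the product gives
\[
N^1(\bar I^i)\,|\bar L^1(\widehat{\mathcal I})^i|^2
=|\bar L^1(\widehat{\mathcal I})^i|^2_-\bar I^i_-\bullet N^1
+2N^1(\bar I^i)_-\bar L^1(\widehat{\mathcal I})^i_-\bullet\bar L^1(\widehat{\mathcal I})^i
+N^1(\bar I^i)_-\bullet[\bar L^1(\widehat{\mathcal I})^i],
\]
and taking expectations (the middle term is a martingale), compensating $N^1$ and $[\bar L^1(\widehat{\mathcal I})^i]$, and iterating once, one gets $E\big[N^1(\bar I^i)_t|\bar L^1(\widehat{\mathcal I})^i_t|^2\big]\lesssim\sum_{p=1}^{k_n-1}E\big[|\widehat I^{i+p}(t)|^2\big]$. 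Then Markov and $[\mathrm{ST}]$ give
\[
\sum_{i\ge1}P\big(N^1(\bar I^i)_t\ne0,\;|\bar L^1(\widehat{\mathcal I})^i_t|^2>4\varrho^1_n[i]\big)
\le\frac{K_0}{4\rho_n}\sum_{i\ge1}E\big[N^1(\bar I^i)_t|\bar L^1(\widehat{\mathcal I})^i_t|^2\big]
\lesssim\frac{k_n\bar r_n}{\rho_n}\to0
\]
by $(\ref{threshold2})$. No splitting of $L^1$, no Bernstein inequality, and no delicate control of the sampling-time dependence is needed; the point is that the indicator $1_{\{N^1(\bar I^i)_t\ne0\}}$ is dominated by the integer $N^1(\bar I^i)_t$, and It\^o on the product converts the joint event directly into second-moment information about the window lengths.
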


\begin{proof}
Since $N^{1}$ and $L^{1}$ have no common jump, It\^{o}'s formula yields
\begin{align*}
N^{1}(\bar{I}^i)|\bar{L}^{1}(\widehat{\mathcal{I}})^i|^2
=|\bar{L}^{1}(\widehat{\mathcal{I}})^i|^2_{-}\bar{I}^i_-\bullet N^{1}+2 N^{1}(\bar{I}^i)_-\bar{L}^{1}(\widehat{\mathcal{I}})^i_-\bullet \bar{L}^{1}(\widehat{\mathcal{I}})^i+N^{1}(\bar{I}^i)_-\bullet[\bar{L}^{1}(\widehat{\mathcal{I}})^i],
\end{align*}
and thus we obtain
$E[N^{1}(\bar{I}^i)_t|\bar{L}^{1}(\widehat{\mathcal{I}})^i_t|^2]
=E[|\bar{L}^{1}(\widehat{\mathcal{I}})^i|^2_{-}\bar{I}^i_-\bullet \Lambda^{1}_t]+E[N^{1}(\bar{I}^i)_-\bullet\langle\bar{L}^{1}(\widehat{\mathcal{I}})^i\rangle_t]$
by the optional sampling theorem, where $\Lambda^1$ is the compensator of $N^1$. Since $\Lambda^{1}=1_{\{|\delta^{1}|>1\}}\star\nu^{1}$ and $\langle \bar{L}^{1}(\widehat{\mathcal{I}})^i \rangle=\sum_{p=1}^{k_n-1}g^n_p (\widehat{I}^{i+p}_-\bullet\langle L^{1}\rangle)=\sum_{p=1}^{k_n-1}g^n_p[\widehat{I}^{i+p}_-\kappa(\delta^{1})]\star\nu^{1}$, by [$\mathrm{SK}_2$], [A2] and the optional sampling theorem we have
\begin{align*}
E[N^{1}(\bar{I}^i)_t |\bar{L}^{1}(\widehat{\mathcal{I}})^i_t|^2]
\lesssim &\int_0^t E[|\bar{L}^{1}(\widehat{\mathcal{I}})^i_{s}|^2 \bar{I}^i_{s}]\mathrm{d}s+\sum_{p=1}^{k_n-1}\int_0^t E[N^{1}(\bar{I}^i)_{s}\widehat{I}^{i+p}_{s}]\mathrm{d}s\\
=&\int_0^t E[\langle \bar{L}^{1}(\widehat{\mathcal{I}})^i\rangle_{s} \bar{I}^i_{s}]\mathrm{d}s+\sum_{p=1}^{k_n-1}\int_0^t E[\Lambda^{1}(\bar{I}^i)_{s}\widehat{I}^{i+p}_{s}]\mathrm{d}s,
\end{align*}
and thus again [$\mathrm{SK}_2$] and the representations of $\Lambda^{1}$ and $\langle \bar{L}^{1}(\widehat{\mathcal{I}})^i\rangle$ yield
{\small \begin{align*}
E[N^{1}(\bar{I}^i)_t |\bar{L}^{1}(\widehat{\mathcal{I}})^i_t|^2]
\lesssim\sum_{p=1}^{k_n-1}\int_0^t E[|g^n_p[\widehat{I}^{i+p}_-\kappa(\delta^{1})]|\star\nu^{1}_{s} \widehat{I}^{i+p}_{s}]\mathrm{d}s+\sum_{p=1}^{k_n-1}\int_0^t E[\Lambda^{1}(\widehat{I}^{i+p})_{s}\widehat{I}^{i+p}_{s}]\mathrm{d}s
\lesssim\sum_{p=1}^{k_n-1}E\left[|\widehat{I}^{i+p}(t)|^2\right].
\end{align*}}
Since [SA4] implies $\sum_{i=1}^{\infty}|\widehat{I}^i(t)|^2\leq\overline{r}_n t$, we have
\begin{align*}
\sum_{i=1}^{\infty}P\left( N^{1}(\bar{I}^i)_t\neq 0, |\bar{L}^{1}(\widehat{\mathcal{I}})^i_t|^2>4\varrho_n^{1}[i]\right)
\leq \frac{K_0}{4\rho_n}\sum_{i=1}^{\infty}E[N^{1}(\bar{I}^i)_t |\bar{L}^{1}(\widehat{\mathcal{I}})^i_t|^2]
\lesssim \frac{k_n\overline{r}_n}{\rho_n} ,
\end{align*}
and thus $(\ref{threshold2})$ yields the first equation of $(\ref{eqbasic})$. Similarly we can prove the second equation of $(\ref{eqbasic})$.
\end{proof}


Let $\varphi_p(\varepsilon)=\sum_{l=1}^2\int_{\{\psi\leq\varepsilon\}}\psi(x)^p F^{l}(\mathrm{d}x)$ for each $p\in[\beta,\infty)$. The following lemma is the same one as Lemma 6.7 of \cite{Koike2012}, and will be useful to prove the lemmas below.

\begin{lem}\label{epsspecify}
Suppose $[\mathrm{SK}_{\beta}]$ for some $\beta\in[0,2]$. Let $p$ be a positive number and $(\rho_n)$ be a sequence of positive numbers which tends to 0. Then there exists a sequence of numbers $\varepsilon_n\in(0,1]$ such that
\begin{equation}\label{eps2}
\limsup_{n\rightarrow\infty}(\rho_n^{-1}\varepsilon_n^2)^p\varphi_{\beta}(\varepsilon_n)\leq 1
\end{equation}
and
\begin{equation}\label{eps3}
\varphi_\beta(\varepsilon_n)\rightarrow 0,\qquad\sqrt{\rho_n}/\varepsilon_n\rightarrow 0
\end{equation} 
as $n\rightarrow\infty$.
\end{lem}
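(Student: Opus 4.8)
The plan is to produce $\varepsilon_n$ explicitly of the form $\varepsilon_n=\sqrt{\rho_n}\,m_n^{1/(2p)}$ for a suitable integer sequence $m_n\to\infty$ that grows slowly; then $(\ref{eps3})$ becomes essentially automatic and $(\ref{eps2})$ reduces to an estimate of the type $\varphi_\beta(\varepsilon_n)\le 1/m_n$, which is what forces $m_n$ to grow slowly.

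First I would record the relevant properties of $\varphi_\beta$. It is nondecreasing on $(0,\infty)$, and $[\mathrm{SK}_\beta]$ gives $\varphi_\beta(\varepsilon)\le\sum_{l=1}^2\int_E\psi(x)^\beta F^l(\mathrm{d}x)<\infty$ for every $\varepsilon$. Dominated convergence, with dominating function $\psi^\beta\in L^1(F^l)$, shows $\varphi_\beta(\varepsilon)\downarrow\sum_{l=1}^2\int_{\{\psi=0\}}\psi(x)^\beta F^l(\mathrm{d}x)$ as $\varepsilon\downarrow0$; this limit is $0$ when $\beta>0$, and it may be taken to be $0$ when $\beta=0$ as well, because on $\{\psi=0\}$ one has $\delta^l\equiv0$, so the restriction of $\mu^l$ to $\mathbb{R}_+\times\{\psi=0\}$ leaves $Z^l$ unchanged and $E$ may be replaced by $\{\psi>0\}$. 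Thus $\varphi_\beta(0+)=0$ in all cases. Using this, choose a nonincreasing sequence $\delta_m\in(0,1]$ with $\delta_m\downarrow0$ and $\varphi_\beta(\delta_m)\le 1/m$, and set $\eta_m:=\delta_m^2 m^{-1/p}$, which is again nonincreasing with $\eta_m\downarrow0$.

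Now, since $\rho_n\to0$, for each fixed $m$ one has $\rho_n\le\eta_m$ for all large $n$; I would therefore set $m_n:=\max\{m\in\mathbb{N}:\eta_m\ge\rho_n\}$ for $n$ large (and $m_n=1$ otherwise), which is well defined because $\eta_m\downarrow0$, and which satisfies both $m_n\to\infty$ and $\eta_{m_n}\ge\rho_n$. Put $\varepsilon_n:=\sqrt{\rho_n}\,m_n^{1/(2p)}$. From $\eta_{m_n}\ge\rho_n$, i.e. $\delta_{m_n}^2\ge\rho_n m_n^{1/p}$, one gets $\varepsilon_n\le\delta_{m_n}\le1$, so $\varepsilon_n\in(0,1]$ and $\varepsilon_n\to0$; consequently $\varphi_\beta(\varepsilon_n)\le\varphi_\beta(\delta_{m_n})\le 1/m_n\to0$, which is the first half of $(\ref{eps3})$, while $\sqrt{\rho_n}/\varepsilon_n=m_n^{-1/(2p)}\to0$ is the second half. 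Finally $(\rho_n^{-1}\varepsilon_n^2)^p\varphi_\beta(\varepsilon_n)=m_n\,\varphi_\beta(\varepsilon_n)\le1$ for all large $n$, which gives $(\ref{eps2})$. The only point requiring a little care is the existence of a single integer sequence $m_n\to\infty$ with $\rho_n\le\eta_{m_n}$ for all large $n$ — the standard interleaving of two null sequences, which goes through smoothly precisely because $\eta_m$ has been arranged to be nonincreasing; the rest is bookkeeping, and the $\beta=0$ case needs only the preliminary reduction to $\{\psi>0\}$ described above.
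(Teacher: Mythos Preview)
Your proof is correct. Both you and the paper set $\varepsilon_n=c_n\sqrt{\rho_n}$ with $c_n\to\infty$ slowly enough that $c_n^{2p}\varphi_\beta(\varepsilon_n)$ stays bounded by~$1$, but the constructions of $c_n$ differ. The paper (following Lemma~7.4 of \cite{BNS2006}) takes, for each $n$, essentially the supremum $a_n'=\sup\{y>0:y^p\varphi_\beta(y\sqrt{\rho_n})\le1\}$, backs off slightly to $a_n=1\vee(a_n'-n^{-1})$ so that the constraint is actually satisfied, and puts $\varepsilon_n=a_n\sqrt{\rho_n}\wedge1$. Your approach instead decouples the two ingredients: first quantify the decay $\varphi_\beta(0+)=0$ by a fixed sequence $\delta_m\downarrow0$ with $\varphi_\beta(\delta_m)\le1/m$, then pick $m_n\to\infty$ by interleaving against $\rho_n$ so that $\varepsilon_n=\sqrt{\rho_n}\,m_n^{1/(2p)}\le\delta_{m_n}$. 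Your route avoids the (harmless but slightly delicate) question of whether the supremum is attained, and makes the identity $(\rho_n^{-1}\varepsilon_n^2)^p\varphi_\beta(\varepsilon_n)\le m_n\cdot(1/m_n)=1$ completely transparent. You are also more careful than the paper about the $\beta=0$ edge case, where $\varphi_0(\varepsilon)\downarrow\sum_l F^l(\{\psi=0\})$ need not vanish without the reduction to $E\cap\{\psi>0\}$; the paper simply asserts $\varphi_\beta(\varepsilon)\to0$.
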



\begin{proof}
The strategy of the proof is the same as the one in the proof of Lemma 7.4 of \cite{BNS2006}. Let
\begin{align*}
a_n':=\sup\{y\in(0,\infty)|y^p\varphi_{\beta}(y\sqrt{\rho_n})\leq 1\},\qquad a_n:=1\vee(a_n'-n^{-1}).
\end{align*}
Since $\varphi_{\beta}(\varepsilon)\rightarrow 0$ as $\varepsilon\rightarrow 0$, for any $C>0$ there exists a positive number $\varepsilon_0$ such that $\varepsilon\leq\varepsilon_0$ implies $\varphi_{\beta}(\varepsilon)\leq C^{-p}$. Moreover, since $\rho_n\rightarrow 0$ as $n\rightarrow\infty$, there exists a positive integer $n_0$ such that $n\geq n_0$ implies $\rho_n<\varepsilon_0^2/C^2$, hence $C^p\varphi_{\beta}(C\sqrt{\rho_n})\leq 1$. Therefore we have $a_n'\rightarrow\infty$, hence $a_n\rightarrow\infty$. Furthermore, for sufficiently large $n$ $a_n<a_n'$, hence $a_n^p\varphi_{\beta}(a_n\sqrt{\rho_n})\leq 1$. Therefore, if we put $\varepsilon_n:=a_n\sqrt{\rho_n}\wedge 1$, we obtain $(\ref{eps2})$ and $\sqrt{\rho_n}/\varepsilon_n\rightarrow 0$. Moreover, since $\varphi_{\beta}(\varepsilon_n)\leq\varphi_\beta(a_n\sqrt{\rho_n})\leq a_n^{-p}\rightarrow 0$, we complete the proof. 
\end{proof}


We introduce some auxiliary notation. We introduce aa auxiliary sequence $(\varepsilon_n)$ of numbers in $(0,1]$ such that
\begin{equation}\label{eps}
\limsup_{n\rightarrow 0}\frac{\sqrt{\rho_n}}{\varepsilon_n}<\infty,
\end{equation}
and we set $E_n:=\left\{x\in E\big|\psi(x)>\varepsilon_n\right\}$. We will more specify the sequence $(\varepsilon_n)$ later. Furthermore, we put
\begin{gather*}
D^{l}:=\kappa'(\delta^{l})\star\mu^{l},\qquad
X^{l}:=A^{l}+M^{l},\qquad
Y^{l}:=X^{l}+D^{l},\qquad
\tilde{N}^{l}:=1_{E_n}\star\mu^{l},\\
\mathfrak{L}^{l}:=\kappa(\delta^{l})1_{E_n^c}\star(\mu^{l}-\nu^{l})-\kappa(\delta^{l})1_{E_n}\star\nu^{l},\qquad
\Xi^{l}:=\kappa(\delta^{l})1_{E_n^c}\star(\mu^{l}-\nu^{l})\qquad
\Theta^{l}:=\kappa(\delta^{l})1_{E_n}\star\nu^{l}
\end{gather*}
for each $l=1,2$ and
\begin{align*}
\mathsf{X}^{1}_{S^i}=X^{1}_{S^i}+U^{1}_{S^i},\qquad
\mathsf{X}^{2}_{T^j}=X^{2}_{T^j}+U^{2}_{T^j},\qquad
\mathsf{Y}^{1}_{S^i}=Y^{1}_{S^i}+U^{1}_{S^i},\qquad
\mathsf{Y}^{2}_{T^j}=Y^{2}_{T^j}+U^{2}_{T^j}
\end{align*}
for each $i,j\in\mathbb{Z}_+$.


\begin{lem}\label{barhatN*}
Suppose $[\mathrm{SK}_\beta]$ holds for some $\beta\in[0,2]$. Then for any $t>0$ we have
\begin{equation}\label{eqbarhatN*}
\sum_{i=1}^{\infty}\tilde{N}^{1}(\bar{I}^i)_t=O_p(k_n\varepsilon_n^{-\beta}),\qquad
\sum_{j=1}^{\infty}\tilde{N}^{2}(\bar{J}^j)_t=O_p(k_n\varepsilon_n^{-\beta}).
\end{equation}
\end{lem}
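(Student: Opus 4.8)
The plan is to bound the total number of ``big'' jumps (those with size governed by $\psi(x) > \varepsilon_n$) contained in the pre-averaging windows $\bar I^i$ and $\bar J^j$. The key observation is that each jump time of $\tilde N^1 = 1_{E_n}\star\mu^1$ belongs to at most $k_n$ of the intervals $\bar I^i = [\widehat S^i,\widehat S^{i+k_n})$: indeed, if $s$ is such a jump time and $s \in \bar I^i$, then $\widehat S^i \le s < \widehat S^{i+k_n}$, and since the $\widehat S^i$ are increasing there are at most $k_n$ indices $i$ with this property. Consequently $\sum_{i=1}^\infty \tilde N^1(\bar I^i)_t \le k_n\, \tilde N^1_t$, and it suffices to show $\tilde N^1_t = O_p(\varepsilon_n^{-\beta})$ uniformly, and likewise for $\tilde N^2$.

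Next I would estimate $E[\tilde N^1_t]$. By definition $\tilde N^1 = 1_{E_n}\star\mu^1$, so its compensator is $1_{E_n}\star\nu^1$, and $E[\tilde N^1_t] = E[\int_0^t \int_{E_n} F^1(\mathrm dx)\,\mathrm ds] = t\, F^1(E_n)$ (using that $\nu^1(\mathrm dt,\mathrm dx) = \mathrm dt\,F^1(\mathrm dx)$; here I am using $[\mathrm{SK}_\beta]$, which already localizes so that $\psi$ is a fixed bounded function with $\int_E \psi(x)^\beta F^l(\mathrm dx) < \infty$ for $l = 1,2$). On $E_n = \{\psi > \varepsilon_n\}$ we have $1 < (\psi(x)/\varepsilon_n)^\beta$ when $\beta > 0$, hence
\begin{equation*}
F^1(E_n) = \int_{E_n} F^1(\mathrm dx) \le \varepsilon_n^{-\beta}\int_{E_n}\psi(x)^\beta F^1(\mathrm dx) \le \varepsilon_n^{-\beta}\int_E \psi(x)^\beta F^1(\mathrm dx) \lesssim \varepsilon_n^{-\beta}.
\end{equation*}
Therefore $E[\tilde N^1_t] \lesssim \varepsilon_n^{-\beta}$, which by Markov's inequality gives $\tilde N^1_t = O_p(\varepsilon_n^{-\beta})$; combining with the window-counting bound above yields $\sum_{i=1}^\infty \tilde N^1(\bar I^i)_t = O_p(k_n \varepsilon_n^{-\beta})$. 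The argument for $\sum_{j=1}^\infty \tilde N^2(\bar J^j)_t$ is identical with $(\widehat T^j)$ and $F^2$ in place of $(\widehat S^i)$ and $F^1$, establishing $(\ref{eqbarhatN*})$.

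The only genuine subtlety — and the step I would be most careful about — is the case $\beta = 0$, where the bound $1 \le (\psi/\varepsilon_n)^\beta$ degenerates. When $\beta = 0$, $[\mathrm{SK}_0]$ forces $\int_E \psi(x)^0 F^l(\mathrm dx) = F^l(\{\psi > 0\}) < \infty$, i.e. $F^l$ restricted to the support of $\delta^l$ is finite, so $\tilde N^l$ has finitely many jumps on $[0,t]$ with expectation bounded by $t\,F^l(\{\psi>0\}) \lesssim 1 = \varepsilon_n^{-0}$, and the conclusion still holds. So the estimate $F^l(E_n) \lesssim \varepsilon_n^{-\beta}$ is valid for all $\beta \in [0,2]$, and no further work is needed. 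I do not expect any real obstacle here: the only care required is noting that $[\mathrm{SK}_\beta]$ (rather than the unlocalized $[\mathrm{K}_\beta]$, where the $R^l_k$ truncations and the $\psi^l_k$ would otherwise intervene) is what makes $\psi$ a single fixed integrable-in-$\psi^\beta$ envelope, so that the bound is uniform in $n$.
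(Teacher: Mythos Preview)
Your proof is correct and takes essentially the same approach as the paper. The paper computes $E\bigl[\sum_{i}\tilde N^1(\bar I^i)_t\bigr]=\sum_i E[\hat\Lambda^1(\bar I^i)_t]\lesssim \varepsilon_n^{-\beta}\sum_i E[|\bar I^i(t)|]\le \varepsilon_n^{-\beta}k_n t$, which is the same two ingredients (the intensity bound $F^1(E_n)\lesssim\varepsilon_n^{-\beta}$ and the $k_n$-fold overlap) you use, just combined in a different order; your pathwise inequality $\sum_i\tilde N^1(\bar I^i)_t\le k_n\tilde N^1_t$ is in fact slightly cleaner, and your explicit treatment of the $\beta=0$ edge case is more careful than the paper's.
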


\begin{proof}
Since
$E\left[\sum_{i=1}^{\infty}\tilde{N}^{1}(\bar{I}^i)_t\right]
=\sum_{i=1}^{\infty}E[\hat{\Lambda}^{1}(\bar{I}^i)_t]
\lesssim \varepsilon_n^{-\beta}\sum_{i=1}^{\infty}E\left[|\bar{I}^i(t)|\right]\leq\varepsilon_n^{-\beta}k_n t,$
we obtain the first equation of $(\ref{barhatN*})$. By symmetry we also obtain the second one.
\end{proof}


\begin{lem}\label{barsmalljumpsum}
Suppose $[\mathrm{SK}_\beta]$ holds for some $\beta\in[0,2]$. Then for any $t>0$ we have
\begin{equation}\label{barsjs}
E\left[\sum_{i=1}^{\infty}|\bar{\mathfrak{L}}^{1}(\widehat{\mathcal{I}})^i_t|^2\right]\lesssim\varepsilon_n^{2-\beta}\varphi_{\beta}(\varepsilon_n)k_n,\qquad
E\left[\sum_{j=1}^{\infty}|\bar{\mathfrak{L}}^{2}(\widehat{\mathcal{J}})^j_t|^2\right]\lesssim\varepsilon_n^{2-\beta}\varphi_{\beta}(\varepsilon_n)k_n.
\end{equation}
\end{lem}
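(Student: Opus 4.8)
The plan is to treat the two estimates symmetrically and to prove only the first one, the second following by interchanging the roles of $(\widehat{\mathcal{I}},\widehat{S})$ and $(\widehat{\mathcal{J}},\widehat{T})$. First I would recall that $\mathfrak{L}^{1}$ is a purely discontinuous local martingale, so that $\bar{\mathfrak{L}}^{1}(\widehat{\mathcal{I}})^i_t=\sum_{p=1}^{k_n-1}g^n_p\,\mathfrak{L}^{1}(\widehat{I}^{i+p})_t$ is again a local martingale in $t$, being a finite linear combination of stochastic integrals of the predictable processes $\widehat{I}^{i+p}_-$ against $\mathfrak{L}^{1}$. Hence, after a stopping argument, the optional sampling theorem gives $E[|\bar{\mathfrak{L}}^{1}(\widehat{\mathcal{I}})^i_t|^2]=E[\langle\bar{\mathfrak{L}}^{1}(\widehat{\mathcal{I}})^i\rangle_t]$. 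The key structural fact is that $\mathfrak{L}^{1}$ and its bracket are supported on the small-jump set $E_n^c=\{\psi\le\varepsilon_n\}$: indeed $\langle\mathfrak{L}^{1}\rangle=\kappa(\delta^{1})^21_{E_n^c}\star\nu^{1}$, and since $|\delta^1(\cdot,x)|\le\psi(x)\le\varepsilon_n\le 1$ on $E_n^c$ we have $\kappa(\delta^1)^21_{E_n^c}=(\delta^1)^21_{E_n^c}\le\psi^21_{E_n^c}$. Using that the intervals $\widehat{I}^{i+p}$ overlap boundedly (each point of $[0,t)$ lies in at most $k_n$ of the $\widehat{I}^i$, by the refresh-time construction), I can bound $\langle\bar{\mathfrak{L}}^{1}(\widehat{\mathcal{I}})^i\rangle_t\le\|g\|_\infty^2\sum_{p=1}^{k_n-1}\mathbf{1}_{\widehat{I}^{i+p}}\bullet\langle\mathfrak{L}^1\rangle_t$ up to cross terms controlled by Cauchy–Schwarz.

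The core computation is then the summation over $i$. Summing $E[\langle\bar{\mathfrak{L}}^{1}(\widehat{\mathcal{I}})^i\rangle_t]$ over $i\in\mathbb{N}$ and exchanging the order of summation, each interval $\widehat{I}^k$ is counted at most $k_n$ times (once for each $i$ with $i<k\le i+k_n-1$), so I obtain, up to a constant depending only on $g$,
\begin{align*}
E\left[\sum_{i=1}^{\infty}|\bar{\mathfrak{L}}^{1}(\widehat{\mathcal{I}})^i_t|^2\right]
\lesssim k_n\,E\left[\sum_{k=1}^{\infty}\mathbf{1}_{\widehat{I}^k}\bullet\langle\mathfrak{L}^1\rangle_t\right]
= k_n\,E\left[\langle\mathfrak{L}^1\rangle_t\right]
\le k_n\int_0^t E\!\left[\int_{E_n^c}\psi(x)^2F^{1}(\mathrm{d}x)\right]\mathrm{d}s,
\end{align*}
where I used $\sum_{k}\mathbf{1}_{\widehat{I}^k}=\mathbf{1}_{[0,\sup_k\widehat{S}^k)}\le 1$ on $[0,t)$ for large $n$ (since $\widehat{S}^k\uparrow\infty$). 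The remaining integral is $\int_{\{\psi\le\varepsilon_n\}}\psi(x)^2F^1(\mathrm{d}x)$, and splitting $\psi^2=\psi^{2-\beta}\cdot\psi^{\beta}\le\varepsilon_n^{2-\beta}\psi^\beta$ on $\{\psi\le\varepsilon_n\}$ (valid since $\beta\le 2$ and $\psi\le\varepsilon_n\le 1$) bounds it by $\varepsilon_n^{2-\beta}\varphi_\beta(\varepsilon_n)$. Multiplying by $t$ and absorbing constants gives the claimed bound $\lesssim\varepsilon_n^{2-\beta}\varphi_\beta(\varepsilon_n)k_n$.

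The step I expect to require the most care is the bounded-overlap bookkeeping for $\langle\bar{\mathfrak{L}}^{1}(\widehat{\mathcal{I}})^i\rangle_t$: because $\langle\mathfrak{L}^1\rangle$ is not zero across the interval boundaries, the bracket of the sum $\sum_p g^n_p\mathfrak{L}^1(\widehat{I}^{i+p})$ involves genuinely the cross terms $\sum_{p\ne q}g^n_pg^n_q\langle\mathfrak{L}^1(\widehat{I}^{i+p}),\mathfrak{L}^1(\widehat{I}^{i+q})\rangle$, which however vanish since the $\widehat{I}^{i+p}$ are disjoint in $p$ (they are consecutive refresh-interpolated intervals). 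So in fact $\langle\bar{\mathfrak{L}}^{1}(\widehat{\mathcal{I}})^i\rangle_t=\sum_{p=1}^{k_n-1}(g^n_p)^2\,\mathbf{1}_{\widehat{I}^{i+p}}\bullet\langle\mathfrak{L}^1\rangle_t$ exactly, which makes the summation over $i$ transparent and eliminates the need for Cauchy–Schwarz; the only genuine input is $[\mathrm{SA4}]$-type control of the interval lengths through $\sum_i\mathbf{1}_{\widehat{I}^i}\le 1$, already available here. The boundedness of $[M^l]'$ etc.\ from $[\mathrm{SK}_\beta]$ is not needed for this lemma; only the integrability $\int_E\psi(x)^\beta F^l(\mathrm{d}x)<\infty$ and the domination $|\delta^l|\le\psi$ enter.
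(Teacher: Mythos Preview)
There is a genuine gap. You assert that $\mathfrak{L}^{1}$ is a purely discontinuous local martingale with $\langle\mathfrak{L}^{1}\rangle=\kappa(\delta^{1})^{2}1_{E_n^c}\star\nu^{1}$, but by definition
\[
\mathfrak{L}^{1}=\underbrace{\kappa(\delta^{1})1_{E_n^c}\star(\mu^{1}-\nu^{1})}_{=\Xi^{1}}\;-\;\underbrace{\kappa(\delta^{1})1_{E_n}\star\nu^{1}}_{=\Theta^{1}},
\]
and the second piece $\Theta^{1}$ is an absolutely continuous finite-variation process (an ordinary $\mathrm{d}t$-integral against the intensity), not a martingale. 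So $\mathfrak{L}^{1}$ is not a local martingale, the optional sampling identity $E[|\bar{\mathfrak{L}}^{1}(\widehat{\mathcal{I}})^i_t|^{2}]=E[\langle\bar{\mathfrak{L}}^{1}(\widehat{\mathcal{I}})^i\rangle_t]$ fails, and your bracket formula is simply wrong. Your argument as written only covers the $\Xi^{1}$ contribution.

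The paper proceeds by splitting $|\bar{\mathfrak{L}}^{1}(\widehat{\mathcal{I}})^i_t|^{2}\lesssim|\bar{\Xi}^{1}(\widehat{\mathcal{I}})^i_t|^{2}+|\bar{\Theta}^{1}(\widehat{\mathcal{I}})^i_t|^{2}$. For the martingale part $\Xi^{1}$ your overlap bookkeeping is essentially correct and yields the $\varphi_2(\varepsilon_n)k_n$ bound. For the drift part, however, one uses the crude pointwise estimate $|\Theta^{1}(\widehat{I}^{i+p})_t|\lesssim\varepsilon_n^{-(\beta-1)_+}|\widehat{I}^{i+p}(t)|$ (since $\int_{E_n}|\kappa(\delta^{1})|F^{1}(\mathrm{d}x)\le\int_{\{\psi>\varepsilon_n\}}\psi\,F^{1}(\mathrm{d}x)\lesssim\varepsilon_n^{-(\beta-1)_+}$), so that $E[|\bar{\Theta}^{1}(\widehat{\mathcal{I}})^i_t|^{2}]\lesssim\varepsilon_n^{-2(\beta-1)_+}E[|\bar{I}^{i}(t)|^{2}]$. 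Summing over $i$ brings in $\sum_i|\bar{I}^{i}(t)|^{2}\lesssim k_n\bar{r}_n\cdot k_n t$, and one then needs the side conditions $(\beta-1)_+\le\beta/2$ and $k_n\bar{r}_n=o(\varepsilon_n^{2})$ (the latter from $(\ref{threshold2})$ and $(\ref{eps})$, implicitly in force in this appendix) to absorb this into $\varepsilon_n^{2-\beta}\varphi_\beta(\varepsilon_n)k_n$. None of this is captured by your martingale-only argument, and in particular the claim that ``only the integrability $\int\psi^\beta F^l(\mathrm{d}x)<\infty$ and the domination $|\delta^l|\le\psi$ enter'' is false: the $\Theta^{1}$ bound genuinely uses the mesh control $k_n\bar{r}_n=o(\varepsilon_n^{2})$.
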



\begin{proof}
Since $E[|\bar{\Xi}^{(1)}(\widehat{\mathcal{I}})^i_t|^2]=E[\langle\bar{\Xi}^{(1)}(\widehat{\mathcal{I}})^i\rangle_t]\lesssim\varphi_2(\varepsilon_n)E\left[\sum_{p=1}^{k_n-1}|\widehat{I}^{i+p}(t)|\right]$ and $E[|\bar{\Theta}(\widehat{\mathcal{I}})^i_t|^2]\lesssim\varepsilon_n^{-2(\beta-1)_+}E\left[|\bar{I}^i(t)|^2\right],$ we have
\begin{equation*}
E\left[\sum_{i=1}^{\infty}|\bar{\mathfrak{L}}^{1}(\widehat{\mathcal{I}})^i_t|^2\right]\lesssim \varphi_2(\varepsilon_n)k_n t+\varepsilon_n^{-2(\beta-1)_+}k_n\bar{r}_n\cdot k_n t .
\end{equation*}
Since $\varphi_2(\varepsilon_n)\leq(\varepsilon_n)^{2-\beta}\varphi_{\beta}(\varepsilon_n)$, $(\beta-1)_+\leq\beta/2$ and $k_n\bar{r}_n=o((\varepsilon_n)^2)$ by $(\ref{threshold2})$ and $(\ref{eps})$, we obtain the first equation of $(\ref{barsjs})$. By symmetry we obtain the second equation of $(\ref{barsjs})$.
\end{proof}


\begin{lem}\label{barlarge}
Suppose $[\mathrm{ST}]$ and $[\mathrm{SK}_\beta]$ hold for some $\beta\in[0,2]$. Then for any $t>0$ we have
\begin{equation}\label{eqbarlarge}
\sum_{i=1}^{\infty}1_{\{|\bar{L}^{1}(\widehat{\mathcal{I}})^i_t|^2>c\varrho_n^{1}[i]\}}=o_p(k_n\rho_n^{-\beta/2}),\qquad
\sum_{j=1}^{\infty}1_{\{|\bar{L}^{2}(\widehat{\mathcal{J}})^j_t|^2>c\varrho_n^{(2)}[j]\}}=o_p(k_n\rho_n^{-\beta/2}).
\end{equation}
\end{lem}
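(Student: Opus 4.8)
The plan is to threshold the jump measure at a suitable level $\varepsilon_n$ and mimic the proof of the analogous estimate in \cite{Koike2012} (Lemma~6.8); I prove the first relation in $(\ref{eqbarlarge})$, the second being symmetric, and I treat $\beta\in(0,2]$ (the case $\beta=0$, where the jumps form a finite-activity process, is simpler). Fix $\eta>0$ and apply Lemma~\ref{epsspecify} with exponent $p=1-\beta/2+\eta$ to obtain $(\varepsilon_n)\subset(0,1]$ with $\varphi_\beta(\varepsilon_n)\to0$, $\sqrt{\rho_n}/\varepsilon_n\to0$ and $\limsup_{n\to\infty}(\rho_n^{-1}\varepsilon_n^2)^{1-\beta/2+\eta}\varphi_\beta(\varepsilon_n)\leq1$. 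Since $\psi>\varepsilon_n$ on $E_n$ and $\int_E\psi^\beta F^l(\mathrm{d}x)<\infty$, we have $F^l(E_n)\leq\varepsilon_n^{-\beta}\int_E\psi^\beta F^l(\mathrm{d}x)<\infty$, so $\kappa(\delta^l)1_{E_n}\star\mu^l$ is a finite-activity pure-jump process and $L^l=\kappa(\delta^l)1_{E_n}\star\mu^l+\mathfrak{L}^l$ with $\mathfrak{L}^l=\Xi^l-\Theta^l$. By linearity of the pre-averaging, $\bar{L}^1(\widehat{\mathcal{I}})^i_t$ equals the pre-averaged version of $\kappa(\delta^1)1_{E_n}\star\mu^1$ plus $\bar{\mathfrak{L}}^1(\widehat{\mathcal{I}})^i_t$; the former involves only the intervals $\widehat{I}^{i+1},\dots,\widehat{I}^{i+k_n-1}$, whose union is contained in $\bar{I}^i$, and therefore vanishes on $\{\tilde{N}^1(\bar{I}^i)_t=0\}$.

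Consequently
\begin{equation*}
1_{\{|\bar{L}^1(\widehat{\mathcal{I}})^i_t|^2>c\varrho_n^1[i]\}}\leq1_{\{\tilde{N}^1(\bar{I}^i)_t\neq0\}}+1_{\{|\bar{\mathfrak{L}}^1(\widehat{\mathcal{I}})^i_t|^2>c\varrho_n^1[i]\}},
\end{equation*}
and I would bound the two resulting sums over $i$ separately. First, Lemma~\ref{barhatN*} gives $\sum_{i=1}^\infty1_{\{\tilde{N}^1(\bar{I}^i)_t\neq0\}}\leq\sum_{i=1}^\infty\tilde{N}^1(\bar{I}^i)_t=O_p(k_n\varepsilon_n^{-\beta})$. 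Second, Chebyshev's inequality, the bound $\varrho_n^1[i]\geq\rho_n/K_0$ coming from $[\mathrm{ST}]$, and Lemma~\ref{barsmalljumpsum} give
\begin{equation*}
E\Bigg[\sum_{i=1}^\infty1_{\{|\bar{\mathfrak{L}}^1(\widehat{\mathcal{I}})^i_t|^2>c\varrho_n^1[i]\}}\Bigg]\leq\frac{K_0}{c\rho_n}\,E\Bigg[\sum_{i=1}^\infty|\bar{\mathfrak{L}}^1(\widehat{\mathcal{I}})^i_t|^2\Bigg]\lesssim\frac{\varepsilon_n^{2-\beta}\varphi_\beta(\varepsilon_n)}{\rho_n}\,k_n.
\end{equation*}

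It then remains to check that both bounds are $o_p(k_n\rho_n^{-\beta/2})$. For the first, $\varepsilon_n^{-\beta}\rho_n^{\beta/2}=(\sqrt{\rho_n}/\varepsilon_n)^\beta\to0$. For the second, the statement reduces to $(\rho_n^{-1}\varepsilon_n^2)^{1-\beta/2}\varphi_\beta(\varepsilon_n)\to0$; since $\rho_n^{-1}\varepsilon_n^2\to\infty$ (because $\sqrt{\rho_n}/\varepsilon_n\to0$) while $(\rho_n^{-1}\varepsilon_n^2)^{1-\beta/2+\eta}\varphi_\beta(\varepsilon_n)$ stays bounded, we get $(\rho_n^{-1}\varepsilon_n^2)^{1-\beta/2}\varphi_\beta(\varepsilon_n)\lesssim(\rho_n^{-1}\varepsilon_n^2)^{-\eta}\to0$. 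Adding the two sums, and arguing symmetrically for the $j$-sum, yields $(\ref{eqbarlarge})$. I expect the main obstacle to be precisely this balancing: $\varepsilon_n$ must be taken large enough that the count of detectable jumps, $F^l(E_n)\asymp\varepsilon_n^{-\beta}$, is negligible against $\rho_n^{-\beta/2}$, yet small enough that the compensated small-jump residual $\bar{\mathfrak{L}}^1$ is negligible against the threshold, which is of order $\rho_n$; Lemma~\ref{epsspecify} is exactly what reconciles these requirements, with the extra exponent $\eta$ serving to upgrade the resulting $O_p$ bound to $o_p$.
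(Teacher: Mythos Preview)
Your proof is correct and follows essentially the same route as the paper: split the indicator according to whether $\tilde{N}^1(\bar{I}^i)_t$ vanishes, then invoke Lemmas~\ref{barhatN*} and~\ref{barsmalljumpsum} for the two pieces and Lemma~\ref{epsspecify} to balance them. The only difference is the exponent chosen in Lemma~\ref{epsspecify}: the paper takes $p=1$, so that $(\rho_n^{-1}\varepsilon_n^2)\varphi_\beta(\varepsilon_n)$ stays bounded and both terms are $O(k_n\varepsilon_n^{-\beta})=o(k_n\rho_n^{-\beta/2})$ in one stroke, whereas your choice $p=1-\beta/2+\eta$ achieves the same conclusion with an extra line of algebra.
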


\begin{proof}
Combining Lemma \ref{barhatN*} with Lemma \ref{barsmalljumpsum}, we obtain
\begin{align*}
\sum_{i=1}^{\infty}1_{\{|\bar{L}^1(\widehat{\mathcal{I}})^i_t|^2>c\varrho_n^{1}[i]\}}
\leq\frac{K_0}{\rho_n}\sum_{i=1}^{\infty}|\bar{\mathfrak{L}}^1(\widehat{\mathcal{I}})^i_t|^2+\sum_{i=1}^{\infty}\tilde{N}^{1}(\bar{I}^i)_t
\lesssim\frac{\varepsilon_n^2}{\rho_n}\varphi_{\beta}(\varepsilon_n)\varepsilon_n^{-\beta}k_n+\varepsilon_n^{-\beta}k_n,
\end{align*}
hence Lemma \ref{epsspecify} with $p=1$ yields the first equation of $(\ref{eqbarlarge})$. By symmetry we also obtain the second one. 
\end{proof}


\begin{lem}\label{matanian}
Suppose $[\mathrm{ST}]$ and $[\mathrm{SK}_\beta]$ hold for some $\beta\in[0,2]$. Then for any $t>0$ we have
\begin{equation}\label{eqmatanian}
\sum_{i=1}^{\infty}|\bar{L}^{1}(\widehat{\mathcal{I}})^i_t|^21_{\{|\bar{L}^{1}(\widehat{\mathcal{I}})^i_t|^2\leq c\varrho_n^{1}[i]\}}=o_p\left(k_n\rho_n^{1-\beta/2}\right),\qquad
\sum_{j=1}^{\infty}|\bar{L}^{2}(\widehat{\mathcal{J}})^j_t|^21_{\{|\bar{L}^{2}(\widehat{\mathcal{J}})^j_t|^2\leq c\varrho_n^{2}[j]\}}=o_p\left(k_n\rho_n^{1-\beta/2}\right).
\end{equation}
\end{lem}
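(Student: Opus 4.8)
The plan is to split the summation index according to whether the ``$E_n$-jump'' counting measure $\tilde N^{1}$ charges the window $\bar I^{i}$: write $\sum_{i=1}^{\infty}=\sum_{i:\tilde N^{1}(\bar I^{i})_{t}=0}+\sum_{i:\tilde N^{1}(\bar I^{i})_{t}\neq0}$, bound the first piece by the $L^{2}$-estimate of Lemma~\ref{barsmalljumpsum} and the second by the crude count of Lemma~\ref{barhatN*}, and then pin down the auxiliary sequence $(\varepsilon_{n})$ via Lemma~\ref{epsspecify} so that both resulting rates are $o(k_{n}\rho_{n}^{1-\beta/2})$. Only the first ($l=1$) identity needs to be proved, the second following by the symmetric bookkeeping $(\mathcal I,\widehat S,1)\leftrightarrow(\mathcal J,\widehat T,2)$.

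First I would record the elementary decomposition $L^{1}=\mathfrak L^{1}+\kappa(\delta^{1})1_{E_{n}}\star\mu^{1}$, obtained by regrouping the $1_{E_{n}}$ and $1_{E_{n}^{c}}$ parts of $\kappa(\delta^{1})\star(\mu^{1}-\nu^{1})$ through the definitions of $\mathfrak L^{1}$, $\Xi^{1}$ and $\Theta^{1}$. Since $\bigcup_{p=1}^{k_{n}-1}\widehat I^{i+p}\subseteq\bar I^{i}$, on the event $\{\tilde N^{1}(\bar I^{i})_{t}=0\}$ the process $\kappa(\delta^{1})1_{E_{n}}\star\mu^{1}$ puts no mass on any $\widehat I^{i+p}$ up to time $t$, whence $\bar L^{1}(\widehat{\mathcal I})^{i}_{t}=\bar{\mathfrak L}^{1}(\widehat{\mathcal I})^{i}_{t}$ there. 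Consequently the first piece is dominated by $\sum_{i=1}^{\infty}|\bar{\mathfrak L}^{1}(\widehat{\mathcal I})^{i}_{t}|^{2}$, which is $O_{p}(\varepsilon_{n}^{2-\beta}\varphi_{\beta}(\varepsilon_{n})k_{n})$ by Lemma~\ref{barsmalljumpsum}. For the second piece I would discard the factor $|\bar L^{1}(\widehat{\mathcal I})^{i}_{t}|^{2}$ against the truncation indicator and invoke $[\mathrm{ST}]$: each summand is at most $c\varrho_{n}^{1}[i]\le cK_{0}\rho_{n}$, and since $1_{\{\tilde N^{1}(\bar I^{i})_{t}\neq0\}}\le\tilde N^{1}(\bar I^{i})_{t}$, the second piece is at most $cK_{0}\rho_{n}\sum_{i=1}^{\infty}\tilde N^{1}(\bar I^{i})_{t}=O_{p}(\rho_{n}k_{n}\varepsilon_{n}^{-\beta})$ by Lemma~\ref{barhatN*}.

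It then remains to verify that $(\varepsilon_{n})$ can be chosen so that $\varepsilon_{n}^{2-\beta}\varphi_{\beta}(\varepsilon_{n})=o(\rho_{n}^{1-\beta/2})$ and $\rho_{n}\varepsilon_{n}^{-\beta}=o(\rho_{n}^{1-\beta/2})$ hold simultaneously. Applying Lemma~\ref{epsspecify} with $p=2$ furnishes a sequence for which $(\rho_{n}^{-1}\varepsilon_{n}^{2})^{2}\varphi_{\beta}(\varepsilon_{n})\le1$ eventually, $\varphi_{\beta}(\varepsilon_{n})\to0$ and $\sqrt{\rho_{n}}/\varepsilon_{n}\to0$. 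Then $\rho_{n}\varepsilon_{n}^{-\beta}=\rho_{n}^{1-\beta/2}(\sqrt{\rho_{n}}/\varepsilon_{n})^{\beta}=o(\rho_{n}^{1-\beta/2})$ as soon as $\beta>0$ (the finite-activity case $\beta=0$ being covered by the arguments of Appendix~\ref{proofthmFAnoise}); and writing $\varphi_{\beta}(\varepsilon_{n})=\varphi_{\beta}(\varepsilon_{n})^{1-\theta}\varphi_{\beta}(\varepsilon_{n})^{\theta}\le\varphi_{\beta}(\varepsilon_{n})^{1-\theta}(\rho_{n}\varepsilon_{n}^{-2})^{2\theta}$ with $\theta=(2-\beta)/4\in[0,1)$ gives, after cancelling the $\varepsilon_{n}$-powers, $\varepsilon_{n}^{2-\beta}\varphi_{\beta}(\varepsilon_{n})\le\rho_{n}^{1-\beta/2}\varphi_{\beta}(\varepsilon_{n})^{1-\theta}=o(\rho_{n}^{1-\beta/2})$. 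Adding the two pieces yields the claim for $l=1$, and symmetry finishes the proof.

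The two bounds above are immediate consequences of Lemmas~\ref{barsmalljumpsum} and~\ref{barhatN*} together with $[\mathrm{ST}]$, so the only genuinely delicate point is the final step: extracting from the \emph{single} sequence supplied by Lemma~\ref{epsspecify} the two \emph{strict} smallness statements, i.e. upgrading the $O$ coming out of the inequality $(\rho_{n}^{-1}\varepsilon_{n}^{2})^{p}\varphi_{\beta}(\varepsilon_{n})\le1$ to an $o$ by trading a fractional power of $\varphi_{\beta}(\varepsilon_{n})$ (which vanishes) against the $\varepsilon_{n}$-power, and simultaneously exploiting $\sqrt{\rho_{n}}/\varepsilon_{n}\to0$ for the other term.
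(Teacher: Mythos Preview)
Your argument is essentially the paper's proof: split on whether $\tilde N^{1}(\bar I^{i})_{t}$ vanishes, bound the first piece by Lemma~\ref{barsmalljumpsum} and the second by $[\mathrm{ST}]$ combined with Lemma~\ref{barhatN*}, and invoke Lemma~\ref{epsspecify} with $p=2$. (The paper writes $N^{1}$ in its decomposition, but since it then applies Lemma~\ref{barhatN*} and the reduction $\bar L^{1}=\bar{\mathfrak L}^{1}$ requires the $E_{n}$-counter to vanish, $\tilde N^{1}$ is what is meant; your version is the clean one.) Your explicit interpolation $\varphi_{\beta}(\varepsilon_{n})\le\varphi_{\beta}(\varepsilon_{n})^{1-\theta}(\rho_{n}\varepsilon_{n}^{-2})^{2\theta}$ with $\theta=(2-\beta)/4$ is a nice way to unpack the one-line ``by Lemma~\ref{epsspecify} with $p=2$'' in the paper, and the boundary case $\beta=0$---where $(\sqrt{\rho_{n}}/\varepsilon_{n})^{\beta}=1$ and the second piece is only $O_{p}$---is an issue you correctly flag; the paper does not comment on it either.
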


\begin{proof}
Since
\begin{align*}
\sum_{i=1}^{\infty}|\bar{L}^{1}(\widehat{\mathcal{I}})^i_t|^21_{\{|\bar{L}^{1}(\widehat{\mathcal{I}})^i_t|^2\leq c\varrho_n^{1}[i]\}}
\leq &\sum_{i=1}^{\infty}|\bar{L}^{1}(\widehat{\mathcal{I}})^i_t|^21_{\{ N^{1}(\bar{I}^i)_t= 0\}}
+\sum_{i=1}^{\infty}|\bar{L}^{1}(\widehat{\mathcal{I}})^i_t|^21_{\{|\bar{L}^{1}(\widehat{\mathcal{I}})^i_t|^2\leq c\varrho_n^{1}[i], N^{1}(\bar{I}^i)_t\neq 0\}}\\
\lesssim &\sum_{i=1}^{\infty}|\bar{\mathfrak{L}}^{1}(\widehat{\mathcal{I}})^i_t|^2
+\rho_n\sum_{i=1}^{\infty}1_{\{ N^{1}(\bar{I}^i)_t\neq 0\}},
\end{align*}
Lemma \ref{barhatN*} and Lemma \ref{barsmalljumpsum} yield
$\sum_{i=1}^{\infty}|\bar{L}^{1}(\widehat{\mathcal{I}})^i_t|^21_{\{|\bar{L}^{1}(\widehat{\mathcal{I}})^i_t|^2\leq c\varrho_n^{1}[i]\}}=O_p\left(\{(\varepsilon_n)^2\varphi_{\beta}(\varepsilon_n)+\rho_n\}k_n\varepsilon_n^{-\beta}\right),$
hence by Lemma \ref{epsspecify} with $p=2$ we obtain the first equation of $(\ref{eqmatanian})$. By symmetry we also obtain the second equation of $(\ref{eqmatanian})$.
\end{proof}


Next we prove some lemmas which deal with the events that the noise part corrects the effect of small jumps. Let $\eta_n:=(k_n\rho_n)^{-1}$. Set
\begin{align*}
\overline{\mathsf{Y}}^1(\widehat{\mathcal{I}})^i=\sum_{p=1}^{k_n-1}g^n_p\left(\mathsf{Y}^1_{\widehat{S}^{i+p}}-\mathsf{Y}^1_{\widehat{S}^{i+p-1}}\right),\qquad
\overline{\mathsf{Y}}^2(\widehat{\mathcal{J}})^j=\sum_{q=1}^{k_n-1}g^n_q\left(\mathsf{Y}^2_{\widehat{T}^{j+q}}-\mathsf{Y}^2_{\widehat{T}^{j+q-1}}\right)
\end{align*}
for each $i,j\in\mathbb{Z}_+$.


\begin{lem}\label{lemLX}
Let $c_1$ and $c_2$ be two positive numbers. Suppose $[\mathrm{ST}]$, $[\mathrm{SA}4]$ and $[\mathrm{SK}_\beta]$ hold for some $\beta\in[0,2]$. Suppose also that $[\mathrm{SN}^\flat_r]$ holds for some $r\in(2,\infty)$. Then for any $t>0$ we have
\begin{align}
\sum_{i:\widehat{S}^{i+k_n}\leq t} |\overline{\mathsf{Y}}^{1}(\widehat{\mathcal{I}})^i|^21_{\{|\overline{\mathsf{Y}}^{1}(\widehat{\mathcal{I}})^i|^2>c_1\varrho_n^{1}[i], |\bar{L}^{1}(\widehat{\mathcal{I}})^i_t|^2>c_2\varrho_n^{1}[i] \}}
=o_p\left(\eta_n^{\frac{r-2}{2}}\rho_n^{-\beta/2}\right),\label{eqLX1}\\
\sum_{j:\widehat{T}^{j+k_n}\leq t} |\overline{\mathsf{Y}}^{2}(\widehat{\mathcal{J}})^j|^21_{\{|\overline{\mathsf{Y}}^{2}(\widehat{\mathcal{J}})^j|^2>c_1\varrho_n^{2}[j], |\bar{L}^{2}(\widehat{\mathcal{J}})^j_t|^2>c_2\varrho_n^{2}[j] \}}
=o_p\left(\eta_n^{\frac{r-2}{2}}\rho_n^{-\beta/2}\right).\label{eqLX2}
\end{align}

\end{lem}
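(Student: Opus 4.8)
By symmetry it suffices to treat \eqref{eqLX1}; fix $t>0$ and write $a_n:=\eta_n^{(r-2)/2}\rho_n^{-\beta/2}$ for the target order. The plan is to separate the two mechanisms that can make $|\overline{\mathsf{Y}}^1(\widehat{\mathcal{I}})^i|^2$ exceed the threshold. On $\{\widehat{S}^{i+k_n}\le t\}$ decompose $\overline{\mathsf{Y}}^1(\widehat{\mathcal{I}})^i=\bar X^1(\widehat{\mathcal{I}})^i_t+\bar D^1(\widehat{\mathcal{I}})^i_t+\widetilde{\underline X^1}(\widehat{\mathcal{I}})^i_t+\overline{\zeta}^1(\widehat{\mathcal{I}})^i$. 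Under $[\mathrm{SK}_\beta]$ the process $D^1=\kappa'(\delta^1)\star\mu^1$ is of finite activity with bounded jumps, so for $n$ large each window $\bar I^i$ (of length $\le k_n\bar r_n$) contains at most one jump of $D^1$. By (the localized form of) Lemma \ref{modulus}, cf. \eqref{absmod2}, together with \eqref{threshold2} and $[\mathrm{ST}]$ (so that $\varrho_n^1[i]\ge\rho_n/K_0$), one has $\sup_i(|\bar X^1(\widehat{\mathcal{I}})^i_t|+|\widetilde{\underline X^1}(\widehat{\mathcal{I}})^i_t|)=o(\sqrt{\rho_n})$; hence for $n$ large, on $\{|\overline{\mathsf{Y}}^1(\widehat{\mathcal{I}})^i|^2>c_1\varrho_n^1[i],\ \widehat{S}^{i+k_n}\le t\}$ one must have $\bar D^1(\widehat{\mathcal{I}})^i_t\ne0$ or $|\overline{\zeta}^1(\widehat{\mathcal{I}})^i|^2>c'\rho_n$ for a suitable constant $c'>0$. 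Consequently the left-hand side of \eqref{eqLX1} is at most $\Sigma_1+\Sigma_2$, where $\Sigma_1$ is the same sum restricted to $\{\bar D^1(\widehat{\mathcal{I}})^i_t\ne0,\ |\bar L^1(\widehat{\mathcal{I}})^i_t|^2>c_2\varrho_n^1[i],\ \widehat{S}^{i+k_n}\le t\}$ and $\Sigma_2$ is the analogous sum with $\{\bar D^1\ne0\}$ replaced by $\{|\overline{\zeta}^1(\widehat{\mathcal{I}})^i|^2>c'\rho_n\}$.

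For $\Sigma_1$ I expect to show it vanishes with probability tending to $1$, rather than estimating a rate. Since $N^1=1_{\{|\delta^1|>1\}}\star\mu^1$ we have $\{\bar D^1(\widehat{\mathcal{I}})^i_t\ne0\}\subseteq\{N^1(\bar I^i)_t\ne0\}$, so the number of contributing windows is at most $(c_2\rho_n/K_0)^{-1}\sum_{i}N^1(\bar I^i)_t|\bar L^1(\widehat{\mathcal{I}})^i_t|^2$. The It\^o-formula/optional-sampling computation of Lemma \ref{lembasic} applies (as $\psi$ is bounded, $[\mathrm{SK}_\beta]$ gives $[\mathrm{SK}_2]$ and $1_{\{|\delta^1|>1\}}\star\nu^1$ is absolutely continuous with bounded density) and yields $E[\sum_iN^1(\bar I^i)_t|\bar L^1(\widehat{\mathcal{I}})^i_t|^2]\lesssim\sum_{i,p}E[|\widehat I^{i+p}(t)|^2]\le k_n\bar r_n t$; by \eqref{threshold2} this count has expectation $\lesssim\rho_n^{-1}k_n\bar r_n\to0$ and is therefore $0$ with probability tending to $1$. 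Since $\Sigma_1\ge0$ and $\Sigma_1=0$ off that event, $\Sigma_1=o_p(\alpha_n)$ for any positive $\alpha_n$, in particular $\Sigma_1=o_p(a_n)$; the part of $\Sigma_2$ supported on $\{\bar D^1(\widehat{\mathcal{I}})^i_t\ne0\}$ is dominated by the same count and is likewise $o_p(a_n)$.

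It then remains to bound the part $\Sigma_2'$ of $\Sigma_2$ on $\{\bar D^1(\widehat{\mathcal{I}})^i_t=0\}$, which carries the stated rate. There $|\overline{\mathsf{Y}}^1(\widehat{\mathcal{I}})^i|\le o(\sqrt{\rho_n})+|\overline{\zeta}^1(\widehat{\mathcal{I}})^i|$ uniformly, so on $\{|\overline{\zeta}^1(\widehat{\mathcal{I}})^i|^2>c'\rho_n\}$ we get $|\overline{\mathsf{Y}}^1(\widehat{\mathcal{I}})^i|^2\lesssim|\overline{\zeta}^1(\widehat{\mathcal{I}})^i|^2$, and since $r>2$, $|\overline{\zeta}^1(\widehat{\mathcal{I}})^i|^2 1_{\{|\overline{\zeta}^1(\widehat{\mathcal{I}})^i|^2>c'\rho_n\}}\le(c'\rho_n)^{-(r-2)/2}|\overline{\zeta}^1(\widehat{\mathcal{I}})^i|^r$. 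Dropping the remaining indicators (and using $|\bar L^1(\widehat{\mathcal{I}})^i_t|^2>c_2\varrho_n^1[i]\ \Rightarrow\ |\bar L^1(\widehat{\mathcal{I}})^i_t|^2>(c_2/K_0^2)\varrho_n^1[i]$ to keep an $\mathcal{F}^{(0)}$-measurable constraint) gives $\Sigma_2'\lesssim\rho_n^{-(r-2)/2}\sum_{i:\widehat{S}^{i+k_n}\le t}|\overline{\zeta}^1(\widehat{\mathcal{I}})^i|^r1_{\{|\bar L^1(\widehat{\mathcal{I}})^i_t|^2>(c_2/K_0^2)\varrho_n^1[i]\}}$. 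Conditioning on $\mathcal{F}^{(0)}$ and using Lemma \ref{moment}, $E_0[\Sigma_2']\lesssim\rho_n^{-(r-2)/2}k_n^{-r/2}\sum_i1_{\{|\bar L^1(\widehat{\mathcal{I}})^i_t|^2>(c_2/K_0^2)\varrho_n^1[i]\}}$, which by Lemma \ref{barlarge} is $o_p(\rho_n^{-(r-2)/2}k_n^{-r/2}\cdot k_n\rho_n^{-\beta/2})=o_p((k_n\rho_n)^{-(r-2)/2}\rho_n^{-\beta/2})=o_p(a_n)$; hence $\Sigma_2'=o_p(a_n)$. Adding the three pieces yields \eqref{eqLX1}, and \eqref{eqLX2} follows by exchanging the roles of the two processes.

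The places I expect to have to argue most carefully are: (a) the reduction $1_{\{|\overline{\mathsf{Y}}^1|^2>c_1\varrho\}}\le1_{\{\bar D^1\ne0\}}+1_{\{|\overline{\zeta}^1|^2>c'\rho_n\}}$, which must hold uniformly in $i$ and relies on the uniform modulus bound \eqref{absmod2} genuinely dominating $\sqrt{\varrho_n^1[i]}$ via \eqref{threshold2} under $[\mathrm{ST}]$; and (b) the conceptual point that $\Sigma_1$ is not estimated at a rate but shown to be identically zero off an event of vanishing probability — the ``double-jump'' exclusion furnished by Lemma \ref{lembasic} — so that the exponent $\eta_n^{(r-2)/2}\rho_n^{-\beta/2}$ emerges entirely from the large-noise term $\Sigma_2'$ through the $r$th-moment device of Lemma \ref{moment} combined with the counting estimate of Lemma \ref{barlarge}. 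Once this structure is in place the remaining estimates are routine.
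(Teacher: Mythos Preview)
Your proposal is correct and follows essentially the same strategy as the paper's proof: split according to whether a large jump of $D^1$ occurs in the window (the paper phrases this as $N^1(\bar I^i)_t\neq0$ versus $=0$, which is equivalent to your $\bar D^1\neq0$ versus $=0$), dispose of the jump case via the It\^o-formula estimate of Lemma~\ref{lembasic}, and in the no-jump case force the pre-averaged noise to be large, then combine the $r$th-moment bound of Lemma~\ref{moment} with the counting estimate of Lemma~\ref{barlarge} to produce the rate $\eta_n^{(r-2)/2}\rho_n^{-\beta/2}$. The only cosmetic difference is that the paper writes out the intermediate inequality $|\overline{\mathsf{X}}^1|^2\le 2|\bar X^1|^2+2|\overline{\zeta}^1|^2$ and then absorbs the first term via \eqref{absmod2}, whereas you pass directly to $|\overline{\mathsf{Y}}^1|^2\lesssim|\overline{\zeta}^1|^2$; your parenthetical about replacing $c_2\varrho_n^1[i]$ by $(c_2/K_0^2)\varrho_n^1[i]$ is unnecessary since Lemma~\ref{barlarge} already allows an arbitrary constant in front of $\varrho_n^l$.
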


\begin{proof}
Consider $(\ref{eqLX1})$. We decompose the target quantity as
\begin{align*}
\sum_{i:\widehat{S}^{i+k_n}\leq t} |\overline{\mathsf{Y}}^{1}(\widehat{\mathcal{I}})^i|^21_{\{|\overline{\mathsf{Y}}^{1}(\widehat{\mathcal{I}})^i|^2>c_1\varrho_n^{1}[i], |\bar{L}^{1}(\widehat{\mathcal{I}})^i_t|^2>c_2\varrho_n^{1}[i] \}}
&=\sum_{i:\widehat{S}^{i+k_n}\leq t} |\overline{\mathsf{Y}}^{1}(\widehat{\mathcal{I}})^i|^21_{\{|\overline{\mathsf{Y}}^{1}(\widehat{\mathcal{I}})^i|^2>c_1\varrho_n^{1}[i], |\bar{L}^{1}(\widehat{\mathcal{I}})^i_t|^2>c_2\varrho_n^{1}[i], N^{1}(\bar{I}^i)_t\neq 0 \}}\\
&\hphantom{=}+\sum_{i:\widehat{S}^{i+k_n}\leq t} |\overline{\mathsf{X}}^{1}(\widehat{\mathcal{I}})^i|^21_{\{|\overline{\mathsf{X}}^{1}(\widehat{\mathcal{I}})^i|^2>c_1\varrho_n^{1}[i], |\bar{L}^{1}(\widehat{\mathcal{I}})^i_t|^2>c_2\varrho_n^{1}[i], N^{1}(\bar{I}^i)_t=0 \}}\\
&=:A_{1,t}+A_{2,t}.
\end{align*}
By Lemma \ref{lembasic} we obtain
$A_{1,t}=o_p\left(\eta_n^{\frac{r-2}{2}}\rho_n^{-\beta/2}\right).$
On the other hand, on $\{|\overline{\mathsf{X}}^{1}(\widehat{\mathcal{I}})^i|^2>c_1\varrho_n^{1}[i]\}$ we have
\begin{align*}
|\overline{\zeta}^{1}(\widehat{\mathcal{I}})^i|
\geq |\overline{\mathsf{X}}^{1}(\widehat{\mathcal{I}})^i|-|\bar{X}^{1}(\widehat{\mathcal{I}})^i_t|-|\widetilde{\underline{X}^1}(\widehat{\mathcal{I}})^i_t|
>\sqrt{\rho_n}\left(\sqrt{\frac{c_1}{K_0}}-2K\sqrt{\frac{2 k_n\bar{r}_n|\log b_n|}{\rho_n}}\right)
\end{align*}
by [SA4], [$\mathrm{ST}$] and $(\ref{absmod2})$. Hence by $(\ref{threshold2})$ we have
\begin{align*}
A_{2,t}
\leq &\sum_{i:\widehat{S}^{i+k_n}\leq t} |\overline{\mathsf{X}}^{1}(\widehat{\mathcal{I}})^i|^21_{\{|\bar{L}^{1}(\widehat{\mathcal{I}})^i_t|^2>\varrho_n^{1}[i], |\overline{\zeta}^{1}(\widehat{\mathcal{I}})^i|^2>c_1\rho_n/4 K_0\}}\\
\leq &2\left(\frac{2 K_0}{c_1\rho_n}\right)^{\frac{r}{2}}\sum_{i:\widehat{S}^{i+k_n}\leq t} |\bar{X}^{1}(\widehat{\mathcal{I}})^i_t|^2|\overline{\zeta}^{1}(\widehat{\mathcal{I}})^i|^r 1_{\{|\bar{L}^{1}(\widehat{\mathcal{I}})^i_t|^2>c_2\varrho_n^{1}[i]\}}
+2\left(\frac{2 K_0}{c_1\rho_n}\right)^{\frac{r-2}{2}}\sum_{i:\widehat{S}^{i+k_n}\leq t} |\overline{\zeta}^{1}(\widehat{\mathcal{I}})^i|^r 1_{\{|\bar{L}^{1}(\widehat{\mathcal{I}})^i_t|^2>c_2\varrho_n^{1}[i]\}},
\end{align*}
and thus $(\ref{threshold2})$ and $(\ref{absmod2})$ imply that
$A_{2,t}
\lesssim(\rho_n)^{-\frac{r-2}{2}}\sum_{i:\widehat{S}^{i+k_n}\leq t} |\overline{\zeta}^{1}(\widehat{\mathcal{I}})^i|^r 1_{\{|\bar{L}^{1}(\widehat{\mathcal{I}})^i_t|^2>c_2\varrho_n^{1}[i]\}}.$
By Lemma \ref{moment} we obtain
$E_{0}\left[A_{2,t}\right]\lesssim
\eta_n^{\frac{r-2}{2}}k_n^{-1}\sum_{i=1}^{\infty} 1_{\{\bar{L}^{1}(\widehat{\mathcal{I}})^i_t|^2>\varrho_n^{1}[i]\}},$
hence Lemma \ref{barlarge} yield
$A_{2,t}
=o_p\left(\eta_n^{\frac{r-2}{2}}\rho_n^{-\beta/2}\right).$
Consequently, we obtain $(\ref{eqLX1})$. By symmetry we also obtain $(\ref{eqLX2})$, and thus we complete the proof.
\end{proof}


\begin{lem}\label{lemLZ}
Suppose $[\mathrm{ST}]$, $[\mathrm{SA}4]$ and $[\mathrm{SK}_\beta]$ hold for some $\beta\in[0,2]$. Suppose also that $[\mathrm{SN}^\flat_r]$ holds for some $r\in(2,\infty)$. Then for any $t>0$ we have
\begin{align}
\sum_{i:\widehat{S}^{i+k_n}\leq t} |\bar{L}^{1}(\widehat{\mathcal{I}})^i_t|1_{\{|\overline{\mathsf{Z}}^{1}(\widehat{\mathcal{I}})^i|^2\leq\varrho_n^{1}[i], |\bar{L}^{1}(\widehat{\mathcal{I}})^i_t|^2>4\varrho_n^{1}[i] \}}
=o_p\left( k_n\eta_n^{r/4}\rho_n^{-\beta/4}\right),\label{eqLZ1}\\
\sum_{j:\widehat{T}^{j+k_n}\leq t} |\bar{L}^{2}(\widehat{\mathcal{J}})^j_t|1_{\{|\overline{\mathsf{Z}}^{2}(\widehat{\mathcal{J}})^j|^2\leq\varrho_n^{2}[j], |\bar{L}^{2}(\widehat{\mathcal{J}})^j_t|^2>4\varrho_n^{2}[j] \}}
=o_p\left( k_n\eta_n^{r/4}\rho_n^{-\beta/4}\right).\label{eqLZ2}
\end{align}
\end{lem}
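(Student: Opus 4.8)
The plan is to deduce both $(\ref{eqLZ1})$ and $(\ref{eqLZ2})$ from Lemmas \ref{barlarge} and \ref{lemLX} by a short triangle-inequality argument followed by the Cauchy--Schwarz inequality; as usual only $(\ref{eqLZ1})$ will be proved, the other statement following by symmetry. Throughout we keep the standing reductions to [ST], [SA4], [$\mathrm{SK}_\beta$] and [$\mathrm{SN}^\flat_r$]. The starting observation is that, since $Z^1=Y^1+L^1$ and the noise is shared, $\mathsf{Z}^1=\mathsf{Y}^1+L^1$, so that for every index $i$ with $\widehat{S}^{i+k_n}\leq t$ (which forces $\widehat{S}^{i+p}\wedge t=\widehat{S}^{i+p}$ for all $p\leq k_n$) we have the exact identity $\overline{\mathsf{Z}}^1(\widehat{\mathcal{I}})^i=\overline{\mathsf{Y}}^1(\widehat{\mathcal{I}})^i+\bar{L}^1(\widehat{\mathcal{I}})^i_t$.

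Fix such an $i$ and put $A_i:=\{|\overline{\mathsf{Z}}^1(\widehat{\mathcal{I}})^i|^2\leq\varrho_n^1[i],\ |\bar{L}^1(\widehat{\mathcal{I}})^i_t|^2>4\varrho_n^1[i]\}$. On $A_i$ one has $|\overline{\mathsf{Z}}^1(\widehat{\mathcal{I}})^i|\leq\sqrt{\varrho_n^1[i]}<\tfrac12|\bar{L}^1(\widehat{\mathcal{I}})^i_t|$, so the above identity and the reverse triangle inequality give $|\overline{\mathsf{Y}}^1(\widehat{\mathcal{I}})^i|\geq|\bar{L}^1(\widehat{\mathcal{I}})^i_t|-|\overline{\mathsf{Z}}^1(\widehat{\mathcal{I}})^i|>\tfrac12|\bar{L}^1(\widehat{\mathcal{I}})^i_t|$; in particular $|\overline{\mathsf{Y}}^1(\widehat{\mathcal{I}})^i|^2>\varrho_n^1[i]$ and $|\bar{L}^1(\widehat{\mathcal{I}})^i_t|\leq 2|\overline{\mathsf{Y}}^1(\widehat{\mathcal{I}})^i|$. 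Hence, writing $B_i:=\{|\overline{\mathsf{Y}}^1(\widehat{\mathcal{I}})^i|^2>\varrho_n^1[i],\ |\bar{L}^1(\widehat{\mathcal{I}})^i_t|^2>4\varrho_n^1[i],\ \widehat{S}^{i+k_n}\leq t\}$, the left side of $(\ref{eqLZ1})$ is at most $2\sum_i|\overline{\mathsf{Y}}^1(\widehat{\mathcal{I}})^i|1_{B_i}$, which by the Cauchy--Schwarz inequality is bounded by $2\big(\sum_i1_{B_i}\big)^{1/2}\big(\sum_i|\overline{\mathsf{Y}}^1(\widehat{\mathcal{I}})^i|^21_{B_i}\big)^{1/2}$. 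Since $B_i\subset\{|\bar{L}^1(\widehat{\mathcal{I}})^i_t|^2>4\varrho_n^1[i]\}$, Lemma \ref{barlarge} (with $c=4$) bounds $\sum_i1_{B_i}$ by $o_p(k_n\rho_n^{-\beta/2})$, while Lemma \ref{lemLX} with $c_1=1$ and $c_2=4$ bounds $\sum_i|\overline{\mathsf{Y}}^1(\widehat{\mathcal{I}})^i|^21_{B_i}$ by $o_p(\eta_n^{(r-2)/2}\rho_n^{-\beta/2})$, so the left side of $(\ref{eqLZ1})$ is $o_p\big(k_n^{1/2}\eta_n^{(r-2)/4}\rho_n^{-\beta/2}\big)$.

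It then remains to match this rate with the claimed one. Using $k_n\eta_n=\rho_n^{-1}$ one gets $k_n^{1/2}\eta_n^{(r-2)/4}\rho_n^{-\beta/2}=\rho_n^{(2-\beta)/4}\,k_n\eta_n^{r/4}\rho_n^{-\beta/4}$, and since $\rho_n\to0$ and $\beta\leq2$ the factor $\rho_n^{(2-\beta)/4}$ is $\leq1$ for large $n$; hence the left side of $(\ref{eqLZ1})$ is $o_p(k_n\eta_n^{r/4}\rho_n^{-\beta/4})$, as required, and $(\ref{eqLZ2})$ follows by interchanging the roles of $(\widehat{\mathcal{I}},L^1,\mathsf{Z}^1)$ and $(\widehat{\mathcal{J}},L^2,\mathsf{Z}^2)$. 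The only subtle point is this last bookkeeping of the exponents of $k_n$, $\eta_n$ and $\rho_n$ through the Cauchy--Schwarz split — in particular the observation that the residual factor $\rho_n^{(2-\beta)/4}$ is harmless precisely because $\beta\in[0,2]$; everything else is an immediate combination of the previously established lemmas.
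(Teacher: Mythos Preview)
Your argument is correct and takes a genuinely different, more economical route than the paper's. The paper splits the sum according to whether the pre-averaged noise $|\overline{\zeta}^{1}(\widehat{\mathcal{I}})^i|$ is above or below $\sqrt{\varrho_n^1[i]}/2$: for the large-noise part it uses the Cauchy--Schwarz inequality together with the moment bound of Lemma~\ref{moment} and the counting estimate of Lemma~\ref{barlarge}; for the small-noise part it shows that the event forces a ``big'' jump (i.e.\ $N^{1}(\bar{I}^i)_t\neq 0$) and then appeals to Lemma~\ref{lembasic}. You instead observe directly that the event $A_i$ forces $|\overline{\mathsf{Y}}^1(\widehat{\mathcal{I}})^i|^2>\varrho_n^1[i]$ and $|\bar{L}^1(\widehat{\mathcal{I}})^i_t|\leq 2|\overline{\mathsf{Y}}^1(\widehat{\mathcal{I}})^i|$, which lets you apply the already-proved Lemma~\ref{lemLX} (with $c_1=1$, $c_2=4$) after a single Cauchy--Schwarz split against the counting bound from Lemma~\ref{barlarge}. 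In effect you reuse the noise/big-jump case analysis that is packaged inside the proof of Lemma~\ref{lemLX} rather than repeating it; this buys a shorter argument at the cost of a slightly less sharp intermediate rate (your $o_p(k_n^{1/2}\eta_n^{(r-2)/4}\rho_n^{-\beta/2})$ versus the paper's direct $o_p(k_n\eta_n^{r/4}\rho_n^{-\beta/4})$), but as you correctly check via $k_n\eta_n=\rho_n^{-1}$ and $\beta\leq 2$, the extra factor $\rho_n^{(2-\beta)/4}$ is harmless.
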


\begin{proof}
Consider $(\ref{eqLZ1})$. We decompose the target quantity as
\begin{align*}
&\hphantom{=}\sum_{i:\widehat{S}^{i+k_n}\leq t} |\bar{L}^{1}(\widehat{\mathcal{I}})^i_t|1_{\{|\overline{\mathsf{Z}}^{1}(\widehat{\mathcal{I}})^i|^2\leq\varrho_n^{1}[i], |\bar{L}^{1}(\widehat{\mathcal{I}})^i_t|^2>4\varrho_n^{1}[i] \}}\\
&=\sum_{i:\widehat{S}^{i+k_n}\leq t} |\bar{L}^{1}(\widehat{\mathcal{I}})^i_t|\left(1_{\{|\overline{\zeta}^{1}(\widehat{\mathcal{I}})^i|^2>\varrho_n^{1}[i]/4, |\overline{\mathsf{Z}}^{1}(\widehat{\mathcal{I}})^i|^2\leq\varrho_n^{1}[i], |\bar{L}^{1}(\widehat{\mathcal{I}})^i_t|^2>4\varrho_n^{1}[i] \}}+1_{\{|\overline{\zeta}^{1}(\widehat{\mathcal{I}})^i|^2\leq\varrho_n^{1}[i]/4, |\overline{\mathsf{Z}}^{1}(\widehat{\mathcal{I}})^i|^2\leq\varrho_n^{1}[i], |\bar{L}^{1}(\widehat{\mathcal{I}})^i_t|^2>4\varrho_n^{1}[i] \}}\right)\\
&=:\mathbb{B}_{1,t}+\mathbb{B}_{2,t}.
\end{align*}
By the Schwarz inequality we have
{\small \begin{align*}
&\mathbb{B}_{1,t}\leq
\sum_{i:\widehat{S}^{i+k_n}\leq t} |\bar{L}^{1}(\widehat{\mathcal{I}})^i_t|1_{\{|\overline{\zeta}^1(\widehat{\mathcal{I}})^i|^2>\varrho_n^{1}[i]/4, |\bar{L}^{1}(\widehat{\mathcal{I}})^i_t|^2>4\varrho_n^{1}[i] \}}\\
\leq &\left\{\sum_{i:\widehat{S}^{i+k_n}\leq t}|\bar{L}^{1}(\widehat{\mathcal{I}})^i_t|^2 1_{\{|\overline{\zeta}^{1}(\widehat{\mathcal{I}})^i|^2>\varrho_n^{1}[i]/4\}}\right\}^{1/2}\left\{\sum_{i:\widehat{S}^{i+k_n}\leq t}1_{\{|\bar{L}^{1}(\widehat{\mathcal{I}})^i_t|^2>4\varrho_n^{1}[i] \}}\right\}^{1/2}.
\end{align*}}
[ST] and Lemma \ref{moment} yield
\begin{align*}
E\left[\sum_{i:\widehat{S}^{i+k_n}\leq t}|\bar{L}^{1}(\widehat{\mathcal{I}})^i_t|^2 1_{\{|\overline{\zeta}^{1}(\widehat{\mathcal{I}})^i|^2>\varrho_n^{1}[i]/4\}}\right]
\lesssim \rho_n^{-r/2}\sum_{i=1}^{\infty}E\left[|\bar{L}^{1}(\widehat{\mathcal{I}})^i_t|^2 E_0[|\overline{\zeta}^{1}(\widehat{\mathcal{I}})^i|^r]\right]
\lesssim \eta_n^{r/2}\sum_{i=1}^{\infty}E[|\bar{I}^i(t)|]
\leq \eta_n^{r/2}k_n.
\end{align*}
Combining this with Lemma \ref{barlarge}, we obtain
$\mathbb{B}_{1,t}
=o_p\left( k_n\eta_n^{r/4}\rho_n^{-\beta/4}\right).$
On the other hand, on $\{|\overline{\mathsf{Z}}^{1}(\widehat{\mathcal{I}})^i|^2\leq\varrho_n^{1}[i], |\bar{L}^{1}(\widehat{\mathcal{I}})^i_t|^2>4\varrho_n^{1}[i] \}$ we have $|\overline{\mathsf{Y}}^{1}(\widehat{\mathcal{I}})^i|\geq|\bar{L}^{1}(\widehat{\mathcal{I}})^i_t|-|\overline{\mathsf{Z}}^{1}(\widehat{\mathcal{I}})^i|>\sqrt{\varrho_n^{1}[i]}$. Moreover, by $(\ref{absmod2})$ we have
\begin{align*} 
|\bar{D}^{1}(\widehat{\mathcal{I}})^i_t|
\geq|\overline{\mathsf{Y}}^{1}(\widehat{\mathcal{I}})^i|-|\bar{X}^{1}(\widehat{\mathcal{I}})^i_t|
-|\overline{\zeta}^{1}(\widehat{\mathcal{I}})^i|-|\widetilde{\underline{X}^1}(\widehat{\mathcal{I}})^i_t|>\sqrt{\rho_n}\left(\frac{1}{2\sqrt{K_0}}-2\sqrt{\frac{2 k_n\bar{r}_n|\log b_n|}{\rho_n}}\right),
\end{align*}
on $\{|\overline{\zeta}^{1}(\widehat{\mathcal{I}})^i|^2\leq\varrho_n^{1}[i]/4, |\overline{\mathsf{Y}}^{1}(\widehat{\mathcal{I}})^i|^2>\varrho_n^{1}[i],\widehat{S}^{i+k_n}\leq t\}$, and thus $(\ref{threshold2})$ yields $|\bar{D}^{1}(\widehat{\mathcal{I}})^i_t|>0$, hence $N^{1}(\bar{I}^i)\neq 0$. Therefore, we obtain
\begin{equation}\label{estLZ}
\mathbb{B}_{2,t}\leq\sum_{i:\widehat{S}^{i+k_n}\leq t} |\bar{L}^{1}(\widehat{\mathcal{I}})^i_t|1_{\{|\bar{L}^{1}(\widehat{\mathcal{I}})^i_t|^2>4\varrho_n^{1}[i], N^{1}(\bar{I}^i)_t\neq 0 \}},
\end{equation}
and thus Lemma \ref{lembasic} yields
$\mathbb{B}_{2,t}
=o_p\left( k_n\eta_n^{r/4}\rho_n^{-\beta/4}\right).$
Consequently, we obtain $(\ref{eqLZ1})$. By symmetry we also obtain $(\ref{eqLZ2})$, and thus we complete the proof.
\end{proof}


\begin{lem}\label{lemL2Z}
Suppose $[\mathrm{ST}]$, $[\mathrm{SA}4]$ and $[\mathrm{SK}_2]$ hold. Suppose also that $[\mathrm{SN}^\flat_r]$ for some $r\in(2,\infty)$. Then for any $t>0$ we have
\begin{align}
\sum_{i:\widehat{S}^{i+k_n}\leq t} |\bar{L}^{1}(\widehat{\mathcal{I}})^i_t|^21_{\{|\overline{\mathsf{Z}}^{1}(\widehat{\mathcal{I}})^i|^2\leq\varrho_n^{1}[i], |\bar{L}^{1}(\widehat{\mathcal{I}})^i_t|^2>4\varrho_n^{1}[i] \}}
=O_p\left( k_n\eta_n^{r/2}\right),\label{eqL2Z1}\\
\sum_{j:\widehat{T}^{j+k_n}\leq t} |\bar{L}^{2}(\widehat{\mathcal{J}})^j_t|^21_{\{|\overline{\mathsf{Z}}^{2}(\widehat{\mathcal{J}})^j|^2\leq\varrho_n^{2}[j], |\bar{L}^{2}(\widehat{\mathcal{J}})^j_t|^2>4\varrho_n^{2}[j] \}}
=O_p\left( k_n\eta_n^{r/2}\right).\label{eqL2Z2}
\end{align}
\end{lem}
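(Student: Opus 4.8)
The plan is to mimic the two-part decomposition used in the proof of Lemma~\ref{lemLZ}, now for the squared quantity, and I treat only $(\ref{eqL2Z1})$ since $(\ref{eqL2Z2})$ follows by symmetry. On $\{\widehat{S}^{i+k_n}\leq t\}$ we have $\overline{\mathsf{Z}}^1(\widehat{\mathcal{I}})^i=\overline{\mathsf{Y}}^1(\widehat{\mathcal{I}})^i+\bar{L}^1(\widehat{\mathcal{I}})^i_t$, and I would split the left-hand side of $(\ref{eqL2Z1})$ according to whether $|\overline{\zeta}^1(\widehat{\mathcal{I}})^i|^2>\varrho_n^1[i]/4$ or $|\overline{\zeta}^1(\widehat{\mathcal{I}})^i|^2\leq\varrho_n^1[i]/4$, calling the two pieces $\mathbb{B}_{1,t}$ and $\mathbb{B}_{2,t}$.

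For $\mathbb{B}_{1,t}$, I would drop the indicators $1_{\{|\overline{\mathsf{Z}}^1(\widehat{\mathcal{I}})^i|^2\leq\varrho_n^1[i]\}}$ and $1_{\{|\bar{L}^1(\widehat{\mathcal{I}})^i_t|^2>4\varrho_n^1[i]\}}$, so that $\mathbb{B}_{1,t}\leq\sum_{i:\widehat{S}^{i+k_n}\leq t}|\bar{L}^1(\widehat{\mathcal{I}})^i_t|^2 1_{\{|\overline{\zeta}^1(\widehat{\mathcal{I}})^i|^2>\varrho_n^1[i]/4\}}$. Replacing $\varrho_n^1[i]$ by $\rho_n/K_0$ via $[\mathrm{ST}]$, using $1_{\{|\overline{\zeta}^1(\widehat{\mathcal{I}})^i|^2>\rho_n/4K_0\}}\leq(4K_0/\rho_n)^{r/2}|\overline{\zeta}^1(\widehat{\mathcal{I}})^i|^r$, conditioning on $\mathcal{F}^{(0)}$ (note $\bar{L}^1(\widehat{\mathcal{I}})^i_t$ is $\mathcal{F}^{(0)}$-measurable) and invoking Lemma~\ref{moment} gives $E[\mathbb{B}_{1,t}]\lesssim\rho_n^{-r/2}k_n^{-r/2}\sum_{i=1}^{\infty}E[|\bar{L}^1(\widehat{\mathcal{I}})^i_t|^2]$. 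The one new ingredient is $\sum_{i=1}^{\infty}E[|\bar{L}^1(\widehat{\mathcal{I}})^i_t|^2]\lesssim k_n$: since $L^1$ integrated over the pairwise disjoint intervals $\widehat{I}^{i+p}$ produces orthogonal martingale increments and, under $[\mathrm{SK}_2]$, the density of $\langle L^1\rangle=\kappa(\delta^1)^2\star\nu^1$ is bounded by $\int_E\psi(x)^2 F^1(\mathrm{d}x)<\infty$, one gets $E[|\bar{L}^1(\widehat{\mathcal{I}})^i_t|^2]=E[\langle\bar{L}^1(\widehat{\mathcal{I}})^i\rangle_t]\lesssim\sum_{p=1}^{k_n-1}E[|\widehat{I}^{i+p}(t)|]$; summing over $i$ and using $\sum_m|\widehat{I}^m(t)|\leq t$ yields $\lesssim k_n t$. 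Hence $E[\mathbb{B}_{1,t}]\lesssim k_n(k_n\rho_n)^{-r/2}=k_n\eta_n^{r/2}$, i.e.\ $\mathbb{B}_{1,t}=O_p(k_n\eta_n^{r/2})$.

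For $\mathbb{B}_{2,t}$, I would reproduce the argument from the proof of Lemma~\ref{lemLZ}: on the relevant event $|\overline{\mathsf{Y}}^1(\widehat{\mathcal{I}})^i|\geq|\bar{L}^1(\widehat{\mathcal{I}})^i_t|-|\overline{\mathsf{Z}}^1(\widehat{\mathcal{I}})^i|>\sqrt{\varrho_n^1[i]}$, and then by $(\ref{absmod2})$, $[\mathrm{ST}]$ and $(\ref{threshold2})$,
\[
|\bar{D}^1(\widehat{\mathcal{I}})^i_t|\geq|\overline{\mathsf{Y}}^1(\widehat{\mathcal{I}})^i|-|\bar{X}^1(\widehat{\mathcal{I}})^i_t|-|\overline{\zeta}^1(\widehat{\mathcal{I}})^i|-|\widetilde{\underline{X}^1}(\widehat{\mathcal{I}})^i_t|>\sqrt{\rho_n}\left(\tfrac{1}{2\sqrt{K_0}}-2\sqrt{\tfrac{2 k_n\bar{r}_n|\log b_n|}{\rho_n}}\right)>0
\]
for sufficiently large $n$, so $D^1$, and hence $N^1$, jumps inside $\bar{I}^i$, giving $N^1(\bar{I}^i)_t\neq0$; therefore $\mathbb{B}_{2,t}\leq\sum_{i:\widehat{S}^{i+k_n}\leq t}|\bar{L}^1(\widehat{\mathcal{I}})^i_t|^2 1_{\{|\bar{L}^1(\widehat{\mathcal{I}})^i_t|^2>4\varrho_n^1[i],\,N^1(\bar{I}^i)_t\neq0\}}$. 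By Lemma~\ref{lembasic} (available since $[\mathrm{A}2]$ is in force throughout this section), $\sum_{i}P(N^1(\bar{I}^i)_t\neq0,\,|\bar{L}^1(\widehat{\mathcal{I}})^i_t|^2>4\varrho_n^1[i])\to0$, so with probability tending to $1$ there is no contributing index and $\mathbb{B}_{2,t}=0$; in particular $\mathbb{B}_{2,t}=o_p(k_n\eta_n^{r/2})$. Combining the two bounds gives $(\ref{eqL2Z1})$, and $(\ref{eqL2Z2})$ follows by symmetry.

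I expect $\mathbb{B}_{2,t}$ to be the delicate point: the crude expectation estimate $E[\mathbb{B}_{2,t}]\lesssim k_n\bar{r}_n$ that one reads off from the proof of Lemma~\ref{lembasic} need not be $O(k_n\eta_n^{r/2})$ for an arbitrary admissible $\rho_n$, so one genuinely has to use that Lemma~\ref{lembasic} controls the probability of the bad event (not merely an expectation) and conclude that $\mathbb{B}_{2,t}$ vanishes identically off a set of probability $o(1)$. Everything else is a routine adaptation of the proof of Lemma~\ref{lemLZ}; the only fresh computation is the $L^2$-estimate $\sum_i E[|\bar{L}^1(\widehat{\mathcal{I}})^i_t|^2]\lesssim k_n$, which needs only the $F^1$-square-integrability of $\psi$ from $[\mathrm{SK}_2]$ together with the orthogonality and disjointness of the refresh-interpolated intervals.
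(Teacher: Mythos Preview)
Your proposal is correct and follows essentially the same route as the paper: the same split according to whether $|\overline{\zeta}^{1}(\widehat{\mathcal{I}})^i|^2$ exceeds $\varrho_n^{1}[i]/4$, the same expectation bound via Lemma~\ref{moment} and $\sum_i E[|\bar{L}^1(\widehat{\mathcal{I}})^i_t|^2]\lesssim k_n$ for the first piece, and the same reduction to $\{N^1(\bar{I}^i)_t\neq0,\,|\bar{L}^1(\widehat{\mathcal{I}})^i_t|^2>4\varrho_n^1[i]\}$ handled by Lemma~\ref{lembasic} for the second piece. Your remark that one must use Lemma~\ref{lembasic} at the level of probabilities (not expectations) for $\mathbb{B}_{2,t}$ is exactly right and matches what the paper does implicitly via the reference to~$(\ref{estLZ})$.
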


\begin{proof}
Consider $(\ref{eqL2Z1})$. We decompose the target quantity as
\begin{align*}
&\hphantom{=}\sum_{i:\widehat{S}^{i+k_n}\leq t} |\bar{L}^{1}(\widehat{\mathcal{I}})^i_t|^21_{\{|\overline{\mathsf{Z}}^{1}(\widehat{\mathcal{I}})^i|^2\leq\varrho_n^{1}[i], |\bar{L}^{1}(\widehat{\mathcal{I}})^i_t|^2>4\varrho_n^{1}[i] \}}\\
&=\sum_{i:\widehat{S}^{i+k_n}\leq t} |\bar{L}^{1}(\widehat{\mathcal{I}})^i_t|^2\left(1_{\{|\overline{\zeta}^{1}(\widehat{\mathcal{I}})^i|^2>\varrho_n^{1}[i]/4, |\overline{\mathsf{Z}}^{1}(\widehat{\mathcal{I}})^i|^2\leq\varrho_n^{1}[i], |\bar{L}^{1}(\widehat{\mathcal{I}})^i_t|^2>4\varrho_n^{1}[i] \}}+1_{\{|\overline{\zeta}^{1}(\widehat{\mathcal{I}})^i|^2\leq\varrho_n^{1}[i]/4, |\overline{\mathsf{Z}}^{1}(\widehat{\mathcal{I}})^i|^2\leq\varrho_n^{1}[i], |\bar{L}^{1}(\widehat{\mathcal{I}})^i_t|^2>4\varrho_n^{1}[i] \}}\right)\\
&=:\Gamma_{1,t}+\Gamma_{2,t}.
\end{align*}
An argument similar to that in the proof of $(\ref{estLZ})$ yields
$\Gamma_{2,t}=o_p\left( k_n\eta_n^{r/2}\right).$
On the other hand, Lemma \ref{moment} yields
\begin{align*}
E[\Gamma_{1,t}]\lesssim
\rho_n^{-r/2}\sum_{i=1}^{\infty} E\left[|\bar{L}^{1}(\widehat{\mathcal{I}})^i_t|^2 E_0[|\overline{\zeta}^{1}(\widehat{\mathcal{I}})^i|^r]\right]
\lesssim\eta_n^{r/2}\sum_{i=1}^{\infty}E\left[|\bar{I}^{i}(t)|\right]
\leq\eta_n^{r/2}k_n t,
\end{align*}
hence we obtain $(\ref{eqL2Z1})$.
By symmetry we also obtain $(\ref{eqL2Z2})$, and thus we complete the proof.
\end{proof}


Let $\bar{K}^{ij}_t=1_{\{\bar{I}^i(t)\cap\bar{J}^j(t)\neq\emptyset\}}$ for each $i,j\in\mathbb{Z}_+$ and $t\in\mathbb{R}_+$.
\begin{lem}\label{barKssp}
Suppose that $[\mathrm{A}2]$ and $[\mathrm{SA}4]$ are satisfied. Let $\tau$ be a $\mathbf{G}^{(n)}$-stopping time.  Then $H\bar{K}^{i j}_\tau$ is $\mathcal{F}_{\widehat{S}^i\wedge\widehat{T}^j}$-measurable for any $i,j\in\mathbb{Z}_+$, provided that $H$ is a $\mathcal{G}^{(n)}_{\bar{R}^\vee(i,j)}$-measurable random variable.
\end{lem}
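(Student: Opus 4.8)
Write $\sigma:=\widehat{S}^i\wedge\widehat{T}^j$ and $\rho:=\bar{R}^\vee(i,j)=\widehat{S}^{i+k_n}\vee\widehat{T}^{j+k_n}$. The first step is to observe that, since $\bar{I}^i(\tau)\cap\bar{J}^j(\tau)=[\widehat{S}^i\vee\widehat{T}^j,\ \widehat{S}^{i+k_n}\wedge\widehat{T}^{j+k_n}\wedge\tau)$, one has the explicit representation $\bar{K}^{ij}_\tau=1_{\{\widehat{S}^i\vee\widehat{T}^j<\widehat{S}^{i+k_n}\wedge\widehat{T}^{j+k_n}\wedge\tau\}}$, so $\bar{K}^{ij}_\tau$ is $\{0,1\}$-valued; hence $\{H\bar{K}^{ij}_\tau\in B\}$ is, for each Borel $B$, either $\{H\in B\}\cap\{\bar{K}^{ij}_\tau=1\}$ or $\{\bar{K}^{ij}_\tau=0\}\cup(\{H\in B\}\cap\{\bar{K}^{ij}_\tau=1\})$ according as $0\notin B$ or $0\in B$. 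Thus it suffices to prove (i) $\{\bar{K}^{ij}_\tau=1\}\in\mathcal{F}_{\sigma}$ and (ii) $\{H\in B\}\cap\{\bar{K}^{ij}_\tau=1\}\in\mathcal{F}_{\sigma}$ for every Borel $B$. Throughout I would freely use the facts, supplied by Lemma 11.2 of \cite{Koike2012phy} under [A2] and [SA4], that $\widehat{S}^k,\widehat{T}^k,R^k$ (hence $\sigma$ and $\rho$) are $\mathbf{G}^{(n)}$-stopping times, that $\mathcal{G}^{(n)}_\rho=\mathcal{F}^{(0)}_{(\rho-b_n^{\xi-1/2})_+}$, and that $(\rho-b_n^{\xi-1/2})_+$ is an $\mathbf{F}^{(0)}$-stopping time.

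The geometric ingredient is a gap estimate from [SA4]: since consecutive $S$- and $T$-increments are bounded by $\bar{r}_n$ (and $R^{k-1}$ is itself a sampling time), one gets $\widehat{S}^k\le R^{k-1}+\bar{r}_n$ and $\widehat{T}^k\le R^{k-1}+\bar{r}_n$, hence $R^k-R^{k-1}\le\bar{r}_n$, hence $\widehat{S}^{i+k_n}-\widehat{S}^i\le 2k_n\bar{r}_n$ and $\widehat{T}^{j+k_n}-\widehat{T}^j\le 2k_n\bar{r}_n$. Consequently, on $\{\bar{K}^{ij}_\tau=1\}$ — which in particular forces $\widehat{S}^i\vee\widehat{T}^j<\widehat{S}^{i+k_n}\wedge\widehat{T}^{j+k_n}$, and therefore $|\widehat{S}^i-\widehat{T}^j|<2k_n\bar{r}_n$ — all of $\widehat{S}^i,\widehat{T}^j,\widehat{S}^{i+k_n},\widehat{T}^{j+k_n}$, and hence $\rho$, lie in $[\sigma,\sigma+4k_n\bar{r}_n)$. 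Since $k_n\bar{r}_n=O(b_n^{\xi'-1/2})=o(b_n^{\xi-1/2})$ as $\xi<\xi'$ (cf. (\ref{spp})), for all large $n$ we have $4k_n\bar{r}_n<b_n^{\xi-1/2}$, and so on $\{\bar{K}^{ij}_\tau=1\}$ it holds that $(\rho-b_n^{\xi-1/2})_+<\sigma$.

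With these in hand, (i) is proved by checking $\{\bar{K}^{ij}_\tau=1\}\cap\{\sigma\le t\}\in\mathcal{F}_t$ for every $t$: on this set the constraints $\widehat{S}^i,\widehat{T}^j\le t+2k_n\bar{r}_n$ and $\widehat{S}^{i+k_n},\widehat{T}^{j+k_n}\le t+4k_n\bar{r}_n$ hold automatically, and each of $\{\widehat{S}^{i+k_n}\le t+4k_n\bar{r}_n\}$, $\{\widehat{T}^{j+k_n}\le t+4k_n\bar{r}_n\}$, $\{\widehat{S}^i\le t+2k_n\bar{r}_n\}$, $\{\widehat{T}^j\le t+2k_n\bar{r}_n\}$ belongs to $\mathcal{G}^{(n)}_{t+4k_n\bar{r}_n}=\mathcal{F}^{(0)}_{(t+4k_n\bar{r}_n-b_n^{\xi-1/2})_+}\subseteq\mathcal{F}^{(0)}_t$ by strong predictability and (\ref{spp}); moreover $\{\widehat{S}^i<\tau\}$ and $\{\widehat{T}^j<\tau\}$, once intersected with $\{\widehat{S}^i,\widehat{T}^j\le t+2k_n\bar{r}_n\}$, are likewise forced into $\mathcal{F}^{(0)}_t$. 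Intersecting $\{\bar{K}^{ij}_\tau=1\}\cap\{\sigma\le t\}$ with all of these (each of which it is already contained in) exhibits it as a finite intersection of $\mathcal{F}_t$-sets. For (ii), since $\{H\in B\}\in\mathcal{F}^{(0)}_{(\rho-b_n^{\xi-1/2})_+}$ and on $\{\bar{K}^{ij}_\tau=1\}$ we have $(\rho-b_n^{\xi-1/2})_+<\sigma$, one writes $\{H\in B\}\cap\{\bar{K}^{ij}_\tau=1\}\cap\{\sigma\le t\}=\bigl(\{H\in B\}\cap\{(\rho-b_n^{\xi-1/2})_+\le t\}\bigr)\cap\bigl(\{\bar{K}^{ij}_\tau=1\}\cap\{\sigma\le t\}\bigr)$, which lies in $\mathcal{F}_t$ by (i) and the fact that $(\rho-b_n^{\xi-1/2})_+$ is an $\mathbf{F}^{(0)}$-stopping time. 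I expect (i) to be the main obstacle: a priori the event $\{\bar{K}^{ij}_\tau=1\}$ involves $\widehat{S}^{i+k_n}\vee\widehat{T}^{j+k_n}\vee\tau$, which may sit far above $\sigma$ and, through $\tau$, is only $\mathbf{G}^{(n)}$-predictable; the resolution is precisely that $\bar{K}^{ij}_\tau=1$ itself confines all the relevant times to a window of length $<b_n^{\xi-1/2}$ above $\sigma$, which is exactly the amount of foresight that strong predictability ([A2]) makes available.
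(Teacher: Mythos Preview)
Your proof is correct and follows essentially the same approach as the paper: use that on $\{\bar{K}^{ij}_\tau=1\}$ the gap $\bar{R}^\vee(i,j)-\widehat{S}^i\wedge\widehat{T}^j$ is $O(k_n\bar{r}_n)<b_n^{\xi-1/2}$, and then invoke strong predictability [A2] together with (\ref{spp}) to push measurability from $\mathcal{G}^{(n)}_{\bar{R}^\vee(i,j)}$ down to $\mathcal{F}_{\widehat{S}^i\wedge\widehat{T}^j}$. The paper's version is a bit more economical: rather than splitting into your steps (i) and (ii) and decomposing $\{\bar{K}^{ij}_\tau=1\}$ into its individual inequality constraints, it simply observes that both $A:=\{H\in B\}$ and $B:=\{\bar{K}^{ij}_\tau=1\}$ lie in $\mathcal{G}^{(n)}_{\bar{R}^\vee(i,j)}$ (since all the times involved, including $\tau$, are $\mathbf{G}^{(n)}$-stopping times), writes $A\cap B\cap\{\sigma\le u\}=A\cap B\cap\{\rho\le u+k_n\bar{r}_n\}\cap\{\sigma\le u\}$, and concludes in one line via $\mathcal{G}^{(n)}_{u+k_n\bar{r}_n}\subset\mathcal{F}_u$.
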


\begin{proof}
Let $B=\left\{\bar{I}^i(\tau)\cap\bar{J}^j(\tau)\neq\emptyset\right\}$. It is sufficient to show that $A\cap B\cap C\in\mathcal{F}_u$ for any $u\in\mathbb{R}_+$, where $A\in\mathcal{G}^{(n)}_{\bar{R}^\vee(i,j)}$ and  $C=\{\widehat{S}^i\wedge\widehat{T}^j\leq u\}$. On $B$ we have $\bar{R}^\vee(i,j)-\widehat{S}^i\wedge\widehat{T}^j\leq|\bar{I}^i|\vee|\bar{J}^j|\vee(\hat{S}^{i+k_n}-\hat{T}^j)\vee(\hat{T}^{j+k_n}-\hat{S}^i)\leq k_n\bar{r}_n,$ hence
$\bar{R}^\vee(i,j)
=\{\bar{R}^\vee(i,j)-\widehat{S}^i\wedge\widehat{T}^j\}
+\widehat{S}^i\wedge\widehat{T}^j
\leq\widehat{S}^i\wedge\widehat{T}^j+k_n\bar{r}_n,$
and thus we have
$B\cap C=B\cap C\cap\{ \bar{R}^\vee(i,j)\leq u+k_n\bar{r}_n\}.$
Since $A,B\in\mathcal{G}^{(n)}_{\bar{R}^\vee(i,j)}$, we have $A\cap B\cap \{ \bar{R}^\vee(i,j)\leq u+k_n\bar{r}_n\}\in\mathcal{G}_{u+k_n\bar{r}_n}$,
however, $\mathcal{G}_{u+k_n\bar{r}_n}=\mathcal{F}_{(u+k_n\bar{r}_n-b_n^{\xi-\frac{1}{2}})_+}\subset\mathcal{F}_u$ by $(\ref{spp})$. This together with the fact that $C\in\mathcal{F}_u$ implies $A\cap B\cap C\in\mathcal{F}_u$.
\end{proof}


\begin{lem}\label{XiM}
Suppose $[\mathrm{SC}1]$-$[\mathrm{SC}2]$, $[\mathrm{A}2]$, $[\mathrm{SA}4]$, $[\mathrm{SA}6]$, $[\mathrm{SK}_2]$ and $[\mathrm{SN}^\flat_2]$ hold. Then for any $t>0$, there exists a positive constant $K$ independent of both $n$ and $(\varepsilon_n)$ such that
\begin{align}
E\left[\sup_{0\leq s\leq t}\left|\sum_{i,j:\bar{R}^\vee(i,j)\leq s}\bar{\Xi}^{1}(\widehat{\mathcal{I}})^i_s\bar{M}^{2}(\widehat{\mathcal{J}})^j_s\bar{K}^{i j}\right|\right]\leq K k_n^2\sqrt{\varphi_2(\varepsilon_n)} b_n^{1/4},\label{eqXiM1}\\
E\left[\sup_{0\leq s\leq t}\left|\sum_{i,j:\bar{R}^\vee(i,j)\leq s}\bar{\Xi}^{1}(\widehat{\mathcal{I}})^i_s\widetilde{\underline{M}^2}(\widehat{\mathcal{J}})^j_s\bar{K}^{i j}\right|\right]\leq K k_n^2\sqrt{\varphi_2(\varepsilon_n)} b_n^{1/4},\label{eqXiM2}\\
E\left[\sup_{0\leq s\leq t}\left|\sum_{i,j:\bar{R}^\vee(i,j)\leq s}\bar{\Xi}^{1}(\widehat{\mathcal{I}})^i_s\overline{\zeta}^{2}(\widehat{\mathcal{J}})^j\bar{K}^{i j}\right|\right]\leq K k_n^2\sqrt{\varphi_2(\varepsilon_n)} b_n^{1/4}.\label{eqXiM3}
\end{align}
\end{lem}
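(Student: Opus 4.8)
The three bounds are structurally identical, so by symmetry between the two coordinates I would prove only $(\ref{eqXiM1})$: $(\ref{eqXiM2})$ follows verbatim with the second‑moment bound $E_0[|\bar M^{2}(\widehat{\mathcal J})^j|^2]\lesssim k_n\bar{r}_n$ (valid under $[\mathrm{SC}2]$) replaced by the corresponding one for $\widetilde{\underline{M}^{2}}(\widehat{\mathcal J})^j$, and $(\ref{eqXiM3})$ follows with Lemma \ref{moment} supplying $E_0[|\overline{\zeta}^{2}(\widehat{\mathcal J})^j|^2]\lesssim k_n^{-1}$; in the last case, since $\overline{\zeta}^{2}(\widehat{\mathcal J})^j$ is centred and $k_n$‑dependent conditionally on $\mathcal F^{(0)}$ while $\bar\Xi^{1}(\widehat{\mathcal I})^i$, the stopping times and the indicators $\bar K^{ij}$ are all $\mathcal F^{(0)}$‑measurable, one first conditions on $\mathcal F^{(0)}$ and argues in the noise. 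The factor $\sqrt{\varphi_2(\varepsilon_n)}$ in all three estimates will come from the smallness of the predictable bracket of $\Xi^{1}$: under $[\mathrm{SK}_2]$ one has $\langle\Xi^{1}\rangle_t=\int_0^t\big(\int_{E_n^c}\kappa(\delta^{1}(s,x))^2F^{1}(\mathrm dx)\big)\mathrm ds$ with integrand $\le\varphi_2(\varepsilon_n)$ because $\psi\le\varepsilon_n$ on $E_n^c$.

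The plan is to view the double sum as a discretised stochastic integral against $\Xi^{1}$. Writing $\Xi^{1}(\widehat I^{i+p})_s=\int_0^s 1_{\widehat I^{i+p}}(u-)\,\mathrm d\Xi^{1}_u$ and absorbing the boundary contribution coming from the constraint $\bar R^\vee(i,j)\le s$, the sum in $(\ref{eqXiM1})$ becomes $\int_0^{\cdot}\mathcal G^n_u\,\mathrm d\Xi^{1}_u$ for a process $\mathcal G^n$ assembled from the weights $g^n_p$, the running averages $\bar M^{2}(\widehat{\mathcal J})^j_u$, and the overlap indicators $\bar K^{ij}$; here it is used that $\Xi^{1}$ is purely discontinuous while $M^{2}$ is continuous, so $M^{2}$ and $\Xi^{1}$ are orthogonal and $\bar M^{2}(\widehat{\mathcal J})^j$ may be carried along as a predictable factor. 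The crucial point is that $\mathcal G^n$ is genuinely $\mathbf F$‑predictable: by $[\mathrm A2]$ together with $(\ref{spp})$ and Lemma \ref{barKssp}, the indicators $\bar K^{ij}$ and all interval endpoints entering $\mathcal G^n$ are known sufficiently far in advance. To make the martingale cancellation usable I would split the outer index $i$ into the $k_n$ residue classes modulo $k_n$; within a class the pre‑averaging windows $\{\widehat I^{i+1},\dots,\widehat I^{i+k_n-1}\}$ are pairwise disjoint, so at each $u$ exactly one $i$ in the class contributes, the corresponding sub‑integral is a true $\mathbf F$‑martingale, and its integrand reduces, up to the bounded weight $g^n_p$, to $\Theta^{n,i}_u:=\sum_j\bar M^{2}(\widehat{\mathcal J})^j_u\bar K^{ij}$. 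Doob's and the Burkholder–Davis–Gundy inequalities then give, for each class $b$, $E\big[\sup_{s\le t}|\!\int_0^s(\cdot)\,\mathrm d\Xi^{1}|\big]^2\lesssim \varphi_2(\varepsilon_n)\,E\big[\int_0^t|\Theta^{n,i^{\ast}(u)}_u|^2\,\mathrm du\big]$.

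It remains to estimate $E\big[\int_0^t|\Theta^{n,i^{\ast}(u)}_u|^2\,\mathrm du\big]=\sum_{i}E\big[\int_{W_i(t)}|\Theta^{n,i}_u|^2\,\mathrm du\big]$ over the $i$ in the class, where $W_i=[\widehat S^i,\widehat S^{i+k_n-1})$ are disjoint. I would bound $\Theta^{n,i}$ using $(\ref{sumbarK})$ (at most $2k_n+1$ nonzero $j$ per $i$), $E_0[|\bar M^{2}(\widehat{\mathcal J})^j|^2]\lesssim k_n\bar{r}_n$, $[\mathrm{SC}1]$ ($b_nN^n_t\le K$) and $[\mathrm{SA}4]$ (every interval $\le\bar{r}_n$) to count the intervals involved, then sum the $k_n$ residue classes by the triangle inequality; inserting $k_n\simeq\theta b_n^{-1/2}$, $\bar{r}_n=b_n^{\xi'}$ with $\xi'>\tfrac9{10}$ (from $[\mathrm{SA}4]$) collapses the bound to the asserted order $k_n^2\sqrt{\varphi_2(\varepsilon_n)}\,b_n^{1/4}$.

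The main obstacle is the combination of two competing requirements. On the one hand the double sum must be reorganised so that the martingale cancellation across the pairs $(i,j)$ is actually available — the naive term‑by‑term estimate is too weak by a power of $b_n$ because consecutive $\bar\Xi^{1}(\widehat{\mathcal I})^i$ share pre‑averaging windows — and on the other hand the resulting integrand must be shown to be $\mathbf F$‑predictable, even though $\bar K^{ij}$ depends on refresh times lying in the future of the integration variable. This is precisely what $[\mathrm A2]$ and Lemma \ref{barKssp} are there for, and the residue‑class decomposition is what reconciles the two. A secondary, purely bookkeeping, difficulty is tracking the ramp of the weights $g^n_p$ at the ends of the overlap blocks so that $\Theta^{n,i}$ is estimated at the correct order rather than crudely.
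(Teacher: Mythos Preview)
Your argument contains a genuine gap at its core. You assert that, because $\Xi^{1}$ is purely discontinuous and $M^{2}$ is continuous, ``$\bar M^{2}(\widehat{\mathcal J})^{j}$ may be carried along as a predictable factor'' and the whole double sum becomes a single stochastic integral $\int\mathcal G^{n}\,\mathrm d\Xi^{1}$. Orthogonality gives $[\Xi^{1},M^{2}]=0$, nothing more: by integration by parts,
\[
\bar\Xi^{1}(\widehat{\mathcal I})^{i}\,\bar M^{2}(\widehat{\mathcal J})^{j}\,\bar K^{ij}
=\{\bar K^{ij}_{-}\bar\Xi^{1}_{-}\}\bullet\bar M^{2}
+\{\bar K^{ij}_{-}\bar M^{2}_{-}\}\bullet\bar\Xi^{1},
\]
and the first term on the right is not absorbed anywhere in your scheme. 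The integrand you propose, built from $\bar M^{2}(\widehat{\mathcal J})^{j}_{u}$, does not reproduce the product $\bar\Xi^{1}_s\bar M^{2}_s$; the ``missing'' half is an integral against $M^{2}$, of the same order as the one you keep, and must be bounded separately. The paper's proof proceeds exactly through this integration by parts (together with Lemma~4.3 of \cite{Koike2012phy}), splitting into $\Delta_{1}+\Delta_{2}+\Delta_{3}$: two genuine martingale integrals (one against $M^{2}$, one against $\Xi^{1}$) plus a boundary term controlled by $(\ref{estshift})$.

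A second, related issue is the residue-class decomposition. Once the integration by parts is done, the sum over all $(i,j)$ reorganises directly into a \emph{single} stochastic integral (e.g.\ $\Delta_{1,s}=\sum_{p,q}\upsilon(p,q)_{-}\Xi^{1}(\widehat I^{p})_{-}\widehat J^{q}_{-}\bullet M^{2}_{s}$ in the paper), with no need to split $i$ modulo $k_n$; the orthogonality that matters is that of the increments $\Xi^{1}(\widehat I^{p})$ over disjoint $p$, which is exploited when bounding $E[\langle\Delta_{1}\rangle_t]$ via $[\mathrm{SA}6]$. Your triangle-inequality sum over $k_n$ classes discards the cancellation \emph{between} classes and costs a factor $k_n^{1/2}$; with $\bar r_n=b_n^{\xi'}$ and $\xi'<1$ this does not collapse to $k_n^{2}\sqrt{\varphi_2(\varepsilon_n)}\,b_n^{1/4}$ as you claim. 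For $(\ref{eqXiM3})$ the paper likewise introduces a martingale $\mathfrak Z^{2}$ for the noise and repeats the same integration-by-parts argument, using that $\Xi^{1}$ is quasi-left continuous and the $\widehat T^{j}$ are $\mathbf F^{(0)}$-predictable (from $[\mathrm A2]$) so that $[\Xi^{1},\mathfrak Z^{2}]=0$; your conditioning-on-$\mathcal F^{(0)}$ idea is plausible for that term but does not transfer to $(\ref{eqXiM1})$--$(\ref{eqXiM2})$.
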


\begin{proof}
First consider $(\ref{eqXiM1})$. since integration by parts and Lemma 4.3 of \cite{Koike2012phy} yield
\begin{equation}\label{IBP} 
\bar{\Xi}^{1}(\widehat{\mathcal{I}})^i_t\bar{M}^{2}(\widehat{\mathcal{J}})^j_t\bar{K}^{ij}_s=\left\{\bar{K}^{ij}_-\bar{\Xi}^{1}(\widehat{\mathcal{I}})^i_-\right\}\bullet\bar{M}^{2}(\widehat{\mathcal{J}})^j_s+\left\{\bar{K}^{ij}_-\bar{M}^{2}(\widehat{\mathcal{J}})^j_-\right\}\bullet\bar{\Xi}^{1}(\widehat{\mathcal{I}})^i_s,
\end{equation}
we can decompose the target quantity as
\begin{align*}
&\hphantom{=}\sum_{i,j:\bar{R}^\vee(i,j)\leq s}\bar{\Xi}^{1}(\widehat{\mathcal{I}})^i_s\bar{M}^{2}(\widehat{\mathcal{J}})^j_s\bar{K}^{i j}\\
&=\sum_{i,j=0}^\infty\left[\left\{\bar{K}^{ij}_-\bar{\Xi}^{1}(\widehat{\mathcal{I}})^i_-\right\}\bullet\bar{M}^{2}(\widehat{\mathcal{J}})^j_s+\left\{\bar{K}^{ij}_-\bar{M}^{2}(\widehat{\mathcal{J}})^j_-\right\}\bullet\bar{\Xi}^{1}(\widehat{\mathcal{I}})^i_s\right]
+\sum_{i,j:\bar{R}^\vee(i,j)>s}\bar{\Xi}^{1}(\widehat{\mathcal{I}})^i_s\bar{M}^{2}(\widehat{\mathcal{J}})^j_s\bar{K}^{i j}_s\\
&=:\Delta_{1,s}+\Delta_{2,s}+\Delta_{3,s}.
\end{align*}
First we estimate $\Delta_{1,s}$. Since $\bar{\Xi}^{1}(\widehat{\mathcal{I}})^i_s=\sum_{p=1}^{k_n-1}g^n_p \Xi^{1}(\widehat{I}^{i+p})_s$ and $\bar{M}^{2}(\widehat{\mathcal{J}})^j_s=\sum_{q=1}^{k_n-1}g^n_q M^{2}(\widehat{J}^{j+q})_s$, we have
\begin{align*}
\Delta_{1,s}
=&\sum_{i,j=0}^{\infty}\sum_{p,q=1}^{k_n-1}g_p^n g_q^n\bar{K}^{ij}_-\Xi^{1}(\widehat{I}^{i+p})_-\widehat{J}^{j+q}_-\bullet M^{2}_s\\
=&\sum_{i,j=0}^{\infty}\sum_{p=i+1}^{i+k_n-1}\sum_{q=j+1}^{j+k_n-1}g_{p-i}^n g_{q-j}^n\bar{K}^{ij}_-\Xi^{1}(\widehat{I}^{i+p})_-\widehat{J}^{j+q}_-\bullet M^{2}_s
=\sum_{p,q=1}^{\infty}\upsilon(p,q)_-\Xi^{1}(\widehat{I}^{p})_-\widehat{J}^{q}_-\bullet M^{2}_s,
\end{align*}
where
$\upsilon(p,q)_s=\sum_{i=(p-k_n+1)_+}^{p-1}\sum_{j=(q-k_n+1)_+}^{q-1}g_{p-i}^n g_{q-j}^n\bar{K}^{i j}_s.$
Hence we have
\begin{align*}
\langle\Delta_{1,\cdot}\rangle_s
=\sum_{q=1}^{\infty}\left[\sum_{p=1}^{\infty}\upsilon(p,q)_-\Xi^{1}(\widehat{I}^{p})_-\right]^2\widehat{J}^{q}_-\bullet \langle M^{2}\rangle_s,
\end{align*}
and thus we obtain
\begin{align*}
E[\langle\Delta_{1,\cdot}\rangle_t ]
\lesssim &\sum_{q=1}^{\infty}\int_0^t E\left[\left\{\sum_{p=1}^{\infty}\upsilon(p,q)_s\Xi^{1}(\widehat{I}^{p})_s\right\}^2\widehat{J}^{q}_s\right]\mathrm{d}s
=\sum_{q=1}^{\infty}\int_0^t \sum_{p=1}^{\infty}E\left[\upsilon(p,q)_s^2\widehat{J}^{q}_s E\left[|\Xi^{1}(\widehat{I}^{p})_s|^2|\mathcal{F}^{(0)}_{\widehat{S}^{p-1}}\right]\right]\mathrm{d}s\\
\lesssim &k_n^{2}\varphi_2(\varepsilon_n)\sum_{p,q=1}^{\infty}\sum_{i=(p-k_n+1)_+}^{p-1}\sum_{j=(q-k_n+1)_+}^{q-1}E\left[\bar{K}^{i j}_t |\widehat{I}^{p}(t)||\widehat{J}^{q}(t)|\right]
\lesssim k_n^4\varphi_2(\varepsilon_n) k_n b_n\lesssim k_n^4\varphi_2(\varepsilon_n) b_n^{1/2}
\end{align*}
by the representation of $\langle M^{2}\rangle$ and $\langle \Xi^{1}\rangle$, [A2], $(\ref{sumbarK})$, [SA6] and $(\ref{window})$. Combining this with the Schwarz and Doob inequalities, we conclude that 
\begin{equation}\label{estDelta}
E\left[\sup_{0\leq s\leq t}|\Delta_{1,s}|\right]\lesssim k_n^2\sqrt{\varphi_2(\varepsilon_n)} b_n^{1/4}.
\end{equation}
Similarly we can also show that $E\left[\sup_{0\leq s\leq t}|\Delta_{2,s}|\right]$ $\lesssim k_n^2\sqrt{\varphi_2(\varepsilon_n)} b_n^{1/4}$. Now we estimate $\Delta_{3,s}$. $(\ref{IBP})$, the Doob inequality, [A2] and the optional sampling theorem, $(\ref{sumbarK})$ and [SA6] imply that
\begin{equation}\label{estXiM}
E\left[\sup_{0\leq s\leq t}\sum_{i,j=0}^\infty|\bar{\Xi}^{1}(\widehat{\mathcal{I}})^i_s\bar{M}^{2}(\widehat{\mathcal{J}})^j_s\bar{K}^{i j}_s|^2\right]
\lesssim\varphi_2(\varepsilon_n)E\left[\sum_{i,j=1}^{\infty}\bar{K}^{i j}|\bar{I}^i(t)||\bar{J}^j(t)|\right]
\lesssim\varphi_2(\varepsilon_n)k_n.
\end{equation}
On the other hand, $(\ref{sumbarK})$ also implies
\begin{equation}\label{estshift}
\sum_{i,j:\bar{R}^\vee(i,j)>s}\bar{K}^{ij}_s\leq 
(2k_n+1)\left[\sum_{i:\widehat{S}^{i+k_n}>s}1+\sum_{j:\widehat{T}^{j+k_n}>s}1\right]
\leq (2k_n+1)k_n.
\end{equation}
Therefore, the Schwarz inequality and $\ref{window})$ yield $E\left[\sup_{0\leq s\leq t}|\Delta_{3,s}|\right]\lesssim\sqrt{\varphi_2(\varepsilon_n)k_n}\cdot k_n\lesssim k_n^2\sqrt{\varphi_2(\varepsilon_n)} b_n^{1/4}.$ Consequently, we conclude that $(\ref{eqXiM1})$ holds. $(\ref{eqXiM2})$ can be shown in a similar manner.

Finally consider $(\ref{eqXiM3})$. Define the process $\mathfrak{Z}^2_t$ by 
$\mathfrak{Z}^2_t=\sqrt{b_n}\sum_{j=1}^\infty\zeta^2_{\widehat{T}^j}1_{\{\widehat{T}^j\leq t\}}.$
Then obviously $\mathfrak{Z}^2_t$ is a purely discontinuous locally square-integrable martingale $\mathcal{B}^{(0)}$ and $\overline{\zeta}^{2}(\widehat{\mathcal{J}})^j=\widetilde{\mathfrak{Z}}^2(\widehat{\mathcal{J}})^j$ on $\{\widehat{T}^{j+k_n}\leq t\}$. On the other hand, since $\Xi^1$ is quasi-left continuous by Theorem I-4.2 of \cite{JS} and for every $j$ $\widehat{T}^j$ is $\mathbf{F}^{(0)}$-predictable time by [A2], we have $\Delta\Xi^1_{\widehat{T}^j}=0$ for every $j$. Therefore, we have $[\Xi^1,\mathfrak{Z}^2]=0$, and thus we can decompose the target quantity as
\begin{align*}
&\hphantom{=}\sum_{i,j:\bar{R}^\vee(i,j)\leq s}\bar{\Xi}^{1}(\widehat{\mathcal{I}})^i_s\overline{\zeta}^{2}(\widehat{\mathcal{J}})^j_s\bar{K}^{i j}\\
&=\sum_{i,j=0}^\infty\left[\left\{\bar{K}^{ij}_-\bar{\Xi}^{1}(\widehat{\mathcal{I}})^i_-\right\}\bullet\widehat{\mathfrak{Z}}^{2}(\widehat{\mathcal{J}})^j_s+\left\{\bar{K}^{ij}_-\widehat{\mathfrak{Z}}^{2}(\widehat{\mathcal{J}})^j_-\right\}\bullet\bar{\Xi}^{1}(\widehat{\mathcal{I}})^i_s\right]
+\sum_{i,j:\bar{R}^\vee(i,j)>s}\bar{\Xi}^{1}(\widehat{\mathcal{I}})^i_s\widetilde{\mathfrak{Z}}^{2}(\widehat{\mathcal{J}})^j_s\bar{K}^{i j}_s\\
&=:\Upsilon_{1,s}+\Upsilon_{2,s}+\Upsilon_{3,s}
\end{align*}
due to integration by parts and Lemma 4.3 of \cite{Koike2012phy}. First we estimate $\Upsilon_{1,s}$. Since 
\begin{align*}
\Upsilon_{1,s}
=&b_n^{-1/2}\sum_{i,j=0}^{\infty}\sum_{p=1}^{k_n-1}\sum_{q=0}^{k_n-1}g_p^n \Delta(g)_q^n\bar{K}^{ij}_-\Xi^{1}(\widehat{I}^{i+p})_-\widehat{J}^{j+q}_-\bullet \mathfrak{Z}^{2}_s\\
=&b_n^{-1/2}\sum_{i,j=0}^{\infty}\sum_{p=i+1}^{i+k_n-1}\sum_{q=j}^{j+k_n-1}g_{p-i}^n \Delta(g)_{q-j}^n\bar{K}^{ij}_-\Xi^{1}(\widehat{I}^{i+p})_-\widehat{J}^{j+q}_-\bullet \mathfrak{Z}^{2}_s
=b_n^{-1/2}\sum_{p=1}^{\infty}\sum_{q=0}^\infty\upsilon'(p,q)_-\Xi^{1}(\widehat{I}^{p})_-\widehat{J}^{q}_-\bullet \mathfrak{Z}^{2}_s,
\end{align*}
where
$\upsilon'(p,q)_s=\sum_{i=(p-k_n+1)_+}^{p-1}\sum_{j=(q-k_n+1)_+}^{q}g_{p-i}^n \Delta(g)_{q-j}^n\bar{K}^{i j}_s.$
We have
\begin{align*}
[\Upsilon_{1,\cdot}]_s
=\sum_{q=0}^{\infty}\left[\sum_{p=1}^{\infty}\upsilon'(p,q)_{\widehat{T}^q}\Xi^{1}(\widehat{I}^{p})_{\widehat{T}^q}\right]^2\left(\zeta^2_{\widehat{T}^q}\right)^21_{\{\widehat{T}^q\leq s\}},
\end{align*}
hence we obtain
\begin{align*}
E\left[\left[\Upsilon_{1,\cdot}\right]_t \right]
\lesssim &\sum_{q=0}^{\infty}E\left[\left\{\sum_{p=1}^{\infty}\upsilon'(p,q)_{\widehat{T}^q}\Xi^{1}(\widehat{I}^{p})_{\widehat{T}^q}\right\}^21_{\{\widehat{T}^q\leq t\}}\right]
=\sum_{q=0}^{\infty}\sum_{p=1}^\infty E\left[\upsilon'(p,q)_{\widehat{T}^q\wedge t}^2 E\left[|\Xi^{1}(\widehat{I}^{p})_{\widehat{T}^q\wedge t}|^2|\mathcal{F}^{(0)}_{\widehat{S}^{p-1}\wedge{\widehat{T}^q\wedge t}}\right]\right]\\
\lesssim &\varphi_2(\varepsilon_n)\sum_{q=0}^{\infty} \sum_{p=1}^{\infty}\sum_{i=(p-k_n+1)_+}^{p-1}\sum_{j=(q-k_n+1)_+}^{q}E\left[\bar{K}^{i j}_t |\widehat{I}^{p}(t)|\right]
\lesssim\varphi_2(\varepsilon_n) k_n^3\lesssim k_n^4\varphi_2(\varepsilon_n) b_n^{1/2}
\end{align*}
by the optional sampling theorem, the representation of $\langle \Xi^{1}\rangle$, Lemma \ref{barKssp}, the Lipschitz continuity of $g$,  $(\ref{sumbarK})$ and $(\ref{window})$. Since $\langle\Upsilon_{1,\cdot}\rangle$ is the predictable compensator of $\left[\Upsilon_{1,\cdot}\right]$, the above result and the Schwarz and Doob inequalities yield $E\left[\sup_{0\leq s\leq t}|\Upsilon_{1,s}|\right]\lesssim k_n^2\sqrt{\varphi_2(\varepsilon_n)} b_n^{1/4}.$ On the other hand, we can show that $E\left[\sup_{0\leq s\leq t}|\Upsilon_{2,s}|\right]\lesssim k_n^2\sqrt{\varphi_2(\varepsilon_n)} b_n^{1/4}$ in a similar manner to the proof of $(\ref{estDelta})$. Finally, since 
\begin{equation*}
E\left[\sup_{0\leq s\leq t}\sum_{i,j=0}^\infty|\bar{\Xi}^{1}(\widehat{\mathcal{I}})^i_s\overline{\zeta}^{2}(\widehat{\mathcal{J}})^j_s\bar{K}^{i j}_s|^2\right]
\lesssim \sum_{i=0}^\infty E\left[\sup_{0\leq s\leq t}|\bar{\Xi}^{1}(\widehat{\mathcal{I}})^i_s|^2\right]
\lesssim\varphi_2(\varepsilon_n)E\left[\sum_{i,j=0}^{\infty}|\bar{I}^i(t)|\right]
\lesssim\varphi_2(\varepsilon_n)k_n
\end{equation*}
by $(\ref{sumbarK})$ and the Doob inequality, the Schwarz inequality and $(\ref{estshift})$ yield $E\left[\sup_{0\leq s\leq t}|\Upsilon_{3,s}|\right]\lesssim k_n^2\sqrt{\varphi_2(\varepsilon_n)} b_n^{1/4}$. Consequently, we obtain $(\ref{eqXiM3})$.
\end{proof}


\begin{lem}\label{ThetaM}
Suppose $[\mathrm{ST}]$, $[\mathrm{A}2]$, $[\mathrm{SA}4]$ and $[\mathrm{SK}_\beta]$ hold for some $\beta\in[0,2]$. Suppose also that $[\mathrm{SN}^\flat_r]$ holds for some $r\in(2,\infty)$. Then for any $t>0$ we have
\begin{align}
k_n^{-2}\sup_{0\leq s\leq t}\left|\sum_{i,j:\bar{R}^\vee(i,j)\leq s}\bar{\Theta}^{1}(\widehat{\mathcal{I}})^i_s\bar{M}^{2}(\widehat{\mathcal{J}})^j_s\bar{K}^{i j}\right|=o_p\left(b_n^{1/4}\right)+o_p\left(\rho_n^{1-\beta/2}\right),\label{eqThetaM1}\\
k_n^{-2}\sup_{0\leq s\leq t}\left|\sum_{i,j:\bar{R}^\vee(i,j)\leq s}\bar{\Theta}^{1}(\widehat{\mathcal{I}})^i_s\widetilde{\underline{M}^2}(\widehat{\mathcal{J}})^j_s\bar{K}^{i j}\right|=o_p\left(b_n^{1/4}\right)+o_p\left(\rho_n^{1-\beta/2}\right),\label{eqThetaM2}\\
k_n^{-2}\sup_{0\leq s\leq t}\left|\sum_{i,j:\bar{R}^\vee(i,j)\leq s}\bar{\Theta}^{1}(\widehat{\mathcal{I}})^i_s\overline{\zeta}^{2}(\widehat{\mathcal{J}})^j\bar{K}^{i j}\right|=o_p\left(b_n^{1/4}\right)+o_p\left(\rho_n^{1-\beta/2}\right).\label{eqThetaM3}
\end{align}
\end{lem}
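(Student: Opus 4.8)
The plan is to run the argument of Lemma \ref{XiM} with the compensated small jumps $\Xi^1$ replaced by the continuous finite variation process $\Theta^1$. Two elementary observations are used throughout. First, writing $h^1_s=\int_{E_n}\kappa(\delta^1(s,x))F^1(\mathrm{d}x)$ for the density of $\Theta^1$, one has the pathwise bound $|h^1_s|\le\int_{\{\psi>\varepsilon_n\}}\psi(x)F^1(\mathrm{d}x)\lesssim\varepsilon_n^{-(\beta-1)_+}$ (split $\psi=\psi^{1-\beta}\psi^\beta$ and use $\psi>\varepsilon_n$ if $\beta\ge1$, $\psi\le\|\psi\|_\infty$ if $\beta\le1$, together with $[\mathrm{SK}_\beta]$); in particular $\sup_{0\le s\le t}|\bar\Theta^1(\widehat{\mathcal{I}})^i_s|\lesssim\varepsilon_n^{-(\beta-1)_+}|\bar I^i(t)|\lesssim\varepsilon_n^{-(\beta-1)_+}k_n\bar r_n$ by $[\mathrm{SA}4]$, and likewise for $\Theta^2$. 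Second, $(\ref{eps})$ gives $\varepsilon_n^{1-\beta}\lesssim\rho_n^{(1-\beta)/2}$ when $\beta\ge1$.

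I would treat $(\ref{eqThetaM1})$ first. Exactly as in Lemma \ref{XiM}, integration by parts and Lemma 4.3 of \cite{Koike2012phy} give a decomposition of the sum into $\Delta_{1,s}+\Delta_{2,s}+\Delta_{3,s}$, where $\Delta_{1,s}=\sum_{i,j=0}^\infty\{\bar K^{ij}_-\bar\Theta^1(\widehat{\mathcal{I}})^i_-\}\bullet\bar M^2(\widehat{\mathcal{J}})^j_s$ is a local martingale (the integral against $M^2$), $\Delta_{2,s}=\sum_{i,j=0}^\infty\{\bar K^{ij}_-\bar M^2(\widehat{\mathcal{J}})^j_-\}\bullet\bar\Theta^1(\widehat{\mathcal{I}})^i_s$ has finite variation (the integral against $\Theta^1$), and $\Delta_{3,s}=\sum_{i,j:\bar R^\vee(i,j)>s}\bar\Theta^1(\widehat{\mathcal{I}})^i_s\bar M^2(\widehat{\mathcal{J}})^j_s\bar K^{ij}_s$ is the boundary term. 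For $\Delta_1$ I would reindex and estimate $E[\langle\Delta_1\rangle_t]$ by the same computation as for $\Xi^1$, now using $E_0[|\Theta^1(\widehat I^p)_s|^2]\le\varepsilon_n^{-2(\beta-1)_+}\bar r_n|\widehat I^p(s)|$ in place of the bound on $\langle\Xi^1\rangle$ (so that $\varphi_2(\varepsilon_n)$ gets replaced by $\varepsilon_n^{-2(\beta-1)_+}\bar r_n$), together with $(\ref{sumbarK})$ and the boundedness of $[M^2]'$; this yields $E[\langle\Delta_1\rangle_t]\lesssim k_n^4\varepsilon_n^{-2(\beta-1)_+}\bar r_n b_n^{1/2}$, whence, by Doob's and Schwarz's inequalities, $k_n^{-2}\sup_{0\le s\le t}|\Delta_{1,s}|=O_p(\varepsilon_n^{-(\beta-1)_+}\bar r_n^{1/2}b_n^{1/4})$, which is $o_p(b_n^{1/4})$ when $\beta\le1$ (since $\bar r_n\to0$) and $o_p(\rho_n^{1-\beta/2})$ when $\beta\ge1$ (by $(\ref{eps})$ and $(\ref{threshold2})$). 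For $\Delta_3$ I would apply the Cauchy--Schwarz inequality in $(i,j)$, $(\ref{estshift})$, $(\ref{absmod2})$ and the sup bound on $\bar\Theta^1$ above to get $k_n^{-2}\sup_{0\le s\le t}|\Delta_{3,s}|=O_p(\varepsilon_n^{-(\beta-1)_+}(k_n\bar r_n)^{3/2}|\log b_n|^{1/2})$, handled by the same case split (note $(k_n\bar r_n)^{3/2}=b_n^{3(\xi'-1/2)/2}$ and $3(\xi'-1/2)/2>1/4$ since $\xi'>9/10$).

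The delicate term is $\Delta_{2,s}=\int_0^s\Psi^n_{u-}h^1_u\,\mathrm{d}u$, where reindexing yields $|\Psi^n_u|\lesssim k_n^{5/2}(\bar r_n|\log b_n|)^{1/2}$ pathwise from $(\ref{sumbarK})$ and $(\ref{absmod2})$. When $\beta\ge1$ this suffices: $|h^1_u|\lesssim\varepsilon_n^{1-\beta}\lesssim\rho_n^{(1-\beta)/2}$ gives $k_n^{-2}\sup_{0\le s\le t}|\Delta_{2,s}|=O_p(\rho_n^{(1-\beta)/2}(k_n\bar r_n|\log b_n|)^{1/2})=o_p(\rho_n^{1-\beta/2})$ by $(\ref{threshold2})$. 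When $\beta<1$, however, $|h^1_u|$ is merely bounded and the crude estimate $O_p((k_n\bar r_n|\log b_n|)^{1/2})$ need not be $o_p(\rho_n^{1-\beta/2})$ for moderately fast $\rho_n$; this is the main obstacle, and it is exactly here that one must invoke condition (v) of $[\mathrm{K}_\beta]$. Namely, writing $h^1_u=f^1_u-g^1_u$ with $f^1_u=\int_E\kappa(\delta^1(u,x))F^1(\mathrm{d}x)$ and $g^1_u=\int_{E_n^c}\kappa(\delta^1(u,x))F^1(\mathrm{d}x)$, one has $|g^1_u|\le\int_{\{\psi\le\varepsilon_n\}}\psi(x)F^1(\mathrm{d}x)\le\varphi_\beta(\varepsilon_n)$; the $g^1$-part of $\Delta_2$ is then a small-drift cross-term of size $O(\varphi_\beta(\varepsilon_n))$ per unit time, which becomes negligible once $(\varepsilon_n)$ is chosen via Lemma \ref{epsspecify}, while $\int_0^\cdot f^1_u\,\mathrm{d}u$ is a continuous finite variation process whose density obeys the regularity $(\ref{eqA5})$, so that the $f^1$-part of $\Delta_2$ is a drift cross-term of the form $\sum_{i,j}\bar A^1(\widehat{\mathcal{I}})^i\bar M^2(\widehat{\mathcal{J}})^j\bar K^{ij}$ and can be estimated by the same device used for the drift contributions in the proof of Theorem \ref{AMN} in \cite{Koike2012phy}, yielding $o_p(k_n^2 b_n^{1/4})$. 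Collecting $\Delta_1,\Delta_2,\Delta_3$ proves $(\ref{eqThetaM1})$. Finally, $(\ref{eqThetaM2})$ follows verbatim with $M^2$ replaced by $\underline M^2$, the factor $b_n^{-1/2}$ appearing in $\widetilde{\underline M^2}(\widehat{\mathcal{J}})^j$ being absorbed as in the proof of $(\ref{eqXiM2})$, and $(\ref{eqThetaM3})$ by replacing $\bar M^2(\widehat{\mathcal{J}})^j$ with $\widetilde{\mathfrak Z}^2(\widehat{\mathcal{J}})^j$ for the purely discontinuous locally square-integrable martingale $\mathfrak Z^2_t=\sqrt{b_n}\sum_{j=1}^\infty\zeta^2_{\widehat T^j}1_{\{\widehat T^j\le t\}}$ from the proof of $(\ref{eqXiM3})$, using that $[\Theta^1,\mathfrak Z^2]=0$ holds automatically since $\Theta^1$ is continuous.
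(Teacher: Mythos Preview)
Your plan is correct and lands on the same two ingredients as the paper: the pathwise bound $|\bar\Theta^1(\widehat{\mathcal I})^i_s|\lesssim\varepsilon_n^{-(\beta-1)_+}k_n\bar r_n$ for the large-jump compensator, and (when $\beta<1$) the split $\Theta^1=\hat\Theta^1-\check\Theta^1$ with $\hat\Theta^1=\kappa(\delta^1)\star\nu^1$, so that the $\hat\Theta^1$-part becomes a drift cross-term handled via $[\mathrm K_\beta](\mathrm v)$ and the drift lemmas of \cite{Koike2012phy}, while the $\check\Theta^1$-part is small. The only organizational difference is that the paper branches on $\beta$ \emph{first}: for $\beta\ge1$ it skips integration by parts entirely and bounds $E\!\left[\sum_{i,j}\bar K^{ij}|\bar\Theta^1(\widehat{\mathcal I})^i_t\,\bar M^2(\widehat{\mathcal J})^j_t|\right]$ directly by Schwarz, getting $k_n^{-2}\sup_s|\Upsilon_s|=o_p\big((k_n\bar r_n)^{1/2}\rho_n^{-(\beta-1)/2}\big)=o_p(\rho_n^{1-\beta/2})$ in one line; only for $\beta<1$ does it carry out the split (your $h^1=f^1-g^1$) and invoke Lemmas~6.2 and~12.1 of \cite{Koike2012phy}. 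Your route---IBP uniformly, then treat $\Delta_1,\Delta_3$ by direct size bounds and reserve the case split for $\Delta_2$---is a little heavier for $\beta\ge1$ but perfectly valid, and has the advantage that the martingale and boundary pieces are dispatched once and for all.
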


\begin{proof}
Define the process $\Upsilon_s$ by
$\Upsilon_s=\sum_{i,j:\bar{R}^\vee(i,j)\leq s}\bar{\Theta}^{1}(\widehat{\mathcal{I}})^i_s\bar{M}^{2}(\widehat{\mathcal{J}})^j_s\bar{K}^{i j}.$
If $\beta\geq 1$, the Schwarz inequality, [S$\mathrm{K}_\beta$] and [SA4] yield
\begin{align*}
&E\left[\sup_{0\leq s\leq t}\left|\Upsilon_s\right|\right]
\leq E\left[\sum_{i,j=1}^{\infty}\bar{K}^{i j}|\bar{\Theta}^{1}(\widehat{\mathcal{I}})^i_t\bar{M}^{2}(\widehat{\mathcal{J}})^j_t|\right]\\
\leq &\left\{E\left[\sum_{i,j=1}^{\infty}\bar{K}^{i j}|\bar{\Theta}^{1}(\widehat{\mathcal{I}})^i_t|^2\right]\right\}^{1/2}
\left\{E\left[\sum_{i,j=1}^{\infty}\bar{K}^{i j}|\bar{M}^{2}(\widehat{\mathcal{J}})^j_t|^2\right]\right\}^{1/2}
=o\left((k_n\bar{r}_n)^{1/2}\rho_n^{-(\beta-1) /2}\right),
\end{align*}
hence we obtain $\sup_{0\leq s\leq t}|\Upsilon_s|=o_p\left(\rho_n^{1-\beta/2}\right)$. If $\beta<1$, we decompose the target quantity as
\begin{align*}
\Upsilon_s=\sum_{i,j:\bar{R}^\vee(i,j)\leq s}\left\{\bar{\hat{\Theta}}^{1}(\widehat{\mathcal{I}})^i_s-\bar{\check{\Theta}}^{1}(\widehat{\mathcal{I}})^i_s\right\}\bar{M}^{2}(\widehat{\mathcal{J}})^j\bar{K}^{i j}
=\Upsilon_{1,s}+\Upsilon_{2,s},
\end{align*}
where $\hat{\Theta}^{1}=\kappa(\delta^{1})\star\nu^{1}$ and $\check{\Theta}^{1}=\kappa(\delta^{1})1_{(E_n^{1})^c}\star\nu^{1}$. Then, by [SA6] and an argument similar to the above, we can show $\sup_{0\leq s\leq t}|\Upsilon_{2,s}|=O_p\left(k_n^2\rho_n^{(1-\beta)/2}k_nb_n^{1/2}\right)=o_p(k_n^2b_n^{1/4})$. On the other hand, note that $|\bar{\Theta}^{(1)}(\widehat{\mathcal{I}})^i_s|\lesssim\varepsilon_n^{-(\beta-1)_+}k_n\bar{r}_n\lesssim\sqrt{k_n\bar{r}_n|\log b_n|}$, an argument similar to the proof of Lemma 6.2 of \cite{Koike2012phy} yields
\begin{align*}
\sup_{0\leq s\leq t}\left|\Upsilon_{1,s}-\sum_{i,j=1}^{\infty}\bar{\hat{\Theta}}^{1}(\widehat{\mathcal{I}})^i_s\bar{M}^{2}(\widehat{\mathcal{J}})^j_s\bar{K}^{i j}_s\right|=o_p\left(k_n^2 b_n^{1/4}\right),
\end{align*}
hence Lemma 12.1 of \cite{Koike2012phy} yields $\sup_{0\leq s\leq t}|\Upsilon_{1,s}|=o_p\left(k_n^2b_n^{1/4}\right)$ since we have the conditions [A2], [K$_\beta$](v) and [SA6]. Consequently, we conclude that $(\ref{eqThetaM1})$. Similarly we can also show $(\ref{eqThetaM2})$ and $(\ref{eqThetaM3})$.
\end{proof}


\begin{proof}[\upshape{\bfseries{Proof of Theorem \ref{thmFAnoise}}}]
By a localization procedure, we may replace the conditions [K$_\beta$], [C1]-[C2] and [N$^\flat_r$] with [SF], [C1]-[C2] and [SN$^\flat_r$] respectively. Moreover, we can also replace the condition [T] with [ST] by Lemma \ref{lemthreshold}, while $(\ref{A4})$ can be replaced with $(\ref{SA4})$ due to the above argument.

We decompose the target quantity as
\begin{align}
&PTHY(\mathsf{Z}^{1}, \mathsf{Z}^{2})^n_t-PHY(\mathsf{X}^{1}, \mathsf{X}^{2})^n_t\nonumber\\
=&\frac{1}{(\psi_{HY}k_n)^2}\Biggl[\sum_{i,j:\bar{R}^\vee(i,j)\leq t}\left\{ \overline{\mathsf{Y}}^{1}(\widehat{\mathcal{I}}^i)\overline{\mathsf{Y}}^{2}(\widehat{\mathcal{J}}^j) 1_{\{|\overline{\mathsf{Y}}^{1}(\widehat{\mathcal{I}})^i|^2\leq 4\varrho_n[i] ,|\overline{\mathsf{Y}}^{2}(\widehat{\mathcal{J}})^j|^2\leq 4\varrho_n[j]\}}-\overline{\mathsf{X}}^{1}(\widehat{\mathcal{I}})^i \overline{\mathsf{X}}^{2}(\widehat{\mathcal{J}})^j\right\}\bar{K}^{i j}\nonumber\\
&\quad +\sum_{i,j:\bar{R}^\vee(i,j)\leq t} \overline{\mathsf{Y}}^{1}(\widehat{\mathcal{I}})^i\overline{\mathsf{Y}}^{2}(\widehat{\mathcal{J}})^j\bar{K}^{i j}\left(1_{\{|\overline{\mathsf{Z}}^{1}(\widehat{\mathcal{I}})^i|^2\leq\varrho_n^{1}[i] ,|\overline{\mathsf{Z}}^{2}(\widehat{\mathcal{J}})^j|^2\leq\varrho_n^{2}[j]\}}-1_{\{|\overline{\mathsf{Y}}^{1}(\widehat{\mathcal{I}})^i|^2\leq 4\varrho_n^{1}[i] ,|\overline{\mathsf{Y}}^{2}(\widehat{\mathcal{J}})^j|^2\leq 4\varrho_n^{2}[j]\}}\right)\nonumber\\
&\quad +\sum_{i,j:\bar{R}^\vee(i,j)\leq t}\left\{\bar{L}^{1}(\widehat{\mathcal{I}})^i_t\overline{\mathsf{Y}}^{2}(\widehat{\mathcal{J}})^j+\overline{\mathsf{Y}}^{1}(\widehat{\mathcal{I}})^i \bar{L}^{2}(\widehat{\mathcal{J}})^j_t+\bar{L}^{1}(\widehat{\mathcal{I}})^i_t \bar{L}^{2}(\widehat{\mathcal{J}})^j_t\right\}\bar{K}^{i j} 1_{\{|\overline{\mathsf{Z}}^{1}(\widehat{\mathcal{I}})^i|^2\leq\varrho_n^{1}[i] ,|\overline{\mathsf{Z}}^{2}(\widehat{\mathcal{J}})^j|^2\leq\varrho_n^{2}[j]\}}\Biggr]\nonumber\\
=:&\mathbb{I}_t+\mathbb{I}\mathbb{I}_t+\mathbb{I}\mathbb{I}\mathbb{I}_t+\mathbb{I}\mathbb{V}_t+\mathbb{V}_t.\label{tphy0}
\end{align}


(a) By Theorem \ref{thmFAnoise}, we have
\begin{equation}\label{tphyI}
\sup_{0\leq s\leq t}|\mathbb{I}_s|=o_p(b_n^{1/4})+O_p\left(\eta_n^{\frac{r-2}{2}}\right).
\end{equation}


(b) Next consider $\mathbb{II}$. We decompose it as
\begin{align*}
\mathbb{I}\mathbb{I}_t
&=\frac{1}{(\psi_{HY}k_n)^2}\sum_{i,j:\bar{R}^\vee(i,j)\leq t} \overline{\mathsf{Y}}^{1}(\widehat{\mathcal{I}})^i\overline{\mathsf{Y}}^{2}(\widehat{\mathcal{J}})^j\bar{K}^{i j}
\left( 1_{\{|\overline{\mathsf{Z}}^{1}(\widehat{\mathcal{I}})^i|^2\leq\varrho_n^{1}[i] ,|\overline{\mathsf{Z}}^{2}(\widehat{\mathcal{J}})^j|^2\leq\varrho_n^{2}[j]\}}1_{\{ |\overline{\mathsf{Y}}^{1}(\widehat{\mathcal{I}})^i|^2>4\varrho_n^{1}[i]\}\cup\{ |\overline{\mathsf{Y}}^{2}(\widehat{\mathcal{J}})^j|^2>4\varrho_n^{2}[j]\}}\right.\\
&\hphantom{=\frac{1}{(\psi_{HY}k_n)^2}\sum_{i,j:\bar{R}^\vee(i,j)\leq t}}\left.- 1_{\{ |\overline{\mathsf{Z}}^{1}(\widehat{\mathcal{I}})^i|^2>\varrho_n^{1}[i]\}\cup\{|\overline{\mathsf{Z}}^{2}(\widehat{\mathcal{J}})^j|^2>\varrho_n^{2}[j]\}}1_{\{ \overline{\mathsf{Y}}^{1}(\widehat{\mathcal{I}})^i|^2\leq 4\varrho_n^{1}[i] ,\overline{\mathsf{Y}}^{2}(\widehat{\mathcal{J}})^j|^2\leq 4\varrho_n^{2}[j] \}}\right)\\
&=:\mathbb{I}\mathbb{I}_{1,t}+\mathbb{I}\mathbb{I}_{2,t}.
\end{align*}


First estimate $\mathbb{II}_{1,t}$. We decompose it as
\begin{align*}
&\mathbb{I}\mathbb{I}_{1,t}\nonumber\\
=&\frac{1}{(\psi_{HY}k_n)^2}\sum_{i,j:\bar{R}^\vee(i,j)\leq t} \overline{\mathsf{Y}}^{(1)}(\widehat{\mathcal{I}})^i\overline{\mathsf{Y}}^{(2)}(\widehat{\mathcal{J}})^j\bar{K}^{i j}1_{\{|\overline{\mathsf{Z}}^{(1)}(\widehat{\mathcal{I}})^i|^2\leq\varrho_n^{(1)}[i] ,|\overline{\mathsf{Z}}^{(2)}(\widehat{\mathcal{J}})^j|^2\leq\varrho_n^{(2)}[j]\}}\\
&\hphantom{\sum_{i,j:\bar{R}^\vee(i,j)\leq t}}\times \left(1_{\{ |\overline{\mathsf{Y}}^{(1)}(\widehat{\mathcal{I}})^i|^2>4\varrho_n^{(1)}[i], |\overline{\mathsf{Y}}^{(2)}(\widehat{\mathcal{J}})^j|^2\leq 4\varrho_n^{(2)}[j]\}}+1_{\{ |\overline{\mathsf{Y}}^{(1)}(\widehat{\mathcal{I}})^i|^2\leq 4\varrho_n^{(1)}[i], |\overline{\mathsf{Y}}^{(2)}(\widehat{\mathcal{J}})^j|^2>4\varrho_n^{(2)}[j]\}}\right.\\
&\hphantom{\sum_{i,j:\bar{R}^\vee(i,j)\leq t}\times (}\left.+1_{\{ |\overline{\mathsf{Y}}^{(1)}(\widehat{\mathcal{I}})^i|^2> 4\varrho_n^{(1)}[i], |\overline{\mathsf{Y}}^{(2)}(\widehat{\mathcal{J}})^j|^2> 4\varrho_n^{(2)}[j]\}}\right)\\
=:&\mathbb{II}_{1,t}^{(1)}+\mathbb{II}_{1,t}^{(2)}+\mathbb{II}_{1,t}^{(3)}.
\end{align*}
Consider $\mathbb{II}_{1,t}^{(1)}$. The Schwarz inequality, $(\ref{sumbarK})$, [SC1] and $(\ref{window})$ yield
{\small \begin{align*}
&\sup_{0\leq s\leq t}|\mathbb{II}_{1,s}^{(1)}|\\
\leq &\frac{1}{\psi_{HY}^2 k_n}\left\{\sum_{i:\widehat{S}^{i+k_n}\leq t} |\overline{\mathsf{Y}}^{1}(\widehat{\mathcal{I}})^i|^21_{\{|\overline{\mathsf{Z}}^{1}(\widehat{\mathcal{I}})^i|^2\leq\varrho_n^{1}[i], |\overline{\mathsf{Y}}^{1}(\widehat{\mathcal{I}})^i|^2>4\varrho_n^{1}[i]\}}\right\}^{1/2}
\left\{\sum_{j:\widehat{T}^{j+k_n}\leq t} |\overline{\mathsf{Y}}^{2}(\widehat{\mathcal{J}})^j|^21_{\{ |\overline{\mathsf{Y}}^{2}(\widehat{\mathcal{J}})^j|^2\leq 4\varrho_n^{2}[j], |\overline{\mathsf{Z}}^{2}(\widehat{\mathcal{J}})^j|^2\leq\varrho_n^{2}[j]\}}\right\}^{1/2}\nonumber\\
\lesssim &\frac{(b_n^{-1}\rho_n)^{1/2}}{k_n}\left\{\sum_{i:\widehat{S}^{i+k_n}\leq t} |\overline{\mathsf{Y}}^{1}(\widehat{\mathcal{I}})^i|^21_{\{|\overline{\mathsf{Z}}^{1}(\widehat{\mathcal{I}})^i|^2\leq\varrho_n^{1}[i], |\overline{\mathsf{Y}}^{1}(\widehat{\mathcal{I}})^i|^2>4\varrho_n^{1}[i]\}}\right\}^{1/2}\nonumber\\
\lesssim &\rho_n^{1/2}\left\{\sum_{i:\widehat{S}^{i+k_n}\leq t} |\overline{\mathsf{Y}}^{1}(\widehat{\mathcal{I}})^i|^21_{\{|\overline{\mathsf{Z}}^{1}(\widehat{\mathcal{I}})^i|^2\leq\varrho_n^{1}[i], |\overline{\mathsf{Y}}^{1}(\widehat{\mathcal{I}})^i|^2>4\varrho_n^{1}[i]\}}\right\}^{1/2}.
\end{align*}}
On $\{|\overline{\mathsf{Z}}^{1}(\widehat{\mathcal{I}})^i|^2\leq\varrho_n^{1}[i], |\overline{\mathsf{Y}}^{1}(\widehat{\mathcal{I}})^i|^2>4\varrho_n^{1}[i]\}$ we have $|\bar{L}^{1}(\widehat{\mathcal{I}})^i_t|\geq|\overline{\mathsf{Y}}^{1}(\widehat{\mathcal{I}})^i|-|\overline{\mathsf{Z}}^{1}(\widehat{\mathcal{I}})^i|>\sqrt{\varrho_n^{1}[i]}$, hence we obtain
\begin{align*}
\sum_{i:\widehat{S}^{i+k_n}\leq t} |\overline{\mathsf{Y}}^{1}(\widehat{\mathcal{I}})^i|^21_{\{|\overline{\mathsf{Z}}^{1}(\widehat{\mathcal{I}})^i|^2\leq\varrho_n^{1}[i], |\overline{\mathsf{Y}}^{1}(\widehat{\mathcal{I}})^i|^2>4\varrho_n^{1}[i]\}}
\leq \sum_{i:\widehat{S}^{i+k_n}\leq t} |\overline{\mathsf{Y}}^{1}(\widehat{\mathcal{I}})^i|^21_{\{|\bar{L}^{1}(\widehat{\mathcal{I}})^i_t|^2>\varrho_n^{1}[i], |\overline{\mathsf{Y}}^{1}(\widehat{\mathcal{I}})^i|^2>4\varrho_n^{1}[i]\}},
\end{align*}
and thus Lemma \ref{lemLX} yield
\begin{equation}\label{tphyII1(1)}
\sup_{0\leq s\leq t}|\mathbb{II}_{1,s}^{(1)}|=o_p\left(\eta_n^{\frac{r-2}{4}}\rho_n^{1/2-\beta/4}\right).
\end{equation}
By symmetry we obtain
\begin{equation}\label{tphyII1(2)}
\sup_{0\leq s\leq t}|\mathbb{II}_{1,s}^{(2)}|=o_p\left(\eta_n^{\frac{r-2}{4}}\rho_n^{1/2-\beta/4}\right).
\end{equation}  
On the other hand, the Schwarz inequality and $(\ref{sumbarK})$ yield
{\small \begin{align*}
\sup_{0\leq s\leq t}|\mathbb{II}_{1,s}^{(3)}|
\leq &\frac{1}{\psi_{HY}^2 k_n}\left\{\sum_{i:\widehat{S}^{i+k_n}\leq t} |\overline{\mathsf{Y}}^{1}(\widehat{\mathcal{I}})^i|^21_{\{|\overline{\mathsf{Z}}^{1}(\widehat{\mathcal{I}})^i|^2\leq\varrho_n^{1}[i], |\overline{\mathsf{Y}}^{1}(\widehat{\mathcal{I}})^i|^2>4\varrho_n^{1}[i]\}}\right\}^{1/2}\\
&\hphantom{\frac{1}{\psi_{HY}^2 k_n}}\times\left\{\sum_{j:\widehat{T}^{j+k_n}\leq t} |\overline{\mathsf{Y}}^{2}(\widehat{\mathcal{J}})^j|^21_{\{ |\overline{\mathsf{Y}}^{2}(\widehat{\mathcal{J}})^j|^2> 4\varrho_n^{2}[j], |\overline{\mathsf{Z}}^{2}(\widehat{\mathcal{J}})^j|^2\leq\varrho_n^{2}[j]\}}\right\}^{1/2},
\end{align*}}
hence by the same arguments as the above we obtain
$\sup_{0\leq s\leq t}|\mathbb{II}_{1,s}^{(3)}|
=o_p\left(k_n^{-1}\eta_n^{\frac{r-2}{2}}\rho_n^{-\beta/2}\right),$
and thus $\beta\leq 2$, $(\ref{window})$ and $(\ref{threshold2})$ yield
\begin{equation}\label{tphyII1(3)}
\sup_{0\leq s\leq t}|\mathbb{II}_{1,s}^{(3)}|
=o_p\left(\eta_n^{\frac{r-2}{2}}\right).
\end{equation}
$(\ref{tphyII1(1)})$, $(\ref{tphyII1(2)})$ and $(\ref{tphyII1(3)})$ yield
\begin{equation}\label{tphyII1}
\sup_{0\leq s\leq t}|\mathbb{II}_{1,s}|=o_p\left(\eta_n^{\frac{r-2}{2}}\right)+o_p\left(\rho_n^{1-\beta/2}\right).
\end{equation}


Next estimate $\mathbb{II}_{2,t}$. Note that $(\ref{sumbarK})$, we obtain
{\small \begin{align*}
&\sup_{0\leq s\leq t}|\mathbb{II}_{2,t}|\nonumber\\
\lesssim &\frac{\rho_n}{k_n}\left\{\sum_{i:\widehat{S}^{i+k_n}\leq t}1_{\{|\overline{\mathsf{Z}}^{1}(\widehat{\mathcal{I}})^i|^2>\varrho_n^{1}[i], |\overline{\mathsf{Y}}^{1}(\widehat{\mathcal{I}})^i|^2\leq 4\varrho_n^{1}[i]\}}
+\sum_{j:\widehat{T}^{j+k_n}\leq t}1_{\{|\overline{\mathsf{Z}}^{2}(\widehat{\mathcal{J}})^j|^2>\varrho_n^{2}[j], |\overline{\mathsf{Y}}^{2}(\widehat{\mathcal{J}})^j|^2\leq 4\varrho_n^{2}[j]\}}\right\}\nonumber\\
\leq &\frac{\rho_n}{k_n}
\Biggl\{\sum_{i:\widehat{S}^{i+k_n}\leq t}1_{\{|\overline{\zeta}^{1}(\widehat{\mathcal{I}})^i|^2>\varrho^{1}_n[i]/4\}}
+\sum_{j:\widehat{T}^{j+k_n}\leq t}1_{\{|\overline{\zeta}^{2}(\widehat{\mathcal{J}})^j|^2>\varrho^{2}_n[j]/4\}}\\
&+\sum_{i:\widehat{S}^{i+k_n}\leq t}1_{\{|\overline{\mathsf{Z}}^{1}(\widehat{\mathcal{I}})^i|^2>\varrho_n^{1}[i], |\overline{\mathsf{Y}}^{1}(\widehat{\mathcal{I}})^i|^2\leq 4\varrho_n^{1}[i],|\overline{\zeta}^{1}(\widehat{\mathcal{I}})^i|^2\leq\varrho^{1}_n[i]/4\}}
+\sum_{j:\widehat{T}^{j+k_n}\leq t}1_{\{|\overline{\mathsf{Z}}^{2}(\widehat{\mathcal{J}})^j|^2>\varrho_n^{2}[j], |\overline{\mathsf{Y}}^{2}(\widehat{\mathcal{J}})^j|^2\leq 4\varrho_n^{2}[j],|\overline{\zeta}^{2}(\widehat{\mathcal{J}})^j|^2\leq\varrho^{2}_n[j]/4\}}\Biggr\}.
\end{align*}}
[SC1] and Lemma \ref{moment} yield
\begin{equation}\label{sumU}
\sum_{i:\widehat{S}^{i+k_n}\leq t}1_{\{|\overline{\zeta}^{1}(\widehat{\mathcal{I}})^i|^2>\varrho^{1}_n[i]/4\}}\lesssim b_n^{-1}\eta_n^{r/2},\qquad
\sum_{j:\widehat{T}^{j+k_n}\leq t}1_{\{|\overline{\zeta}^{2}(\widehat{\mathcal{J}})^j|^2>\varrho^{2}_n[j]/4\}}\lesssim b_n^{-1}\eta_n^{r/2}.
\end{equation}
On the other hand, on $\{|\overline{\mathsf{Y}}^{1}(\widehat{\mathcal{I}})^i|^2\leq 4\varrho_n^{(1)}[i],|\overline{\zeta}^{1}(\widehat{\mathcal{I}})^i|^2\leq\varrho^{1}_n[i]/4,\widehat{S}^{i+k_n}\leq t\}$ we have
\begin{align*}
|\bar{D}^{1}(\widehat{\mathcal{I}})^i_t|
\leq|\overline{\mathsf{Y}}^{1}(\widehat{\mathcal{I}})^i|+|\bar{X}^{1}(\widehat{\mathcal{I}})^i_t|
+|\overline{\zeta}^{1}(\widehat{\mathcal{I}})^i|+|\widetilde{\underline{X}^1}(\widehat{\mathcal{I}})^i_t|
\leq 5\sqrt{\varrho_n^{1}[i]}/2+2K\sqrt{2 k_n\bar{r}_n|\log b_n|}
\rightarrow 0,
\end{align*}
hence a.s. for sufficiently large $n$ we have $\bar{D}^{1}(\widehat{\mathcal{I}})^i_t=0$. Moreover, on $\{|\overline{\mathsf{Z}}^{1}(\widehat{\mathcal{I}})^i|^2>\varrho_n^{1}[i],|\overline{\zeta}^{1}(\widehat{\mathcal{I}})^i|^2\leq\varrho^{1}_n[i]/4,\bar{D}^{1}(\widehat{\mathcal{I}})^i_t=0,\widehat{S}^{i+k_n}\leq t\}$, by $(\ref{absmod2})$ we have
\begin{align*}
|\bar{L}^{1}(\widehat{\mathcal{I}})^i_t|
\geq |\overline{\mathsf{Z}}^{1}(\widehat{\mathcal{I}})^i|-|\bar{X}^{1}(\widehat{\mathcal{I}})^i_t|-|\overline{\zeta}^{1}(\widehat{\mathcal{I}})^i|-|\widetilde{\underline{X}^1}(\widehat{\mathcal{I}})^i_t|
>\sqrt{\rho_n}\left(\frac{1}{2 \sqrt{K_0}}-2K\sqrt{\frac{2 k_n\bar{r}_n|\log b_n|}{\rho_n}}\right),
\end{align*}
hence $(\ref{threshold2})$ yields $|\bar{L}^{1}(\widehat{\mathcal{I}})^i_t|>\sqrt{\rho_n/9 K_0}$. Therefore we obtain
\begin{align*}
\sum_{i:\widehat{S}^{i+k_n}\leq t}1_{\{|\overline{\mathsf{Z}}^{1}(\widehat{\mathcal{I}})^i|^2>\varrho_n^{1}[i], |\overline{\mathsf{Y}}^{1}(\widehat{\mathcal{I}})^i|^2\leq 4\varrho_n^{1}[i],|\overline{\zeta}^{1}(\widehat{\mathcal{I}})^i|^2\leq\varrho^{1}_n[i]/4\}}
\leq\sum_{i=1}^{\infty}1_{\{|\bar{L}^{1}(\widehat{\mathcal{I}})^i_t|^2>\rho_n/9 K_0\}}.
\end{align*} 
By symmetry we also obtain
\begin{align*}
\sum_{j:\widehat{T}^{j+k_n}\leq t}1_{\{|\overline{\mathsf{Z}}^{2}(\widehat{\mathcal{J}})^j|^2>\varrho_n^{2}[j], |\overline{\mathsf{Y}}^{2}(\widehat{\mathcal{J}})^j|^2\leq 4\varrho_n^{2}[j],|\overline{\zeta}^{2}(\widehat{\mathcal{J}})^j|^2\leq\varrho^{2}_n[j]/4\}}
\leq\sum_{j=1}^{\infty}1_{\{|\bar{L}^{2}(\widehat{\mathcal{J}})^j_t|^2>\rho_n/9 K_0\}}.
\end{align*}
Combining these results with Lemma \ref{barlarge} and $(\ref{window})$, we obtain
\begin{equation}\label{tphyII2}
\sup_{0\leq s\leq t}|\mathbb{II}_{2,t}|
=O_p\left(\eta_n^{\frac{r-2}{2}}\right) + o_p\left(\rho_n^{1-\beta/2}\right).
\end{equation}

By $(\ref{tphyII1})$ and $(\ref{tphyII2})$, we conclude
\begin{equation}\label{tphyII}
\sup_{0\leq s\leq t}|\mathbb{II}_{t}|
=O_p\left(\eta_n^{\frac{r-2}{2}}\right) + o_p\left(\rho_n^{1-\beta/2}\right).
\end{equation}


(c) Next consider $\mathbb{III}$. We decompose it as
\begin{align*}
\mathbb{I}\mathbb{I}\mathbb{I}_t
&=\frac{1}{(\psi_{HY}k_n)^2}\sum_{i,j:\bar{R}^\vee(i,j)\leq t}\bar{L}^{1}(\widehat{\mathcal{I}})^i_t\overline{\mathsf{Y}}^{2}(\widehat{\mathcal{J}})^j\bar{K}^{i j}1_{\{|\overline{\mathsf{Z}}^{1}(\widehat{\mathcal{I}})^i|^2\leq\varrho_n^{1}[i] ,|\overline{\mathsf{Z}}^{2}(\widehat{\mathcal{J}})^j|^2\leq\varrho_n^{2}[j]\}}\\
&\hphantom{=\sum_{i,j:S^i\vee T^j\leq t}}\times \left(1_{\{|\bar{L}^{1}(\widehat{\mathcal{I}})^i_t|^2> 4\varrho_n^{1}[i],|\bar{L}^{2}(\widehat{\mathcal{J}})^j_t|^2> 4\varrho_n^{2}[j]\}}+1_{\{ |\bar{L}^{1}(\widehat{\mathcal{I}})^i_t|^2> 4\varrho_n^{1}[i], |\bar{L}^{2}(\widehat{\mathcal{J}})^j_t|^2\leq 4\varrho_n^{2}[j]\}}\right.\\
&\hphantom{=\sum_{i,j:S^i\vee T^j\leq t}\times(}\left.+1_{\{|\bar{L}^{1}(\widehat{\mathcal{I}})^i_t|^2\leq 4\varrho_n^{1}[i], |\bar{L}^{2}(\widehat{\mathcal{J}})^j_t|^2> 4\varrho_n^{2}[j]\}}+1_{\{ |\bar{L}^{1}(\widehat{\mathcal{I}})^i_t|^2\leq 4\varrho_n^{1}[i] ,|\bar{L}^{2}(\widehat{\mathcal{J}})^j_t|^2\leq 4\varrho_n^{2}[j]\}}\right)\\
&=:\mathbb{III}_{1,t}+\mathbb{III}_{2,t}+\mathbb{III}_{3,t}+\mathbb{III}_{4,t}.
\end{align*}


First we estimate $\mathbb{III}_{1,t}$. The Schwarz inequality and $(\ref{sumbarK})$ yield
{\footnotesize \begin{align*}
&\sup_{0\leq s\leq t}|\mathbb{III}_{1,s}|\\
\leq &\frac{1}{\psi_{HY}^2 k_n}\left\{\sum_{i:\widehat{S}^{i+k_n}\leq t}|\bar{L}^{1}(\widehat{\mathcal{I}})^i_t|^21_{\{|\overline{\mathsf{Z}}^{1}(\widehat{\mathcal{I}})^i|^2\leq\varrho_n^{1}[i] ,|\bar{L}^{1}(\widehat{\mathcal{I}})^i_t|^2> 4\varrho_n^{1}[i]\}}\right\}^{1/2}
\left\{\sum_{j:\widehat{T}^{j+k_n}\leq t}|\overline{\mathsf{Y}}^{2}(\widehat{\mathcal{J}})^j|^21_{\{|\overline{\mathsf{Z}}^{2}(\widehat{\mathcal{J}})^j|^2\leq\varrho_n^{2}[j] ,|\bar{L}^{2}(\widehat{\mathcal{J}})^j_t|^2> 4\varrho_n^{2}[j]\}}\right\}^{1/2}.
\end{align*}}
On $\{|\overline{\mathsf{Z}}^{2}(\widehat{\mathcal{J}})^j|^2\leq\varrho_n^{2}[j] ,|\bar{L}^{2}(\widehat{\mathcal{J}})^j_t|^2> 4\varrho_n^{2}[j]\}$ we have $|\overline{\mathsf{Y}}^{2}(\widehat{\mathcal{J}})^j|\geq|\bar{L}^{2}(\widehat{\mathcal{J}})^j_t|-|\overline{\mathsf{Z}}^{2}(\widehat{\mathcal{J}})^j|>\sqrt{\varrho_n^{2}[j]}$, hence Lemma \ref{lemLX} and Lemma \ref{lemL2Z} imply that
\begin{equation}\label{tphyIII1}
\sup_{0\leq s\leq t}|\mathbb{III}_{1,s}|=o_p\left( k_n^{-1/2}\eta_n^{\frac{r-1}{2}}\rho_n^{-\beta/4}\right)
=o_p\left(\eta_n^{r/2}\rho_n^{1/2-\beta/4}\right).
\end{equation}


Next we estimate $\mathbb{III}_{2,t}$. On $\{|\overline{\mathsf{Z}}^{2}(\widehat{\mathcal{J}})^j|^2\leq\varrho_n^{2}[j], |\bar{L}^{2}(\widehat{\mathcal{J}})^j_t|^2\leq 4\varrho_n^{2}[j]\}$ we have $|\overline{\mathsf{Y}}^{2}(\widehat{\mathcal{J}})^j|\leq|\overline{\mathsf{Z}}^{2}(\widehat{\mathcal{J}})^j|+|\bar{L}^{2}(\widehat{\mathcal{J}})^j_t|\leq 3\sqrt{\varrho_n^{2}[j]}$, hence by $(\ref{sumbarK})$ we obtain
\begin{equation*}
\sup_{0\leq s\leq t}|\mathbb{III}_{2,s}|
\lesssim\frac{\sqrt{\rho_n}}{k_n}\sum_{i:\widehat{S}^{i+k_n}\leq t}|\bar{L}^{1}(\widehat{\mathcal{I}})^i|1_{\{|\overline{\mathsf{Z}}^{1}(\widehat{\mathcal{I}})^i|^2\leq\varrho_n^{1}[i], |\bar{L}^{1}(\widehat{\mathcal{I}})^i_t|^2> 4\varrho_n^{1}[i]\}},
\end{equation*}
and thus Lemma \ref{lemLZ} yields
\begin{equation}\label{tphyIII2}
\sup_{0\leq s\leq t}|\mathbb{III}_{2,s}|=o_p\left(\eta_n^{r/4}\rho_n^{1/2-\beta/4}\right).
\end{equation}


Next we estimate $\mathbb{III}_{3,t}$. By the Schwarz inequality and $(\ref{sumbarK})$ we have
{\small \begin{align*}
\sup_{0\leq s\leq t}|\mathbb{III}_{3,s}|
\lesssim \frac{1}{k_n}\left\{\sum_{i:\widehat{S}^{i+k_n}\leq t}|\bar{L}^{1}(\widehat{\mathcal{I}})^i_t|^21_{\{|\bar{L}^{1}(\widehat{\mathcal{I}})^i_t|^2\leq 4\varrho_n^{1}[i]\}}\right\}^{1/2}
\left\{\sum_{j:\widehat{T}^{j+k_n}\leq t}|\overline{\mathsf{Y}}^{2}(\widehat{\mathcal{J}})^j|^21_{\{|\overline{\mathsf{Z}}^{2}(\widehat{\mathcal{J}})^j|^2\leq \varrho_n^{2}[j], |\bar{L}^{2}(\widehat{\mathcal{I}})^i_t|^2> 4\varrho_n^{2}[j]\}}\right\}^{1/2}.
\end{align*}}
Note that on $\{|\overline{\mathsf{Z}}^{2}(\widehat{\mathcal{J}})^j|^2\leq\varrho_n^{2}[j], |\bar{L}^{2}(\widehat{\mathcal{J}})^j_t|^2> 4\varrho_n^{2}[j]\}$ we have $|\overline{\mathsf{Y}}^{2}(\widehat{\mathcal{J}})^j|>|\bar{L}^{2}(\widehat{\mathcal{J}})^j_t|-|\overline{\mathsf{Z}}^{2}(\widehat{\mathcal{J}})^j|>\sqrt{\varrho_n^{2}[j]}$, Lemma \ref{barsmalljumpsum}, Lemma \ref{lemLX} and $k_n^{-1}=o(\rho_n)$ yield
\begin{equation}\label{tphyIII3}
\sup_{0\leq s\leq t}|\mathbb{III}_{3,s}|
=o_p\left( k_n^{-1/2}\rho_n^{1/2-\beta/2}\eta_n^{\frac{r-2}{4}}\right)
=o_p\left( \rho_n^{1-\beta/2}\eta_n^{\frac{r-2}{4}}\right).
\end{equation}


Finally we estimate $\mathbb{I}\mathbb{I}\mathbb{I}_{4,t}$. First we specify $(\varepsilon_n)$. By Lemma \ref{epsspecify} we can choose the sequence $(\varepsilon_n)$ satisfying $(\ref{eps2})$ for $p=2$ and $(\ref{eps3})$. Next we decompose the target quantity as
\begin{align*}
\mathbb{III}_{4,t}
=&\frac{1}{(\psi_{HY}k_n)^2}\sum_{i,j:\bar{R}^\vee(i,j)\leq t}\bar{L}^{1}(\widehat{\mathcal{I}})^i_t\overline{\mathsf{Y}}^{2}(\widehat{\mathcal{J}})^j\bar{K}^{i j}1_{\{|\overline{\mathsf{Z}}^{1}(\widehat{\mathcal{I}})^i|^2\leq\varrho_n^{1}[i] ,|\overline{\mathsf{Z}}^{2}(\widehat{\mathcal{J}})^j|^2\leq\varrho_n^{2}[j], |\bar{L}^{1}(\widehat{\mathcal{I}})^i_t|^2\leq 4\varrho_n^{1}[i] ,|\bar{L}^{2}(\widehat{\mathcal{J}})^j_t|^2\leq 4\varrho_n^{2}[j]\}}\\
&\hphantom{\frac{1}{(\psi_{HY}k_n)^2}}\times \left(1_{\{ N^{2}(\bar{J}^j)_t\neq 0\}}+1_{\{ N^{2}(\bar{J}^j)_t=0, \tilde{N}^{1}(\bar{I}^i)_t\neq 0\}}+1_{\{ N^{2}(\bar{J}^j)_t=\tilde{N}^{1}(\bar{I}^i)_t=0\}}\right)\\
=:&\mathbb{III}_{4,t}^{(1)}+\mathbb{III}_{4,t}^{(2)}+\mathbb{III}_{4,t}^{(3)}.
\end{align*}
By Lemma \ref{lembasic}, we obtain $\sup_{0\leq s\leq t}|\mathbb{III}_{4,s}^{(1)}|=o_p(b_n^{1/4})$. Moreover, on $\{|\overline{\mathsf{Z}}^{2}(\widehat{\mathcal{J}})^j|^2\leq \varrho_n^{2}[j], |\bar{L}^{2}(\widehat{\mathcal{J}})^j|^2\leq 4\varrho_n^{2}[j]\}$ we have $|\overline{\mathsf{Y}}^{2}(\widehat{\mathcal{J}})^j|\leq|\bar{L}^{2}(\widehat{\mathcal{J}})^j|+|\overline{\mathsf{Z}}^{2}(\widehat{\mathcal{J}})^j|\leq 3\sqrt{\varrho_n^{2}[j]}$, and thus by $(\ref{sumbarK})$, Lemma \ref{barhatN*} and $(\ref{eps3})$ we have
\begin{equation*}
\sup_{0\leq s\leq t}|\mathbb{III}_{4,s}^{(2)}|\lesssim k_n^{-1}\rho_n\sum_{i=1}^{\infty}1_{\{\tilde{N}^{1}(\bar{I}^i)\neq 0\}}
=o_p\left(\rho_n^{1-\beta/2}\right).
\end{equation*}

On the other hand, since
\begin{align*}
\mathbb{III}_{4,t}^{(3)}
=&\frac{1}{(\psi_{HY}k_n)^2}\sum_{i,j:\bar{R}^\vee(i,j)\leq t}\bar{L}^{1}(\widehat{\mathcal{I}})^i_t\overline{\mathsf{X}}^{2}(\widehat{\mathcal{J}})^j\bar{K}^{i j}1_{\{ N^{2}(\bar{J}^j)_t=\tilde{N}^{1}(\bar{I}^i)_t=0\}}\\
&\hphantom{\frac{1}{(\psi_{HY}k_n)^2}}\times 1_{\{|\overline{\mathsf{Z}}^{1}(\widehat{\mathcal{I}})^i|^2\leq\varrho_n^{1}[i] ,|\overline{\mathsf{Z}}^{2}(\widehat{\mathcal{J}})^j|^2\leq\varrho_n^{2}[j], |\bar{L}^{1}(\widehat{\mathcal{I}})^i_t|^2\leq 4\varrho_n^{1}[i] ,|\bar{L}^{2}(\widehat{\mathcal{J}})^j_t|^2\leq 4\varrho_n^{2}[j]\}},
\end{align*}
we can decompose it as
\begin{align*}
\mathbb{III}_{4,t}^{(1)}
&=\frac{1}{(\psi_{HY}k_n)^2}\sum_{i,j:\bar{R}^\vee(i,j)\leq t}\bar{L}^{1}(\widehat{\mathcal{I}})^i_t\left[\left\{\bar{A}^{2}(\widehat{\mathcal{J}})^j_t+\widetilde{\underline{A}^2}(\widehat{\mathcal{J}})^j_t\right\}+\bar{M}^{2}(\widehat{\mathcal{J}})^j_t
+\widetilde{\underline{M}^2}(\widehat{\mathcal{J}})^j_t+\overline{\zeta}^{2}(\widehat{\mathcal{J}})^j\right]\bar{K}^{i j}\\
&\hphantom{=\frac{1}{(\psi_{HY}k_n)^2}}\times 1_{\{N^{2}(\bar{J}^j)_t=\tilde{N}^{1}(\bar{I}^i)_t=0,|\overline{\mathsf{Z}}^{1}(\widehat{\mathcal{I}})^i|^2\leq\varrho_n^{1}[i] ,|\overline{\mathsf{Z}}^{2}(\widehat{\mathcal{J}})^j|^2\leq\varrho_n^{2}[j], |\bar{L}^{1}(\widehat{\mathcal{I}})^i_t|^2\leq 4\varrho_n^{1}[i] ,|\bar{L}^{2}(\widehat{\mathcal{J}})^j_t|^2\leq 4\varrho_n^{2}[j]\}}\\
&=:\Gamma_{1,t}+\Gamma_{2,t}+\Gamma_{3,t}+\Gamma_{4,t}.
\end{align*}


First consider $\Gamma_{1,t}$. By the Schwarz inequality and $(\ref{sumbarK})$, we have
\begin{align*}
\sup_{0\leq s\leq t}|\Gamma_{1,s}|
\lesssim&\frac{1}{k_n}\left\{\sum_{i:\widehat{S}^{i+k_n}\leq t}|\bar{L}^{1}(\widehat{\mathcal{I}})^i|^21_{\{|\bar{L}^{1}(\widehat{\mathcal{I}})^i|^2\leq 4\varrho_n^{1}[i]\}}\right\}^{1/2}
\left\{\sum_{j:\widehat{T}^{j+k_n}\leq t}|\bar{A}^{2}(\widehat{\mathcal{J}})^j+\widetilde{\underline{A}^2}(\widehat{\mathcal{J}})^j|^2\right\}^{1/2},
\end{align*}
hence Lemma \ref{matanian}, the boundedness of $(A^{2})'$ and $(\underline{A}^{2})'$, the Lipschitz continuity of $g$ and [SA6] yield
\begin{equation}\label{tphygamma1}
\sup_{0\leq s\leq t}|\Gamma_{1,s}|=o_p\left( k_n^{-1/2}\rho_n^{1/2-\beta/4}\right)= o_p\left( b_n^{1/4}\right).
\end{equation}


Next consider $\Gamma_{2,t}$. Since $\bar{L}^{1}(\widehat{\mathcal{I}})^i_t=\bar{\mathfrak{L}}^{1}(\widehat{\mathcal{I}})^i_t=\bar{\Xi}^{1}(\widehat{\mathcal{I}})^i_t+\bar{\Theta}^{1}(\widehat{\mathcal{I}})^i_t$ on $\{\tilde{N}^{2}(\bar{J}^j)_t=0\}$, we can decompose the target quantity as
\begin{align*}
\Gamma_{2,t}
&=\frac{1}{(\psi_{HY}k_n)^2}\sum_{i,j:\bar{R}^\vee(i,j)\leq t}\left\{\bar{\Xi}^{1}(\widehat{\mathcal{I}})^i_t\bar{M}^{2}(\widehat{\mathcal{J}})^j_t+\bar{\Theta}^{1}(\widehat{\mathcal{I}})^i_t\bar{M}^{2}(\widehat{\mathcal{J}})^j_t\right\}\bar{K}^{i j}\\
&\hphantom{=}\times 1_{\{ N^{2}(\bar{J}^j)_t=\tilde{N}^{2}(\bar{J}^j)_t=0, |\overline{\mathsf{Z}}^{1}(\widehat{\mathcal{I}})^i|^2\leq\varrho_n^{1}[i] ,|\overline{\mathsf{Z}}^{2}(\widehat{\mathcal{J}})^j|^2\leq\varrho_n^{2}[j], |\bar{L}^{1}(\widehat{\mathcal{I}})^i_t|^2\leq 4\varrho_n^{1}[i] ,|\bar{L}^{2}(\widehat{\mathcal{J}})^j_t|^2\leq 4\varrho_n^{2}[j]\}}\\
&=:\Gamma_{2,t}^{(1)}+\Gamma_{2,t}^{(2)}.
\end{align*}
First estimate $\Gamma_{2,t}^{(1)}$. We decompose it further as
\begin{align*}
\Gamma_{2,t}^{(1)}
&=\frac{1}{(\psi_{HY}k_n)^2}\sum_{i,j:\bar{R}^\vee(i,j)\leq t}\bar{\Xi}^{1}(\widehat{\mathcal{I}})^i_t\bar{M}^{2}(\widehat{\mathcal{J}})^j_t\bar{K}^{i j}\\
&\hphantom{=}\times \left(1-1_{\{ N^{2}(\bar{J}^j)_t\neq 0\}\cup\{\tilde{N}^{2}(\bar{J}^j)_t\neq 0\}\cup\{|\overline{\mathsf{Z}}^{1}(\widehat{\mathcal{I}})^i|^2>\varrho_n^{1}[i]\}\cup\{|\bar{L}^{1}(\widehat{\mathcal{I}})^i_t|^2> 4\varrho_n^{1}[i]\}\cup\{|\overline{\mathsf{Z}}^{2}(\widehat{\mathcal{J}})^j|^2>\varrho_n^{2}[j]\}\cup\{|\bar{L}^{2}(\widehat{\mathcal{J}})^j_t|^2> 4\varrho_n^{2}[j]\}\}}\right)\\
&=:\Gamma_{2,t}^{(1)\prime}+\Gamma_{2,t}^{(1)\prime\prime}.
\end{align*}
By Lemma \ref{XiM} we have $\sup_{0\leq s\leq t}|\Gamma_{2,s}^{(1)\prime}|=o_p(b_n^{1/4})$. On the other hand, $(\ref{estXiM})$, the Schwarz inequality, $(\ref{sumbarK})$, Lemma \ref{barhatN*}, Lemma \ref{barlarge}, $(\ref{sumU})$ and $(\ref{eps3})$ yield
\begin{align*}
\sup_{0\leq s\leq t}|\Gamma_{2,s}^{(1)\prime\prime}|
=&o_p\left( k_n^{-1}\varphi_2(\varepsilon_n)^{1/2}k_n^{1/2}\rho_n^{-\beta/4}\right) 
+ O_p\left( k_n^{-1}\varphi_2(\varepsilon_n)^{1/2}b_n^{-1/2}\eta_n^{r/4}\right).
\end{align*}
Since we have 
\begin{equation}\label{eps4}
\varphi_2(\varepsilon_n)\leq\varepsilon_n^{-\beta}\varepsilon_n^{2}\varphi_{\beta}(\varepsilon_n)\leq\rho_n^2\varepsilon_n^{-2-\beta}=o\left(\rho_n^{1-\beta/2}\right)
\end{equation}
due to $(\ref{eps2})$ for $p=2$ and $(\ref{eps3})$, note that $b_n=o(\rho_n^2)$, we obtain
$\sup_{0\leq s\leq t}|\Gamma_{2,s}^{(1)\prime\prime}|
=o_p\left( b_n^{1/4}\rho_n^{1/2-\beta/2}\right) 
+ o_p\left( \rho_n^{1/2-\beta/4}\eta_n^{r/4}\right)
=o_p\left( \rho_n^{1-\beta/2}\right) 
+ o_p\left( \eta_n^{r/2}\right).$
Consequently, we conclude that
\begin{equation}\label{tphygamma2(1)}
\sup_{0\leq s\leq t}|\Gamma_{2,s}^{(1)}|
=o_p\left( b_n^{1/4}\right) 
+ o_p\left( \rho_n^{1-\beta/2}\right) 
+ o_p\left( \eta_n^{r/2}\right).
\end{equation}
Next estimate $\Gamma_{2,t}^{(2)}$. We decompose it further as
\begin{align*}
\Gamma_{2,t}^{(2)}
&=\frac{1}{(\psi_{HY}k_n)^2}\sum_{i,j:\bar{R}^\vee(i,j)\leq t}\bar{\Theta}^{1}(\widehat{\mathcal{I}})^i_t\bar{M}^{2}(\widehat{\mathcal{J}})^j_t\bar{K}^{i j}\\
&\hphantom{=}\times \left(1-1_{\{ N^{2}(\bar{J}^j)_t\neq 0\}\cup\{\tilde{N}^{2}(\bar{J}^j)_t\neq 0\}\cup\{|\overline{\mathsf{Z}}^{1}(\widehat{\mathcal{I}})^i|^2>\varrho_n^{1}[i]\}\cup\{|\bar{L}^{1}(\widehat{\mathcal{I}})^i_t|^2> 4\varrho_n^{1}[i]\}\cup\{|\overline{\mathsf{Z}}^{2}(\widehat{\mathcal{J}})^j|^2>\varrho_n^{2}[j]\}\cup\{|\bar{L}^{2}(\widehat{\mathcal{J}})^j_t|^2> 4\varrho_n^{2}[j]\}\}}\right)\\
&=:\Gamma_{2,t}^{(2)\prime}+\Gamma_{2,t}^{(2)\prime\prime}.
\end{align*}
Lemma \ref{ThetaM} yields $\sup_{0\leq s\leq t}|\Gamma_{2,s}^{(3)\prime}|=o_p\left( b_n^{1/4}\right)+o_p\left( \rho_n^{1-\beta/2}\right)$. On the other hand, since $\bar{\Theta}^{1}(\widehat{\mathcal{I}})^i_t\bar{M}^{2}(\widehat{\mathcal{J}})^j_t\bar{K}^{ij}_t=\bar{K}^{ij}_-\bar{\Theta}^{1}(\widehat{\mathcal{I}})^i_-\bullet\bar{M}^{2}(\widehat{\mathcal{J}})^j_t+\bar{K}^{ij}_-\bar{M}^{2}(\widehat{\mathcal{J}})^j_-\bullet\bar{\Theta}^{1}(\widehat{\mathcal{I}})^i_t$ by integration by parts and Lemma 4.3 of \cite{Koike2012phy}, we have
\begin{align*}
E\left[|\bar{\Theta}^{1}(\widehat{\mathcal{I}})^i_t\bar{M}^{2}(\widehat{\mathcal{J}})^j_t\bar{K}^{i j}_t|^2\right]
\lesssim k_n^2\bar{r}_n\varepsilon_n^{-2(\beta-1)_+}E\left[\sum_{p=1}^{k_n-1}|\widehat{I}^{i+p}(t)|^2\bar{K}^{ij}_t\right]
\end{align*}
by [A2], the optional sampling theorem and the inequality  $|\Theta^{1}(\widehat{I}^p)_t|^2\lesssim\varepsilon_n^{-2(\beta-1)_+}|\widehat{I}^p(t)|^2$ and $(\ref{absmod2})$. Therefore, $(\ref{sumbarK})$ and [SA6] yield
\begin{align*}
E\left[\sum_{i,j:\bar{R}^\vee(i,j)\leq t}|\bar{\Theta}^{1}(\widehat{\mathcal{I}})^i_t\bar{M}^{2}(\widehat{\mathcal{J}})^j_t\bar{K}^{i j}|^2\right]
\lesssim k_n^4\bar{r}_n\varepsilon_n^{-2(\beta-1)_+}b_n\lesssim k_n^2\bar{r}_n\varepsilon_n^{-2(\beta-1)_+}
\end{align*}
and thus an argument similar to the above yields
\begin{align*}
\sup_{0\leq s\leq t}|\Gamma_{2,s}^{(2)\prime\prime}|
=&o_p\left( k_n^{-1/2}\bar{r}_n^{1/2}\varepsilon_n^{-(\beta-1)_+}k_n^{1/2}\rho_n^{-\beta/4}\right) 
+ O_p\left( k_n^{-1/2}\bar{r}_n^{1/2}\varepsilon_n^{-(\beta-1)_+}b_n^{-1/2}\eta_n^{r/4}\right)\\
=&o_p\left(\rho_n^{1-\beta/2}\right)+o_p\left(\eta_n^{r/2}\right).
\end{align*}
Consequently, we conclude that 
$\sup_{0\leq s\leq t}|\Gamma_{2,s}^{(2)}|=o_p\left( b_n^{1/4}\right) +o_p\left(\eta_n^{r/2}\right) + o_p\left( \rho_n^{1-\beta/2}\right).$
Combining this result with $(\ref{tphygamma2(1)})$, we conclude
\begin{equation}\label{tphygamma2}
\sup_{0\leq s\leq t}|\Gamma_{2,s}|=o_p\left( b_n^{1/4}\right) +o_p\left(\eta_n^{r/2}\right) + o_p\left( \rho_n^{1-\beta/2}\right).
\end{equation}
Similarly we can also show that
\begin{equation}\label{tphygamma3}
\sup_{0\leq s\leq t}|\Gamma_{3,s}|=o_p\left( b_n^{1/4}\right) +o_p\left(\eta_n^{r/2}\right) + o_p\left( \rho_n^{1-\beta/2}\right).
\end{equation}


Now we deal with $\Gamma_{4,t}$. Since $\bar{L}^{1}(\widehat{\mathcal{I}})^i_t=\bar{\mathfrak{L}}^{1}(\widehat{\mathcal{I}})^i_t$ on $\{\tilde{N}^{2}(\bar{J}^j)_t=0\}$, we can decompose the target quantity as
\begin{align*}
\Gamma_{4,t}
&=\frac{1}{(\psi_{HY}k_n)^2}\sum_{i,j:\bar{R}^\vee(i,j)\leq t}\bar{\mathfrak{L}}^{1}(\widehat{\mathcal{I}})^i_t\overline{\zeta}^{2}(\widehat{\mathcal{J}})^j\bar{K}^{i j}\\
&\hphantom{=}\times \left(1-1_{\{ N^{2}(\bar{J}^j)_t\neq 0\}\cup\{\tilde{N}^{2}(\bar{J}^j)_t\neq 0\}\cup\{|\overline{\mathsf{Z}}^{1}(\widehat{\mathcal{I}})^i|^2>\varrho_n^{1}[i]\}\cup\{|\bar{L}^{1}(\widehat{\mathcal{I}})^i_t|^2> 4\varrho_n^{1}[i]\}\cup\{|\overline{\mathsf{Z}}^{2}(\widehat{\mathcal{J}})^j|^2>\varrho_n^{2}[j]\}\cup\{|\bar{L}^{2}(\widehat{\mathcal{J}})^j_t|^2> 4\varrho_n^{2}[j]\}\}}\right)\\
&=:\Gamma_{4,t}^{(1)}+\Gamma_{4,t}^{(2)}.
\end{align*}
By Lemma \ref{XiM} and Lemma \ref{ThetaM} we have $\sup_{0\leq s\leq t}|\Gamma_{4,s}^{(1)}|=o_p(b_n^{1/4})+o_p(\rho_n^{1-\beta/2})$. On the other hand, by the Lipschitz continuity of $g$, $(\ref{sumbarK})$ and [SN$^\flat_2$], we have
\begin{equation*}
E_{0}\left[\sum_{i,j:\bar{R}^\vee(i,j)\leq t}|\bar{\mathfrak{L}}^{1}(\widehat{\mathcal{I}})^i_t|^2|\overline{\zeta}^{2}(\widehat{\mathcal{J}})^j|^2\bar{K}^{i j}\right]
\lesssim\sum_{i=1}^{\infty}|\bar{\mathfrak{L}}^{(1)}(\widehat{\mathcal{I}})^i_t|^2,
\end{equation*}
hence the Schwarz inequality, $(\ref{sumbarK})$, Lemma \ref{barhatN*}, Lemma \ref{barlarge}, $(\ref{sumU})$ and $(\ref{eps3})$ yield
\begin{align*}
\sup_{0\leq s\leq t}|\Gamma_{4,s}^{(2)}|
=&o_p\left( k_n^{-1}\varepsilon_n^{1-\beta/2}\varphi_\beta(\varepsilon_n)^{1/2}k_n^{1/2}\rho_n^{-\beta/4}\right) 
+ O_p\left( k_n^{-1}\varepsilon_n^{1-\beta/2}\varphi_\beta(\varepsilon_n)^{1/2}b_n^{-1/2}\eta_n^{r/4}\right).
\end{align*}
Since we have $(\ref{eps4})$ due to $(\ref{eps2})$ for $p=2$ and $(\ref{eps3})$, note that $b_n=o(\rho_n^2)$, we obtain
\begin{align*}
\sup_{0\leq s\leq t}|\Gamma_{4,s}^{(2)}|
=&o_p\left( b_n^{1/4}\rho_n^{1/2-\beta/2}\right) 
+ o_p\left( \rho_n^{1/2-\beta/4}\eta_n^{r/4}\right)
=o_p\left( \rho_n^{1-\beta/2}\right) 
+ o_p\left( \eta_n^{r/2}\right).
\end{align*}
Consequently, we obtain
\begin{equation}\label{tphygamma4}
\sup_{0\leq s\leq t}|\Gamma_{4,s}|
=o_p\left( b_n^{1/4}\right) +o_p\left( \rho_n^{1-\beta/2}\right) 
+ o_p\left( \eta_n^{r/2}\right).
\end{equation}


By $(\ref{tphygamma1})$, $(\ref{tphygamma2})$ and $(\ref{tphygamma3})$, we obtain
$\sup_{0\leq s\leq t}|\mathbb{III}_{4,s}^{(3)}|=o_p\left( b_n^{1/4}\right) +o_p\left( \rho_n^{1-\beta/2}\right) 
+ o_p\left( \eta_n^{r/2}\right).$
Consequently, we obtain
\begin{equation}\label{tphyIII4}
\sup_{0\leq s\leq t}|\mathbb{III}_{4,s}|
=o_p\left( b_n^{1/4}\right) +o_p\left( \rho_n^{1-\beta/2}\right) 
+ o_p\left( \eta_n^{r/2}\right).
\end{equation}

Note that $\eta_n=o(1)$ and $\beta\geq 0$, $(\ref{tphyIII1})$, $(\ref{tphyIII2})$, $(\ref{tphyIII3})$ and $(\ref{tphyIII4})$ yield
\begin{equation}\label{tphyIII}
\sup_{0\leq s\leq t}|\mathbb{III}_{s}|
=o_p\left( b_n^{1/4}\right) + o_p\left(\eta_n^{r/2}\right) + o_p\left(\rho_n^{1-\beta/2}\right).
\end{equation}


(d) By symmetry, we obtain
\begin{equation}\label{tphyIV}
\sup_{0\leq s\leq t}|\mathbb{IV}_{s}|
=o_p\left( b_n^{1/4}\right) + o_p\left(\eta_n^{r/2}\right) + o_p\left(\rho_n^{1-\beta/2}\right).
\end{equation}


(e) Finally we consider $\mathbb{V}$. We decompose it as
\begin{align*}
\mathbb{V}_t
&=\frac{1}{(\psi_{HY}k_n)^2}\sum_{i,j:\bar{R}^\vee(i,j)\leq t}\bar{L}^{1}(\widehat{\mathcal{I}})^i_t\bar{L}^{2}(\widehat{\mathcal{J}})^j_t\bar{K}^{i j}1_{\{\overline{\mathsf{Z}}^{1}(\widehat{\mathcal{I}})^i|^2\leq\varrho_n^{1}[i] ,\overline{\mathsf{Z}}^{2}(\widehat{\mathcal{J}})^j|^2\leq\varrho_n^{2}[j]\}}\\
&\hphantom{=\sum_{i,j:S^i\vee T^j\leq t}}\times \left(1_{\{\bar{L}^{1}(\widehat{\mathcal{I}})^i_t|^2> 4\varrho_n^{1}[i],\bar{L}^{2}(\widehat{\mathcal{J}})^j_t|^2> 4\varrho_n^{2}[j]\}}+1_{\{ \bar{L}^{1}(\widehat{\mathcal{I}})^i_t|^2> 4\varrho_n^{1}[i], \bar{L}^{2}(\widehat{\mathcal{J}})^j_t|^2\leq 4\varrho_n^{2}[j]\}}\right.\\
&\hphantom{=\sum_{i,j:S^i\vee T^j\leq t}\times(}\left.+1_{\{\bar{L}^{1}(\widehat{\mathcal{I}})^i_t|^2\leq 4\varrho_n^{1}[i], \bar{L}^{2}(\widehat{\mathcal{J}})^j_t|^2> 4\varrho_n^{2}[j]\}}+1_{\{ \bar{L}^{1}(\widehat{\mathcal{I}})^i_t|^2\leq 4\varrho_n^{1}[i] ,\bar{L}^{2}(\widehat{\mathcal{J}})^j_t|^2\leq 4\varrho_n^{2}[j]\}}\right)\\
&=:\mathbb{V}_{1,t}+\mathbb{V}_{2,t}+\mathbb{V}_{3,t}+\mathbb{V}_{4,t}.
\end{align*}


By the Schwarz inequality, $(\ref{sumbarK})$ and Lemma \ref{lemL2Z} we have
$\sup_{0\leq s\leq t}|\mathbb{V}_{1,s}|=O_p\left(\eta_n^{\frac{r}{2}}\right).$
Moreover, by $(\ref{sumbarK})$ and Lemma \ref{lemLZ} we have
$\sup_{0\leq s\leq t}|\mathbb{V}_{2,s}|
=o_p\left(\eta_n^{r/4}\rho_n^{1/2-\beta/4}\right)$ and
$\sup_{0\leq s\leq t}|\mathbb{V}_{3,s}|
=o_p\left(\eta_n^{r/4}\rho_n^{1/2-\beta/4}\right).$
Furthermore, the Schwarz inequality, $(\ref{sumbarK})$ and Lemma \ref{matanian} yield
$\sup_{0\leq s\leq t}|\mathbb{V}_{4,s}|=o_p\left(\rho_n^{1-\beta/2}\right).$
Consequently, we obtain
\begin{equation}\label{tphyV}
\sup_{0\leq s\leq t}|\mathbb{V}_{s}|=O_p\left(\eta_n^{r/2}\right) + o_p\left(\rho_n^{1-\beta/2}\right).
\end{equation}


Note that $\eta_n=o(1)$, $(\ref{tphy0})$, $(\ref{tphyI})$, $(\ref{tphyII})$, $(\ref{tphyIII})$, $(\ref{tphyIV})$ and $(\ref{tphyV})$ yield
\begin{equation*}
\sup_{0\leq s\leq t}|PTHY(\mathsf{Z}^{1}, \mathsf{Z}^{2})^n_s-PHY(\mathsf{X}^{1}, \mathsf{X}^{2})^n_s|
=o_p\left( b_n^{1/4}\right) + O_p\left(\eta_n^{\frac{r-2}{2}}\right) + o_p\left(\rho_n^{1-\beta/2}\right).
\end{equation*}  
Since $\eta_n=(k_n\rho_n)^{-1}=O(b_n^{1/2}\rho_n^{-1})$, we complete the proof of Theorem \ref{thmIAnoise}.
\end{proof}

\section{Proof of Proposition \ref{propphy}}\label{proofpropphy}

By a localization procedure, we may assume that [SC1]-[SC2], [SA4], [SA6] and [SN$^\flat_2$] hold. In a similar manner we may also assume that $E^{1}=E^{2}=:E$ and that there is a non-negative bounded measurable function $\psi$ on $E$ such that
\begin{equation*}
\sup_{\omega\in\Omega ,t\in\mathbb{R}_+}|\delta^{l}(\omega, t,x)|\leq\psi(x)\quad\textrm{and}\quad\int_E \psi(x)^{2}F^{l}(\mathrm{d}x)<\infty ,\qquad l=1,2.
\end{equation*}


Under the above assumption we can define the process $L^{\prime l}$ by $L^{\prime l}=\delta^l\star(\mu^l-\nu^l)$ for each $l=1,2$. Then, for each $l=1,2$ we have $Z^l=X^{\prime l}+L^{\prime l}$, where $X^{\prime l}_t=X^l_t-\int_0^t\int_E\kappa'(\delta^l(s,x))\mathrm{d}sF(\mathrm{d}x)$. Hence we can decompose the target quantity as
\begin{align*}
&PHY(\mathsf{Z}^{1},\mathsf{Z}^{2})^n_t\\
=&\frac{1}{(\psi_{HY}k_n)^2}\sum_{i,j: \bar{R}^\vee(i,j)\leq t}\left\{\overline{\mathsf{X}}^{\prime 1}(\widehat{\mathcal{I}})^i\overline{\mathsf{X}}^{\prime 2}(\widehat{\mathcal{J}})^j+\overline{\mathsf{X}}^{\prime 1}(\widehat{\mathcal{I}})^i\bar{L}^{\prime 2}(\widehat{\mathcal{J}})^j_t+\bar{L}^{\prime 1}(\widehat{\mathcal{I}})^i_t\overline{\mathsf{X}}^{\prime 2}(\widehat{\mathcal{J}})^j+\bar{L}^{\prime 1}(\widehat{\mathcal{I}})^i_t\bar{L}^{\prime 2}(\widehat{\mathcal{J}})^j_t\right\} \bar{K}^{i j}
\\
=:&\mathbb{I}_t+\mathbb{II}_t+\mathbb{III}_t+\mathbb{IV}_t,
\end{align*}
where
$\overline{\mathsf{X}}^{\prime1}(\widehat{\mathcal{I}})^i=\sum_{p=1}^{k_n-1}g\left (\frac{p}{k_n}\right)\left(\mathsf{X}^{\prime1}_{\widehat{S}^{i+p}}-\mathsf{X}^{\prime1}_{\widehat{S}^{i+p-1}}\right),$
$\overline{\mathsf{X}}^{\prime2}(\widehat{\mathcal{J}})^j=\sum_{q=1}^{k_n-1}g\left (\frac{q}{k_n}\right)\left(\mathsf{X}^{\prime2}_{\widehat{T}^{j+q}}-\mathsf{X}^{\prime2}_{\widehat{T}^{j+q-1}}\right)$
and $\mathsf{X}^{\prime1}_{\widehat{S}^{i}}=X^{\prime1}_{\widehat{S}^i}+U^1_{\widehat{S}^i}$, $\mathsf{X}^{\prime2}_{\widehat{T}^{j}}=X^{\prime2}_{\widehat{T}^j}+U^2_{\widehat{T}^j}$ for every $i,j$.

First, we can adopt an argument similar to the proof of Lemma \ref{XiM} for the proof of $\mathbb{II}_t=O_p(b_n^{1/4})$ and $\mathbb{III}_t=O_p(b_n^{1/4})$. Next, combining Lemma 4.2 of \cite{Koike2012phy} with an argument similar to the proof of Lemma \ref{XiM} we can show that $\mathbb{I}_t=[X^{\prime1},X^{\prime2}]_t+O_p(b_n^{1/4})$. Finally we consider $\mathbb{IV}_t$. By integration by parts we can decompose it as
\begin{align*}
\mathbb{IV}_t&=
\frac{1}{(\psi_{HY}k_n)^2}\sum_{i,j: \bar{R}^\vee(i,j)\leq t}\left\{\bar{L}^{\prime 1}(\widehat{\mathcal{I}})^i_-\bullet\bar{L}^{\prime 2}(\widehat{\mathcal{J}})^j_t+\bullet\bar{L}^{\prime 2}(\widehat{\mathcal{J}})^j_-\bullet\bar{L}^{\prime 1}(\widehat{\mathcal{I}})^i_t+[\bar{L}^{\prime 1}(\widehat{\mathcal{I}})^i,\bar{L}^{\prime 2}(\widehat{\mathcal{J}})^j]_t\right\}\bar{K}^{i j}\\
&=:\mathbb{IV}^{(1)}_t+\mathbb{IV}^{(4)}_t+\mathbb{IV}^{(3)}_t.
\end{align*}
By an argument similar to the proof of Lemma \ref{XiM} we can show that $\mathbb{IV}^{(1)}_t=O_p(b_n^{1/4})$ and $\mathbb{IV}^{(2)}_t=O_p(b_n^{1/4})$. On the other hand, since
\begin{align*}
[\bar{L}^{\prime1}(\widehat{\mathcal{I}})^i,\bar{L}^{\prime2}(\widehat{\mathcal{J}})^j]_t
=\sum_{p,q=0}^{k_n-1}g^n_pg^n_q(\widehat{I}^{i+p}_-\widehat{J}^{j+q}_-)\bullet[L^{\prime1},L^{\prime2}]_t
=\sum_{p=i}^{i+k_n-1}\sum_{q=i}^{j+k_n-1}g^n_{p-i}g^n_{q-j}(\widehat{I}^{p}_-\widehat{J}^{q}_-)\bullet[L^{\prime1},L^{\prime2}]_t,
\end{align*}
we obtain
\begin{align*}
\mathbb{IV}_t^{(3)}=\frac{1}{(\psi_{HY}k_n)^2}
\sum_{p,q=1}^{\infty}\left(\sum_{i=(p-k_n+1)\vee1}^p\sum_{j=(q-k_n+1)\vee1}^q g^n_{p-i}g^n_{q-j}\bar{K}^{ij}1_{\{\bar{R}^\vee(i,j)\leq t\}}\right)(\widehat{I}^{p}_-\widehat{J}^{q}_-)\bullet[L^{\prime1},L^{\prime2}]_t.
\end{align*}
Moreover, on $\{\widehat{I}^p\cap\widehat{J}^q\neq\emptyset\}$ we have $\widehat{S}^{p-1}<\widehat{T}^q$ and $\widehat{T}^{q-1}<\widehat{S}^p$, hence for $i\in\{(p-k_n+1)\vee1,\dots,p-1\}$ and $j\in\{(q-k_n+1)\vee1,\dots,q-1\}$ we have $\widehat{S}^i<\widehat{T}^{j+k_n-1}$ and $\widehat{T}^j<\widehat{S}^{i+k_n-1}$, so that $\bar{K}^{ij}=1$. Therefore, for $p,q\geq k_n$ we have
$\sum_{i=(p-k_n+1)\vee1}^p\sum_{j=(q-k_n+1)\vee1}^q g^n_{p-i}g^n_{q-j}\bar{K}^{ij}_t
=\left(\sum_{i=1}^{k_n-1}g_i^n\right)^2$
on $\{\widehat{I}^p\cap\widehat{J}^q\neq\emptyset\}$ because $g(0)=0$. Since $(\widehat{I}^{p}_-\widehat{J}^{q}_-)\bullet[L^{\prime1},L^{\prime2}]_t=1_{\{\widehat{I}^p\cap\widehat{J}^q\neq\emptyset\}}(\widehat{I}^{p}_-\widehat{J}^{q}_-)\bullet[L^{\prime1},L^{\prime2}]_t$ and $\frac{1}{(\psi_{HY}k_n)^2}\left(\sum_{i=1}^{k_n-1}g_i^n\right)^2=1+O(k_n^{-1})$ by the Lipschitz continuity of $g$, we obtain
\begin{align*}
[L^{\prime1},L^{\prime2}]_t
&=\frac{1}{(\psi_{HY}k_n)^2}\sum_{p,q=1}^\infty\left(\sum_{i=1}^{k_n-1}g_i^n\right)^2(\widehat{I}^{p}_-\widehat{J}^{q}_-)\bullet[L^{\prime1},L^{\prime2}]_t+O_p\left(k_n^{-1}\right)\\
&=\frac{1}{(\psi_{HY}k_n)^2}\sum_{p,q=1}^\infty\left(\sum_{i=(p-k_n+1)\vee1}^p\sum_{j=(q-k_n+1)\vee1}^q g^n_{p-i}g^n_{q-j}\bar{K}^{ij}_t\right)(\widehat{I}^{p}_-\widehat{J}^{q}_-)\bullet[L^{\prime1},L^{\prime2}]_t+O_p\left(k_n^{-1}\right)\\
&=:\mathbf{IV}^{(3)}_t+O_p\left(k_n^{-1}\right).
\end{align*}
Since we have
\begin{align*}
&\mathbb{IV}_t^{(3)}-\mathbf{IV}_t^{(3)}\\
=&\frac{1}{(\psi_{HY}k_n)^2}\sum_{p,q=1}^{\infty}\left(\sum_{i=(p-k_n+1)\vee1}^p\sum_{j=(q-k_n+1)\vee1}^q g^n_{p-i}g^n_{q-j}\bar{K}^{ij}_t1_{\{\bar{R}^\vee(i,j)> t\}}\right)(\widehat{I}^{p}_-\widehat{J}^{q}_-)\bullet[L^{\prime1},L^{\prime2}]_t+O_p(b_n^{1/2})\\
=&\frac{1}{(\psi_{HY}k_n)^2}\sum_{i,j=0}^\infty\bar{K}^{ij}_t 1_{\{\bar{R}^\vee(i,j)> t\}}\sum_{p,q=0}^{k_n-1}g^n_pg^n_q(\widehat{I}^{i+p}_-\widehat{J}^{j+q}_-)\bullet[L^{\prime1},L^{\prime2}]_t+O_p(b_n^{1/2}),
\end{align*}
we obtain
$\left|\mathbb{IV}_t^{(3)}-\mathbf{IV}_t^{(3)}\right|
\lesssim\frac{1}{k_n^2}\sum_{i,j=0}^\infty\bar{K}^{ij}_t 1_{\{\bar{R}^\vee(i,j)> t\}}(\bar{I}^{i}_-\bar{J}^{j}_-)\bullet\left|[L^{\prime1},L^{\prime2}]\right|_t+O_p(b_n^{1/2}),$
and thus the Kunita-Watanabe inequality and the inequality of arithmetic and geometric means yield
\begin{align*}
\left|\mathbb{IV}_t^{(3)}-\mathbf{IV}_t^{(3)}\right|
\lesssim\frac{1}{k_n^2}\sum_{i,j=0}^\infty\bar{K}^{ij}_t 1_{\{\bar{R}^\vee(i,j)> t\}}\left\{[L^{\prime1}](\bar{I}^{i})_t+[L^{\prime2}](\bar{J}^{j})_t\right\}+O_p(b_n^{1/2}).
\end{align*}
Since $\bar{K}^{ij}_t 1_{\{\bar{R}^\vee(i,j)> t\}}$ is $\mathcal{F}_{\widehat{S}^i\wedge\widehat{T}^j}$-measurable by Lemma \ref{barKssp}, we obtain
{\small \begin{align*}
E\left[\left|\sum_{i,j=0}^\infty\bar{K}^{ij}_t 1_{\{\bar{R}^\vee(i,j)> t\}}\left\{[L^{\prime1}](\bar{I}^{i})_t+[L^{\prime2}](\bar{J}^{j})_t\right\}\right|\right]
\lesssim E\left[\sum_{i,j=0}^\infty\bar{K}^{ij}_t 1_{\{\bar{R}^\vee(i,j)> t\}}\left\{\langle L^{\prime1}\rangle(\bar{I}^{i})_t+\langle L^{\prime2}\rangle(\bar{J}^{j})_t\right\}\right],
\end{align*}}
hence by [SK$_2$], [SA4] and $(\ref{estshift})$ we conclude that $\left|\mathbb{IV}_t^{(3)}-\mathbf{IV}_t^{(3)}\right|= O_p(k_n\bar{r}_n)+O_p(b_n^{1/2})=o_p(b_n^{1/4})$. Consequently, we obtain $\mathbb{IV}_t^{(3)}=[L^{\prime1},L^{\prime2}]_t+o_p(b_n^{1/4})$, and thus we complete the proof of the proposition because $[Z^1,Z^2]=[X^{\prime1},X^{\prime2}]+[L^{\prime1},L^{\prime2}]$.\hfill $\Box$

\section{Proof of Proposition \ref{depcon}}\label{proofdepcon}

By a localization procedure, we may systematically replace the conditions [C1]-[C2], [A4], [A6], [N$^\flat_2$], [T] and [N$^\flat_r$] with [SC1]-[SC2], [SA4], [SA6], [SN$^\flat_2$], [ST] and [SN$^\flat_r$] respectively.

Set $\tilde{\lambda}^l_u=\sum_{v=u}^\infty\lambda^l_v$ for each $u\in\mathbb{Z}_+$ and $l=1,2$. We define the random variables $\tilde{\zeta}^1_{i}$ and $\tilde{\zeta}^2_{j}$ by $\tilde{\zeta}^1_i=\sum_{u=0}^i\tilde{\lambda}^1_{u+1}\zeta^1_{S^{i-u}}$ and $\tilde{\zeta}^2_j=\sum_{u=0}^i\tilde{\lambda}^2_{u+1}\zeta^2_{T^{j-u}}$ for every $i,j$. Then we have
\begin{align*}
\tilde{\zeta}^1_{i}-\tilde{\zeta}^1_{i-1}
=\sum_{u=0}^i\tilde{\lambda}^1_{u+1}\zeta^1_{S^{i-u}}-\sum_{u=1}^i\tilde{\lambda}^1_{u}\zeta^1_{S^{i-u}}
=\sum_{u=0}^i(\tilde{\lambda}^1_{u+1}-\tilde{\lambda}^1_{u})\zeta^1_{S^{i-u}}+\tilde{\lambda}_{0}\zeta^1_{S^i}
=-\sum_{u=0}^i\lambda_u\zeta^1_{S^{i-u}}+\tilde{\lambda}_{0}\zeta^1_{S^i},
\end{align*}
hence we obtain
\begin{equation}\label{BN}
\sum_{u=0}^i\lambda_u\zeta^1_{S^{i-u}}
=\tilde{\lambda}_{0}\zeta^1_{S^i}-(\tilde{\zeta}^1_{i}-\tilde{\zeta}^1_{i-1}).
\end{equation}
In the time series analysis, this relation is known as the Beveridge-Nelson decomposition. See \cite{BN1981} for details. Combining $(\ref{BN})$ with Abel's partial summation formula, we obtain
\begin{align*}
\sum_{p=0}^{k_n-1}\Delta(g)^n_p\left(\sum_{u=0}^{i+p}\lambda_u\zeta^1_{S^{i+p-u}}\right)
&=\tilde{\lambda}_{0}\overline{\zeta}^1(\widehat{\mathcal{I}})^i-\sum_{p=0}^{k_n-1}\Delta(g)^n_p(\tilde{\zeta}^1_{i+p}-\tilde{\zeta}^1_{i+p-1})\\
&=\tilde{\lambda}_{0}\overline{\zeta}^1(\widehat{\mathcal{I}})^i+\sum_{p=0}^{k_n-1}\{\Delta(g)^n_{p+1}-\Delta(g)^n_p\}(\tilde{\zeta}^1_{i+p}-\tilde{\zeta}^1_{i-1}).
\end{align*}
for every $i$. Similarly we can deduce
\begin{align*}
\sum_{q=0}^{k_n-1}\Delta(g)^n_q\left(\sum_{u=0}^{j+q}\lambda^2_u\zeta^2_{T^{j+q-u}}\right)
&=\tilde{\lambda}^2_{0}\overline{\zeta}^2(\widehat{\mathcal{J}})^j+\sum_{q=0}^{k_n-1}\{\Delta(g)^n_{q+1}-\Delta(g)^n_q\}(\tilde{\zeta}^2_{j+q}-\tilde{\zeta}^2_{j-1}),\\
\sum_{p=0}^{k_n-1}\Delta(g)^n_p\left(\sum_{u=0}^{i+p}\mu^1_u b_n^{-1/2}\underline{X}^1(I^{i+p-u})_t\right)
&=\tilde{\mu}^1_{0}\widetilde{\underline{X}^1}(\widehat{\mathcal{I}})^i_t+b_n^{-1/2}\sum_{p=0}^{k_n-1}\{\Delta(g)^n_{p+1}-\Delta(g)^n_p\}(\underline{X}^1(I^{i+p})_t-\underline{X}^1(I^{i-1})_t),\\
\sum_{q=0}^{k_n-1}\Delta(g)^n_q\left(\sum_{u=0}^{j+q}\mu^2_u b_n^{-1/2}\underline{X}^2(J^{j+q-u})_t\right)
&=\tilde{\mu}^2_{0}\widetilde{\underline{X}^2}(\widehat{\mathcal{J}})^j_t+b_n^{-1/2}\sum_{q=0}^{k_n-1}\{\Delta(g)^n_{q+1}-\Delta(g)^n_q\}(\underline{X}^2(J^{j+q})_t-\underline{X}^2(J^{j-1})_t)
\end{align*}
for every $i,j$, where $\tilde{\mu}^l_u=\sum_{v=u}^\infty\mu^l_v$ for each $u\in\mathbb{Z}_+$ and $l=1,2$.
Note that $\sum_{u=1}^{\infty}|\tilde{\lambda}^l_u|\leq \sum_{u=1}^{\infty}\sum_{v=u}^{\infty}|\lambda^l_v|=\sum_{v=1}^{\infty}v|\lambda^l_v|<\infty$ and $\sum_{u=1}^{\infty}|\tilde{\mu}^l_u|\leq \sum_{u=1}^{\infty}\sum_{v=u}^{\infty}|\mu^l_v|=\sum_{v=1}^{\infty}v|\mu^l_v|<\infty$ for $l=1,2$, by using the above formulas we can show that
\begin{align*}
\widehat{PHY}(\mathsf{Z}^1,\mathsf{Z}^2)^n_t-\widehat{PHY}(\tilde{\mathsf{Z}}^1,\tilde{\mathsf{Z}}^2)^n_t\to^p 0
\quad\mathrm{and}\quad
\widehat{PTHY}(\mathsf{Z}^1,\mathsf{Z}^2)^n_t-\widehat{PTHY}(\tilde{\mathsf{Z}}^1,\tilde{\mathsf{Z}}^2)^n_t\to^p 0
\end{align*}
as $n\to\infty$ for any $t\in\mathbb{R}_+$, where $\tilde{\mathsf{Z}}^1_{S^i}=Z^1_{S^i}+\tilde{\lambda}^1_0\zeta^1_{S^i}+\tilde{\mu}^1_0b_n^{-1/2}(\underline{X}^1_{S^i}-\underline{X}^1_{S^{i-1}})$ and $\tilde{\mathsf{Z}}^2_{T^j}=Z^2_{T^j}+\tilde{\lambda}^2_0\zeta^2_{T^j}+\tilde{\mu}^2_0b_n^{-1/2}(\underline{X}^2_{T^j}-\underline{X}^2_{T^{j-1}})$ for each $i,j$. Consequently, we complete the proof of the proposition due to Proposition \ref{propphy} and Corollary \ref{conpthy}.\hfill $\Box$


\section*{Acknowledgements}

\addcontentsline{toc}{section}{Acknowledgements}

I am indebted to Professor Nakahiro Yoshida for his encouragement to my research and valuable suggestions which led to considerable improvements of the paper.
{\small
\addcontentsline{toc}{section}{References}

\begin{thebibliography}{46}
\expandafter\ifx\csname natexlab\endcsname\relax\def\natexlab#1{#1}\fi
\providecommand{\bibinfo}[2]{#2}
\ifx\xfnm\relax \def\xfnm[#1]{\unskip,\space#1}\fi
\bibitem[{A\"{i}t-Sahalia et~al.(2012)A\"{i}t-Sahalia, Jacod and Li}]{AJL2011}
\bibinfo{author}{Y.~A\"{i}t-Sahalia}, \bibinfo{author}{J.~Jacod},
  \bibinfo{author}{J.~Li}, \bibinfo{title}{Testing for jumps in noisy high
  frequency data}, \bibinfo{journal}{J. Econometrics} \bibinfo{volume}{168}
  (\bibinfo{year}{2012}) \bibinfo{pages}{207--222}.
\bibitem[{Andersen et~al.(2007)Andersen, Bollerslev and Diebold}]{ABD2007}
\bibinfo{author}{T.G. Andersen}, \bibinfo{author}{T.~Bollerslev},
  \bibinfo{author}{F.X. Diebold}, \bibinfo{title}{Roughing it up: Including
  jump components in the measurement, modeling, and forecasting of return
  volatility}, \bibinfo{journal}{The Review of Economics and Statistics}
  \bibinfo{volume}{89} (\bibinfo{year}{2007}) \bibinfo{pages}{701--720}.
\bibitem[{Barndorff-Nielsen et~al.(2006)Barndorff-Nielsen, Graversen, Jacod,
  Podolskij and Shephard}]{BNS2006}
\bibinfo{author}{O.E. Barndorff-Nielsen}, \bibinfo{author}{S.E.G. Graversen},
  \bibinfo{author}{J.~Jacod}, \bibinfo{author}{M.~Podolskij},
  \bibinfo{author}{N.~Shephard}, \bibinfo{title}{A central limit theorem for
  realised power and bipower variations of continuous semimartingales}, in:
  \bibinfo{editor}{Y.~Kabanov}, \bibinfo{editor}{R.~Liptser},
  \bibinfo{editor}{J.~Stoyanov} (Eds.), \bibinfo{booktitle}{From Stochastic
  Calculus to Mathematical Finance}, the Shiryaev Festschrift,
  \bibinfo{publisher}{Springer Verlag}, \bibinfo{address}{Berlin},
  \bibinfo{year}{2006}, pp. \bibinfo{pages}{33--69}.
\bibitem[{Barndorff-Nielsen et~al.(2009)Barndorff-Nielsen, Hansen, Lunde and
  Shephard}]{BNHLS2009}
\bibinfo{author}{O.E. Barndorff-Nielsen}, \bibinfo{author}{P.R. Hansen},
  \bibinfo{author}{A.~Lunde}, \bibinfo{author}{N.~Shephard},
  \bibinfo{title}{Realized kernels in practice: trades and quotes},
  \bibinfo{journal}{Econom. J.} \bibinfo{volume}{12} (\bibinfo{year}{2009})
  \bibinfo{pages}{C1--C32}.
\bibitem[{Barndorff-Nielsen et~al.(2011)Barndorff-Nielsen, Hansen, Lunde and
  Shephard}]{BNHLS2011}
\bibinfo{author}{O.E. Barndorff-Nielsen}, \bibinfo{author}{P.R. Hansen},
  \bibinfo{author}{A.~Lunde}, \bibinfo{author}{N.~Shephard},
  \bibinfo{title}{Multivariate realised kernels: Consistent positive
  semi-definite estimators of the covariation of equity prices with noise and
  non-synchronous trading}, \bibinfo{journal}{J. Econometrics}
  \bibinfo{volume}{162} (\bibinfo{year}{2011}) \bibinfo{pages}{149--169}.
\bibitem[{Barndorff-Nielsen and Shephard(2004{\natexlab{a}})}]{BNS2004b}
\bibinfo{author}{O.E. Barndorff-Nielsen}, \bibinfo{author}{N.~Shephard},
  \bibinfo{title}{Measuring the impact of jumps in multivariate price processes
  using bipower covariation}, \bibinfo{year}{2004}{\natexlab{a}}.
  \bibinfo{note}{Discussion paper, Nuffield College, Oxford University}.
\bibitem[{Barndorff-Nielsen and Shephard(2004{\natexlab{b}})}]{BNS2004}
\bibinfo{author}{O.E. Barndorff-Nielsen}, \bibinfo{author}{N.~Shephard},
  \bibinfo{title}{Power and bipower variation with stochastic volatility and
  jumps}, \bibinfo{journal}{Journal of Financial Econometrics}
  \bibinfo{volume}{2} (\bibinfo{year}{2004}{\natexlab{b}})
  \bibinfo{pages}{1--37}.
\bibitem[{Barunik and Vacha(2012)}]{BV2012a}
\bibinfo{author}{J.~Barunik}, \bibinfo{author}{L.~Vacha},
  \bibinfo{title}{Realized wavelet-based estimation of integrated variance and
  jumps in the presence of noise}, \bibinfo{year}{2012}.
  \bibinfo{note}{ArXiv:1202.1854}.
\bibitem[{Beveridge and Nelson(1981)}]{BN1981}
\bibinfo{author}{S.~Beveridge}, \bibinfo{author}{C.R. Nelson},
  \bibinfo{title}{A new approach to decomposition of economic time series into
  permanent and transitory components with particular attention to measurement
  of the `buisiness cycle'}, \bibinfo{journal}{Journal of Monetary Economics}
  \bibinfo{volume}{7} (\bibinfo{year}{1981}) \bibinfo{pages}{151--174}.
\bibitem[{Bibinger(2011)}]{B2011a}
\bibinfo{author}{M.~Bibinger}, \bibinfo{title}{Efficient covariance estimation
  for asynchronous noisy high-frequency data}, \bibinfo{journal}{Scandinavian
  Journal of Statistics} \bibinfo{volume}{38} (\bibinfo{year}{2011})
  \bibinfo{pages}{23--45}.
\bibitem[{Bos et~al.(2012)Bos, Janus and Koopman}]{BJK2012}
\bibinfo{author}{C.S. Bos}, \bibinfo{author}{P.~Janus}, \bibinfo{author}{S.J.
  Koopman}, \bibinfo{title}{Spot variance path estimation and its application
  to high-frequency jump testing}, \bibinfo{journal}{Journal of Financial
  Econometrics} \bibinfo{volume}{10} (\bibinfo{year}{2012})
  \bibinfo{pages}{354--389}.
\bibitem[{Boudt et~al.(2011)Boudt, Croux and Laurent}]{Boudt2011}
\bibinfo{author}{K.~Boudt}, \bibinfo{author}{C.~Croux},
  \bibinfo{author}{S.~Laurent}, \bibinfo{title}{Outlyingness weighted
  covariation}, \bibinfo{journal}{Journal of Financial Econometrics}
  \bibinfo{volume}{9} (\bibinfo{year}{2011}) \bibinfo{pages}{657--684}.
\bibitem[{Christensen et~al.(2010)Christensen, Kinnebrock and
  Podolskij}]{CKP2010}
\bibinfo{author}{K.~Christensen}, \bibinfo{author}{S.~Kinnebrock},
  \bibinfo{author}{M.~Podolskij}, \bibinfo{title}{Pre-averaging estimators of
  the ex-post covariance matrix in noisy diffusion models with non-synchronous
  data}, \bibinfo{journal}{J. Econometrics} \bibinfo{volume}{159}
  (\bibinfo{year}{2010}) \bibinfo{pages}{116--133}.
\bibitem[{Christensen et~al.(2011{\natexlab{a}})Christensen, Oomen and
  Podolskij}]{COP2011}
\bibinfo{author}{K.~Christensen}, \bibinfo{author}{R.~Oomen},
  \bibinfo{author}{M.~Podolskij}, \bibinfo{title}{Fact or friction: Jumps at
  ultra high frequency}, \bibinfo{type}{CREATES Research Paper}
  \bibinfo{number}{2011-19}, Aarhus University,
  \bibinfo{year}{2011}{\natexlab{a}}.
\bibitem[{Christensen et~al.(2011{\natexlab{b}})Christensen, Podolskij and
  Vetter}]{CPV2011}
\bibinfo{author}{K.~Christensen}, \bibinfo{author}{M.~Podolskij},
  \bibinfo{author}{M.~Vetter}, \bibinfo{title}{On covariation estimation for
  multivariate continuous {I}t\^{o} semimartingales with noise in
  non-synchronous observation schemes}, \bibinfo{type}{CREATES Research Paper}
  \bibinfo{number}{2011-53}, Aarhus University,
  \bibinfo{year}{2011}{\natexlab{b}}.
\bibitem[{Cont and Kan(2011)}]{ContKan2011}
\bibinfo{author}{R.~Cont}, \bibinfo{author}{Y.H. Kan}, \bibinfo{title}{Dynamic
  hedging of portfolio credit derivatives}, \bibinfo{journal}{SIAM J.
  FinancialMath.} \bibinfo{volume}{2} (\bibinfo{year}{2011})
  \bibinfo{pages}{112--140}.
\bibitem[{Delbaen and Schachermayer(1994)}]{DS1994}
\bibinfo{author}{F.~Delbaen}, \bibinfo{author}{W.~Schachermayer},
  \bibinfo{title}{A general version of the fundamental theorem of asset
  pricing}, \bibinfo{journal}{Math. Ann.} \bibinfo{volume}{300}
  (\bibinfo{year}{1994}) \bibinfo{pages}{463--520}.
\bibitem[{Donoho and Johnstone(1994)}]{DJ1994}
\bibinfo{author}{D.L. Donoho}, \bibinfo{author}{I.M. Johnstone},
  \bibinfo{title}{Ideal spatial adaptation by wavelet shrinkage},
  \bibinfo{journal}{Biometrika} \bibinfo{volume}{81} (\bibinfo{year}{1994}).
\bibitem[{Fan and Wang(2007)}]{FW2007}
\bibinfo{author}{J.~Fan}, \bibinfo{author}{Y.~Wang},
  \bibinfo{title}{Multi-scale jump and volatility analysis for high-frequency
  financial data}, \bibinfo{journal}{J. Amer. Statist. Assoc.}
  \bibinfo{volume}{102} (\bibinfo{year}{2007}) \bibinfo{pages}{1349--1362}.
\bibitem[{Hansen and Lunde(2006)}]{HL2006}
\bibinfo{author}{P.R. Hansen}, \bibinfo{author}{A.~Lunde},
  \bibinfo{title}{Realized variance and market microstructure noise},
  \bibinfo{journal}{J. Bus. Econom. Statist.} \bibinfo{volume}{24}
  (\bibinfo{year}{2006}) \bibinfo{pages}{127--161}.
\bibitem[{Hayashi et~al.(2011)Hayashi, Jacod and Yoshida}]{HJY2011}
\bibinfo{author}{T.~Hayashi}, \bibinfo{author}{J.~Jacod},
  \bibinfo{author}{N.~Yoshida}, \bibinfo{title}{Irregular sampling and central
  limit theorems for power variations: The continuous case},
  \bibinfo{journal}{Ann. Inst. Henri Poincar\'{e} Probab. Stat.}
  \bibinfo{volume}{47} (\bibinfo{year}{2011}) \bibinfo{pages}{1197--1218}.
\bibitem[{Hayashi and Yoshida(2005)}]{HY2005}
\bibinfo{author}{T.~Hayashi}, \bibinfo{author}{N.~Yoshida}, \bibinfo{title}{On
  covariance estimation of non-synchronously observed diffusion processes},
  \bibinfo{journal}{Bernoulli} \bibinfo{volume}{11} (\bibinfo{year}{2005})
  \bibinfo{pages}{359--379}.
\bibitem[{Hayashi and Yoshida(2011)}]{HY2011}
\bibinfo{author}{T.~Hayashi}, \bibinfo{author}{N.~Yoshida},
  \bibinfo{title}{Nonsynchronous covariation process and limit theorems},
  \bibinfo{journal}{Stochastic Process. Appl.} \bibinfo{volume}{121}
  (\bibinfo{year}{2011}) \bibinfo{pages}{2416--2454}.
\bibitem[{Jacod(2008)}]{J2008}
\bibinfo{author}{J.~Jacod}, \bibinfo{title}{Asymptotic properties of realized
  power variations and related functionals of semimartingales},
  \bibinfo{journal}{Stochastic Process. Appl.} \bibinfo{volume}{118}
  (\bibinfo{year}{2008}) \bibinfo{pages}{517--559}.
\bibitem[{Jacod et~al.(2009)Jacod, Li, Mykland, Podolskij and
  Vetter}]{JLMPV2009}
\bibinfo{author}{J.~Jacod}, \bibinfo{author}{Y.~Li}, \bibinfo{author}{P.A.
  Mykland}, \bibinfo{author}{M.~Podolskij}, \bibinfo{author}{M.~Vetter},
  \bibinfo{title}{Microstructure noise in the continuous case: The
  pre-averaging approach}, \bibinfo{journal}{Stochastic Process. Appl.}
  \bibinfo{volume}{119} (\bibinfo{year}{2009}) \bibinfo{pages}{2249--2276}.
\bibitem[{Jacod et~al.(2010)Jacod, Podolskij and Vetter}]{JPV2010}
\bibinfo{author}{J.~Jacod}, \bibinfo{author}{M.~Podolskij},
  \bibinfo{author}{M.~Vetter}, \bibinfo{title}{Limit theorems for moving
  averages of discretized processes plus noise}, \bibinfo{journal}{Ann.
  Statist.} \bibinfo{volume}{38} (\bibinfo{year}{2010})
  \bibinfo{pages}{1478--1545}.
\bibitem[{Jacod and Shiryaev(2003)}]{JS}
\bibinfo{author}{J.~Jacod}, \bibinfo{author}{A.N. Shiryaev},
  \bibinfo{title}{Limit theorems for stochastic processes},
  \bibinfo{publisher}{Springer}, \bibinfo{edition}{second} edition,
  \bibinfo{year}{2003}.
\bibitem[{Jing et~al.(2011)Jing, Li and Liu}]{JLL2011}
\bibinfo{author}{B.Y. Jing}, \bibinfo{author}{C.X. Li},
  \bibinfo{author}{Z.~Liu}, \bibinfo{title}{On estimating the integrated
  co-volatility using noisy high frequency data with jumps},
  \bibinfo{year}{2011}. \bibinfo{note}{Working paper}.
\bibitem[{Kalnina(2011)}]{Kal2011}
\bibinfo{author}{I.~Kalnina}, \bibinfo{title}{Subsampling high frequency data},
  \bibinfo{journal}{J. Econometrics} \bibinfo{volume}{161}
  (\bibinfo{year}{2011}) \bibinfo{pages}{262--283}.
\bibitem[{Koike(2012)}]{Koike2012}
\bibinfo{author}{Y.~Koike}, \bibinfo{title}{An estimator for the cumulative
  co-volatility of nonsynchronously observed semimartingales with jumps},
  Master's thesis, University of Tokyo, \bibinfo{year}{2012}.
\bibitem[{Koike(2013)}]{Koike2012phy}
\bibinfo{author}{Y.~Koike}, \bibinfo{title}{Limit theorems for the pre-averaged
  {H}ayashi-{Y}oshida estimator with random sampling}, \bibinfo{year}{2013}.
  \bibinfo{note}{ArXiv:1302.4887}.
\bibitem[{Mancini(2001)}]{M2001}
\bibinfo{author}{C.~Mancini}, \bibinfo{title}{Disentangling the jumps of the
  diffuson in a geometric jumping {B}rownian motion},
  \bibinfo{journal}{Giornale dell'Istituto Italiano degli Attuari}
  \bibinfo{volume}{64} (\bibinfo{year}{2001}) \bibinfo{pages}{19--47}.
\bibitem[{Mancini and Gobbi(2012)}]{MG2012}
\bibinfo{author}{C.~Mancini}, \bibinfo{author}{F.~Gobbi},
  \bibinfo{title}{Identifying the {B}rownian covariation from the co-jumps
  given discrete observations}, \bibinfo{journal}{Econometric Theory}
  \bibinfo{volume}{28} (\bibinfo{year}{2012}) \bibinfo{pages}{249--273}.
\bibitem[{Noureldin et~al.(2011)Noureldin, Shephard and Sheppard}]{NSS2011}
\bibinfo{author}{D.~Noureldin}, \bibinfo{author}{N.~Shephard},
  \bibinfo{author}{K.~Sheppard}, \bibinfo{title}{Multivariate
  high-frequency-based volatility ({HEAVY}) models}, \bibinfo{journal}{J. Appl.
  Econometrics}  (\bibinfo{year}{2011}).
\bibitem[{Ogihara and Yoshida(2012)}]{OY2012}
\bibinfo{author}{T.~Ogihara}, \bibinfo{author}{N.~Yoshida},
  \bibinfo{title}{Quasi-likelihood analysis for stochastic regression models
  with nonsynchronous observations}, \bibinfo{year}{2012}.
  \bibinfo{note}{ArXiv:1212.4911v1}.
\bibitem[{Pickands(1967)}]{Pickands1967}
\bibinfo{author}{J.~Pickands, III}, \bibinfo{title}{Maxima of stationary
  {G}aussian processes}, \bibinfo{journal}{Probab. Theory Related Fields}
  \bibinfo{volume}{7} (\bibinfo{year}{1967}) \bibinfo{pages}{190--223}.
\bibitem[{Podolskij and Vetter(2009{\natexlab{a}})}]{PV2009SPA}
\bibinfo{author}{M.~Podolskij}, \bibinfo{author}{M.~Vetter},
  \bibinfo{title}{Bipower-type estimation in a noisy diffusion setting},
  \bibinfo{journal}{Stochastic Process. Appl.} \bibinfo{volume}{119}
  (\bibinfo{year}{2009}{\natexlab{a}}) \bibinfo{pages}{2803--2831}.
\bibitem[{Podolskij and Vetter(2009{\natexlab{b}})}]{PV2009}
\bibinfo{author}{M.~Podolskij}, \bibinfo{author}{M.~Vetter},
  \bibinfo{title}{Estimation of volatility functionals in the simultaneous
  presence of microstructure noise and jumps}, \bibinfo{journal}{Bernoulli}
  \bibinfo{volume}{15} (\bibinfo{year}{2009}{\natexlab{b}})
  \bibinfo{pages}{634--658}.
\bibitem[{Podolskij and Ziggel(2010)}]{PZ2010}
\bibinfo{author}{M.~Podolskij}, \bibinfo{author}{D.~Ziggel},
  \bibinfo{title}{New tests for jumps in semimartingale models},
  \bibinfo{journal}{Stat. Inference Stoch. Process} \bibinfo{volume}{13}
  (\bibinfo{year}{2010}) \bibinfo{pages}{15--41}.
\bibitem[{Shimizu(2003)}]{S2002}
\bibinfo{author}{Y.~Shimizu}, \bibinfo{title}{Estimation of diffusion processes
  with jumps from discrete observations}, Master's thesis, University of Tokyo,
  \bibinfo{year}{2003}.
\bibitem[{Shimizu(2010)}]{S2010}
\bibinfo{author}{Y.~Shimizu}, \bibinfo{title}{Threshold selection in
  jump-discriminant filter for discretely observed jump processes},
  \bibinfo{journal}{Stat. Methods Appl.} \bibinfo{volume}{19}
  (\bibinfo{year}{2010}) \bibinfo{pages}{355--378}.
\bibitem[{Todorov and Bollerslev(2010)}]{TB2010}
\bibinfo{author}{V.~Todorov}, \bibinfo{author}{T.~Bollerslev},
  \bibinfo{title}{Jumps and betas: A new framework for disentangling and
  estimating systematic risks}, \bibinfo{journal}{J. Econometrics}
  \bibinfo{volume}{157} (\bibinfo{year}{2010}) \bibinfo{pages}{220--235}.
\bibitem[{Ubukata and Oya(2009)}]{UO2009}
\bibinfo{author}{M.~Ubukata}, \bibinfo{author}{K.~Oya},
  \bibinfo{title}{Estimation and testing for dependence in market
  microstructure noise}, \bibinfo{journal}{Journal of Financial Econometrics}
  \bibinfo{volume}{7} (\bibinfo{year}{2009}) \bibinfo{pages}{106--151}.
\bibitem[{Veraart(2010)}]{Veraart2010}
\bibinfo{author}{A.E. Veraart}, \bibinfo{title}{Inference for the jump part of
  quadratic variation of {I}t\^{o} semimartingales},
  \bibinfo{journal}{Econometric Theory} \bibinfo{volume}{26}
  (\bibinfo{year}{2010}) \bibinfo{pages}{331--368}.
\bibitem[{Wang et~al.(2013)Wang, Liu and Liu}]{WLL2013}
\bibinfo{author}{K.~Wang}, \bibinfo{author}{J.~Liu}, \bibinfo{author}{Z.~Liu},
  \bibinfo{title}{Disentangling the effect of jumps on systematic risk using a
  new estimator of integrated co-volatility}, \bibinfo{journal}{Journal of
  Banking \& Finance}  (\bibinfo{year}{2013}). \bibinfo{note}{Forthcoming}.
\bibitem[{Zhang(2007)}]{Zhang2007}
\bibinfo{author}{L.~Zhang}, \bibinfo{title}{What you don't know cannot hurt
  you: {O}n the detection of small jumps}, \bibinfo{year}{2007}.
  \bibinfo{note}{Working paper}.

\end{thebibliography}

}
 
\end{document}